\documentclass[12pt,a4paper]{amsart}

\usepackage[latin1]{inputenc}
\usepackage[english]{babel}
\usepackage{geometry}
\usepackage{array}
\usepackage{enumerate}
\usepackage{amsmath, amsfonts, amssymb, amsthm}
\usepackage{xypic}
\usepackage{mathrsfs}
\usepackage{hyperref}
\usepackage{aliascnt}
\usepackage[all]{xy}

\usepackage[usenames,dvipsnames]{color}

\newcommand{\M}{\mathcal{M}}
\newcommand{\oM}{\overline{\mathcal{M}}}

\newcommand{\oOmega}{\overline{\Omega}}

\newcommand{\A}{\mathcal{A}}
\renewcommand{\O}{\mathcal{O}}

\renewcommand{\H}{\mathcal{H}}

\newcommand{\cC}{\mathcal{C}}
\renewcommand{\O}{\mathcal{O}}
\newcommand{\R}{\mathcal{R}}
\newcommand{\T}{\mathcal{T}}
\newcommand{\RR}{\mathcal{R}}

\renewcommand{\P}{\mathbb{P}}
\newcommand{\C}{\mathbb{C}}
\newcommand{\Z}{\mathbb{Z}}
\newcommand{\N}{\mathbb{N}}

\newcommand{\cZ}{\mathcal{Z}}

\DeclareMathOperator{\Res}{Res}

\DeclareMathOperator{\Aut}{Aut}
\DeclareMathOperator{\End}{End}

\DeclareMathOperator{\Tr}{Tr}

\DeclareMathOperator{\Id}{Id}

\theoremstyle{plain}
\newtheorem{theorem}{Theorem}[section]

\newtheorem{corollary}[theorem]{Corollary}
\newtheorem{conjecture}[theorem]{Conjecture}
\newtheorem{proposition}[theorem]{Proposition}
\newtheorem{lemma}[theorem]{Lemma}
\newtheorem{assumption}[theorem]{Assumption}

\theoremstyle{definition}
\newtheorem{definition}[theorem]{Definition}
\newtheorem{question}[theorem]{Question}
\newtheorem{example}[theorem]{Example}
\newtheorem{remark}[theorem]{Remark}

\newtheorem{problem}[theorem]{Problem}
\newtheorem{questions}[theorem]{Questions}

\keywords{Holomorphic vertex operator algebras, unitary vertex algebras, moonshine vertex algebra, partition functions, moduli space of curves, Hodge line bundle, Teichm\"{u}ller modular forms}

\subjclass[2020]{14H15, 17B69, 32G15, 14H42}

\begin{document}
\author{Sebastiano Carpi}
	\address{Dipartimento di Matematica, Universit\`a di Roma ``Tor Vergata'', Via della Ricerca Scientifica, 1, 00133 Roma, Italy\\
		E-mail: {\tt carpi@mat.uniroma2.it}
	}
\author{Giulio Codogni}
	\address{Dipartimento di Matematica, Universit\`a di Roma ``Tor Vergata'', Via della Ricerca Scientifica, 1, 00133 Roma, Italy\\
		E-mail: {\tt giulio.codogni@uniroma2.it}
	}

\title
[VOAs, partition functions and Teichm\"{u}ller modular forms]
{Vertex operator algebras, partition functions and Teichm\"{u}ller modular forms}

\begin{abstract}
In the spirit of the geometric approach to two-dimensional conformal field theory, we explicitly associate to every holomorphic vertex operator algebra a section of a power of Hodge line bundle on the moduli space of curves of arbitrary genus - or equivalently a Teichm\"{u}ller modular form in any genus. 
As a first application, we connect the geometry of the moduli space of curves to the classification program for holomorphic vertex algebras.
We then discuss how to use the theory of holomorphic vertex algebras to reach new results about the moduli space of curves.
In the last part of the paper we study how the above mentioned forms can be used to reconstruct the vertex algebra.

\end{abstract}
\maketitle
\tableofcontents

 \section{Introduction}

In this paper, we study some aspects of the connection between the moduli space of Riemann surfaces and two-dimensional conformal field theories (CFTs), trying to reach new results on both sides.

Our starting point is an indication given almost forty years ago in a remarkable paper by Friedan and Shenker \cite{FS}. Here, the authors, inspired by string theory, consider as a fundamental object the partition function $\cZ(\Sigma)$ of a modular invariant two-dimensional CFT  on any closed Riemann surface $\Sigma$.  The ``function" $\cZ(\Sigma)$ is  formally defined through 
a Euclidean path integral 
 \begin{equation*}
\mathcal{Z}(\Sigma) =  \int_{\mathcal{F}_\Sigma} e^{- S_\Sigma (\phi) } \mathcal{D}\phi
\end{equation*}
where $\mathcal{F}_\Sigma$ is a suitable space of (generalized) functions $\phi$ on $\Sigma$ and $S_\Sigma (\phi) \colon \mathcal{F}_\Sigma \to \mathbb{R}$   
is the Euclidean action of the CFT on $\Sigma$. Due to conformal covariance, one may argue that $\cZ(\Sigma)$ depends only on the genus of $\Sigma$, the equivalence class of its complex structure and some appropriate additional structure. This suggests to consider the genus $g$ partition function $\cZ_g$, that is the restriction of the map $\Sigma \mapsto  \cZ(\Sigma) \in \mathbb{C}$  to the genus $g$ Riemann surfaces this should correspond to  a section of a suitable vector bundle on the compactified moduli space of Riemann surfaces $\oM_g$. 

The Friedan-Shenker picture is closely related to the celebrated Segal axiomatization of two-dimensional CFT \cite{Segal}. Although the latter is heuristically based on path integrals, the formulation of the corresponding axioms does not make use of these integrals. The Friedan-Shenker picture should emerge by restricting the Segal functor to closed Riemann surfaces. The difficulty of defining either path integrals or the Segal functor in a mathematically rigorous way for concrete CFT models is proverbial, see e.g. \cite{GKRV21,GKRV24,Huang03,Ten17,Ten19a,Ten19b}. The theory of vertex operator algebras (VOAs) \cite{Bor86,VA,Axiomatic,FLM88,DMS97,Kac01,Gan06} gives a mathematical axiomatization of chiral two-dimensional conformal field theories (which are special two-dimensional CFTs generated by fields depending on one light-ray coordinate only), and manageable language to extract from CFTs various mathematical results and stimulating indications for future research. 

 Heuristic arguments on the degeneration limit of the partition function  $\cZ(\Sigma)$ when $\Sigma$ approaches a singular complex curve with nodes give formal series involving the correlation functions of the conformal quantum fields, see \cite{FS,GV09}. For a vertex operator algebra $V$, of CFT type and self-dual  - whose definition we recall in Section \ref{S:vert} - one can take these formal series as a mathematically unambiguous definition of the genus $g$ partition functions $\cZ_{V,g}$. This approach is taken, e.g., in  \cite{GV09,GKV10,MT10,MT14,TWgeq26}. In Section \ref{S:part_func} we define $\cZ_{V,g}$ as formal power series. 
 
 For a chiral CFTs, the modular invariance requirement, necessary in the work of Friedan and Shenker, is satisfied by the requirement that the corresponding VOA is holomorphic, see  Section \ref{S:vert}.  In this case the Friedan-Shenker picture suggests that the genus $g$ partition function $\cZ_g$ should give a section of a power of the Hodge line bundle on $\oM_g$.  Equivalently, $\cZ_g$ gives a Teichm\"{u}ller modular forms (= holomorphic functions on the Teichm\"{u}ller space $T_g$ which are modular invariant with respect to the action of the mapping class group; full details are given in Section \ref{S:scho}).

 Our first set of results, namely Theorems \ref{T:main1} and \ref{thm:altre_espansioni}, and Proposition \ref{P:ampl}, confirms the Friedan-Shenker picture and, as we are going to explain, helps to prove new results both about the moduli space of curves and about vertex operator algebras. We summarize these results with the following statement.

 \begin{theorem}\label{thm:intro1} 
 
 The formal series $\cZ_{V,g}\cZ_{M(1),g}^{-c}$, where $V$ is any holomorphic vertex operator algebra of central charge $c$ and $M(1)$ is the Heisenberg vertex algebra, converges on a suitable open set to a Teichm\"{u}ller modular form of degree $g$ and weight $c/2$.

There exist:
\begin{itemize}
    \item a trivialization $\phi_0$ of the Hodge line bundle $\lambda_g$ near the boundary divisor $\delta_0$ of $\oM_g$;
    \item for every holomorphic vertex operator algebra $V$ of central charge $c$, a global section $u(\overline{\Omega}_{V,g})$ of $\lambda_g^{\otimes c/2}$ on $\oM_g$;
\end{itemize} 
such that $\phi_0(u(\oOmega_{V,g}))=\cZ_{V,g}$.

 \end{theorem}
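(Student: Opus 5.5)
The plan is to build the section from the theory of conformal blocks and then compare its boundary expansion with the sewing series. For a holomorphic VOA $V$ of central charge $c$ (so $c\in 8\mathbb{Z}$), the sheaf of coinvariants/conformal blocks of $V$ on $\oM_{g,1}$, as constructed via the Beilinson--Feigin--Mazur / Frenkel--Ben-Zvi / Damiolini--Gibney--Tarasca formalism, is a line bundle. Rationality and $C_2$-cofiniteness make it a vector bundle, and holomorphicity gives rank $1$ because $V$ is its own unique simple module. Since $V$ is holomorphic, this line bundle descends to $\oM_g$. It carries a projectively flat logarithmic connection with Atiyah algebra $\tfrac{c}{2}\mathcal{A}_{\lambda}$. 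Because the rank is one and the monodromy is trivial, it is canonically isomorphic to $\lambda_g^{\otimes -c/2}$ (or to its dual, depending on convention), at least over $\M_g$. Extension across $\partial\oM_g$ follows from factorization: at a nodal curve, the coinvariants of the normalization tensored with the sum over $V$-modules reduce to the sole module $V$. I then define $\overline{\Omega}_{V,g}$ as the canonical element of the dual bundle, namely the vacuum evaluation $|0\rangle\mapsto 1$ at a marked point. Propagation of vacua makes it independent of the point. It is therefore a nowhere-vanishing section on $\M_g$ that extends holomorphically, and $u$ denotes the identification with a section of $\lambda_g^{\otimes c/2}$.

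Next I compare with $\cZ_{V,g}$. Near $\delta_0$, I use the plumbing/Schottky-type coordinates of Section \ref{S:scho}: take a maximally degenerate stable curve, or $g$ self-sewings of a sphere, with sewing parameters $q_1,\dots,q_g$ and local coordinates at the punctures. The factorization/sewing theorem for conformal blocks (Damiolini--Gibney--Tarasca sewing) says that the canonical block on the smoothed curve equals the formal sum $\sum_{n}\sum_{v\in V_n} q^n\langle \ldots v\otimes v^\vee\ldots\rangle$ of genus-zero correlators, which is exactly the formal series defining $\cZ_{V,g}$. This sum converges by the analytic sewing results (Gui for holomorphic sewing, or Zhu-type convergence). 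The trivialization $\phi_0$ is defined by doing the same for a reference theory. Because $\lambda_g$ is intrinsic, the natural trivializing frame near $\delta_0$ is the one attached to the sewing coordinates; equivalently, $\phi_0$ is chosen so that the Heisenberg partition function plays the role of the frame, with $\cZ_{M(1),g}$ corresponding to the Mumford-isomorphism factor. With this choice, $\phi_0(u(\oOmega_{V,g}))=\cZ_{V,g}$ once the normalizations are matched, and the quotient $\cZ_{V,g}\cZ_{M(1),g}^{-c}$ becomes the convergent expression of the section in a genuine frame of $\lambda_g^{\otimes c/2}$. Pulling back to $T_g$, where $\lambda_g$ is trivialized by a normalized basis of abelian differentials, produces a holomorphic mapping-class-invariant function of weight $c/2$, that is, a Teichm\"uller modular form.

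I expect the main obstacle to be the second step: identifying precisely the trivialization in which the sewn conformal block equals the formal series, and showing that $\cZ_{M(1),g}^{-c}$ is exactly the correction needed. $M(1)$ is not $C_2$-cofinite, so its series has to be treated separately. My first attempt would be to compute $\cZ_{M(1),g}$ explicitly as an inverse determinant power (of the Schottky/period type, $\prod(1-q^n)$-like), and to compare it with the Mumford form relating $\lambda_2$, $\lambda_1^{13}$ and the Virasoro central charge. I would then check the genus-one case against $\eta(\tau)^{-c}\chi_V$, which is a modular form of weight $c/2$, to fix the normalizations.
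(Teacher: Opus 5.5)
Your construction of $u(\oOmega_{V,g})$ matches the paper, and so does your identification of $\cZ_{V,g}$ with a sewn block evaluated on the vacuum. There is, however, a genuine gap at the step where you choose $\phi_0$, and that step is precisely the content of the statement. The sewing theorem only says that the sewn series $\mu_{V,g}$ converges to \emph{some} section of $\T_g^*(V)$ over $B_0$. Consequently $\cZ_{V,g}=\mu_{V,g}(\oOmega_{V,g})$ is the expression of $u(\oOmega_{V,g})$ in the frame of $\lambda_g^{\otimes c/2}$ obtained by transporting $\mu_{V,g}$ through $u_{V,g}$. Nothing in your argument shows that this frame is the same for all $V$ of central charge $c$. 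For $V$ and a reference theory $W$ the two frames differ by a nonzero function on $B_0$, and matching normalizations at $q_1=\cdots=q_g=0$ only forces that function to be $1$ on this locus. So defining $\phi_0$ ``by doing the same for a reference theory'' gives $\phi_0(u(\oOmega_{V,g}))=\cZ_{V,g}$ only for $V=W$. Choosing $\phi_0$ ``so that the Heisenberg partition function plays the role of the frame'' assumes the conclusion. The frame of normalized differentials near $\delta_0$ differs from the sewn-block frame exactly by $\cZ_{M(1),g}^{c}$, and what has to be proved is that this discrepancy does not depend on $V$. An explicit formula for $\cZ_{M(1),g}$, compared with Mumford's isomorphism, would say nothing about an arbitrary $V$.

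The missing idea is to let the Atiyah algebra act on the sewn \emph{sections}, not only on the bundle. The paper does this in three steps.
\begin{enumerate}
\item An isomorphism $\T_g(V)\cong\T_g(U)$ over $\oM_g$ is unique up to a scalar, hence $\A_{c,g}$-equivariant.
\item The lifts $\ell_k\in\A_{c,g}$ of $q_k\partial_{q_k}$, which near the two preimages of the $k$-th node are $\tfrac12 w\,\partial_w$, annihilate every sewn block (Theorem~\ref{t:diff_eq}). The proof is a residue computation with $T(z)$ in a projective atlas: the two residues at the node give $\tfrac12L_0+\tfrac12L_0$, which acts as $q_k\partial_{q_k}$ on the sewing sum.
\item Since $\ell_k$ has symbol $q_k\partial_{q_k}$, a section killed by all $\ell_k$ is determined by its restriction to $q=0$ (Lemma~\ref{uniqueness}).
\end{enumerate}
Hence the equivariant isomorphism normalized at $q=0$ sends $\mu_{V,g}$ to $\mu_{U,g}$. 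This yields a single $F_{g,c}$ with $F_{g,c}\cZ_{V,g}=u(\oOmega_{V,g})$ for all $V$, and the $V$-independent $\phi_0$ of Theorem~\ref{thm:altre_espansioni}. The Heisenberg factor then needs no analysis of $M(1)$. Tensor products give $F_{g,c}=F_{g,8}^{c/8}$. The lattice identity $\cZ_{V_L,g}=\Theta_{L,g}\cZ_{M(1),g}^{c}$ (Theorem~\ref{thm:partition_of_lattice}), together with the modularity of theta series, identifies $F_g=\cZ_{M(1),g}^{-8}$ (Proposition~\ref{P:ampl}).

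A smaller error: $\oOmega_{V,g}$ is not nowhere-vanishing on $\M_g$. For $g\geq 3$ the class $\lambda_g$ is non-torsion in $\mathrm{Pic}(\M_g)$, so a nonzero section of $\lambda_g^{\otimes c/2}$ must vanish somewhere, and a block normalized by $\Omega\mapsto 1$ would have poles there. Only holomorphy of the vacuum section is used, so this does not affect the rest.
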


The proof of Theorem \ref{thm:intro1} relies on conformal blocks (Section \ref{SecConformalBlocks}) and Virasoro uniformization (Section \ref{S:Ati}). We also have a variant of the second part of Theorem \ref{thm:intro1} for the other boundary divisors $\delta_i$.

We refrain from properly explaining here all the terms appearing in Theorem \ref{thm:intro1}, because we will do that in the rest of this paper. We rather use this introduction to fully explain the genus one case. We assume some familiarity with the language of modular forms as in \cite{Serre}.

When $g=1$, the Teichm\"{u}ller space is the Siegel upper half plane $\H$; the mapping class group is $SL(2,\Z)$; the quotient $\H/SL(2,\Z)$ is the moduli space of elliptic curves $\M_{1,1}$ (in genus one, we have to add marked point, so we look at $\M_{1,1}$ rather than $\M_1$). An intermediate quotient is the punctured disk $\Delta^*$, with its corresponding variable $\mu = e^{2\pi i \tau}$, $\tau \in \H$. It is also the so called Schottky space $S_1$, whose generalization $S_g$ to the higher genus case is particularly relevant for our paper, and will be properly discussed in Section \ref{SS:schottky}. Teichm\"{u}ller modular forms are just the classical modular forms, and they give sections of the Hodge bundle on $\oM_{1,1}$.

Vertex operator algebras of CFT type  are graded vector spaces $$V=\oplus_{n\in \N}V_n\,,$$ with $\dim V_0=1$ and $\dim V_n < \infty$, called spaces of states, endowed with a ``quantized" operation
$$
Y\colon V \to \End(V)[[z,z^{-1}]]
$$
called state-to-field correspondence. There are a number of axioms discussed in Section \ref{S:vert}.

The partition function in genus one ignores the operator $Y$, and takes into account only the space of states. Following Remark \ref{RemarkGenus1Partition}, it is the graded dimension
$$\cZ_{V,1}(\tau)=\sum_{n \in \N} \dim V_ne^{2\pi i n\tau}$$

A remarkable result by Zhu in \cite{Zhu} states that if  $V$ is a holomorphic VOA with central charge $c$ (necessarily a non-negative integer multiple of 8), the series $\cZ_{V,1}(\tau)$ converges on upper half plane  $\H$, and $\cZ_{V,1}(\tau) \cZ_{M(1),1}(\tau)^{-c}$ is a modular form of weight $c/2$, where  $\cZ_{M(1),1}(\tau)$ is the genus one partition function of the (rank one) Heisenberg vertex algebras $M(1)$. Our approach also gives a different proof of this result.

As the ring of classical modular forms (or, equivalently, the moduli space $\oM_{1,1}$) is very well understood, this result gives an important tool to classify vertex operator algebras.  

A paradigmatic example is the moonshine VOA $V^\natural$, let us discuss it in detail. It is a holomorphic VOA of central charge $24$ constructed by Frenkel, Lepowsky and Meurman \cite{FLM88}. Its automorphism group is the Monster $\mathbb{M}$, the largest sporadic finite simple group. One has
$$
\cZ_{V^\natural,1}(\tau)=e^{2\pi i \tau}(j(\tau) -744)=1 + 1968884 \,e^{4\pi i \tau}+ \cdots\,,
$$
where $j(\tau)$ is the Klein $j$-function.

This gives an interpretation of the ``McKay equation" $1968884 = 1968883 +1$, which relates the coefficient $1968884$ in the expansion of $j(\tau)$ with the   smallest dimension $1968883$ of the non-trivial representations of the Monster $\mathbb{M}$. The relation between the expansion of the Klein $j$-function and the dimension of the representation of the Monster group leads to the so called monstrous moonshine conjecture of Conway and Norton. The existence of the VOA $V^{\natural}$ turned out to be the reason for this moonshine.

Frenkel, Lepowsky and Meurman conjectured that the moonshine VOA $V^\natural$ is the unique holomorphic VOA of central charge 24 and $\dim V_1=0$, \cite[page xxxiii]{FLM88}. This is a very important conjecture in the theory of vertex operator algebras which is still open, see Section \ref{sec: c=24}.

Observe that if $V$ is another such VOA, then $\cZ_{M(1),1}(\tau)^{-24}(\cZ_{V^{\natural},1}(\tau)-\cZ_{V,1}(\tau))$ is a modular form of weight 12 whose $e^{2\pi i\tau}$ expansion (Fourier series) starts with a term of order two in $e^{2\pi i \tau}$. It is well known that such a form is identically zero \cite[Theorem 4, Section 3.2]{Serre}, so the graded dimension of $V$ is equal to the graded dimension of $V^{\natural}$. 

For $g>1$, the partition function $\cZ_{V,g}$ captures information about the state-field correspondence $Y$, so it is natural to try to generalize the above approach using our Theorem \ref{thm:intro1}.

In a nutshell, \emph{the geometric invariant of $\oM_g$ that governs the classification (of partition functions) of holomorphic VOA is the slope $s_g$ of the effective cone of divisors of $\oM_g$}. It plays the role of ``a modular form of weight 12 whose expansion starts with a term of order two is identically zero" in the genus one case. We will review its definition in Section \ref{SecSlope}.

The sequence of numbers $\{s_g\}_{g\in \N}$ has been widely studied over the last decades, for reasons that have nothing to do with VOAs. There are many results and conjectures about it, again reviewed in Section \ref{SecSlope}. To make a long story short, we have the following corollary of Theorem \ref{thm:intro1}.
\begin{corollary}[= Corollary \ref{cor:classification2}]\label{cor_class_intro}
If $s_g>6$ for every $g$, then any holomorphic VOA $V$ of central charge 24 and $\dim V_1=0$ has the same partition function of the monster VOA $V^{\natural}$ for every $g$.
\end{corollary}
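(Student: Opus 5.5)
Let $V$ be a holomorphic VOA of central charge $24$ with $\dim V_1=0$. By Theorem \ref{thm:intro1} there are global sections $u(\oOmega_{V,g})$ and $u(\oOmega_{V^\natural,g})$ of $\lambda_g^{\otimes 12}$ on $\oM_g$. The plan is to study their difference,
$$F_g:=u(\oOmega_{V^\natural,g})-u(\oOmega_{V,g})\in H^0(\oM_g,\lambda_g^{\otimes 12}),$$
and to show that $F_g$ vanishes to order at least $2$ along the boundary divisor $\delta_0$. If $F_g$ were not identically zero, its zero divisor $D$ would be an effective divisor of class $12\lambda - 2\delta_0 - (\text{effective boundary part})$. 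The slope of such a divisor is at most $12/2=6$, which contradicts $s_g>6$. Hence $F_g=0$. Applying the trivialization $\phi_0$ gives $\cZ_{V,g}=\phi_0(u(\oOmega_{V,g}))=\phi_0(u(\oOmega_{V^\natural,g}))=\cZ_{V^\natural,g}$, which is the claim.

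The key steps, in order, are the following.

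\textbf{Step 1: expansion near $\delta_0$.} Recall from Section \ref{S:part_func} that $\cZ_{V,g}$ is a formal series in Schottky-type sewing parameters $q_1,\dots,q_g$ together with the remaining moduli. Here each $q_i$ is a local equation of a branch of $\delta_0$ near the maximally degenerate locus. The coefficient of a monomial $\prod q_i^{n_i}$ is a trace or sum of correlation functions over states in $\bigotimes_i V_{n_i}$. Since $V$ and $V^\natural$ both have $\dim V_0=1$ and $\dim V_1=0$, and since $\cZ_{V,g}$ and $\cZ_{V^\natural,g}$ are built from the same vacuum and Virasoro data, every term in which some $n_i\le 1$ should only involve states of weight $\le 1$ in that handle. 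I claim these terms coincide for the two VOAs. Consequently $\phi_0(F_g)$ is divisible by $q_i^2$ in each variable, i.e.\ $F_g$ vanishes to order $\ge 2$ along $\delta_0$.

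\textbf{Step 2: bookkeeping.} One checks that $F_g$, being a global section, vanishes to nonnegative order along the other boundary divisors $\delta_i$. For the $\delta_i$ one can use the variant of Theorem \ref{thm:intro1} for those divisors, though only nonnegativity is needed. Then $[D]=12\lambda-\sum_i b_i\delta_i$ with $b_0\ge 2$ and $b_i\ge 0$ for $i\ge 1$. Using the definition of $s_g$ from Section \ref{SecSlope} as the infimum of $a/\min b_i$, one compares $b_0$ with the $b_i$. If some $b_i$ with $i\ge1$ is smaller than $2$, then vanishing along $\delta_0$ alone must still be enough for the argument. Since the slope is usually defined via $\min_i b_i$, I would handle this either by first subtracting the boundary components $\delta_i$, $i\geq1$, which are not in question for $s_g$ as usually normalized, or by invoking the standard fact that the slope $s_g$ can be computed using $b_0$ only (as is known for effective divisors not containing the boundary).

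\textbf{Main obstacle.} The delicate point is Step 1: the terms with some $n_i\le 1$ must agree between $V$ and $V^\natural$. This is not automatic in higher genus, because a term with $n_1=0$ but large $n_2$ still involves correlation functions of high-weight states of both algebras. The way around this is that setting $q_1$ to lowest order degenerates the genus $g$ surface to a genus $g-1$ surface, with the vacuum inserted at the node. So the coefficients of $q_1^0$ and $q_1^1$ are governed by the genus $g-1$ partition function, together with the contribution of $V_1=0$. This suggests an induction on $g$: the genus $g-1$ partition functions agree by the inductive hypothesis, starting from Zhu's genus one theorem as discussed in the introduction. Then the $q_i^0$ and $q_i^1$ parts of $F_g$ vanish, giving order $\ge2$ along $\delta_0$.
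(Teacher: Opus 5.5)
Your treatment of $\delta_0$ agrees with the paper's. The coefficient of $q_k^0$ in $\cZ_{V,g}$ is $\cZ_{V,g-1}$ because $\gamma_0=1_V$, and the coefficient of $q_k^1$ vanishes because $\gamma_1=0$ when $V_1=\{0\}$. Induction on $g$, starting from the genus one argument, then gives $\operatorname{ord}_{\delta_0}F_g\ge 2$.

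\textbf{The gap is Step 2.} The slope of $F_g$ is its weight divided by $\min_i\operatorname{ord}_{\delta_i}F_g$. Equivalently it is $12/\min_i b_i$, where $D'\equiv 12\lambda-\sum_i b_i\delta_i$ is the non-boundary part of $\operatorname{div}(F_g)$. With only $b_0\ge 2$ and $b_i\ge 0$ for $i\ge 1$, you get no upper bound on $s(F_g)$ at all.

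\emph{Your first remedy does nothing.} Removing the components $\delta_i$ from $\operatorname{div}(F_g)$ is exactly how $D'$ is formed. The coefficients $b_i$, $i\ge 1$, of its class are part of the definition of the slope and cannot be normalized away.

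\emph{Your second remedy is not a true general fact.} Effective divisors with no boundary component can have $b_1<b_0$. For example, take pull-backs of Siegel cusp forms that do not vanish on the product locus $\mathcal{A}_1\times\mathcal{A}_{g-1}$: they vanish along $\delta_0$ but not along $\delta_1$. Inequalities $b_i\ge b_0$ do hold under an upper bound on $a/b_0$ (a theorem of Farkas and Popa). Since here $a/b_0\le 6$, such a result would close your argument. But it is a substantial input from the geometry of $\oM_g$ that you would have to state and cite precisely; as written, Step 2 is not a proof.

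The paper needs no such input, because it bounds the vanishing order along every $\delta_i$ directly, using the expansions of Theorem \ref{thm:altre_espansioni}. On $B_i$ one has $\phi_i(F_g)=\cZ_{V^\natural,g,i}-\cZ_{V,g,i}$. In $\cZ_{V,g,i}$, the coefficient of $q^0$ is the term $k=0$, where $Y(\Omega,w)=Y(\Omega,z)=1_V$. It equals the product $\cZ_{V,i}\,\cZ_{V,g-i}$ of lower-genus partition functions, which agree for $V$ and $V^\natural$ by induction. The coefficient of $q^1$ is a sum over an orthonormal basis of $V_1=V^\natural_1=\{0\}$, hence zero. Thus $\operatorname{ord}_{\delta_i}F_g\ge 2$ for every $i$, and $s(F_g)\le 6$. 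This uses the inductive hypothesis in genera $i$ and $g-i$, so the induction must be a strong one, not just a step from $g-1$.
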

The validity of the assumption on the slope $s_g$ is presently open, and we call it the ``weak Harris-Morrison slope conjecture" \ref{conj:slope}, which is again discussed in Section \ref{SecSlope}. This is the input we need from the geometry of $\oM_g$ to study (partition functions of)  holomorphic VOAs.

\medskip

Other applications of our Theorem \ref{thm:intro1} are about the space of Teichm\"{u}ller modular forms (Section \ref{SecLinearIndependence}) and the Schottky problem (Section \ref{S:schottky_problem}).

\medskip

From a mathematical perspective, especially in view of Corollary \ref{cor_class_intro}, it is natural to ask if the partition function is a complete invariant of the vertex algebra, i.e. if one can reconstruct the vertex algebra out of the knowledge of $\cZ_{V,g}$ for every $g$. From the physics perspective we are asking if one can recover the Segal functor and hence the CFT from the knowledge of the partition functions as argued in \cite{FS}. In this regard Segal wrote  ``Friedan has conjectured that a field theory $U$ is completely determined by  its restriction to closed surfaces.\dots  This seems plausible but I don't know the proof", see \cite[page 457]{Segal}. Let us spell out this problem as a conjecture.
\begin{conjecture}[Reconstruction Conjecture]
\label{Conj:reconstruction}
Let $U$, $V$ be two self-dual VOAs of CFT type. If $\cZ_{U,g} = \cZ_{V,g}$ for every $g$, then $U$ and $V$ are isomorphic. 
\end{conjecture}
We put self-dual and CFT type as assumptions because it is the most general set-up where we can define unambiguously the partition function as a formal series. However, it is possible that Conjecture \ref{Conj:reconstruction} holds under some additional assumption on at least one of the two VOAs, such as holomorphicity or unitarity. Let us stress that there are VOAs whose partition functions are different only when $g$ is big enough, e.g. the partition functions of the VOAs associated to the lattices $D_{16}^+$ and $E_8^{\oplus 2}$ are different if and only if $g\geq 5$, see Remark \ref{rem:part_latt}.

Sections \ref{SecPartitionFunctionSubalgebra}, \ref{SectionExamples} and \ref{sec: c=24} are devoted to the study of Conjecture \ref{Conj:reconstruction}. Our results are mainly bounded to the special but important unitary case, as explained in Section \ref{sec: c=24}.

We define a vertex subalgebra $PV \subset V$, which we call \emph{the partition function subalgebra}, which can be extracted from the partition function $\cZ_V$. This algebra is contained in $V^{\operatorname{Aut}(V)}$.  If $V$ is unitary, $PV$ is a unitary subalgebra containing the conformal vector of $V$, see Lemma \ref{L:unitary}. 

\begin{theorem}[= Theorem \ref{T:main2_corpo} ] If $U$ and $V$ are two simple unitary vertex operator algebras such that $\cZ_{U,g}=\cZ_{V,g}$ for every $g$, then $PU$ is isomorphic to $PV$, and $U$ is isomorphic to $V$ as $PU$ (equivalently $PV$)- module.
\end{theorem}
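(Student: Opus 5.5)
The strategy is to reduce everything to the numbers that the partition functions encode. By definition (Section \ref{S:part_func}), the coefficients of the formal series $\cZ_{V,g}$ are built from traces of products of vertex operators $Y(a_i,z_i)$ over the graded pieces $V_n$, summed against dual bases. Expanding these to all orders and in all genera yields the full collection of invariants
\[
\Tr_{V_{n_1}}\bigl(o(a_1)\cdots\bigr),
\]
that is, traces of products of modes of states, each summed over an orthonormal basis with respect to the invariant bilinear form. In the unitary setting we can use the invariant Hermitian form instead. The subalgebra $PV$ should be generated by the states that are fixed by every such ``summing over dual bases'' contraction. Concretely, I expect $PV$ to be the vertex subalgebra generated by vectors of the form $\sum_i \bigl(\text{modes of } Y(e_i,z)\bigr)\, e^i$, where $\{e_i\}$ and $\{e^i\}$ run over dual bases. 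This description makes it clear that $PV \subset V^{\Aut(V)}$.

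The first step is to show that the partition functions determine all correlation functions of $PV$ on $V$. More precisely, I want to show that $\cZ_{V,g}$ for all $g$ determines the numbers $\langle \mathbf 1, Y(p_1,z_1)\cdots Y(p_k,z_k)\mathbf 1\rangle$ for $p_j \in PV$, together with the characters
\[
\Tr_{V_n} Y(p_1,z_1)\cdots Y(p_k,z_k).
\]
The idea is that the genus $g$ sewing procedure expresses each coefficient of $\cZ_{V,g}$ as exactly such traces, and that higher genus allows arbitrarily long products. If $\cZ_{U,g}=\cZ_{V,g}$ for all $g$, these numbers agree for $U$ and $V$.

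The second step is to build the isomorphism $PU\cong PV$. Since $PU$ and $PV$ are unitary subalgebras containing the conformal vector (Lemma \ref{L:unitary}), their $n$-point vacuum functions determine them up to isomorphism. A simple unitary VOA is recovered, via a GNS-type reconstruction, from its vacuum correlators, because the positive-definite form kills the null space. I would set up a correspondence between the generating vectors of $PU$ and $PV$ that are indexed by the same combinatorial data. Equality of all correlators then says that this correspondence preserves inner products, so it extends to an isometric, and hence well-defined and injective, vertex algebra isomorphism $PU\to PV$.

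The third step is the module statement. Unitarity and the positivity of the form imply that $V$, as a $PV$-module, is completely reducible: the orthogonal complement of a submodule is again a submodule. The same holds for $U$. A module of this kind is determined by its multiplicities, and these are in turn determined by its character data. Here the character data means the graded traces of products of $PV$-modes, which are linearly independent functionals separating irreducible modules, in the spirit of the density theorem or Zhu's algebra. By the first step these traces coincide for $U$ and $V$ after transport through $PU\cong PV$. Hence $U\cong V$ as modules.

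The main obstacle is the first step: extracting genuine $PV$-correlators and traces of arbitrary products from the formal series. The sewing coefficients mix different contractions, so I would need an inductive triangularity argument in the Schottky parameters to isolate each term. A secondary difficulty in the last step is that complete reducibility holds, but the irreducible $PV$-modules may be infinite in number. I would handle this by working degree by degree, since each $V_n$ is finite-dimensional, and using traces on $V_n$ to identify the finite-dimensional representations of the finite-dimensional algebra generated by zero modes.
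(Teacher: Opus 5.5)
Your three-step architecture is the paper's. You transport the data in $\cZ$ to an isometry $\varphi$ defined on words of Casimir operators applied to the vacuum, upgrade it to a vertex algebra isomorphism $PV\to PU$, and then compare $V$ and $U$ as unitary $PV$-modules via complete reducibility and Zhu-algebra separation of zero-mode characters. However, you have misread Definition \ref{def:parition_functions}, so you put the difficulty in the wrong place. The coefficients of $\cZ_{V,g}$ are not traces: the coefficient of $q_1^{n_1}\cdots q_g^{n_g}$ is literally the vacuum expectation value $(\Omega,\gamma_{n_1}(w_1,z_1)\cdots\gamma_{n_g}(w_g,z_g)\Omega)$. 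Hence $\cZ_U=\cZ_V$ is immediately equivalent to equality of all $(\Omega\,|\,C_{k_1}(m_1,n_1)\cdots C_{k_s}(m_s,n_s)\Omega)$ with their counterparts in $U$. There is no mixing of contractions, and no role for Schottky parameters or a triangularity argument, so your ``main obstacle'' is a non-issue. The genuine gap is your assertion that equality of correlators ``says that this correspondence preserves inner products''. That does not follow from vacuum expectation values alone. To compute $(x\Omega\,|\,y\Omega)$ for two words $x,y$ you must move the modes of $x$ across the scalar product. This needs the adjoint of each Casimir endomorphism to be a combination of Casimir endomorphisms with coefficients independent of the VOA. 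The same unproved input is hidden in your claim that the traces used in Step 3 are determined by $\cZ$.

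The missing idea is the paper's use of quasi-primary Casimir fields. Unitarity decomposes $V$ orthogonally into $\mathfrak{sl}_2$-modules generated by quasi-primaries. This gives the universal, triangular, invertible relation \eqref{E:rel_campi} between $\gamma_k$ and the $\gamma^{qp}_j$. With a $\theta$-invariant orthonormal basis of $V_k^{qp}$ one gets the adjoint rule \eqref{eqQPendScalarProduct}, $(a|C^{qp}_k(m,n)b)=(C^{qp}_k(-n,-m)a|b)$.
\begin{itemize}
\item An induction on the number of operators moved across then gives equality of all inner products of words (Proposition \ref{prop: equality of partition functions}, (iv)$\Rightarrow$(vi)). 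This is what makes $\varphi$ well defined and unitary.
\item The same rule, together with commutativity and the inversion $w\mapsto w^{-1}$ as in the proof of Proposition \ref{P:ort}, converts vacuum data into graded traces ((ii)$\Rightarrow$(iii)).
\item You also need that these words span the subalgebra generated by the $C_{k,j}$, i.e.\ $PV=\widetilde{PV}$. This is Proposition \ref{prop: partition}, proved by an orthogonal-complement argument plus associativity.
\item Finally you need $\varphi(\nu^V)=\nu^U$ (Lemma \ref{L:conf} plus grading preservation). Otherwise the $PV$-module structure transported to $U$ need not have the right $L_0$ when you compare characters.
\end{itemize}
With these in place, your Step 3 is essentially Propositions \ref{prop: unitarymodule}, \ref{prop: characters independence} and \ref{propr: charactermodules}.
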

 As a consequence, if the $PV$-module $V$ has, up to isomorphisms, a unique unitary VOA structure, then $V$ is determined by the partition function as a unitary VOA, see Corollary \ref{cor: unique structure}.

 We discuss various explicit examples of partition function subalgebras of unitary VOAs and, in various cases, we are able to prove that $PV = V^{\operatorname{Aut}(V)}$ and/or that the unitary VOA structure of $V$ is determined by $\cZ_V$. We conjecture that the equality $PV = V^{\operatorname{Aut}(V)}$ always implies that the $PV$-module $V$ has a unique unitary VOA structure so that $V$ is determined by $\cZ_V$.  We can prove, thanks to the results by Codogni and Shephered-Barron in \cite{CS-B14},  that the unitary VOA structure of a holomorphic lattice VOA $V_L$ is completely determined by its partition function $\cZ_{V_L}$, see Theorem \ref{thmLatticePartition}. Moreover, using previous results by Gaberdiel and Volpato \cite{GV09}, we also prove that the same result holds for all holomorphic VOAs with central charge $c=24$ and weight-one subspace $V_1 \neq \{0\}$ (the Schellekens VOAs), see Theorem \ref{th: Schellekens}. 
 
 We conclude our paper with a conjecture on the partition function subalgebra $PV^\natural$ of the moonshine VOA, see Conjecture \ref{conj: monster orbifold}. The conjecture says that $PV^\natural$ is equal to the monster orbifold 
${V^\natural}^{\mathbb{M}}$. We prove that this conjecture, together with our weak Harris-Morrison slope conjecture \ref{conj:slope}, and the conjectured strong rationality of the monster orbifold would prove the Frenkel-Lepowsky-Meurmann conjecture on the uniqueness of $V^\natural$.

\subsection*{Organization of the paper}

We have tried to be reasonably self-contained in order to make the content of this article understandable to all readers interested in the relations between the geometry of moduli spaces of complex curves and conformal field theory. In particular we hope that the paper is
accessible both for the community working on moduli space of curves, and for the community working on vertex operator algebras. An extended introduction to the topics of this paper can also be found in the survey \cite{CC geq 26}.

In Section \ref{S:vert} we discuss various preliminaries on VOAs, and their modules. This section consists of previously known material with the possible exception of  Proposition \ref{PropTensorLattice}. 

In Section \ref{S:part_func} we define and briefly discuss the partition function of a CFT type  self-dual VOA as a formal series and also introduce some related power series that we will need later.

Section \ref{S:scho} contains preliminary materials on moduli spaces and modular forms. In particular, we give an overview of some basic facts on families on nodal curves, moduli stacks of complex curves, Teichm\"{u}ller and Siegel modular forms, and the Schottky space.  

In Section \ref{SecConformalBlocks} we discuss various preliminary materials on the bundles of conformal blocks and covaqua of holomorphic VOAs. In Section \ref{S:Ati} we discuss and prove some facts related to the  Atiyah algebras and the Virasoro uniformization which play an essential role in our results.  

Section \ref{S:proof1} is mostly dedicated to our  Theorem \ref{T:main1} on the connection between of holomorphic VOAs and Teichm\"{u}ller modular forms. Section  \ref{SecBoundary} is devoted to the proof of Theorem \ref{thm:altre_espansioni}.

In Section \ref{SecSlope} we introduce the slope of the effective cone of $\oM_g$, and we prove Corollary \ref{cor_class_intro}. In Section \ref{SecLinearIndependence} we prove the linear independence of the partition functions and use this result to compare Teichm\"{u}ller and Siegel modular forms. The section also contains some constructions, problems, and conjectures of independent interest. In Section \ref{S:schottky_problem} we discuss some relations of our results with the Schottky problem. 

In Section  \ref{SecPartitionFunctionSubalgebra} we introduce the partition subalgebra of a VOA and discuss some of its properties. In particular, 
we prove Theorem \ref{T:main2_corpo} which completely characterizes the pairs of unitary VOAs having the same partition function. In Section \ref{SectionExamples} we discuss various examples of VOAs for which we can determine interesting properties of their partition subalgebra. In particular, we prove the previously mentioned Theorem \ref{thmLatticePartition} on the partition function of holomorphic lattice VOAs. Section \ref{sec: c=24} is dedicated to holomorphic VOAs with $c=24$. We discuss the Shellekens VOAs and the uniqueness of the moonshine VOA in connection to the monster orbifold.

\subsection*{Acknowledgments}
We thank Enrico Arbarello, Jethro van
Ekeren, Gabi Farkas, Tiziano 
Gaudio, Angela Gibney, Sam Grushevsky, Bin Gui, Reimundo Heluani, Marco Matone, Riccardo Salvati Manni,
Nick Shepherd-Barron, Yoh Tanimoto, Luca Tasin, Michael Tuite, Filippo Viviani and Roberto Volpato for useful dicussions and comments about the topics of this paper.

\subsection*{Funding statement} 

Both authors acknowledge the MUR Excellence Department Project MatMod@TOV awarded to the Department of Mathematics, University of Rome Tor Vergata, CUP E83C23000330006. S.C. is partially supported by the GNAMPA group of INdAM.G.C. is partially supported by the GNSAGA group of INdAM.

\section{Vertex operator algebras and their modules}\label{S:vert}

In this section, we discuss some preliminaries about vertex operator algebras (VOAs), unitary vertex operator algebras, and their representation theories that we will use in the rest of the paper. For more details, we refer the readers to  \cite{VA,DK06,Axiomatic,Kac01,FLM88,LL04}, see also \cite{Gan06}. We mainly follow the notation and terminology in \cite{CKLW18}.

\subsection{Vertex operator algebras}

In this paper, every vertex algebra  will be over the field $\mathbb{C}$ of complex numbers. Let $V$ be a conformal vertex algebra in the sense of \cite[Section 4.10]{Kac01},  with vacuum vector $\Omega^V \in V$ and conformal vector $\nu^V \in V$. Accordingly,  $V$ is a vertex algebra as defined \cite[Section 1.3]{Kac01} with even (bosonic) part $V_{\bar{0}}$ and odd (fermionic) part $V_{\bar{1}}$. 

When the underlying conformal vertex algebra 
is clear from the context we will simply write $\Omega$ and $\nu$ instead of $\Omega^V$ and $\nu^V$ respectively. We will always assume that $\Omega \neq 0$.

For every $a \in V$ the vertex operator
\begin{equation}
Y(a,z) = \sum_{n \in \mathbb{Z}}a_{(n)}z^{-n-1}
\end{equation} 
is a {\bf (quantum) field} on $V$, i.e., \ a formal Laurent series with coefficients in the algebra $\operatorname{End}(V)$ of vector space endomorphisms of $V$such that, for every $b\in V$,  $a_{(n)}b=0$ eventually as $n\to +\infty$. For every $a \in V$ we have $a_{(n)} \Omega = 0$ for all $n \geq 0 $  and    $a_{(-1)}\Omega = a$. Moreover, the map $a \to Y(a,z)$ (the {\bf state field correspondence}) is linear and $Y(\Omega,z) = 1_V$ where $1_V:V \to V$ denotes the identity linear endomorphism of the vector space $V$. 

The vertex operator $Y(\nu,z)$ corresponding to the conformal vector $\nu$ is called the {\bf energy-momentum field} 
of $V$ and it is often simply denoted by $T(z)$.  If $L_n : =\nu_{(n +1)}$ then 
\begin{equation} 
Y(\nu,z)= \sum_{n \in \mathbb{Z}} L_n z^{-n-2} 
\end{equation}
and the endomorphisms $L_n$, $n\in \mathbb{Z}$, give a representation of the Virasoro algebra on $V$ with  
central charge $c\in \mathbb{C}$ (the central charge of $V$), that is,

\begin{equation}
\label{EqVirasoro}
{[} L_n,L_m{]} = (n-m) L_{n+m} +\frac{c}{12}(n^3-n)\delta_{n,-m} 1_V
\end{equation}
for all $n,m\in\mathbb{Z}$. Moreover, $[L_{-1},a_{(n)}] = -n a_{(n-1)}$ for all $a \in V$ and all $n\in \mathbb{Z}$. 
Equivalently, 
\begin{equation}
[L_{-1},Y(a,z)] = \frac{d}{dz} Y(a,z)
\end{equation}
for all $a \in V$.  

The operator $L_0$ (the {\bf conformal Hamiltonian}) is diagonalizable on $V$, that is  
\begin{equation}
V = \bigoplus_{\alpha \in \mathbb{C}} V_{\alpha}
\end{equation}
where $V_\alpha := \mathrm{ker}(L_0 -\alpha 1_V)$ (the {\bf weight $\alpha$ subspace}). 
Sometimes we will write $L^V_n$ for $\nu^V_{(n +1)}$ if we want to specify $V$. 
\medskip

A {\bf vertex operator algebra} (VOA) $V$ conformal vertex algebra with $V_{\bar{1}} = \{0\}$ (i.e. $V$ is purely even) such that $V_\alpha = \{0\}$ if 
$\alpha \notin \mathbb{Z}$. Moreover, $V_n = \{0\}$ eventually as $n \to -\infty$ and $V_n$ is finite dimensional for all $n \in \mathbb{Z}$.
Accordingly, we have 
\begin{equation}
V = \bigoplus_{n \in \mathbb{Z}} V_n 
\end{equation}  
where so that $V$ has a natural $\mathbb{Z}$-grading given by the conformal Hamiltonian $L_0$. Moreover, $\Omega \in V_0$ and  $\nu \in V_2$.
A vertex operator algebra $V$ is said to be of {\bf CFT type} if  $V_n=\{0\}$ for $n < 0$ and $V_0 = \mathbb{C}\Omega$. 

For a vertex operator algebra $V$ we consider the  restricted dual
\begin{equation}
V^\vee := 
\bigoplus V_n^\vee 
\end{equation} 
where, for any integer $n$, $V_n^\vee $ the dual of the finite dimensional vector space $V_n$.  

Clearly $V^\vee$ is a subspace of the algebraic dual $\prod_{n \in \mathbb{Z}}V_n^\vee$ and this gives a
non-degenerate bilinear pairing 
$\langle \cdot , \cdot \rangle : V^\vee \times V \to \mathbb{C}$ defined by $\langle \varphi , a \rangle = \varphi(a)$ for all pairs $(\varphi,a)$ in 
 $V^\vee \times V$. 

\medskip

In various occasions, we will make use of the {\bf Borcherds identity}  \cite{Bor86},
that is, the equality
\begin{eqnarray}
\label{B-id}
\nonumber
\sum_{j=0}^{+\infty}
\binom{m}{j}
\left(a_{(n+j)}b\right)_{(m+k-j)}c =
\sum_{j=0}^{+\infty}(-1)^j
\binom{n}{j}
a_{(m+n-j)}b_{(k+j)}c \\ -
\sum_{j=0}^{+\infty}(-1)^{j+n}
\binom{n}{j}
b_{(n+k-j)}a_{(m+j)}c \, ,\;\;
\;a,b,c\in V, \,m,n,k\in \mathbb{Z} \, .
\end{eqnarray}
It can be taken as one of the axioms of vertex operator algebras or as a consequence of other axioms including the locality axiom, see \cite[Sect.\! 4.8]{Kac01}. 

We also mention {\bf skewsymmetry}: 
\begin{equation}
\label{skewsymmetry_equation}
a_{(n)}b = - \sum_{j=0}^{+\infty} \frac{(-1)^{j+n}}{j!}(L_{-1})^j b_{(n+j)}a  
\end{equation}
for all $a,b \in V$ and all $n \in \mathbb{Z}$, see \cite[Section 4.2]{Kac01}.

An important consequence of the Borcherds identity and of skewsymmetry is the fact that the wight one subspace $V_1$ of a vertex operator algebra $V$ is a Lie algebra with brackets
$[a,b]:=a_{(0)}b,$ $a,b \in V_1$.

The endomorphisms $a_n \in \operatorname{End}(V)$, $a\in V$, $n\in \mathbb{Z}$, are defined by 
\begin{equation}
Y(z^{L_0}a , z) = \sum_{n \in \mathbb{Z}} a_{n}z^{-n}.
\end{equation} 

A vector $a \in V_d$ is said to be homogeneous of conformal dimension or conformal weight $d$.  If $a$ is homogeneous of conformal dimension $d$ 
then $a_n = a_{(n + d -1)}$ and $a_{(n)} = a_{n+1-d}$ for all $n\in \mathbb{Z}$. In particular, $a_{-d} = a_{(-1)}$ so that $a = a_{-d}\Omega$. Moreover, 

\begin{equation}
Y(a,z) = \sum_{n \in \mathbb{Z}}a_{n}z^{-n-d_a} \,.
\end{equation}

We have 
\begin{equation}
a_m V_n \subset V_{n-m}
\end{equation}
for all $a \in V$ and all $m,n \in \mathbb{Z}$. In particular $a_{-n}  \in V_n$ for all $n \in \mathbb{Z}$.

The homogeneous vectors $a \in V$ and the corresponding field $Y(a,z)$ 
are called {\bf quasi-primary} if $L_1a=0$ and {\bf primary} 
if $L_na=0$ for every integer $n>0$. The vacuum vector $\Omega$ is a primary vector in $V_0$ and the conformal vector $\nu$ is a quasi-primary vector in $V_2$.

We have the following commutation relations:
\begin{equation}
\label{EqQuasi-Primary/PrimaryCommutation}
[L_{m}, a_n] = \left( (d -1)m- n \right) a_{m+n},  
\end{equation}
for all primary (resp. quasi-primary) $a \in V_d$, for all $n\in \mathbb{Z}$ and all $m \in \mathbb{Z}$ (resp. $m\in \{-1,0,1\}$), see, e.g., \cite[Cor.4.10]{Kac01}. 
\medskip

A {\bf VOA isomorphism} $\Phi: V \to U$ between two VOAs $V$ and $U$ is a complex vector space isomorphism from $V$ onto $U$ such that 
$\Phi \nu^V = \nu^U$ and $\Phi a_{(n)}b = (\Phi a)_{(n)}\Phi b$ for all $a, b \in V$ and all $n \in \mathbb{Z}$. If $\Phi$ 
satisfies all the above conditions with exception of $\Phi \nu^V = \nu^U$ then $\Phi$ is called a {\bf vertex algebra isomorphism}. 
Note that if  $\Phi: V \to U$  is a vertex algebra isomorphism we have 
\begin{eqnarray*}
\Phi \Omega^V &=& (\Phi \Omega^V)_{(-1)}\Omega^U  = \Phi \Omega^V_{(-1)} \Phi^{-1}\Omega^U \\
&=& \Phi \Phi^{-1}\Omega^U = \Omega^U   \,. 
\end{eqnarray*}

Similarly, an {\bf anti-linear VOA isomorphism} $\Phi: V \to U$ between two VOAs $V$ and $U$ is an anti-linear vector space isomorphism from $V$ onto $U$ such that $\Phi\Omega^V = \Omega^U$,  $\Phi \nu^V = \nu^U$ and $\Phi a_{(n)}b = (\Phi a)_{(n)}\Phi b$ for all $a, b \in V$ and all $n \in \mathbb{Z}$. A VOA-isomorphism or anti-linear VOA-isomorphism $\Phi: V \to U$ preserves the $\mathbb{Z}$-grading that is $\Phi V_n = U_n$ for all $n \in \mathbb{Z}$. 

A {\bf VOA automorphism} (or simply an automorphism) of a vertex operator algebra $V$ is a VOA-isomorphism $g:V\to V$. 
We denote by  $\operatorname{Aut}(V)$ the group of 
VOA automorphisms of $V$. Note that typically  $\operatorname{Aut}(V)$ is a proper subgroup of the group of vector spaces automorphisms of $V$.   Similarly, a {\bf VOA anti-linear automorphism} (or simply an anti-linear automorphism) of a vertex operator algebra $V$ is an anti-linear VOA-isomorphism $g:V\to V$. Note that if $g$ is a VOA-automorphism or anti-linear automorphism of $V$ then $g$ commutes with every$L_n$, 
$n \in \mathbb{Z}$.  

The map 

\begin{equation*}
\operatorname{Aut}(V) \ni g \mapsto (g \restriction_{V_n})_{n \in \mathbb{Z}}
\end{equation*}
gives a group isomorphism of $\operatorname{Aut(V)}$ onto a closed subgroup of the direct product $\prod_{n \in \mathbb{Z}}\operatorname{GL}(V_n)$
which makes $\operatorname{Aut}(V)$ into a Hausdorff topological group. 

\medskip

 By {\bf vertex subalgebra} $U$ of a vertex operator algebra $V$ is a subspace $U \subset V$ such that $\Omega \in U$ and $a_{(n)}b \in U$ for all $a, b \in U$ and all  $n \in \mathbb{Z}$. A vertex subalgebra $U$ of $V$ is always a vertex algebra, but in general not a vertex operator algebra nor a conformal vertex algebra. On the other hand, if the conformal vector $\nu$ of $V$ belongs to a conformal subalgebra $U \subset V$, namely $U$ is a {\bf conformal subalgebra} of $V$, then $U$ is a vertex operator algebra and $U_n = V_n \cap U$ for all $n\in \mathbb{Z}$ and one says that $V$ is a {\bf conformal VOA extension} of $U$.
 
 In this case, it is easy to see that in $V$ is of CFT type then also $U$ is CFT type. 
 
If $G$ is a subgroup of $\operatorname{Aut}(V)$ let $V^G$ be subset of $G$-invariant vectors of $V$, that is $a \in  V^G$ iff $ga=a$ for all $g \in G$. Then $V^G$ is a conformal subalgebra of $V$ called {\bf orbifold subalgebra}. Clearly, if $\overline{G}$ is the closure of $G$ in $\operatorname{Aut}(V)$ then $V^{\overline{G}} = V^G$.   The smallest orbifold subalgebra of $V$ is $V^{\operatorname{Aut{V}}}$.

For a subset $\mathscr{F} \subset V$,  there is a smallest vertex subalgebra $V_\mathscr{F}$ of $V$ containing $\mathscr{F}$ and we say that $V_\mathscr{F}$ is the vertex subalgebra generated by $\mathscr{F}$.  If $\mathscr{F} = \{\nu \}$ then $V_{\mathscr{F}}$ is the {\bf the Virasoro subalgebra}  of $V$. Note that the smallest orbifold subalgebra $V^{\operatorname{Aut{V}}}$ always contains the Virasoro subalgebra. 
$V$ is said to be finitely generated if there is a finite subset of $\mathscr{F} \subset V$ with $V_{\mathscr F} = V$.  
   
\medskip

For a vertex operator algebra $V$ we define the {\bf duality operator} $S:V \to V$ by $S(a) := e^{L_1} (-1)^{L_0}a$. 
If $a \in V_d$ is a homogeneous vector of conformal weight $d$ then 
\begin{equation*}
S(a)= (-1)^d \sum_{k=0}^{d+ n_V} L_1^k a
\end{equation*} 
where $n_V \in \mathbb{Z}_{\geq0}$ is the smallest non-negative integer such that $V_n = \{0\}$ for all integers $n$ with $n< - v_V$. Note that $n_V =0$ if $V$ is of CFT type. 

The endomorphism $S$ is always an involution, that is, $S^2 = 1_V$. Moreover, if $a$ is quasi-primary of dimension $d$ then, by Equation (\ref{eqSqp}),
\begin{equation} 
\label{eqSqp}
S(a) = (-1)^d a \,. 
\end{equation}

In particular $S(\Omega)=\Omega$ and $S\nu=\nu$. 
 
An {\bf invariant bilinear form} on a vertex operator algebra $V$ is a bilinear form $(\cdot,\cdot) : V \times V \to \mathbb{C}$ such that 
\begin{equation}
(b,a_nc ) = ((Sa)_{-n}b, c)
\end{equation}
for all $a,b,c \in V$ and all $n \in \mathbb{Z}$.  
In particular, if $a \in V_d$ is quasi-primary, then 

\begin{equation}
\label{eqInvBilqp}
(b,a_nc ) = (-1)^d(a_{-n}b, c)
\end{equation}
for all $b,c \in V$ and all $n \in \mathbb{Z}$.

Taking $a=\nu$ and $n=0$ we find $(L_0b,c)= (b,L_0c)$ for all $b,c \in V$ and this implies that $V_n$ and $V_m$ are orthogonal subspaces with respect to an invariant bilinear form  $(\cdot,\cdot)$ whenever $n \neq m$. An invariant bilinear form on a vertex operator algebra $V$ is necessarily symmetric. 

We say that $(\cdot,\cdot)$ is {\bf normalized} if  $(\Omega,\Omega)=1$.
If $V$ is of CFT type then,a non-zero invariant bilinear form exists iff $L_1 V_1 = \{0\}$ and, if this is the case, it is unique up to a non-zero multiplicative constant. 
Consequently, if $V$ is of CFT type and $L_1V_1=0$ there exists a unique normalized invariant bilinear form, which is determined by 
\begin{equation}
(a,b) \Omega = S(a)_n b  
\end{equation}
for $a,b \in V_n$, $n\in \mathbb{Z}$. As a consequence, if $V$ is of CFT type and $(\cdot, \cdot)$ is an invariant bilinear form on $V$ then 
$(ga,gb)= (a,b)$ for all $g \in \operatorname{Aut}(V)$ and all $a,b \in V$, that is, $(\cdot, \cdot)$ is $\operatorname{Aut}(V)$-invariant.

\subsection{Vertex algebra modules and representation theory}\label{sec:rep_theory_VOA}

Let $V$ be a VOA with central charge $c$. A {\bf vertex algebra module} (or weak module) for $V$ is a complex vector space $M$ together with  a linear map 

\begin{equation}
V \ni a \mapsto  Y^M(a,z) = \sum_{n \in \mathbb{Z}}a^M_{(n)}z^{-n-1}
\end{equation} 
where, for any $a \in V$, $Y^M(a,z)$ is a (quantum) field on $M$, i.e.\ a formal Laurent series with coefficients in the algebra $\operatorname{End}(M)$ of vector spaces endomorphisms of $M$ such that, for every $c \in M$,  $a^M_{(n)}b=0$ eventually as $n\to +\infty$. Moreover, the map 
$V \ni a \mapsto  Y^M(a,z)$  is such that $Y^M(\Omega,z) = 1_M$ and the Borcherds identity hols on $M$ that is, 

\begin{eqnarray}
\label{B-id_Mod}
\nonumber
\sum_{j=0}^{+\infty}
\binom{m}{j}
\left(a^M_{(n+j)}b^M\right)_{(m+k-j)}c =
\sum_{j=0}^{+\infty}(-1)^j
\binom{n}{j}
a^M_{(m+n-j)}b^M_{(k+j)}c \\ -
\sum_{j=0}^{+\infty}(-1)^{j+n}
\binom{n}{j}
b^M_{(n+k-j)}a^M_{(m+j)}c \, ,\;\;
\;a,b\in V,\; c \in M,\; \,m,n,k\in \mathbb{Z} \, .
\end{eqnarray}
As in the vertex operator algebra case, we define the endomorphisms $a^M_n$, $n \in \mathbb{Z}$ by 
\begin{equation}
Y^M(z^{L_0}a , z) = \sum_{n \in \mathbb{Z}} a^M_{n}z^{-n}.
\end{equation} 

The Borcherds identity in Equation (\ref{B-id_Mod}) implies that the coefficients $L^M_n := \nu^M_n$, $n \in \mathbb{Z}$ of the formal series 
\begin{equation}
Y^M(\nu,z) = \sum_{n \in \mathbb{Z}}L^M_n z^{-n-2}  
\end{equation}
define a representation of the Virasoro algebra with central charge $c$ on $M$. On the other hand, the conformal Hamiltonian $L^M_0$ need not to be diagonalizable on $M$ and, in fact, it could have no eigenvectors at all.

A {\bf VOA module} (or strong module) for $V$ is a vertex algebra module $M$ of $V$ that satisfies the following properties: 
\begin{itemize}
\item[$(i)$]  $M_\alpha := \operatorname{ker}(L^M_0 -\alpha 1_M)$ is finite dimensional for all $\alpha \in \mathbb{C}$

\item[$(ii)$] $M = \bigoplus_{\alpha \in \mathbb{C}} M_\alpha$ 

\item[$(iii)$] For any $\alpha \in \mathbb{C}$,  $M_{\alpha + n} = \{ 0 \}$ eventually as $n \to -\infty$, $n \in \mathbb{Z}$. 

\end{itemize} 

Sometimes we will call a VOA-module for a vertex operator algebra $V$ simply a {\bf $V$-module}. If $U$ is a conformal VOA extension of $V$ then $U$ is also a $V$-module. In particular $V$ is itself a $V$-module called the {\bf adjoint module}.

The {\bf character} $\chi_M(\tau)$ of $V$-module $M$ is the formal series (in the variable $e^{i2\pi i \tau}$)  
$$\Tr_M e^{ 2\pi i \tau (L^M_0 -\frac{c}{24}1_M) } \, ,$$ 
where $c$ is the central charge of $V$. In general, this should be considered as a formal series in the parameter  
$e^{2 \pi i \tau}$.  In various relevant cases, the character $\chi_M(\tau)$ converges to a holomorphic function 
(denoted again $\chi_M(\tau)$) in the upper half-plane $\Im\tau >0$. We will call the character $\chi_V(\tau)$ of the adjoint module the {\bf vacuum character.}

If $M$, $N$ are $V$-modules, then a {\bf $V$-module homomorphism} (or module map) $\Phi: M \to N$  is a linear map such that $\Phi a^M_{(n)} = a^N_{(n)}\Phi$ for all $a \in V$ and all $n \in \mathbb{Z}$. A $V$-module homomorphism $\Phi:M\to N$  which is injective and surjective is called a 
{\bf $V$-module isomorphism}. If there exists a $V$-module isomorphism $\Phi: M \to N$ we say that $M$ and $N$ are isomorphic or equivalent $V$-modules. 
A submodule $N \subset M$ is a subspace of $M$ that is invariant for the action of all endomorphisms $a^M_{(n)}$. $a \in V$, $n \in \mathbb{Z}$. Clearly, a submodule $N \subset M$ inherits from $M$ the structure of a $V$-module. A $V$-module $M$ is said to be simple or irreducible if 
$M\neq \{0\}$ and if its submodules are only  $\{0\}$ and $M$.   A vertex operator algebra is said to be {\bf simple} if its adjoint module is irreducible.
\medskip

The restricted dual $V^\vee$ also carries the structure of a $V$-module called the {\bf contragredient module}. The corresponding fields are
\begin{equation}
Y^{V^\vee}(a,z) = \sum_{n \in \mathbb{Z}} a^{V^\vee}_{(n)}z^{-n},
\end{equation} 
$a \in V$, are defined by
\begin{equation} 
\langle a^{V^\vee}_n \varphi, b \rangle := \langle \varphi, (Sa)_{-n} b \rangle \;\; \varphi \in V^\vee,\; b \in V,\; n \in \mathbb{Z}, 
\end{equation}
where, as before, $S= e^{L_1}(-1)^{L_0}$.

If $\Phi: V \to V^\vee$ is a $V$-module homomorphism then $(a,b):= \langle\Phi a, b\rangle$, $a,b \in V$ is an invariant bilinear form on 
$V$ and every invariant bilinear form arises in this way. Moreover $(\cdot,\cdot)$ is non-degenerate iff $\Phi$ is a $V$-module isomorphism. 
Accordingly, a non-degenerate invariant bilinear form on $V$ exists iff $V$ is a {\bf self-dual VOA}, that is  $V$ and  $V^\vee $ are isomorphic $V$-modules.  If $V$ is a simple VOA then every non-zero invariant bilinear form on $V$ is non-degenerate. Conversely, if $V$ is of CFT type and has a non-degenerate invariant bilinear form, then $V$ is simple, see \cite[Proposition 4.6]{CKLW18}. Moreover, if $V$ is simple, then $V$ has at most one normalized invariant bilinear form; see again  \cite[Proposition 4.6]{CKLW18}.  As a consequence, if $V$ is simple and $(\cdot,\cdot)$ is a normalized invariant bilinear form on $V$ then $(\cdot,\cdot)$ is $\operatorname{Aut(V)}$-invariant, that is $(ga,gb)=(a,b)$ for all 
$a,b \in V$ and all $g \in \operatorname{Aut}(V)$. 
\medskip

A vertex operator algebra $V$ is said to be {\bf regular} if every vertex algebra module for $V$ is a direct sum of simple $V$-modules. If $V$ is a regular VOA then it thas only finitely many equivalence classes of simple $V$-modules, see \cite{DLM98}. Examples of regular VOAs include the lattice VOAs,
$V_L$ with $L$ a positive-definite even lattice, the simple affine VOAs $V_k(\mathfrak{g})$ with $\mathfrak{g}$ a simple complex Lie algebra and positive integer level $k$, and the moonshine vertex operator algebra $V^\natural$, see \cite{DLM97}. 

Let $V$ be a VOA and let $C_2(V)$ be the linear span of the vectors of the form $a_{(-2)}b$, $a,b \in V$. $V$ is said to be {\bf $C_2$-cofinite} if the vector space quotient $V / C_2(V)$ is finite dimensional. By \cite{Li99} every regular VOA is $C_2$-cofinite. 

A vertex operator algebra $V$ is said to be {\bf strongly rational} if it is CFT type, self-dual and regular. In this case it is also simple because CFT type self-dual VOAs are simple. The above examples of regular VOAs are 
all strongly rational. A very important result in the theory of vertex operator algebra is that if $V$ is a strongly rational VOA then the category 
$\operatorname{Rep}(V)$ of $V$-modules has a natural tensor structure, making it into a modular fusion category \cite{Hua08}, see also \cite{Gan06}. 

In this article a central role is played by {\bf holomorphic VOAs}. We say that vertex operator algebra $V$ is holomorphic if it is strongly rational and 
every simple $V$-module $M$ is equivalent to the adjoint-module $V$. Equivalently, $V$ is holomorphic if it is strongly rational and 
$\operatorname{Rep}(V)$ is equivalent, as a braided tensor category, to the  fusion category of finite dimensional complex vector spaces. Note that sometimes a weaker definition of holomorphic VOA is used in the literature, see e.g. \cite{DM04b}. The central charge of a holomorphic VOA must be a non-negative multiple of $8$, see \cite{DM04a}. The case $c=0$ corresponds to the trivial holomorphic vertex operator algebra $V = \mathbb{C}$. Non-trivial examples of holomorphic VOAs are given by the lattice VOAs $V_L$ with $L$ an even, positive definite unimodular lattice. Another important example is the moonshine VOA $V^\natural$ whose central charge is $24$.  Further examples with $c=24$ are given by the Schellekens VOAs discussed in Section \ref{SectionExamples}.  
\medskip

We end this subsection with the following proposition that we will need in Section \ref{SecLinearIndependence}.

\begin{proposition}
\label{PropTensorLattice}
Let $U$ and $V$ be two non-trivial VOAs. Then $U\otimes V$ is a lattice VOA if and only if $U$ and $V$ are both lattice VOAs. 
\end{proposition}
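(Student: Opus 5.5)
The ``if'' direction is standard: if $U \cong V_{L_1}$ and $V \cong V_{L_2}$ for positive definite even lattices $L_1, L_2$, then $V_{L_1}\otimes V_{L_2}\cong V_{L_1\oplus L_2}$. This holds because $M(1)$ for $\mathfrak h_1\oplus\mathfrak h_2$ is $M(1)_{\mathfrak h_1}\otimes M(1)_{\mathfrak h_2}$, $\mathbb C[L_1\oplus L_2]=\mathbb C[L_1]\otimes\mathbb C[L_2]$, and a $2$-cocycle on $L_1\oplus L_2$ can be taken to be the product of cocycles on the summands. So the content is the ``only if'' direction. Assume $W:=U\otimes V\cong V_L$ with $L$ of rank $d$.

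\emph{Step 1: reduction to a split Cartan.} Since $V_L$ is simple of CFT type, $U$ and $V$ are also simple of CFT type: an ideal $I\subset U$ gives an ideal $I\otimes V$, and $W_0=U_0\otimes V_0$. Hence $W_1=U_1\otimes\Omega\oplus\Omega\otimes V_1$, and this is a direct sum of commuting Lie algebras under $a_{(0)}b$. In $V_L$ the subspace $\mathfrak h=\mathbb C\otimes_{\mathbb Z}L\subset (V_L)_1$ is a Cartan subalgebra of the reductive Lie algebra $(V_L)_1$. Any Cartan subalgebra of $U_1\oplus V_1$ has the form $\mathfrak h_U\oplus\mathfrak h_V$. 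Cartan subalgebras are conjugate under inner automorphisms $\exp(a_{(0)})$, $a\in W_1$, and these are VOA automorphisms of $W$. So, after applying an automorphism, we may assume $\mathfrak h=\mathfrak h_U\oplus\mathfrak h_V$ with $\mathfrak h_U\subset U_1$ and $\mathfrak h_V\subset V_1$.

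\emph{Step 2: trivial commutants and conformal vectors.} Let $H_U,H_V$ be the Heisenberg vertex subalgebras generated by $\mathfrak h_U,\mathfrak h_V$; the invariant form on $\mathfrak h$ is the nondegenerate lattice form. Since $V_L=\bigoplus_{\alpha\in L}M(1,\alpha)$, the commutant of $H=H_U\otimes H_V$ in $V_L$ is $\mathbb C\Omega$. This commutant equals $\mathrm{Com}_U(H_U)\otimes\mathrm{Com}_V(H_V)$, so both factors are $\mathbb C\Omega$. In particular $\mathfrak h_U\neq 0$, since otherwise $\mathrm{Com}_U(H_U)=U\neq\mathbb C$, contradicting non-triviality of $U$.

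Next, $\nu^W-\omega_{\mathfrak h}$ lies in the commutant of $H$ and has weight $2$, so $\nu^W=\omega_{\mathfrak h}$. Writing this as $\nu^U-\omega_{\mathfrak h_U}=\omega_{\mathfrak h_V}-\nu^V$, the left side lies in $U\otimes\Omega$ and the right side in $\Omega\otimes V$. Both therefore lie in $\mathbb C\Omega$, and having weight $2$ they vanish. Hence $\nu^U=\omega_{\mathfrak h_U}$, and likewise for $V$.

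\emph{Step 3: recognizing $U$ as a lattice VOA.} Decompose $U$ under $H_U$. Because the $\mathfrak h$-weights on $W$ lie in $L$ with Fock multiplicity one, $U$ is a sum of Fock modules $M(1,\alpha)$ with $\alpha$ in a subset $L_U\subset\mathfrak h_U^*$, each with multiplicity one. One checks this by pairing a highest weight vector in $U$ with one in $V$ to obtain a highest weight vector for $H$ in $W$. Because $\nu^U=\omega_{\mathfrak h_U}$ and $U$ is $\mathbb Z$-graded, $(\alpha,\alpha)/2\in\mathbb Z_{\ge0}$. Simplicity of $U$, together with the Dong--Mason type lemma that $Y(a,z)b\neq 0$ for nonzero $a,b$ in a simple VOA, shows that $L_U$ is closed under addition. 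Non-degeneracy of the invariant form shows that $L_U$ is closed under negation. Thus $L_U$ is a positive definite even lattice, $L=L_U\oplus L_V$, and the classical uniqueness theorem (a simple VOA whose conformal vector is that of a Heisenberg subalgebra and which decomposes into Fock modules $M(1,\alpha)$, $\alpha\in L_U$, with multiplicity one, is isomorphic to $V_{L_U}$, the cocycle being unique up to coboundary) gives $U\cong V_{L_U}$. The same argument gives $V\cong V_{L_V}$.

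The main obstacle is Step 3, in particular invoking or reproving the characterization of lattice VOAs and verifying that $L_U$ is a full-rank lattice in $\mathfrak h_U$. The conjugacy of Cartan subalgebras in Step 1 is the other point requiring care, since it needs $(V_L)_1$ to be reductive with inner automorphisms integrating to VOA automorphisms.
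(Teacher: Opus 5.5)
Your proposal is correct, and it takes a genuinely different route from the paper.

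\textbf{The paper's route.} The paper never looks inside $U$ and $V$. It first shows that the factors are regular: a weak $V$-module $M$ gives the $U\otimes V$-module $U\otimes M$, which splits because $V_L$ is regular. Hence $U$ and $V$ are strongly rational. It then runs a numerical squeeze using Dong--Mason \cite{DM04b}.
\begin{enumerate}
\item $U_1$ and $V_1$ are reductive \cite[Theorem 1.1]{DM04b}.
\item The effective central charges equal the central charges, because every $U\otimes V$-module has non-negative conformal weights.
\item Each factor satisfies $c\geq$ rank of its weight-one Lie algebra \cite[Theorem 1.2]{DM04b}.
\item Since $c^{U\otimes V}=\mathrm{rank}\,(U\otimes V)_1=\mathrm{rank}\,U_1+\mathrm{rank}\,V_1$, both inequalities are equalities, and \cite[Theorem 3.1]{DM04b} identifies $U$ and $V$ as lattice VOAs.
\end{enumerate}

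\textbf{Your route.} You replace this squeeze by a structural argument. You conjugate the lattice Cartan into $\mathfrak h_U\oplus\mathfrak h_V$ via the automorphisms $\exp(a_{(0)})$. You then factor the Heisenberg commutant and the conformal vector, and read off the Fock decomposition. The only black box left is the uniqueness of the lattice VOA structure on $\bigoplus_\alpha M(1,\alpha)$. This is \cite[Corollary 5.4]{DM04b}, which the paper itself invokes in the proof of Theorem~\ref{M(1)extension}.

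\textbf{What each buys.} The paper's argument is shorter, but it has to establish regularity of the factors and relies on the effective-central-charge theory for strongly rational VOAs. Yours needs no rationality of $U$ or $V$. It also gives a sharper conclusion: $L\cong L_U\oplus L_V$ orthogonally. So, for instance, $V_L$ with $L$ indecomposable is never a non-trivial tensor product.

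\textbf{A shortcut in Step 3.} The linear algebra you use for commutants also shows that the $H$-highest-weight vectors of $W$ are exactly $\Omega_U\otimes\Omega_V$. Hence the weight set $L$ equals $L_U\times L_V$, and $L_U=L\cap\mathfrak h_U$ is automatically a discrete subgroup spanning $\mathfrak h_U$, with multiplicity one. You then need neither the Dong--Lepowsky non-vanishing lemma nor the invariant form for closure under addition and negation.
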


\begin{proof} For the if part, note that if $L_1$ and $L_2$ are two even positive definite lattices and $U$, $V$ are isomorphic to $V_{L_1}$ and $V_{L_2}$, respectively, then $L_1 \times L_2$ is again an even positive definite lattice and $U\otimes V$ is isomorphic to $V_{L_1 \times L_2}$. 

Let us now come to the only if part and assume that $U\otimes V$ is a lattice VOA. We first prove that $V$ is regular. To this end, let us recall that a lattice VOA is regular. Let $M$ be a vertex algebra module for $V$. Then $U \otimes M$ is a vertex algebra module for $U \otimes V$. Since the latter VOA is a lattice VOA and hence regular we have a direct sum decomposition 
$$U \otimes M = \bigoplus_j U \otimes M^j $$ where each $M^j$ is a simple $V$-module. It follows that 
$$M = \bigoplus_j  M^j . $$
Hence, since $M$ was arbitrarily arbitrary, $V$ is regular. Similarly, $U$ is also regular. Note that since $U \otimes V$ is simple and CFT type, both $U$ and $V$ are simple and CFT type. Moreover, it follows from the fact that $U \otimes V$ self-dual that both $U$ and $V$ are self-dual, because the normalized invariant bilinear form of $U \otimes V$ restricts to normalized invariant bilinear forms on $U$ and on $V$. Therefore, $U$ and $V$ are strongly rational. It follows from \cite[Theorem 1.1 ]{DM04b} that the weight one subspaces $U_1$ , $V_1$ and 
$(U \otimes V)_1 = U_1\otimes \Omega^V \oplus \Omega^U \otimes V_1$ are reductive Lie algebras. Since $U \otimes V$ is a lattice VOA, the central charge $c^{U\otimes V}$ of $U \otimes V$ is equal to the rank $r((U \otimes V)_1)$ of the Lie algebra $(U \otimes V)_1$, cf. \cite[Theorem 1.3]{DM04b} which is equal to the sum 
$r(U_1) + r(V_1)$ of the ranks of $U_1$ and $V_1$. Moreover, we have $c^{U\otimes V} = c^U + c^V$ where $c^U$ and $c^V$ are the central charges of $U$ and $V$, respectively. 

Let us now prove that $U$ and $V$ are Lattice VOA. Since $U\otimes V$ is a lattice VOA then, in every simple module $W$ 
we have $W_\alpha := \operatorname{ker}(L^W_0 -\alpha 1_W)$ so that $W_\alpha =\{0\}$ if $\alpha$ is not a non-negative real number. Now if 
$M$ is a simple $V$ module then $W:= U \otimes M$ is a simple $U\otimes V$-module and it follows that    
$$M = \bigoplus_{\alpha \in \mathbb{R}_{\geq 0}} M_\alpha $$ 
where $M_\alpha := \operatorname{ker}(L^M_0 -\alpha 1_M)$. Hence, the effective central charge $\tilde{c}^V$ of $V$ is equal to $c^V$, see \cite[Section 1]{DM04b} for the definition. Similarly, the effective central charge  $\tilde{c}^U$ of $U$ is equal to $c^U$. By \cite[Theorem 1.2]{DM04b} $c^U \geq r(U_1)$ and $c^V \geq r(V_1)$ . 
Accordingly,   
$$c^{U\otimes V} = r(U_1) + r(V_1) \leq c(U_1) + c(V_1) = c^{U\otimes V}$$
so that  $c^U = r(U_1)$ and $c^V = r(V_1)$ and the conclusion follows from \cite[Theorem 3.1]{DM04b}.

\end{proof}

Some preliminaries on unitary VOAs are given in \ref{Subsec:unitaryVOAs}.

\section{Definition of partition functions}\label{S:part_func}

In this section we fix a self-dual vertex operator algebra $V$ of CFT type. In particular $V$ is simple. This means that it has a unique normalized invariant bilinear form $( \, , \, )$. 

For any non-negative integer $g$, we introduce its genus $g$ partition functions $\cZ_{V,i}$ for $i\in \{0,1,\cdots, \lfloor \frac{g}{2}\rfloor\}$. Here, we introduce them only as formal power series, we will later intreprete them as functions, and study their properties. $\cZ_{V,g,0}$ will depend on $3g$ formal variables; for $i\neq 0$,  $\cZ_{V,g,i}$ will depend on $3g+1$. 

First, we introduce the Casimir bilocal fields. To this end we note that, for any non-negative integer $k$, the restriction of  $( \, , \, )$ to $V_k$ is non-degenerate. As a consequence, we can find a basis $v^{(i)}$, $i=1,\dots \dim V_k$ for $V_k$ that is orthonormal to
 $( \, , \, )$, that is, $(v^{(i)},v^{(j)}) = \delta_{i,j}$. 
  
\begin{definition}[Casimir bilocal field]\label{def:cas_field}
Given a non-negative natural number $k$, and two formal variable $w$ and $z$, the {\bf $k$-th Casimir bilocal field} is
$$
\gamma_k(w,z)=\sum_{i=1}^{\dim V_a}Y(v^{(i)},w)Y(v^{(i)},z)  $$
where $v^{(i)}$ is a orthonormal basis of $V_k$ with respect to the normalized invariant bilinear $( \, , \, )$. Note thatIf $V_k$ is trivial, we set the $k$-th Casimir bilocal field to be zero.
\end{definition}

The definition of Casimir bilocal fields does not depend on the choice of the basis. Now comes the main definitions.

\begin{definition}[Parititon functions]\label{def:parition_functions} With the above notations, for $g\geq 1$ we define  the {\bf genus $g$ partition function} of $V$ as

 \begin{align*}
 \cZ_{V,g}(w_1,\dots , w_g,z_1,\dots , z_g,q_1,\dots ,q_g):= \\ \sum_{(n_1, \dots , n_g) \in \mathbb{N}^g}  
 \left(\Omega^V,\gamma_{n_1}(w_1,z_1)\cdots \gamma_{n_g}(w_g,z_g) \Omega^V\right) q_1^{n_1}\cdots q_g^{n_g}
 \end{align*}
We also let $\cZ_{V,0}:=1$.

The {\bf partition function}  $\cZ_{V}$ of $V$ is the sequence $\{ \cZ_{V,g} \}_{g \in \mathbb{Z}_{\geq 0}}$.

\medskip

For $g\geq 2$ and $1\leq i\leq \lfloor \frac{g}{2}\rfloor $, we define the {\bf $i$-the genus $g$ partition function} $\cZ_{V,g,i}$  of $V$ as

 \begin{align*}
\cZ_{V,g,i}(w_1,\dots , w_g,w,z_1,\dots , z_g,z,q_1,\dots ,q_g,q):=\\
\sum_{ k \in \N}\sum_{v^{(j)} \in B_k}\left(\sum_{(n_1, \dots , n_i) \in \mathbb{N}^i}  \left(\Omega^V, \gamma_{n_1}(w_1,z_1)\cdots \gamma_{n_i}(w_i,z_i) Y(v^{(j)},w)\Omega^V \right) q_1^{n_1}\cdots q_i^{n_i}\right) \cdot \\ \left(\sum_{(n_{i+1}, \dots , n_{g}) \in \mathbb{N}^{g-i}}  \left(\Omega^V,Y(v^{(j)},z)\gamma_{n_{i+1}}(w_{i+1},z_{i+1})\cdots \gamma_{n_{g}}(w_{g},z_{g}) \Omega^V\right) q_{i+1}^{n_{i+1}}\cdots q_{g}^{n_{g}}\right)q^k
\end{align*}
where $B_k$ is a orthonormal basis of $V_k$; if $V_k$ is trivial, the corresponding addendum is zero.  

The {\bf $i$-th partition function}  $\cZ_{V,i}$ of $V$ is the sequence $( \cZ_{V,g,i} )_{g \in \mathbb{Z}_{\geq 2}}$.

To have uniform notations, we some time call the partition function the $0$-th partition function, and thus let $\cZ_{V,g,0}=\cZ_{V,g}$.
\end{definition}

 \begin{remark}[Genus one partition function]
\label{RemarkGenus1Partition}
 The genus 1 partition function $\mathcal{Z}_{V,1}$ has been computed in \cite[Thm. 4.2]{MT14}. The result is 
 $ \mathcal{Z}_{V,1}(w,z,q) = \mathrm{Tr}_{V} \mu^{L^V_0}$, for 
 $\left|\frac{q}{(w-z)^2} < \frac14 \right|$,
 where 
 $$\mu = \frac{(w-z)^2}{2q} \left(\sqrt{1 + \frac{4q}{(w-z)^2-1}}-1\right)-1$$ 
 is a solution of the equation  $\frac{\mu}{(1+\mu)^2} = -\frac{q}{(w-z)^2} \,. $
 As a consequence, if $U$ and $V$ are VOAs with $\mathcal{Z}_{U,1} = \mathcal{Z}_{V,1}$ then we have  
 $\Tr_U e^{2\pi  i \tau L^U_0} =  \Tr_V e^{2\pi  i \tau L^V_0} \,, $ in the sense of formal series in the variable 
 $e^{2\pi i \tau}$. In particular, if $U$ and $V$ also have the same central charge, then their vacuum characters $\chi_U(\tau)$ and $\chi_V(\tau)$ defined in Section \ref{sec:rep_theory_VOA} coincide. 
  \end{remark}

Let us also recall the following classical definition.
\begin{definition}[Correlation functions]\label{d:correlation_function}
Given an integer $n \geq 1$, and $n$ vectors $a_1,\dots , a_n$ in $V$, their ($n$-points) {\bf correlation function (on the sphere)}  is the formal power series 
$$
(\Omega^V,Y(a_1,z_1)\cdots Y(a_n,z_n)\Omega^V)
$$
\end{definition}

The coefficients of the $q_i$'s in the definition of the partition functions from Definition \ref{def:parition_functions} are sum of correlation functions. Locality implies the following classical result about correlation functions, see \cite[page 265]{Zhu} and \cite[Proposition 3.5.1]{Axiomatic}.

\begin{theorem}\label{T:commutation}
The correlation functions converge in the domain $|z_1|>|z_2|>\cdots > |z_n|$, $z_1,\dots,z_k \in \mathbb{C}$; they can be extended  to rational functions of $z_1,\dots , z_n$ with poles just along the diagonals and the coordinate axes. 
For any permutation $\sigma$ of $\{1,\dots , n\}$, we have the equality of rational functions
$$(\Omega^V,Y(a_1,z_1)\cdots Y(a_n,z_n)\Omega^V)=(\Omega^V,Y(a_{\sigma(1)},z_{\sigma(1)})\cdots Y(a_{\sigma(n)},z_{\sigma(n)})\Omega^V) .$$
\end{theorem}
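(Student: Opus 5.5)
The plan is the standard argument from the vertex algebra axioms: locality, the creation property, and the grading give convergence and rationality, and locality gives the permutation symmetry. We reduce to the $n$-point function with vacuum vectors on both sides and use the restricted dual.

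\emph{Step 1 (convergence and rationality).} Consider the $V^\vee$-valued formal series $Y(a_1,z_1)\cdots Y(a_n,z_n)\Omega^V$, paired against the vector $\Omega^V$ through the form $(\cdot,\cdot)$, which pairs $V_0$ with $V_0$ only. By locality, for each pair $i<j$ there is $N_{ij}$ with $(z_i-z_j)^{N_{ij}}[Y(a_i,z_i),Y(a_j,z_j)]=0$. Fix $N$ larger than all $N_{ij}$ and than the orders of the poles coming from $a_{(m)}\Omega=0$ for $m\geq 0$. Multiply the correlator by $P=\prod_{i<j}(z_i-z_j)^{N}$. We claim that $P\cdot(\Omega^V,Y(a_1,z_1)\cdots Y(a_n,z_n)\Omega^V)$ is a polynomial in $z_i^{\pm1}$, in fact in $z_i$ with bounded negative powers. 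The argument uses locality to move any $Y(a_i,z_i)$ to the rightmost position, at the cost of the factor $P$, where $Y(a_i,z_i)\Omega$ contains only nonnegative powers of $z_i$. Symmetrically, the invariant form moves it to the leftmost position, where $(\Omega, Y(a_i,z_i)\,\cdot)$ contains only nonpositive powers, up to the finitely many terms coming from $S a_i$. Combining the two bounds shows that each $z_i$ appears with exponents in a finite range. Alternatively, and more simply, use the grading: $(\Omega,\ldots)$ extracts the weight-zero component, so the total degree is fixed, namely $-\sum \deg a_i$ for homogeneous $a_i$. We may assume the $a_i$ are homogeneous by linearity.

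\emph{Step 2.} Since the product $P\cdot F$ is a Laurent polynomial $Q$, write $F=Q/P$. The formal series $F$ equals the expansion of $Q/P$ in the domain $|z_1|>\cdots>|z_n|$. This holds because the formal series $F$ lies in $\C[[z_n/z_{n-1},\ldots]]$-type expansions: repeatedly acting on $\Omega$ from the right produces series with only finitely many negative powers in the later variables relative to the earlier ones. Inverting $P$ in that ring of expansions gives uniqueness. Convergence of the expansion of a rational function with poles only on the diagonals and axes in that domain then follows.

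\emph{Step 3 (symmetry).} For a transposition of adjacent indices, locality gives
$$P\,Y(a_i,z_i)Y(a_{i+1},z_{i+1})=P\,Y(a_{i+1},z_{i+1})Y(a_i,z_i)$$
as formal series. Hence both orderings produce the same polynomial $Q$, and therefore the same rational function. Adjacent transpositions generate the symmetric group.

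The main obstacle is Step 2: justifying that the formal series is the expansion of $Q/P$ in the specified region, that is, controlling the direction of the expansion so that division by $P$ is legitimate. This is handled as in \cite[Proposition 3.5.1]{Axiomatic} by induction on $n$, using that $Y(a_n,z_n)\Omega\in V[[z_n]]$.
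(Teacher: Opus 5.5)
Your proposal is correct and follows the same route as the paper: the paper gives no proof of its own and simply invokes this classical consequence of locality via Zhu and \cite[Proposition 3.5.1]{Axiomatic}, and you reproduce that standard argument (multiply by $P$, obtain a polynomial $Q$ from the creation property and the grading, divide, and get the symmetry from locality). The one step to state more precisely is Step 2: the right ring is the iterated Laurent series field $\mathbb{C}((z_1))((z_2))\cdots((z_n))$, which contains $F$ (indeed $F\in\mathbb{C}((z_1))\cdots((z_{n-1}))[[z_n]]$, since $Y(a_n,z_n)\Omega^V\in V[[z_n]]$ and each further $Y(a_k,z_k)$ acts coefficientwise) and in which $P$ is invertible with inverse equal to its expansion in $|z_1|>\cdots>|z_n|$, so $F=Q\,P^{-1}$ there, whereas your ``$\mathbb{C}[[z_n/z_{n-1},\ldots]]$-type'' description is imprecise.
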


\begin{remark}
\label{remarkCommutativityRationality} It is important to note that equality 
$$(\Omega^V,Y(a_1,z_1)\cdots Y(a_n,z_n)\Omega^V)=(\Omega^V,Y(a_{\sigma(1)},z_{\sigma(1)})\cdots Y(a_{\sigma(n)},z_{\sigma(n)})\Omega^V) $$ is an equality of rational functions and not an equality of formal series. The two corresponding Laurent series are obtained by expanding the same rational function in two different regions. In the paper, we will use the same symbol to denote the correlation function as a formal power series or as a rational function and add, if necessary, the missing information to avoid possible confusions. 
\end{remark}

\begin{remark} 
\label{Lurent vs Power series}
By Theorem \ref{T:commutation} the coefficient of the  formal Laurent series 
$$\left(\Omega^V,\gamma_{n_1}(w_1,z_1)\cdots \gamma_{n_g}(w_g,z_g) \Omega^V\right)$$ 
which appear in the definition of the genus $g$-partition functions converge to rational functions, and hence we can also look at the partition function  
$$\cZ_{V,g}(w_1,\dots , w_g,z_1,\dots , z_g,q_1,\dots ,q_g)$$ as to a formal power series in the variables $(q_1,\dots ,q_g)$ 
whose coefficient are rational functions in the variables  $(w_1,\dots , w_g,z_1,\dots , z_g)$.
\end{remark}

\begin{remark}[Product of partition functions]
\label{remarkProductFormal} 
Let us recall that the product of formal power series in the same variables always make sense, but this is not the case for the product of formal Laurent series, see e.g. \cite[Section 2.1]{Kac01}. However, the product of two  $n$-points correlation functions in the same variables $z_1,\dots,z_n$, possibly corresponding to two different VOAs, always makes sense as a formal Laurent series because of the convergence part of Theorem \ref{T:commutation}. Moreover, their product converges in the domain $|z_1|>|z_2|>\cdots > |z_n|$, $z_1,\dots,z_k \in \mathbb{C}$ to the product of the two corresponding rational functions.

As a consequence, if $U$ and $V$ are two VOAs of CFT type having a non degenerate invariant bilinear form then, for any $g\in \Z_{\geq 0}$ the 
product $\cZ_{U,g} \cZ_{V,g}$ gives a well defined formal series and moreover,  $\cZ_{U,g} \cZ_{V,g}= \cZ_{U\otimes V,g} $.

\end{remark}

\section{Moduli spaces and modular forms}\label{S:scho}

\subsection{Family of nodal curves}\label{SS:family}
A Riemann surface, also known as smooth curve, is a compact complex manifold of complex dimension one. In this paper we consider or also nodal curves, i.e. one dimensional analytic spaces (= singular manifolds) which are either smooth or with nodal singularities. We use the euclidean topology.

A family of nodal curves is a surjective morphism $\pi\colon \cC \to B$, where $\cC$ is a reduced analytic space,  $B$ is a smooth complex manifold and the fibers of $\pi$ are nodal curves. In other words, for every point $b$ of $B$, the fiber $\cC_b=\pi^{-1}(b)$ is a nodal curve that varies holomorphically with $b$. The coordinates on $B$ are sometimes called moduli or the parameters of the family.

We often consider families with $n$ sections $s_i$, where $n$ is some positive integer and the $s_i$'s are maps $s_i\colon B \to \cC$ such that $\pi \circ s_i$ is the identity on $B$, the images of the $s_i$ are all disjoint and do not intersect the singular points of the fibers. Informally, the family is a curve with $n$ marked points which change in a holomorphic way. If $B$ is just a point, then $\cC$ is a nodal curve $C$ with $n$ distinct smooth marked points $p_1,\dots, p_n$.

Let $\pi\colon \cC\to B$ be a family of nodal curves over a smooth base with $n$ sections $s_1,\dots , s_n$. We say that $\pi$ has the \textbf{property (Q)} if for every $b$ in $B$, after having removed the nodes, every connected component of the fiber $\cC_b$ intersects at least one section. An equivalent formulation that we will often use is that the complement of the (image of the) sections is affine.

\subsection{Moduli space of Riemann surfaces, Hodge line bundle, boundary divisor, gluing and forgetful maps}\label{S:moduli}

A general reference for this section is the book \cite{ACG}. The moduli space of smooth genus $g$ Riemann surfaces with $n$ marked points and $2g+n-2>0$ is a Deligne-Mumford stack denoted by $\M_{g,n}$. Its compactification $\oM_{g,n}$ is the moduli space of genus $g$ nodal Riemann surfaces with $n$ marked points. (If $n=0$, we omit it. By abuse of notation, we let $\M_1:=\M_{1,1}$ and $\M_0:=\M_{0,3}$; we adopt the analog convention for all the moduli space we treat in this paper.)

If the reader is not familiar with stacks, can think at the moduli space as a manifold; general references about stacks with informal introductions are \cite{ACG,Alper}. By definition of moduli stack, it is the calassifying space for families of curves, i.e. there is one to one correspondence between families of marked nodal curves $\pi\colon \cC \to B$ and maps $m(\pi)\colon B \to \oM_{g,n}$. The map $m(\pi)$ is called the moduli map associated to $\pi$, and maps a point $b$ to the isomorphism class of $\pi^{-1}(b)$. (An analogue principle holds for every fine moduli  moduli space that we will encounter later, such as  the spaces $S_g$, $C_g$ and $T_g$. )

To every Deligne-Mumford stack one can associate a coarse moduli space, which is a (possibly singular) analytic space. In particular, the coarse space $\overline{M}_{g,n}$ of $\oM_{g,n}$ is a normal projective variety, the coarse space $M_{g,n}$ of $\M_{g,n}$ is an open dense subset of $\overline{M}_{g,n}$. We can still associate to every family a moduli map from $B$ to the coarse space, but it is no longer true that every map $m\colon B \to \overline{M}_{g,n}$ comes from a family of curves over $B$. In particular, the points of $\overline{M}_{g,n}$ are in bijective correspondence with isomorphism classes of marked nodal curves.

The complement of $\M_g$ in $\oM_g$ is a reducible divisor $\delta$, called the boundary divisor. The divisor $\delta$ is the union of irreducible divisors $\delta_i$ with $i \in \{0,\dots \lfloor \frac{g}{2}\rfloor \}$. The divisor $\delta_0$ parameterizes curves with at least one non-separating node (a node is non-separating if, once it has been removed, the curve remains connected). The divisor $\delta_i$, for $i \in \{1,\dots \lfloor \frac{g}{2}\rfloor \}$, parameterizes curves with a least one node that divides the curve in a component of genus $i$ and a component of genus $g-i$. 

On $\oM_{g,n}$, we can construct the line bundle $\lambda_g$, usually called Hodge line bundle or lambda class. Its fiber over a curve $(C,p_1,\dots, p_n)$ is the determinant of the ($g$ dimensional) vector space of holomorphic differential forms on $C$. In particular, the fiber does not depend neither on the number nor on the choice of the marked points $p_1, \dots , p_n$.

The forgetful morphism $F\colon \oM_{g,n+1}\to \oM_{g,n}$ is the map that forgets a marked point. There are several gluing morphisms, all of them glue two marked points to form a new curve with one new node and two less marked points: $G_0\colon \oM_{g-1,n+2}\to \oM_{g,n}$ and $G_{h,k}\colon \oM_{h,m+1}\times \oM_{k,n+1}\to \oM_{h+k,m+n}$ (by abuse of notations, we do not keep track of the number of marked points).

The Hodge line bundle behaves well with respect to this map, i.e. $F^*\lambda_g=\lambda_g$, $G_0^*\lambda_g=\lambda_g$, and $G^*_{h,k}\lambda_{h+k}=\lambda_h\boxtimes \lambda_k$.

\subsection{Schottky space}\label{SS:schottky}

We now recall, following \cite[Chapter 5]{Schottky} and \cite[Section 2.2]{TWgeq26}, the definition of the Schottky space $S_g$. When $g=1$, the space $S_1$ is isomorphic to the punctured open disk of radius one, see  \cite[Proposition 5.5]{Schottky}. We focus on the case $g\geq 2$.

The group of Mobi\"{u}s transformations $PSL(2,\C)$ acts on $\P^1$. Given a discrete subgroup $\Gamma$ of $PSL(2,\C)$, its region of discontinuity $\Omega(\Gamma)$ is the biggest open subset where it acts properly discontinuously. The group $\Gamma$ is called a genus $g$ Schottky group if $\Omega(\Gamma)/\Gamma$ is a Riemann surface $C_{\Gamma}$ of genus $g$. In particular, a genus $g$ Schottky group is a free group on $g$ generators such that $\Omega(\Gamma)$ is not empty. The description of $\Omega(\Gamma)$ is quite subtle when $g\geq 2$, and we do not need it in this paper 
A fundamental domain for the action of $\Gamma$ on $\Omega(\Gamma)$ can be obtained by removing  from $\P^1$ $2g$ simply connected closed domains that are bounded by Jordan curves. 

 A marked Schottky group is a Schottky group with the choice of $g$ ordered generators $(\gamma_1,\dots , \gamma_g)$. The marked Schottky space $C_g$ is the space parameterizing marked Schottky groups. Each $\gamma_i$ is hyperbolic, so it is unique determined by its attractive and repelling fixed points $W_i$ and $Z_i$, and its multiplier $\mu_i$ which is a complex number with $0<|\mu_i|<1$. The marked Schottky space can thus be realized as an open subset of $(\P^1)^{\times 2g}\times (\Delta^*_1)^{\times g}$, where $\Delta^*_1$ is the open pointed disk of radius one in the complex plane.

A family of curves with (marked) Schottky structure over a base $B$ is an open subset $U$ of $\P^1\times B$, and an action of the free group $G$ on $g$ (marked) elements such that the quotient $U/G$ is a family of genus $g$ Riemann surfaces  over $B$.  Observe that $C_g$ is a fine moduli space for curves with marked Schottky structure, so there is a universal curve with marked Schottky structure over $C_g$. Every family of curves with marked Schottky structure over a base $B$ can be obtained pulling back the universal curve via a map $B\to C_g$.

Let $C_g^{\circ}$ be the dense open subset of $C_g$ where none of the fixed points is $\infty$. On $C_g^{\circ}$, the fixed points and the multiplier give coordinates, and $C_g^{\circ}$ can be realized as an open subset of $\C^{3g}$. To the best of our knowledge, there is no explicit description of $C_g^{\circ}$ in $\C^{3g}$.

 There is another system of coordinates on $C_g^{\circ}$ that is less common in the literature, but more useful for our purposes. Recall that simple Mobi\"{u}s transformations are the translations $z\mapsto z+\alpha$, the inversion $z\mapsto z^{-1}$, and the homotheties $z\mapsto q z$. Every Mobi\"{u}s transformation $g$ in a Schottky group parameterized by $C_g^{\circ}$ can be uniquely written as the composition of a translation, the inversion a homothety, and another translation, i.e.,
\begin{equation}\label{dec_in_simples}
\gamma_i(z)=w_i+\frac{q_i}{z-z_i}\,,
\end{equation}
for convenient choices of $w_i$, $z_i$ and $q_i$; in other terms, $w_i$, $z_i$ and $q_i$ provide another natural set of coordinates on $C_g^{\circ}$. The formulae to go from one set to the other can be found, for example, in \cite[Section 2]{TWgeq26}:
\begin{equation}\label{e:coo_tw}
w_i=\frac{W_i-\mu_iZ_i}{1-\mu_i} \; , \;  z_i=\frac{Z_i-\mu_iW_i}{1-\mu_i} 
\; , \; q_i=-\frac{\mu_i(W_a-Z_a)^2}{(1-\mu_i)^2} 
\end{equation}
(in loc. cit., their $q$'s are our $\mu$'s, and our $q$'s are their $\rho$'s). They can be obtained observing that $w_i=\gamma_i(\infty)$, $z_i=\gamma^{-1}(\infty)$, and for $q_i$ imposing in Equation \ref{dec_in_simples} that $\gamma_i(w_i)=w_i$. 

As spoiler, let us say that using the coordinates $q_i$, $w_i$ and $z_i$  introduced in Equation \eqref{e:coo_tw}, the partition function $\cZ_{V,g,0}=\cZ_{V,g}$ from Definition \ref{def:parition_functions}, can be used to define a holomorphic function on a convenient open subset of $C_g^{\circ}$.

\medskip

The presentation of a Riemann surface $C$ as quotient $\Omega(\Gamma)/\Gamma$ is called a Schottky structure on $C$. If one also gives generators of $\Gamma$, this is called marked Schottky structures. Using the decomposition from Equation \ref{dec_in_simples}, the gluing relation $x\sim y \Leftrightarrow x= \gamma_i(y)$ takes the explicit form
\begin{equation}\label{plumbing}
x\sim y \, \Leftrightarrow \, (x-w_i)(y-z_i)=q_i \,. 
\end{equation}

Using the coordinates from Equation \eqref{e:coo_tw}, we can present a curve $C$ with a marked Schottky structure in the following explicit way. We consider a fundamental domain $U$ for $\Gamma$ obtained removing from $\P^1$ the interiors $D_i$ and $E_i$ of $2g$ simply connected closed domains, bounded by Jordan curves, containing $w_i$ and $z_i$, and such that $\gamma_i$ identifies the boundary of $D_i$ with the boundary of $E_i$. Then we glue these boundaries using  $\gamma_i$, i.e. according to the relation \eqref{plumbing}. The marked Schottky group is called \emph{classical} if one can take as $E_i$ and $D_i$ disks.  Let us stress that not all marked Schottky groups are classical, see \cite{non-class} and references therein. Let $C_{g,class}$ the space of classical marked Schottky groups. For $g\geq 2$, $C_{g,class}$ is strictly smaller than $C_g$, and we do not know if it is open or closed. In the following paragraphs, we introduce an open subset of $\C^{3g}$ is contained in $C_{g,class}^{\circ}:=C_{g,class}\cap C_g^{\circ}$.

\begin{definition}\label{def:open_in_Schottky}  
For every fixed $r\in \mathbb{R}_{>0}$, let $U_{g,r}$ be the open subset of $\C^{3g}$ defined as
$0<|q_i|<r^2$ for every $i$,  and  $|x-y|>2r $ for every $\{x,y\}\subset \{w_1,\dots, w_g,z_1,\dots , z_g\}$. 

Let $U_{g,r}^+$ the open subset of $U_{g,r}$ defined by the further condition $|w_1|>|z_1|>|w_2| > \cdots > |w_g|>|z_g|$.

\end{definition}
In other words, $U_{g,r}$ the open subset of $\C^{3g}$ such that: the disks in $\C$ of centers $w_i$ and $z_i$ and radii $r$ are all disjoint subsets of $\C$, and $|q_i|<r^2$.

\begin{proposition}\label{prop:open_in_Schottky}
 For every fixed $r\in \mathbb{R}_{>0}$, $U_{g,r}$ is contained in $C_{g,class}^{\circ}:=C_{g,class}\cap C_g^{\circ}$.
\end{proposition}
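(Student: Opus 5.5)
Given a point $(w_1,\dots,w_g,z_1,\dots,z_g,q_1,\dots,q_g)\in U_{g,r}$, I will exhibit directly a classical Schottky configuration and then invoke the classical Klein combination theorem (ping-pong lemma). The argument has three steps.

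First, I check that each $\gamma_i$ is well defined and loxodromic. Set $\gamma_i(z)=w_i+\frac{q_i}{z-z_i}$. Its matrix is $\begin{pmatrix} w_i & q_i - w_iz_i\\ 1 & -z_i\end{pmatrix}$ with determinant $-q_i\neq 0$, so $\gamma_i$ is a M\"obius transformation. Moreover $\gamma_i(\infty)=w_i$ and $\gamma_i^{-1}(\infty)=z_i$.

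Second, I construct disks that $\gamma_i$ pairs. Choose $\rho_i$ with $|q_i|<\rho_i^2<r^2$; this is possible since $|q_i|<r^2$, for instance $\rho_i=\sqrt{|q_i|}$ perturbed slightly, or simply $\rho_i=r'$ with $\sqrt{|q_i|}<r'<r$ common to all $i$. Let $E_i=\{|z-z_i|\le \sqrt{|q_i|}\cdot t_i\}$ and $D_i=\{|x-w_i|\le \sqrt{|q_i|}/t_i\}$, and choose $t_i$ so that both radii are strictly less than $r$; the natural choice is $t_i=1$, giving radii $\sqrt{|q_i|}<r$. By the plumbing relation \eqref{plumbing}, $\gamma_i$ maps the circle $|z-z_i|=\sqrt{|q_i|}$ onto $|x-w_i|=\sqrt{|q_i|}$, and it sends the exterior of $E_i$ onto the interior of $D_i$, since $|\gamma_i(z)-w_i|=|q_i|/|z-z_i|$. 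Because all pairwise distances among the centres exceed $2r$ and all radii are $<r$, the $2g$ closed disks are pairwise disjoint. In particular they do not contain $\infty$, and none of the centres is $\infty$.

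Third, I apply ping-pong. The complement $U$ of the interiors of the $2g$ disjoint disks is a region bounded by $2g$ Jordan curves, and $\gamma_i$ maps $\partial E_i$ to $\partial D_i$ with $\gamma_i(\P^1\setminus E_i)=\mathrm{int}\,D_i$. By the standard Schottky/Klein combination theorem (see \cite[Chapter 5]{Schottky}), the group $\Gamma=\langle\gamma_1,\dots,\gamma_g\rangle$ is free on these generators and discontinuous, $U$ is a fundamental domain, and $\Omega(\Gamma)/\Gamma$ is a compact Riemann surface of genus $g$. So $\Gamma$ is a classical marked Schottky group. Its fixed points $W_i,Z_i$ lie in the interiors of $D_i,E_i$ respectively, because $\gamma_i$ maps the disk $D_i$ strictly into itself. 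Hence they are finite, the point lies in $C_g^{\circ}$, and the coordinates \eqref{e:coo_tw} recover $(w_i,z_i,q_i)$.

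The only delicate point is verifying the interior/exterior mapping direction and the disjointness needed for ping-pong; both follow from $|\gamma_i(z)-w_i|=|q_i|/|z-z_i|$ together with the radius bound $\sqrt{|q_i|}<r$. The rest is a citation of the classical theorem.
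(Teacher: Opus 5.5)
Your proof is correct and follows the paper's argument. You take disks of radius $\sqrt{|q_i|}$ centred at $z_i$ and $w_i$, note they are pairwise disjoint because the centres are more than $2r$ apart and $|q_i|<r^2$, and then invoke the classical Schottky criterion of \cite[Chapter 5]{Schottky}; your explicit checks, namely the pairing direction via $|\gamma_i(z)-w_i|=|q_i|/|z-z_i|$ and the fixed points lying in the finite disks (so the point lies in $C_g^{\circ}$), fill in details the paper leaves implicit, while the auxiliary $\rho_i$ and $t_i$ are unnecessary.
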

\begin{proof}
For every point of $U_{g,r}$ we have to construct the corresponding curve $C$ with the classical marked Schottky structure. Let $\gamma_i$ be the element of $PSL(2,\C)$ associated with $(w_i,z_i,q_i)$ by Equation \eqref{dec_in_simples}, and let $\Gamma$ be the group generated by the $\gamma_i$'s. Let $E_i$ (resp. $D_i$) be the disk in $\P^1$ of radius $\sqrt{|q_i|}$. These disks are all disjoint because of the definition of $U_{g,r}$, so $\Gamma$ is a Schottky group according to \cite[Definition 5.1]{Schottky}, and, as explained in loc. cit., a fundamental domain $U$ can be obtained by removing from $\P^1$ the disks $D_i$ and $E_i$'s. Since we are removing disks, the marked Schottky groups are classical.

\end{proof}

The Schottky space $S_g$ is the quotient of $C_g$ by the action of $PSL(2,\C)$ by conjugation. The open set $C_g^{\circ}$ surjects onto $S_g$. 

The quotient $C_g\to S_g$ has a section obtained by choosing a marking for each Schottky group such that the fixed points of $\gamma_1$ are $0$ and $\infty$, and one fixed point of $\gamma_2$ is one; hence $S_g$ is an open subset of $\C^{3g-3}$ ( we will seldom use this presentation as the $\gamma_i$ do not play a symmetric role).

\medskip

As explained in \cite[Propositions 5.2 and 5.6]{Schottky}, the association $\Gamma\mapsto C_{\Gamma}$ defines a surjective morphism $\pi\colon S_g\to \M_g$ with discrete infinite fibers.

\subsection{Theicm\"{u}ller modular forms}\label{SS:Tecih}

We follow mainly \cite{Schottky} and \cite{Ich}. We fix $g\geq 2$, see Remark \ref{rem:g=1} for the case $g=1$. Let $T_g$ be the Teichm\"{u}ller space of genus $g$ curves. This space parameterizes isomorphism classes of pairs $(C,f)$, where $C$ is a genus $g$ Riemann surfaces and $f$ is a diffeomerphism from $C$ to a fixed Riemann surface $X$; diffeomorphisms are considered up to homotopy; see e.g. \cite[Introduction]{Schottky}. 

The space $T_g$ has a natural projection $\rho \colon T_g\to \M_g$, which turns out to be the universal cover. The space $\M_g$ can be realized as stack quotient $T_g/\Gamma_g$, where $\Gamma_g$ is a discrete group called the mapping class group. It is possible to define an action of $\Gamma_g$ on the homology of $X$, and this gives surjective homorphism
$$\psi \colon \Gamma_g \to Sp(H_1(X,\Z))\cong Sp(2g,\Z)  \,.$$
(Recall that $H_1(X,\Z)$ is a free rank $2g$ module, and the intersection pairing gives a non-degenerate symplectic form, which is why $Sp(H_1(X,\Z))\cong Sp(2g,\Z)$.)

From now on, we fix a symplectic basis of the homology of $X$.  Via $f$ one gets a basis of the homology of $C$, and can then look at the period matrix\footnote{For the reader convenience let us recall the definition of the period matrix of $C$. Let $\{A_i,B_i\}$ be a symplectic basis of the homology. Let $\omega_i$ be a basis of the space of holomorphic one forms such that $\int_{A_i}\omega_j=\delta_{ij}$. Then the $(i,j)$ entry of the period matrix is $\int_{B_i}\omega_j$. This matrix is a $g$ by $g$, symmetric, and its imaginary part is positive definite. Torelli's theorem states that it possible to reconstruct the curve $C$ and the basis of the homology $\{A_i,B_i\}$ out of the period matrix $j(C,f)$.} $j(C,f)$ of $C$.

If $(C,f)$ and $(C',f')$ are equivalent for the action of an element $\gamma$ of $\Gamma_g$, then
$$
j(C,f)=(aj(C',f')+b)(cj(C',f')+d)^{-1} \quad \textrm{ where }\quad \begin{pmatrix}
a & b \\
c & d
\end{pmatrix}=\psi(\gamma)$$

The cocycle $\det(cj(C,f)+d)$ defines a linearization of the action of $\Gamma_g$ on the trivial line bundle on $T_g$. We denote by $L_g$ the trivial line bundle endowed with this linearization, and we call it the line bundle of weight $1$  Teichm\"{u}ller modular forms. In other words, $L_g$ the trivial line bundle on $T_g$ endowed with a non-trivial action of $\Gamma_g$ such that, for every $k\in \Z$, the invariant section of $L_g^{\otimes k}$ are the holomorphic functions $F$ on $T_g$ which satisfy the functional equation
\begin{equation}\label{func_teich}
F\left(\gamma((C,f)) \right)=\det(cj(C,d)+d)^kF(C,f)\,,
\end{equation}
for every $(C,f)$ in $T_g$ and every $\gamma$ in $\Gamma_g$. (For $k<0$, the only section is the zero function.)

The group of isomorphism classes of line bundles on $T_g$ with an action $\Gamma_g$ is isomorphic to $\Z$, and generated by the line bundle $L_g$.

\medskip

Let us now consider the pull-back $\rho^*\lambda_g$, where $\rho \colon T_g \to \M_g=T_g/\Gamma_g$ is the quotient map  and $\lambda_g$ is the Hodge line bundle from Section \ref{S:moduli}. There is canonically defined action of $\Gamma_g$ on $\rho^*\lambda_g$ such that the pull-backs of the sections of $\lambda_g$ are exactly the $\Gamma_g$-invariant sections of $\rho^*\lambda_g$. With this action, $\rho^*\lambda_g$ is isomorphic to the line bundle $L_g$ of weight one Teichm\"{u}ller modular forms as $\Gamma_g$-line bundle. In particular, there is an isomorphism  between $H^0(\M_g,\lambda^{\otimes k})$ and the vector space of holomorphic functions $F$ on $T_g$ satisfying Equation \eqref{func_teich} (see e.g. \cite[Proposition 1.4]{Ich}).

\medskip

The inclusion of $\M_g$ in $\oM_g$ gives an injective restriction map 
$$
H^0(\oM_g,\lambda^{\otimes k}_g)\to H^0(\M_g,\lambda^{\otimes k}_g) \,;
$$
when $g\geq 3$ this map is an isomorphism, otherwise it is not surjective\footnote{The surjectivity for $g\geq 3$ is well-known, it follows from the fact that $\lambda_g$ is semi-ample on the coarse space $\overline{M}_g$, and its Itaka map embeds $M_g$ as big open subset of a normal projective variety - the Satake compactification of $M_g$, see \cite[pages 435-437]{ACG}. For $g\leq 2$, the above is no longer true and the map is known to be non-surjective.}.

\begin{definition}[Teichm\"{u}ller modular forms]
A weight $k$ and degree $g$ Teichm\"{u}ller modular form is a holomorphic functions $F$ on $T_g$ such that
$$
F\left(\gamma((C,f)) \right)=\det(cj(C,d)+d)^kF(C,f)
$$
for every $(C,f)$ in $T_g$ and every $\gamma$ in $\Gamma_g$. Furthermore, if $g\leq 2$, we require that the corresponding section of $\lambda_g^{\otimes k}$ on $\M_g$ extends to a section of $\lambda_g^{\otimes k}$ over the compactification $\oM_g$.

\end{definition}

Let us stress that the above definition depends on the choice of the basis of the homology of $X$; if we change the basis, the space of Teichm\"{u}ller modular forms change accordingly.

\medskip

We now construct a surjective map from the Teichm\"{u}ller space to the Schottky space, using the chosen basis of the homology of $X$. Recall that a cut system on $X$ is the choice of  $g$ simple non intersecting loops linearly independent in homology. A symplectic basis $\{A_i,B_i\}$ of the homology gives a cut system choosing representative of $A_i$'s. Given an element $(C,f)$ of $T_g$, we can use $f$ to obtain a cut system for $C$. A cut system on $C$ is equivalent to a marked Schottky structure on $C$, see e. g.  \cite[proof of Prop. 5.2]{Schottky}. This gives a surjective holomorphic map from $T_g$ to $S_g$.  A function on the Schottky space is called a Teichm\"{u}ller modular form of weight $k$ if and only if its pull-back via this map is a Teichm\"{u}ller modular form of weight $k$.

A Teichm\"{u}ller modular form on $C_g$ is the pull-back of a Teichm\"{u}ller modular form from $S_g$; in particular, it is $PSL(2,\C)$ invariant. Since Teichm\"{u}ller modular forms are analytic, they are completely determined by their restriction to $C_g^{\circ}$ or $U_{g,r}$.

Let us summarize the relation among our moduli spaces with a diagram. 
$$
\begin{array}{ccccc}
	& &C_g & &\\
	& &\downarrow && \\
	T_g&\rightarrow & S_g&\rightarrow & \M_g 
\end{array}
$$
Recall that the map from $T_g$ to $\M_g$ is canonical, whereas all the others depend on the choice of a symplectic basis of the homology of $X$.  Using that diagram, we have also identified the space $H^0(\oM_g,\lambda_g^{\otimes k})$, with the space of weight $k$ Teichm\"{u}ller modular forms over $T_g$, and with convenient spaces of functions on $S_g$ and $C_g$.

\begin{remark}[Genus one and zero]\label{rem:g=1}
All the above theory can be worked out also when $g=1$, the only twist is that one has to consider curves with one marked point, and so the moduli spaces are $T_{1,1}$, $S_1$, $C_1$, and $\overline{\mathcal{M}}_{1,1}$. Let us describe the final results. $T_{1,1}$ is the Siegel upper half space, the mapping class group is $SL(2,\Z)$, $S_1$ is the punctured disc, $\overline{M}_{1,1}$ is the affine line (with some stacky structure), $\overline{\mathcal{M}}_{1,1}$ is the projective line (with some stacky structure), the restriction map of sections of powers of the Hodge line bundle from $\overline {\mathcal{M}}_{1,1}$ to $\mathcal{M}_{1,1}$ is not surjective because one has to impose convergence at infinity. Teichm\"{u}ller modular forms are the classical modular forms.

When $g=0$, we consider the spaces $T_{0,3}$ and $\mathcal{M}_{0,3}$, which are just two points, and the mapping class group is trivial. In this case, modular forms are constant functions.
\end{remark}

\subsection{Schottky coordinates and vanishing order along boundary divisors}\label{S:coord}

We can use Section \ref{SS:schottky} to construct coordinates around some points of the boundary divisors of $\oM_g$ (a similar construction works for $\oM_{g,n}$ but we do not need it), and use them to compute the vanishing order of Teichm\"{u}ller modular forms along boundary divisors. This kind of computation will be very important in the proofs of Corollary \ref{cor:classification1} and Corollary \ref{cor:classification2}.

\subsubsection{Boundary divisor $\delta_0$ and the family over $B_0$}\label{S:delta0} 

Pick in $\delta_0$ the isomorphism class of a curve $[C_0]$ whose normalization is $\P^1$, i.e., $C_0$ is constructed by gluing $g$ pairs of points $w_i$ and $z_i$ in $\P^1$ - we can and do assume that none of them is $\infty$. We construct a family which will give a chart around $[C_0]$.
 
 Take $B$ a small open neighborhood of $(z_1, \dots , z_g,w_1,\dots , w_g)$ in $\P^{\times 2g}$. Take $\Delta$ the product of $g$ small disks in the complex plane with parameters $q_i$'s; with $\Delta^*$ we indicate the product of punctured disks.  Up to shrinking $B$ and $\Delta$, we can assume that $B\times \Delta^*$ is inside the set $U_{g,r}\subset C_g^{\circ}$ from Definition \ref{def:open_in_Schottky} for some conveniently small $r$. We can thus consider the family of curves with marked Schottky structure over it (see e.g. the proof of Proposition \ref{prop:open_in_Schottky}).
 
We complete it over $q_1\cdots q_g=0$ adding nodal curves (roughly speaking, if $q_i=0$, then we glue $w_i$ and $z_i$ to form a node); the fiber over the point 
$$(z_1, \dots , z_g,w_1,\dots , w_g,0,\dots ,0)$$ 
is $C_0$ . This extension is done via the so called plumbing or sewing construction, described for instance in \cite[Section 1.6]{Gui}, \cite[pages 184-185]{ACG} and \cite[Section 2]{CS-B14}. It is an extension of the gluing relation from Equation \eqref{plumbing}. Let $\pi \colon \mathcal{C}\to B\times \Delta$ be the family we have obtained.

 \textbf{Let us call $B_0$ the base $B\times \Delta$} we have just constructed, and let the index $0$ be to recall its relation to the boundary $\delta_0$.
 
 Observe that up to the proof of convergence, the partition function $\cZ_{V,g}$ from Definition \ref{def:parition_functions} is a power series which gives a holomorphic function on an open subset of $B_0$,  and will extend to all $B_0$.

\medskip

When, later on, we will construct conformal blocks and covacua on $B_0$, we will put on $\P^1$ a standard coordinate $z$ such that none of the $w_i$'s and $z_i$'s is $0$ or $\infty$. We will also need a section of $\pi$. The section $x$ of $\P^1\times B \to B$ which is constantly equal to $0$, up to shrinking $B$, $\Delta$ and $r$, will induce a section of $\pi$ which by abuse of notations we will still call $x$. Observe that the complement of the image of $x$ is affine, i.e. this family has the property (Q). We will use $x$ together with the standard coordinate to construct conformal blocks and covacua. 
 
\medskip

 The moduli map $m(\pi)\colon B\times \Delta\to \oM_g$ is not injective, two points have the same image if and only if the corresponding points in $\P^{\times 2g}$ differ by the action of $SL(2,\C)$. In $\P^{\times 2g}$, we can take an open set $B \cong W\times O$, where $O$ is a neighborhood of the identity in $SL(2,\C)$ and $W\subset \P^{\times 2g}$, such that $m(\pi)$ restricted to $W\times \Delta$ gives a local chart around $[C_0]$. 
 
 \medskip
 
Let us explain how to use this family to compute the vanishing order of a Teichm\"{u}ller modular form along $\delta_0$. Since $\delta_0$ is an irreducible divisor in $\oM_g$, we can carry our computation in an open subset of $\oM_g$ that does not intersect trivially $\delta_0$. In $B_0=B\times \Delta$, the divisor $D=\{q_1\cdots q_g=0\}$ is the equation of the preimage of $\delta_0$ by $m(\pi)$. Let $F$ be a section of a power of the Hodge line bundle on $\oM_g$, we want to compute its vanishing order along $\delta_0$. Observe that $m(\pi)(D)$ is open in $\delta_0$, and $\delta_0$ is irreducible, so it is enough to compute it along $m(\pi)(D)$. We pull-back $F$ to $B\times \Delta$, and then the vanishing order along $D$ is the divisibility of $F$ by $q_1\cdots q_g$ (i.e. we trivialize the Hodge line bundle so that $F$ becomes a function, we write $F=(q_1\cdots q_g)^kG$, where $G$ is not divisible by $q_1\cdots q_g$, and then $k$ is the vanishing order). More generally we can pull-back $F$ to the Schottky space $C_g$ to obtain a Tecichm\"{u}ller modular form, then restrict to $B\times \Delta^*$, this pull-back extends along $D$ because $F$ was defined on all $\oM_g$, not just $\M_g$, and then check the divisibility by $q_1\cdots q_g$.

\subsubsection{Boundary divisor $\delta_i$ and the family over $B_i$}\label{S:deltai} 

Let us now treat the boundary divisor $\delta_i$ with $i\neq 0$. We pick a curve $C_0$ obtained taking two smooth curves $X$ and $Y$ of genus $i$ and 
$g-i$ respectively, points $w$ in $X$ and $z$ in $Y$, and then glues $w$ and $z$. We also assume that $X$ is a smooth member of a family of curves over basis $B_X\times \Delta_X$ which is as $B_0$ from Subsection \ref{S:delta0} with the addition of an extra section $w$.  More explicitly, $B_X$ parameterizes $2i+1$ marked points in $\P^1$, $w_1,\dots w_i, z_1,\dots z_i,w$, and $\Delta_X$ is the product of $i$ disck. Observe that this is open in $C_i\times \Delta$, where $C_i$ is the space genus $i$ curves with marked Schottky structure, and $\Delta$ is a further small disk parametrizing $w$. Thus, we have a family of genus $i$ curves $\pi_X\colon \mathcal{C}_X \to B_X\times \Delta_X$ with a section $w$. We make the analogue construction for $Y$.

We now construct a family over $\pi \colon \mathcal{C} \to B_i:=B_X\times \Delta_X \times B_Y\times \Delta_Y\times \Delta$, where $\Delta$ is a small disk with variable $q$. \textbf{ We call this base $B_i$}; the index $i$ is used to recall its relation with the boundary divisor $\delta_i$. Before doing it, let us immediately observe that, up to proving convergence, the partition function $\cZ_{V,g,i}$ from Definition \ref{def:parition_functions} gives a function on some open subset of $B_i$.

\medskip

We glue the families $\mathcal{C}_X$ and $\mathcal{C}_Y$ with the usual plumbing/sewing construction(\cite[Section 1.6]{Gui}, \cite[pages 184-185]{ACG} and \cite[Section 2]{CS-B14}), using the parameter $q$, and the coordinates around the marked points $w$ and $z$ given by the Schottky uniformization. We quickly review the construction for the reader convenience. We take open neighborhoods $W$ and $Z$ in $\mathcal{C}_X$ and $\mathcal{C}_Y$ of the images of $w$ and $z$. On these neighborhoods, we take local coordinates $x$ and $y$ induced by the standard coordinate from $\P^1$. We remove smaller neighborhoods $W'$ and $Z'$ of the images of $w$ and $z$, and then we glue $W\setminus W'$ with $Z\setminus Z'$ according to the relation $xy=q$.

\medskip

As in Subsection \ref{S:delta0}, we add sections to this family. We put standard coordinates on the two copies of $\P^1$ so that none of the marked points is $0$ or $\infty$. We consider constant sections $x$ and $y$ of $\P^1\times B_X\to B_X$ and of $\P^1\times B_Y\to B_Y$ constantly equal to zero. These sections induce sections of $\pi$ such that the family has property $(Q)$. We will use them together with the standard coordinates to construct conformal blocks and covacua.

\medskip

The moduli map $m(\pi)\colon U\to \oM_g$ is not injective, two points have the same image if and only if the corresponding points in $(\P^1)^{\times 2i+1}\times (\P^1)^{\times 2g-2i+1}$ differ by the action of $SL(2,\C)\times SL(2,\C)$. Arguing similarly to Section \ref{S:delta0}, we can take a neighborhood $O$ of the identity in $SL(2,\C)\times SL(2,\C)$, so that $B_X\times B_Y\cong W\times O$, and $m(\pi)$ restricted to $W\times\Delta_X\times \Delta_Y\times \Delta$ gives a local chart.

\medskip

Let us now explain how to use this family to compute the vanishing order of Teichm\"{u}ller modular forms on $\delta_i$. On $B_i$, let $D$ be the divisor $\{q=0\}$. At one point of $D$, there is the curve $C_0$ with which we started with. The preimages of $m(\pi)^{-1}(\delta_i)$ in $B_i$ is $D$.  Let $F$ be a section of a power of the Hodge line bundle on $\oM_g$, we want to compute its vanishing order along $\delta_i$. Observe that $m(\pi)(D)$ is open in $\delta_i$, and $\delta_i$ is irreducible, so it is enough to compute it along $m(\pi)(D)$. We pull-back $F$ to $B_i$, and then the vanishing order along $D$ is its divisibility by $q$.

\subsection{Siegel modular forms}\label{S:Sieg} 

 Siegel modular forms are a variant of Teichm\"{u}ller modular forms, let us recall here the definition. For $g=1$, Siegel and Teichm\"{u}ller modular forms are the same. Fix $g\geq 2$. Let $\H_g$ be the $g$ dimensional Siegel upper half space, i.e., $g$ by $g$ matrices $\tau$ that are symmetric and with positive definite imaginary part. It carries a natural action of the symplectic group $Sp(2g,\Z)$, namely 
$$\begin{pmatrix}
 a & b \\
 c & d\\
\end{pmatrix} \tau=(a\tau+b)(c\tau+d)^{-1}
$$

A degree $g$ and weight $k$ Siegel modular form is a holomorphic function $F\colon \H_g\to \mathbb{C}$ such that
$$
F\left(\begin{pmatrix}
 a & b \\
 c & d\\
\end{pmatrix} \tau\right)=\det(c\tau+d)^{k}F(\tau)
$$
for every $\tau$ and all matrices of $Sp(2g,\Z)$. Classical examples of weight $k$ and degree $g$ Siegel modular forms are the theta series associated to a positive definite, even unimodular lattice $(L, \langle \cdot, \cdot \rangle )$ of rank $2k$. The definition is 
$$
\Theta_{L,g}(\tau):=\sum_{v_1,\dots, v_g \in \Z^g}\exp  \left(2 \pi i \sum_{i,j}\langle v_i,v_j \rangle \tau_{ij}\right)
$$
A classical result of Freitag \cite{Fre77}, (see also \cite[Satz 5.3, Section IV.5]{Fre83} and \cite[Section 2.2 and references therein]{PhD_Cod}) says that, for $k$ divisible by $4$ and $g$ big enough with respect to $k$, the space of weight $k$ and degree $g$ Siegel modular forms has a basis given by all theta series associated to positive definite even unimodular lattices. 

The rank of such lattices is always divisible by $8$, and their classification is known only up to rank $24$. In rank $8$ there is only the lattice $E_8$; in rank $16$ there are the lattices $E_8^{\oplus 2}$ and $D_{16}^+$; in rank $24$ there are $24$ such lattices, they are usually called {\bf Niemeier lattices}.  This implies that the dimension of the space of weight $4$ (respectively $8$, $12$) Siegel modular forms is 1 (resp. $2$ , $24$) for $g$ big enough.

For rank at least $32$, the number of unimodular positive definite even lattices increases very quickly. Already for rank $32$ the exact number is not known and estimates show that it is larger than $8 \times 10^7$.

Later, we will relate Theta series to lattice vertex algebras.

\medskip

Denote by $\A_g$ the stack quotient $\A_g:=\H_g/ Sp(2g,\Z)$ (this is the moduli space of $g$ dimensional principally polarized abelian varieties, but this is not relevant for this paper). It has a Hodge line bundle $N_g$ such that, for $g\geq 2$, weight $k$ Siegel modular forms are exactly pull-backs of sections of $N_g^{\otimes k}$ (for $g=1$, one needs to add the classical convergence at infinity). In other words, the space of weight $k$ Siegel modular forms is isomorphic to $H^0(\A_g,N_g^{\otimes k})$. A general reference is \cite{Fre83}, and references therein. 

The Hodge line bundle $N_g$ is ample on $\A_g$. This in particular means that there exists a positive integer $A$, which depends on $g$, such that for all $k$ divisible by $A$, given a point $\tau$ of $\H_g$ there exists a positive definite even unimodular lattice $L$ of rank $2k$ such that $\Theta_{L,g}(\tau)\neq 0$.

\medskip

The period matrix of a Riemann surface gives a map  $J\colon T_g \to \H_g$, usually called Jacobi map. The image $J(T_g)$ is usually denoted by $J_g$, it is called the Jacobi locus, and it is the locus of matrices which are period matrices of Riemann surfaces. This map descends to a map $j\colon \M_g \to \A_g$. A classical theorem of Torelli says that $j$ is injective.

The pull-back of the Hodge line bundle $N_g$ is $\lambda_g$, and via $J$, weight $k$ and degree $g$ Siegel modular forms pull-back to weight $k$ and degree $g$ Teichm\"{u}ller modular forms. In \ref{prop:comp}, we will see that not all Teicum\"{u}ller modular forms are pull-back of Siegel modular forms.

Observe that the dimension of $J_g$ is $3g-3$ and the dimension of $\H_g$ is $\frac{g(g+1)}{2}$, so $J_g$ is strictly smaller than $\H_g$ for $g\geq 4$. This gives rise to the Schottky problem, which is discussed in more detail in Section \ref{S:schottky_problem}.

\section{Conformal blocks and covacua}
\label{SecConformalBlocks}

\subsection{Motivations and references}

In this section, we recall the theory of conformal blocks of vertex algebras needed for the proof of Theorems \ref{T:main1} and \ref{thm:altre_espansioni}. The properties of conformal blocks are predicted by conformal field theory, and they were widely expected to hold \cite{FS,Segal}.

For the Wess-Zumino-Witten-model, these results were established a long time ago, mainly in \cite{TUY}, see also \cite{Ueno}. They used Kac-Moody algebras rather than vertex algebras.

In the case of vertex algebras, the first main attempt was carried out in the book \cite{VA}; the main issue is that loc. cit. deals just with smooth curves, whereas we need nodal curves. It is also worth mentioning the papers \cite{NT} for the genus zero case, and \cite{HvE16} for (super)-elliptic curves. In \cite{Cod_partititon}, the second author stated and tried to prove all the results listed below for nodal curves, but the pre-print still contains some inaccuracies. Recently, there have been a series of three papers on this topic by A. Gibney, C. Damiolini and N. Tarasca \cite{DGT1,DGT2,DGT3,T}, where the reder can find algebraic proofs.

From an analytic point of view, we quote and use recent results by Bin Gui from \cite{Gui24,Gui24c}. Se also \cite{Gui,GZa,GZb,GZc}.

\subsection{The definition and some examples}\label{S:def}
Let $V$ be a holomorphic vertex algebra\footnote{Most of the facts that we are going to recall work for conformal, rational and $C_2$ cofinite vertex algebras, but we are going to stick to holomorphic vertex algebras as this simplifies some definitions and some statements, and it is the only case of interest for this paper. In particular, to construct these sheaves one has to choose a representation of the vertex algebra for each section; in the holomorphic, we have only one possible choice for the module, and we thus omit it from the notations.}, and $\pi \colon \cC \to B$ a family of nodal curve with $n$ sections.  The sheaves of covacua $\T$ and of conformal blocks $\T^*$ are dual sheaves on $B$, whose construction depends on the family and on the vertex algebra. When we want to include the family in the notation, we write $\T(\pi)$ and $\T^*(\pi)$; when we want to include the vertex algebra in the notation, we write $\T(V)$ and $\T^*(V)$. These sheaves turn out to be line bundles, i.e. locally free sheaves (= vector bundles) of rank one. When $B$ is just a point, these sheaves are vector spaces, and one speaks of spaces of covacua and conformal blocks. The construction is as in \cite[Sections 2,3 and 4]{DGT2}, \cite[Section 3 and 4]{Gui24c} and \cite[Sections 2 and 3]{Gui24}. We also find useful the exposition from \cite[Chapter 3]{Gui}.

We denote by $\O_B$ the sheaf of holomorphic functions on $B$, i.e. the trivial line bundle on $B$. We denote by $V^{\otimes n}\otimes \O_B$ the trivial vector bundle on $B$ with fibers $V^{\otimes n}$, and by  $\left(V^{\vee}\right)^{\otimes n}\otimes \O_B$ the trivial vector bundle on $B$ with fibers $\left(V^{\vee}\right)^{\otimes n}$. 

The sheaf $\T$ can locally be defined as a quotient of  $V^{\otimes n}\otimes \O_B$, whereas $\T^*$ can locally be defined as the sub-sheaf of $\left(V^{\vee}\right)^{\otimes n}\otimes \O_B$.

First, we have to construct the locally free sheaf (= vector bundle) $\mathcal{V}_{\cC}$ on $\cC$, usually called the sheaf of vertex algebras. Let us recall the construction away from the nodes and refer to the above mentioned papers for the construction in a neighborhood of the nodes. First, we define the sheaf $\mathcal{V}^{\leq d}_{\cC}$. Let $V^{\leq d}:=\bigoplus_{k\leq d} V_k$. Given an open set $\mathcal{U} \subset \cC$ with a vertical coordinate\footnote{By vertical coordinate we mean a holomorphic function $z_\mathcal{U} \colon \mathcal{U} \to \C$ which, for every $b$ in $B$, when restricted to $\mathcal{U}_b :=\pi^{-1}(b) \cap \mathcal{U}$ realizes an isomorphism between $\mathcal{U}_b$ and an open subset of $\C$} $z_\mathcal{U}$, the sheaf is a copy of $V^{\leq d} \otimes \O_\mathcal{U}$, i.e. the trivial vector bundle with fiber $V^{\leq d}$. Given another open set $\mathcal{W}\subset \cC$ with local coordinate $z_\mathcal{W}$, we have to define the transition function on $\mathcal{U}\cap \mathcal{W}$. At a point $p$ in $\mathcal{U}\cap \mathcal{W}$ we have two local coordinates, $z_\mathcal{U}-z_\mathcal{U}(p)$ and $z_\mathcal{W}-z_\mathcal{W}(p)$, and let $\rho_p$ be the change of coordinates, i.e. $\rho_p(t)$ is a holomorphic function on $p$ and $t$ such that $\rho_p(z_\mathcal{U}-z_\mathcal{U}(p))
 =z_\mathcal{W}-z_\mathcal{W}(p)$. The group of change of coordinates acts on $V^{\leq n}$ via the Virasoro algebra, see \cite[Section 6.3]{VA} or \cite[Section 2.3]{Gui}, and this action is used to define transition functions. 

The description of this transition function simplifies if we look at the quotient  $\mathcal{V}^{\leq d}_{\cC}/ \mathcal{V}^{\leq d-1}_{\cC}$. On an open subset $\mathcal{U}$ with local coordinates $z_\mathcal{U}$, the quotient  $\mathcal{V}^{\leq d}_{\cC}/ \mathcal{V}^{\leq d-1}_{\cC}$ is the trivial vector bundle with fiber $V_d$; given another open subset $\mathcal{W}$ with local coordinate $z_\mathcal{W}$, the transition function on $\mathcal{U}\cap \mathcal{W}$ is the holomorphic function given by the $d$-th power of the derivative of one coordinate with respect to the other, i.e. $\left(\frac{\partial z_\mathcal{U}}{\partial z_\mathcal{W}}\right)^d $, see  \cite[Equations 2.5.7 and 2.5.8, Proposition 2.5.4]{Gui} and \cite[Sections 6.4 and 6.5]{VA}. A similar formula holds for quasi-primary vectors: if $v$ is quasi-primary, we can see it as a constant local section of $\mathcal{V}^{\leq d}$ on $\mathcal{U}$, and on $\mathcal{U}\cap \mathcal{W}$ it is glued to the (non-constant) section $\left(\frac{\partial z_\mathcal{U}}{\partial z_\mathcal{W}}\right)^d v $.

The sheaf $\mathcal{V}_{\cC}$  is then defined as the inductive limit of the sheaves $\mathcal{V}_{\cC}^{\leq d}$.

\begin{example}\label{ex:P1_vb_and_set_up}
We (partially) describe the sheaf of vertex algebras $\mathcal{V}_{\cC}$ when $B$ is a point, and $\cC$ is $\mathbb{P}^1$. 

On $\P^1$, let $z$ be the standard coordinate,  $\mathcal{U}=\P^1\setminus \infty$ with coordinate $z_\mathcal{U}=z$, and $\mathcal{W}=\P^1\setminus 0$ with coordinate $z_\mathcal{W}=z^{-1}$. 

The transition function for $\mathcal{V}_{\P^1}^{\leq d}/\mathcal{V}_{\P^1}^{\leq d-1}$ on $\mathcal{U}\cap \mathcal{W}$ is the multiplication by $(-1)^d z^{2d}$, hence the quotient $\mathcal{V}_{\P^1}^{\leq d}/\mathcal{V}_{\P^1}^{\leq d-1}$ is isomorphic to $\mathcal{O}_{\P^1}(2d)^{\oplus \dim V_d}$.

Let us construct some special global sections of $\mathcal{V}_{\P^1}$ associated to quasi-primary vectors. Given a quasi-primary vector $v$ of degree $d$, we define a section $\phi_v$ as the constant section $v$ on $\mathcal{U}$, and $(-1)^d z^{-2d}v$ on $\mathcal{W}$. If $v$ is a (non-necessarily primary) vector of degree $d$, the constant section $v$ on $\mathcal{U}$ is glued on $\mathcal{U}\cap \mathcal{W}$ with $(-1)^d\sum_{i=0}^d(i!)^{-1} z^{-2d+i}L_1^iv$, see \cite[Example 2.3.2]{Gui}. 
\end{example}

To keep on going with the construction, we make the following assumptions on the family
\begin{assumption}\label{assumption_construction}
\begin{enumerate}
\item  the family has the property (Q) introduced in Section \ref{SS:family}, i.e. the complement of the image of the sections is affine. In particular, the family has at least one section, i.e. $n\geq 1$. We denote by $S_i$ the image of $s_i$, and by $S$ the union of all the $S_i$'s.
\item We can and do fix coordinate around each $S_i$; more explicitly, for every $i$ we can and do fix an open neighborhood $\mathcal{U}_i$ of $S_i$ and a vertical coordinate $z_i$ on $\mathcal{U}_i$ such that $S_i=\{z_i=0\}$.
\end{enumerate}
\end{assumption}

We recall some standard notation about sheaves for the reader convenience. Given a sheaf $F$ on a variety $X$, the space of global sections is denoted by $H^0(X,F)$. Given a family $\pi \colon \cC\to B$ and a sheaf $F$ on $\cC$, the push-forward $\pi_*F$ is a sheaf on $B$ whose section over an open subset $A$ of $B$ is $H^0(\pi^{-1}(A),F)$. 

Let $\omega_{\cC/B}(*S)$ be the sheaves of relative meromorphic differentials on $\cC$ with algebraic poles only at $S$. Fix $i$, and let $\phi_i$ be a section of the sheaf $\mathcal{V}_{\cC}\otimes \omega_{\cC/B}(*S)$ over a neighborhood $\mathcal{U}_i$ of $S_i$. On $\mathcal{U}_i$, we can write $\phi_i$ as a sum of pure tensors $\sum_{j=1}^{k_i}v_i^{(j)}\otimes f_i^{(j)}dz_i$, where $v_i^{(j)}$ is a vector of $V$, and $f_i^{(j)}$ is a meromorphic function on $U_i$, so $f_i^{(j)}$ depends both on the vertical coordinate $z_i$ and the coordinates from $B$. We associate to $\phi_i$ an endomorphism of $V\otimes \O_B$, which we still denote by $\phi_i$; for $v$ in $V$, we let
$$
\phi_i(v):=\sum_{j=1}^{k_i} \Res_{z_i=0}\left(Y\left(v_i^{(j)},z_i\right)f_i^{(j)}(z_i)dz_i\right)v \in V \otimes \O_B 
$$
We extended $\phi_i$ from $V$ to $V\otimes \O_B$ by $\O_B$-linearity.

Given a section $\phi$ of $\pi_*\left(\mathcal{V}_{\cC}\otimes \omega_{\cC/B}(*S)\right)$, we call $\phi_i$ its restriction to a neighborhood $\mathcal{U}_i$ of $S_i$. We define an action of $\pi_*\left(\mathcal{V}_{\cC}\otimes \omega_{\cC/B}(*S)\right)$ on $V^{\otimes n}\otimes \O_B$ in the following way: for $v_1\otimes \cdots \otimes v_n$ in $V^{\otimes n}$, we let
$$
\phi\cdot (v_1\otimes \cdots \otimes v_n)=\sum_{i=1}^nv_1\otimes \cdots \otimes \phi_i(v_i)\otimes \cdots \otimes v_n
$$
we extended this action by $\O_B$-linearity from  $V^{\otimes n}$ to $V^{\otimes n}\otimes \O_B$. This action is as in \cite[Equations 3.2.4 and 3.2.5]{Gui}. (Let us make contact with the notations of loc. cit., first the symbol $v$ is used for a section of $\mathcal{V}_{\cC}\otimes \omega_{\cC/B}(*S)$ restricted to a neighborhood of $S_i$, and the operator $\mathcal{V}_{\rho}(\eta_i)$ is a trivialization of the bundle, see Equation 3.2.3; then $v$ is used for global sections of $\mathcal{V}_{\cC}\otimes \omega_{\cC/B}(*S)$).

\medskip

The sheaf of \textbf{covacua} is the sheaf of coinvariant for this action: it is the biggest quotient of $V^{\otimes n}\otimes \O_B$ where $\pi_*\left(\mathcal{V}_{\cC}\otimes \omega_{\cC/B}(*S)\right)$ acts trivially. In other words, the sheaves of covacua $\T$ is the quotient of $V^{\otimes n}\otimes \O_B$ by $\pi_*\left(\mathcal{V}_{\cC}\otimes \omega_{\cC/B}(*S)\right)\cdot V^{\otimes n}\otimes \O_B$, see \cite[Equation 3.2.7]{Gui}.

The sheaf of \textbf{conformal blocks} is the dual of the sheaf of covacua. It consists of all sections of $\left(V^{\vee}\right)^{\otimes n}\otimes \O_B$ that are annihilated by the action of $\pi_*\left(\mathcal{V}_{\cC} \otimes \omega_{\cC/B}(*S)\right)$.

\begin{example}\label{ex:covacP1}
We continue Example \ref{ex:P1_vb_and_set_up}. We take $S=\{0\}$, so just one marked point. A tensor product $ v\otimes z^n dz$, where $v$ is a vector of degree $d$ in $V$, and $n\in \mathbb{Z}$, is a local section of $\mathcal{V}_{\P^1}\otimes \omega_{\P^1}(*0)$ around $0$. Assume now that $v$ is a quasi-primary vector. We want to check if $ v\otimes z^n dz$ extends to a global section $\phi_{v,n}$ of $\mathcal{V}_{\P^1}\otimes \omega_{\P^1}(*0)$ - i.e. to an element of $\pi_*\left(\mathcal{V}_{\P^1}\otimes \omega_{\cC/B}(*S)\right)=H^0(\P^1,\mathcal{V}_{\P^1}\otimes \omega_{\P^1}(*0))$. Following Example \ref{ex:P1_vb_and_set_up}, if we restrict it to $\mathcal{U}\cap \mathcal{W}$, and we change coordinates, this section becomes $w^{2d}v\otimes w^{-n-2}dw$, where $w=z^{-1}$, hence it extends to all $\P^1$ if and only if $n\leq 2d-2$.  The span of the vectors of the form $(v\otimes z^ndz)\cdot \Omega^V$, where $v$ is any non-trivial quasi-primary vector and $n$ is any non-positive integer such that $n\leq 2\deg{v}-2$ is $\bigoplus_{k> 0}V_k$, hence the covacua are a quotient of $V_0$. 

Let us show that they are non zero, so (canonically) isomorphic to $V_0$. Let $\phi$ be a global section of $\mathcal{V}_{\P^1}\otimes \omega_{\P^1}(*0)$. Since the pole at $\infty$ of $\phi$ is algebraic, locally around $0$ can be written out as a finite sum $\sum_j v_j\otimes z^{n_j}dz$, where the degree of $v_j$ is $d_j$. On $\mathcal{U}\cap \mathcal{W}$, it is glued with $(-1)^{d_j}\sum_j \left(\sum_{i=0}^{d_j} w^{2d_j-i}L_1^iv_j\right)\otimes w^{-n_j-2}dw$. If this section is regular at $\infty$, then $2d_j-n_j-2\geq 0$. This implies that the operator obtained out of global sections have strictly positive degree, so the space of covacua is exactly $V_0$.

\medskip

 The space of conformal blocs is dual to the space of covacua, so it is spanned by the linear functional $a \mapsto (\Omega^V,a)$.
\end{example}

\begin{example}[Correlation functions]\label{ex:covacP1_tantipunti}
We generalize \ref{ex:covacP1}. Again we take $B$ a point and $\cC=\mathbb{P}^1$ the projective line, with standard coordinate $z$. Now we take $n$ marked points $z_i$, different from $\infty$, and local coordinates $w_i:=z-z_i$.  We define a linear functional $\Phi$ on $V^{\otimes n}$ as 
$$a_1\otimes \cdots \otimes a_n \mapsto (\Omega^V,Y(a_1,z_1)\cdots Y(a_n,z_n)\Omega^V)$$ 
This function is usually called a correlation function; see \ref{d:correlation_function} and \ref{T:commutation}. Let us explain why the above formula gives a well defined complex number. First, one has to see the $z_i$'s as formal variables. Then, by \cite[Section 3.5]{Axiomatic}, the above formula gives a rational function in the $z_i$'s with poles only where two $z_i$'s coincide. The variables can then be evaluated at the desired values.

We have to show that $\Phi$ is a conformal block, so that every element of $\phi$ of $H^0(\P^1,\mathcal{V}\otimes \omega_{\P^1}(*S))$ annihilates $\Phi$, where now $S=\{z_1,\cdots , z_n\}$. This is a classical consequence of associativity. 

As a consequence of Propagation of Vacua (Section \ref{S:prop}) and Example \ref{ex:covacP1}, $\Phi$ is a basis of the space of conformal blocks, and it is mapped to the conformal block $a \mapsto (\Omega^V,a)$ from Example \ref{ex:covacP1} by the isomorphism given in the Theorem of Propagation of Vacua.
\end{example}
The definition, so far, depends on the choice of local coordinates. One can show that it is covariant with respect to the choice of local coordinates, so the resulting sheaves are independent. This is done for instance in \cite[page 71-72, see also Lemma 3.11]{Gui}. More precisely, one introduces a sheaf on $B$ similar to $\mathcal{V}$: for every open set $\mathcal{U}$ in $B$ and every choice of vertical local coordinates $z_i$ around $S_i$, it is isomorphic to  $V^{\otimes n}\otimes \O_{\mathcal{U}}$, and then one introduces convenient transition functions. For every choice of local coordinates, we have an action of $\pi_*\left(\mathcal{V}\otimes \omega_{\cC/B}(*S)\right)$ as before, and one can show that the action is independent of the choice of local coordinates. (In this way, one can also see that the action is well-defined also if one can not choose vertical local coordinates on the full family: it is enough to define them locally on $B$, and this shows that the action glue together).

\medskip

If the family does not satisfy the two assumptions  \ref{assumption_construction} (maybe it does not even have sections at all), one first covers the base $B$ with a small enough open subset $\mathcal{U}_i$ such that if we restrict the families over the $\mathcal{U}_i$'s, it is possible to add sections so that property (Q) holds. One then construct the sheaves on each $\mathcal{U}_i$. Now, one has to show that the sheaves are independent of the choice of the sections (this is a consequence of Propagation of Vacua, see Section \ref{S:prop}), and they glue together.

\subsection{Conformal blocks and covacua are line bundles}\label{S:rank}
One shows that conformal blocks and covacua are dual vector bundles on $B$, not just sheaves. This is a consequence of many results, such as factorization formula, sewing construction, and the description of the algebra of differential operators acting on these bundles. References are \cite[VB corollary page 4]{DGT2} or \cite[Theorem 5.5]{Gui24}.

Since $V$ is holomorphic, the rank of these vector bundles is one, i.e. these sheaves are dual line bundles. This is well-known; let us give a quick explanation. By functoriality (Section \ref{S:funct}), it is enough to do it for the universal family over $\oM_g$. We compute the rank of the fiber of $V$ over a curve whose normalization is $\mathbb{P}^1$. There, using the factorization formula \cite[page 3 Equation (1)]{DGT2}, propagation of vacua (Section \ref{S:prop}), and then Example \ref{ex:covacP1}, we obtain that the dimension of that fiber is one. The reader can also look at the discussion after Example 5.3.2 of \cite{Gui}. 

(For the sake of completeness, let us say that when $V$ is not holomorphic but just rational and $C_2$ cofinite, these bundles are finite rank dual vector bundles. The rank is always strictly greater than one if $V$ is not holomorphic; it depends non-trivially on the genus of the curves and the representation theory of the algebra.)

\subsection{Functoriality}\label{S:funct}
Let $f\colon T\to B$ be a morphism and consider the pulled-back family $\pi_T\colon \cC_T=\cC\times_B T \to T$. We also pull-back sections and local coordinates, when present. Let $\T(\pi)$ and $\T(\pi_f)$ be, respectively, the bundles of covacua on $B$ and $T$. Then there is a natural map between $f^*\T^*(\pi)$ and $\T^*(\pi_f)$ which turns out to be an isomorphism. By duality, the analogous statement holds for the sheaf of covacua $\T$.

This is equivalent to say that $\T$ and $\T^*$ define line bundles on the moduli stack of nodal curves $\oM_{g,n}$. This is shown in \cite{DGT1}, see also \cite[Section 5.4]{Gui}.

\subsection{Tensor product}\label{S:tens}
Given two holomorphic vertex algebras $U$ and $V$, we have natural inclusion $i\colon \T^*(U)\otimes \T^*(V)\hookrightarrow \T^*(U\otimes V)$. Because of Section \ref{S:rank}, domain and codomain of $i$ are line bundles. Thanks to the functoriality of Section \ref{S:funct}, to show that this map is an isomorphism it is enough to show it on $\oM_{g,n}$. Since holomorphic functions on $\oM_{g,n}$ are constant, and $i$ is a non-zero map, if we show that abstractly $\T^*(U)\otimes \T^*(V)$  and $\T^*(U\otimes V)$ are isomorphic, then we obtain that $i$ is an isomorphism. Denoting by $c_U$ (resp. $c_V$) the central charge of $U$ (resp. $V)$, the central charge of $U\otimes V$ is $c_U+c_V$. In Section \ref{S:unif}, we will see that $\T^*(U)$ (resp. $ \T^*(V)$, $\T^*(U\otimes V)$) is isomorphic to $\lambda_g^{-c_U}$ (resp. $\lambda_g^{-c_V}$, $\lambda_g^{-c_U-c_V}$), hence, $i$ is an isomorphism. Dually, we get an isomorphism  $\T(U\otimes V)\cong \T(U)\otimes \T(V)$.

\subsection{Propagation of vacua}\label{S:prop}
Let $\varrho_{m,n}\colon V^{\otimes n}\to V^{\otimes (m+n)}$ be the map defined taking the tensor with $(\Omega^V)^{\otimes m}$, and let $\varrho_{m,n}^{\vee}\colon \left(V^{\vee}\right)^{\otimes (n+m)}\to \left(V^{\vee}\right)^{\otimes n}$ be its dual. 

Suppose one adds to our original family $\pi \colon \cC\to B$ other $m$ sections $t_1,\dots, t_m$. Let $\T_n$ be the bundle constructed using only the first $n$ sections, and $\T_{m+n}$ be the bundle constructed using all the sections.  We use the analog notation for $\T_n^*$ and $\T_{m+n}^*$.

The \textbf{Theorem of Propagation of Vacua} asserts that $\varrho_{m,n}$ induces an isomorphism between $\T_n$ and $\T_{m+n}$, and $\varrho_{m,n}^{\vee}$ induces an isomorphism between $\T_n^*$ and $\T_{m+n}^*$.

The proof of propagation of vacua from \cite[Theorem 10.3.1]{VA} goes through also in this set-up with minor modifications. Another reference is \cite[Theorem 7.1]{Gui24c}, see also \cite[Section 3.4]{Gui}.

\subsection{Factorization}\label{S:factorization}
Assume now that $\pi$ has a horizontal node, i.e. there is a section $n$ of $\pi$ such that $n(b)$ is a node of $\cC_b$ for every $b$ in $B$. Let $\nu\colon \hat{\cC}\to \cC$ the normalization of this horizontal node. This is either a new family $\pi_{\nu}$ on $B$, or the union of two families $\pi_1$ and $\pi_2$ on $B$. 

In the first case, the \textbf{Factorization Theorem} states that the map $\varrho_{n,2}$ defined in Section \ref{S:prop} induces an isomorphism between $\T(\pi)$ and $\T(\pi_{\nu})$. In the second case, it states that $\varrho_{n,2}$ induces an isomorphism between $\T(\pi)$ and $\T(\pi_1)\otimes \T(\pi_2)$. The analogous statement holds for the bundle of conformal blocks.

This is the main result of \cite{DGT2}, see also \cite[Sections 4.7]{Gui}.

\subsection{Sewing of conformal blocks}\label{S:sewing}

The sewing theorem is usually stated in a rather general form; it is an interplay between conformal blocks and plumbing construction (the plumbing construction is reviewed in Section \ref{S:deltai}). Roughly speaking, it goes as follows. It starts from a section of the bundle of conformal blocks $s$ for a family of genus $g$ curves with $k+2$ sections; then, on the family of genus $g+1$ curves with $k$ marked points obtained via plumbing construction, we get a new conformal block $s'$ constructed via the so called sewing procedure. Alternatively, one  starts from two families, one of genus $a$ with $k+1$ sections and one conformal block $s_a$, the other of genus $b$ with $h+1$ section and a conformal block $s_b$, construct a family of genus $a+b$ curves with $h+k$ sections gluing points with the plumbing construction and construct there a conformal block $s$ sewing $s_a$ and $s_b$.

Let us spell out the sewing theorem in the case that will be used in this paper. First, the version for the family $\pi\colon \cC\to B_0=B\times \Delta$ from Subsection \ref{S:delta0} that will be used in the proof of Theorem \ref{T:main1}. 

We start from the trivial family $p\colon \P^1\times B\to B$, with $2g+1$ sections, which we denote by $w_1,\dots, w_g,z_1,\dots , z_g,x$. Using all sections $x$, $w_i$ and $z_i$, we can construct the bundle of conformal blocks $\T^*(p)$ as the sub-bundle of the trivial bundle $\left( V^{\vee}\right)^{\otimes (1+ 2g)} \otimes \O_B$.

Following Subsection \ref{S:delta0}, we can construct a family $\pi\colon \cC\to B_0=B\times \Delta$ with a section $x$  and a local coordinate around $x$ out of the family $p$. Using the section $x$ with its local coordinates, we can construct the bundle of conformal blocks $\T^*(\pi)$ as a sub-bundle of the trivial $V^{\vee}\otimes \O_{B\times \Delta}$.

The sewing of conformal blocks produces a section $s'$ of $\T^*(\pi)$ out of a section $s$ of $\T^*(p)$. It goes as follows. Denote by $\Id_k$ the identity element in $V_k \otimes V_k^{\vee}$. Choosing an orthonormal basis $v^{(i)}$ with respect to the scalar product $( \, , \,)$ on $V_k$, we can identify $V_k$ and $V_k^{\vee}$, so we can identify $\Id_k$ with $\sum v^{(i)}\otimes v^{(i)}$ in $V_k\otimes V_k$. Now we let
\begin{equation}\label{E:sewing}
\overline{s}:=\sum_{(n_1,\dots , n_g)\in \N^g}\left( s\otimes \Id_{n_1}\otimes \cdots \otimes \Id_{n_g}\right) q_1^{n_1}\cdots q_g^{n_g}
\end{equation}
This is a formal power series in $\left(\left(V^{\vee}\right)^{1+2g} \otimes \O_B \right)\otimes V^{\otimes 2g}[[q_1,\dots ,q_k]]$. Observe that there is a natural contraction map from $\left(V^{\vee}\right)^{\otimes (1+2g)} \otimes V^{\otimes 2g}$ to $V^{\vee}$. We let $s'$ be the image of $\overline{s}$ via this contraction.

The \textbf{Sewing Theorem} asserts that $s'$ converges on $B\times \Delta$ to a section of $\T^*(\pi)$. (Observe that $s'$ is a Taylor series in the $q_i$'s with coefficients holomorphic functions on $B$, so it makes to say that it converges on all $B\times \Delta$ ; it does not have all the issues about domain of convergences of Laurant series.)

The convergence of sewing is the main result of \cite{Gui24}, see also \cite[Sections 3.3 and 4.3]{Gui}; in \cite[Introduction]{Gui24} the reader can find the history of the proofs of convergence and a comprehensive bibliography.

\medskip

The following version of the sewing theorem will be needed in the proof of \ref{thm:altre_espansioni}, it is for the families $\pi\colon \cC\to B_i$ from Section \ref{S:deltai}. Let $s_X$ be a conformal block for $\pi_X$, and $s_Y$ a conformal block for $\pi_Y$ (these are families of curves with one marked point, so $s_X$ and $s_Y$ are linear functionals on $V$). On $V$, consider the power series
$$
s':= \sum_{k\in \mathbb{N}}  \sum_{i=1}^{\dim V_k} s_X\left(v^{(i)}_k\right)s_Y\left(v^{(i)}_k \right) q^k
$$
where $\{v^{(i)}_k\}$ is an orthonormal base of $V_k$. The Sewing Theorem asserts that $s'$ converges on $B_i$ to a section of the conformal blocks of $\pi$ on all $B_i$.

Separately, $s_X$ and $s_Y$ can be thought of as conformal blocks obtained via the sewing procedure described in the first part, so $s'$ can be further expanded in variables $w_i$'s, $z_i$' and $q_i$'s.

\subsection{The vacuum section}\label{SS:vacuumSec}

The line bundle of covacua $\T_g(V)$ always has a canonical section, called the vacuum section and denoted by $\oOmega_{V,g}$. Let us explain the construction.

First, we cover $B$ with small enough open subsets $B_i$ to allow us to add sections to the family to guaranty the property (Q), and be able to choose local coordinates. 

On each $B_i$, we choose local coordinates, so that $\T$ can be realized as a quotient of $V^{\otimes n}\otimes \O_B$, for some $n$. We take as vacuum section the image of $\left(\Omega^V\right)^{\otimes n}$ in $\T$, where $\Omega^V$ is the vacuum vector of $V$.

The group of change of local coordinates acts on $V$ via its Lie algebra, which is the subalgebra of the Virasoro algebra spanned by the $L_n$'s with $n\geq 0$, and it annihilates the vacuum vector. This means that the vacuum section is independent on the choice of local coordinates. The vacuum sections on each open subset of $B$ can then be shown to glue to a global section of $\T$ on $B$.

As we shall see in Section \ref{S:proof1}, the vacuum section is strongly related to the genus $g$ partition function of $V$.

The vacuum section was first considered in \cite{Cod_partititon}; because of its strong connection with the power series appearing in Definition \ref{def:parition_functions}, it was also called partition function. This section was then considered in \cite[Definition 3.1]{Gui24}, where it was called vacuum section. We have opted for this second name to stress the difference with the power series. We have decided to use the name partition function only for the power series.

Due to the Theorem of Propagation of Vacua (Section \ref{S:prop}) and the Factorization Theorem (Section \ref{S:factorization}), the vacuum section is preserved by the morphisms from Section \ref{S:moduli}, namely the forgetful morphism $\oM_{g,n+1}\to \oM_{g,n}$, and the gluing morphisms $\oM_{g-1,n+2}\to \oM_{g,n}$ and $\oM_{h,m+1}\times \oM_{k,n+1}\to \oM_{h+k,m+n}$, for all values of h,k,g,m and n. The vacuum section is also preserved by the tensor product construction described in Section \ref{S:tens}.

\section{Atiyah algebras and Virasoro uniformization}\label{S:Ati}
\subsection{Generalities on Atiyah algebras of line bundles} 

We gather some general facts about Atiyah algebras mainly following \cite[Section 5.2]{MOP}, see also  \cite[Section 2]{Tsu} , \cite{BS}, and  \cite[Sections 2 and 6]{DGT1}. 

Given a line bundle $L$ on a smooth variety $B$ or a smooth Deligne-Mumford stack, its Atiyah algebra $\A_L$ is the sheaf of derivations acting on sections of $L$, with its natural structure of sheaf of Lie algebras. An important variant is the log Atiyah algebra $\A_L(-\log(D))$ associated to $L$ and a normal crossing divisor $D$ in $B$: it is the sheaf of differential operators $A$ such that for every section $s$ of $L$ vanishing along $D$, also $A(s)$ vanishes along $D$. 

 The symbol map naturally presents $\A_L(-\log(D))$ as an extension of both sheaves of Lie algebras and $\O_B$-modules
\begin{equation}\label{eq:fund_sequence}
0 \to \O_B \to \A_L(-\log(D)) \to TB(-\log(D)) \to 0
\end{equation}
Where $TB(-\log(D))$ is the sheaf of vector fields on $B$ tangent to $D$. This extension is called the fundamental sequence of $\A_L(-\log(D))$. When $D=0$, i.e. $\A_L(-\log(D))=\A_L$, the class of this extension is equivalent to the first Chern class of $L$. A holomorphic connection on $L$ is a splitting of this exact sequence.  

We can define the  notion of (log) Atiyah algebra without using a line bundle: a (log) Atiyah algebra is a sheaf of Lie algebras and $\O_B$-modules which sits in an exact sequence as \eqref{eq:fund_sequence}.

Assuming now that a log Atiyah algebra $\A$ acts on two isomorphic line bundles $L$ and $M$, we claim that we can find an isomorphism between $L$ and $M$ which is $\A$-equivariant. Let us explain how to do it. Following e.g. \cite[Section 5.2]{MOP}, the trivial Atiyah algebra $\O_B\oplus TB(-\log(D))$ acts on $L\otimes M^{-1}$ (see e.g. \cite[Section 5.2]{MOP}). Since $L\otimes M^{-1}$ is trivial, the constant function is annihilated by the trivial Atiyah algebra, and it corresponds to the $\A$ equivariant isomorphism.

(For the sake of completeness, observe that if two line bundles have the same Atiyah algebra, then they have the same first Chern class, hence if the group of isomorphism classes of degree zero line bundles on $B$ is trivial, they are isomorphic.)

\subsection{Virasoro uniformization of the Hodge line bundle}\label{S:unif_and_kod} The Virasoro uniformization, sometime also called Krichever's construction, is described in many papers, here we follow mainly \cite[Section 2]{DGT1} and \cite[Section 3, in particular Prop. 3.19]{ADKP}, see also \cite[Section 17.3]{VA} and \cite[Sections 3 and 4]{BS}. 

Given a family of curves $\pi\colon \cC\to B$ with sections $s_1,\dots, s_n$ and at least one smooth fiber, we have the \emph{Kodaira-Spencer map}
\begin{equation}\label{eq:KodSpe}
R^1\pi_*T_{\pi}(-S)\to TB(-\log(D))
\end{equation}
where $T_{\pi}$ is the relative tangent bundle of $\pi$, $S$ be the union of the image of the sections, $R^1\pi_*T_{\pi}(-S)$ is the vector bundle on $B$ whose fiber over a point $b$ is $H^1(\cC_b,T(-S_b))$, $D$ is the divisor in $B$ over which the fibers are singular, and $TB(-\log(D))$ is the sheaf of vector fields on $B$ that preserve $D$.

Let $W$ be the Witt algebra, i.e. the algebra spanned by differential operators $L_n:=z^n\frac{d}{dz}$ with $n\in \mathbb{Z}$. On $B$, we look at the trivial vector bundle $\O_B\otimes W$ with fiber $W$.  If the family has the property (Q), i.e. the complement of the sections is affine, choosing vertical coordinates around the sections, we get a surjective map
\begin{equation}\label{E:vir_unif_p}
W^{\oplus n} \otimes \O_B\to TB(-\log(D))
\end{equation}
defined as follows. We define a map from  $W^{\oplus n} \otimes \O_B$
to $R^1\pi_*T_{\pi}(-S),$ and then compose with the Kodaira-Spencer map of Equation \eqref{eq:KodSpe}. To define the first map, we cover $\cC$ with affine open sets given by the neighborhoods of the sections and their complements; then we associate to elements of $W^{\oplus n} \otimes \O_B$ vector fields in the intersections of open sets of this cover using the local coordinates, and we conclude using the description of $R^1\pi_*T_{\pi}(-S)$  based on Chech co-homology.  ( In other words, given a point $b$ of $B$, we attach to every element of $W^{\oplus n}$ an infinitesimal deformation of $\pi^{-1}(b)$ preserving the singularities. )

For every positive integer $c>0$, we consider the central extension $Vir_{c,n}$ of $\O_B\otimes W^{\oplus n}$ by $\O_B$ given by the following commutator
$$
\left[\bigoplus_{i=1}^n f_iL_{n_i} \, , \, \bigoplus_{i=1}^n g_iL_{m_i}\right]=\bigoplus_{i=1}^nf_ig_iL_{n_i-m_i} \oplus \frac{c}{12} \sum_{i=1}^n \delta_{m_i,n_i}(n_i^3-n_i)f_ig_i \,.
$$

We can extend the map \eqref{E:vir_unif_p} to  $Vir_{c,n}$ mapping the central element to zero. If we quotient  $Vir_{c,n}$ by the kernel of the map \eqref{E:vir_unif_p}, we obtain an Atiyah algebra $\A_{c,g,n}$ on $B$, where $g$ is the genus of the fiber of $\pi$.

(Let us describe the Kernel of  the map \eqref{E:vir_unif_p}. It consists of two parts. First, it contains the Lie algebra of the group of change of local coordinates. Then, we can attach to every meromorphic vector field on $\cC$ an element of $W^{\oplus n}$ looking at its expansion around $S$ in terms of the local coordinate, and this is also in  the kernel of the morphism \eqref{E:vir_unif_p}. See also \cite[Section 17.3.5]{VA}.)

The Atiyah algebra $\A_{c,g,n}$ acts on the Hodge line bundle $\lambda_g^{\otimes c/2}$, the action is by Lie derivatives  (see e.g. \cite[Section 3]{BS}). This construction is usually known as the \emph{Virasoro uniformization} of the Hodge line bundle, it identifies the log Atiyah algebra of $\lambda_g^{c/2}$ on $B$ with $\A_{c,g,n}$.

This construction is functorial, gives a concrete description of the Atiyah algebra of $\lambda_g^{\otimes c/2}$ over $\M_{g,n}$, and of the log Atiyah algebra of $\A_{\lambda_g^{\otimes c/2}}(-\log(\delta))$ over $\oM_{g,n}$. As $\lambda_g$ is a pull-back from $\oM_g$, $\A_{c,g,n}$ is the pull-back of an Atiyah algebra $\A_{c,g}\cong \A_{\lambda_g^{\otimes c/2}}(-\log \delta)$ from $\oM_g$.

\subsection{Virasoro uniformization of covacua}\label{S:unif} 
The VOA $V$ comes, by definition, with an action of the Virasoro algebra given by the field associated with the conformal vector. Using this action, the algebra $Vir_{c,n}$ acts on $\O_B\otimes V^{\otimes n}$. One can show that the above action induces an action of $\A_{c,g}$ on the bundle of covacua $\mathcal{T}(V)$ over $\oM_g$, and this identifies the log Atiyah algebra $\A_{\mathcal{T}(V)}(-\log(\delta))$ with $\A_{c,g}$ over $\oM_g$. This is studied in detail in \cite{DGT1}. (Observe that this identification is carried out first for families having the property (Q); in particular, it is carried out with families with marked points, i.e. $n>0$. Then one sees that because of propagation of vacua, this is independent of the choice of marked points, and they can be removed.)

The group of line bundles on $\M_g$ is freely generated by $\lambda_g$ \cite{AC87}, therefore, $\mathcal{T}(V)$ must be isomorphic to $\lambda_g^{\otimes k}$ for some $k$ in $\mathbb{Z}$. The identification of the Atiyah algebra of $\mathcal{T}(V)$ with the Atiyah algebra of $\lambda_g^{\otimes c/2}$ implies that $k=c/2$. Using factorization and propagation of vacua, one can see that the isomorphism between $\mathcal{T}(V)$ and $\lambda_g^{c/2}$ extends over $\oM_g$, see e.g. \cite[Section 5.1]{DGT3}.

In particular, we can fix an isomorphism between $\mathcal{T}(V)$ and $\lambda_g^{c/2}$ on $\oM_g$ that is equivariant for the action of $\A_{c,g}$. Since holomorphic functions are constants in $\oM_g$, the isomorphism between $\mathcal{T}(V)$ and $\lambda_g^{c/2}$ is unique up to a scalar; hence, all isomorphisms are equivariant for the action of $\A_{c,g}$. On the bundle of conformal blocks one has the dual construction. This is also discussed in \cite[Section 6]{DGT1}.

\subsection{Differential equation near the boundary}\label{sec:diff_eq}
Loosely speaking, in this section we show that all conformal blocks obtained via the sewing construction of Section \ref{S:sewing} satisfy the same differential equation. As we will explain, the differential equation does not depend on the vertex algebra but only on its central charge. We generalize the argument of \cite[Section 5.3]{Ueno}, and an analogous result is  \cite[Theorem 4.3.6]{Gui}.

Let us first consider the family over $B_0$ from Subsection \ref{S:delta0}. We construct $g$ sections $\ell_k$, for $k\in \{1,\dots , g\}$, of the Atiyah algebra $\A_{c,g}$. Fix $k$, let $q=q_k$. Specialize the other $q_i$'s at generic values, and fix a point of $B$; we obtain a family over a disk $\Delta$ with parameter $q$ (and this family will be used in the proof of Theorem \ref{t:diff_eq}). The family has only one singular member, the fiber over $q=0$; it has only one node, let $X$ be its normalization.

Up to restricting the base, thanks to \cite[Lemma 5.15 and Corollary 5.17]{Ueno}, on $X$ we have a meromorphic vector field $\ell_k$ with the following  properties:
\begin{itemize}
	\item the poles of $\ell_k$ are only at the marked points and the preimage of the node;
\item if we take its expansion around the marked points and see this expansion as an element of $W^{\oplus n}$ as in Section \ref{S:unif_and_kod}, its image via the map \eqref{E:vir_unif_p} is $q\frac{d}{dq}$;
\item  its expansion in the coordinate $z_a$ (resp. $z_b$) we get $\frac{1}{2}w_a\frac{d}{dw_a}$ (resp.  $\frac{1}{2}z_b\frac{d}{dz_b}$). (The local coordinates $z_a$ and $z_b$ are the coordinates on $X$ around the preimages $a$ and $b$ of the node such that the family around the node is locally isomorphic to $z_az_b=q$)
\end{itemize}
Following \cite[Lemma 5.15 and Corollary 5.17]{Ueno}, we also see that $\ell_k$ vary holomorphically in the coordinates of $B_0$. 
We can see the vector field $\ell_k$ described above as an element of the Atiyah algebra $\A_{c,g}$ on $B_0$, and let it act on the sheaf of conformal blocks. We stress the following fact for further reference.
\begin{lemma}\label{uniqueness}
Let $Z$ be the subvariety of $B_0$ defined by $q_1=\cdots = q_g=0$. Let $f$ and $h$ be two holomorphic functions on $B_0$. If $f$ and $h$ are equal when restricted to $Z$ and $\ell_i f= \ell_i h$ for  all $i=1,\dots , g$, then $f=h$ as functions on $B_0$.

\end{lemma}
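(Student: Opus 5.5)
The plan is to reduce the statement to a uniqueness result for a first-order system of linear PDEs in the variables $q_1,\dots,q_g$. Set $u=f-h$. Then $u$ is holomorphic on $B_0$, vanishes on $Z=\{q_1=\cdots=q_g=0\}$, and satisfies $\ell_i u=0$ for all $i$. The key input is the description of $\ell_i$ as differential operators on functions on $B_0$. By construction in Section \ref{sec:diff_eq}, the image of $\ell_i$ under the map \eqref{E:vir_unif_p} composed with the symbol map is the vector field $q_i\frac{\partial}{\partial q_i}$. I would therefore read $\ell_i$ acting on functions as the derivation $q_i\partial_{q_i}$, possibly plus a holomorphic vector field along $B$ and a zeroth-order term. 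The hypothesis $\ell_i f=\ell_i h$ should be understood so that the operator is the same on both sides. Concretely, I would write
$$\ell_i = q_i\partial_{q_i} + \sum_j a_{ij}\,q_i\,\partial_{b_j} + \sum_{k\neq i} c_{ik}\, q_i\, \partial_{q_k} + e_i ,$$
where the $b_j$ are coordinates on $B$. The tangential corrections carry a factor $q_i$ because at $q_i=0$ the vector field $\ell_i$ restricts, up to $\log$-structure, to the Euler field normal to the divisor $\{q_i=0\}$. The scalar term $e_i$ is the central, zeroth-order part of the Atiyah algebra; after a linear change of the unknown it is a constant times the conformal weight, and I would check that it is either zero or the same for both functions.

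Next, expand $u=\sum_{n\in\N^g} u_n(b)\,q^n$ as a power series in the $q$'s with coefficients holomorphic on $B$. Then induct on the total degree $|n|$. The base case $u_0=0$ is the hypothesis $u|_Z=0$. Suppose $u_m=0$ for all $|m|<N$, and take $n$ with $|n|=N$ and some $n_i>0$. Read off the coefficient of $q^n$ in $\ell_i u=0$. The term $q_i\partial_{q_i}$ contributes $n_i u_n$. Every term of the form $q_i\cdot(\text{derivative})$ raises the $q_i$-degree, so it only involves coefficients $u_m$ with $|m|<N$, and these vanish by induction. The scalar term $e_i$ contributes $e_i u_n$. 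This gives $(n_i+e_i)u_n=0$.

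The main obstacle is making sure $n_i+e_i\neq0$, that is, controlling the zeroth-order part of $\ell_i$ and the exact shape of the correction terms. Normally $e_i$ is $-c/24$-type, which could be problematic. In the paper's setup, however, $\ell_i$ acts on functions through a fixed trivialization, so it is natural to treat $\ell_i$ as a pure vector field on functions, in which case $e_i=0$. Failing that, any constant shift can be absorbed by replacing $u$ with $q_i^{-e_i}u$, which is harmless because both $f$ and $h$ satisfy the same equations. Assuming this, $n_i u_n=0$ with $n_i>0$, so $u_n=0$. This completes the induction, so $u=0$ as a formal series and hence as a holomorphic function; by analyticity and connectedness of $B_0$ this gives $f=h$ on $B_0$.
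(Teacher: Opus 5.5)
Your route differs from the paper's. You expand $u=f-h$ in the $q$'s and induct on total degree; the paper fixes $i$, freezes the other variables at generic values, and solves an ODE in $q_i$. Yours fits the hypothesis better, since $u$ is only known to vanish on the codimension-$g$ locus $Z$, not on $\{q_i=0\}$ for generic values of the other $q_j$.

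There is, however, a genuine gap, and it is exactly at the step you flag as the main obstacle. Your recursion gives $(n_i+e_i|_Z)\,u_n=0$, so you need $e_i|_Z\notin\{-1,-2,\dots\}$, and neither of your two justifications delivers this.
\begin{itemize}
\item Saying that $\ell_i$ acts on functions as a pure vector field is an assumption. In the application, $f$ and $h$ are coefficients of sections of $\T^*$ in some frame, and the zeroth-order part of $\ell_i$ depends on that frame.
\item The substitution $u\mapsto q_i^{\pm e_i}u$ is not harmless. It destroys holomorphy at $q_i=0$ and the vanishing on $Z$. In the resonant case the statement is simply false: on a disc take $\ell=q\partial_q-1$, $f=q$, $h=0$. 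Then $f|_{q=0}=h|_{q=0}$ and $\ell f=\ell h=0$, but $f\neq h$.
\end{itemize}

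The missing input is that the zeroth-order part of $\ell_i$ vanishes along $\{q_i=0\}$. The paper uses this tacitly when it turns $(q\partial_q+A)F=0$ into the regular equation $dF/dq=A(q)F$ with $A$ holomorphic, i.e.\ when it divides by $q$. In the situation where the lemma is applied, this can be justified.
\begin{itemize}
\item By Theorem~\ref{t:diff_eq}, each $\ell_i$ annihilates the sewn block $\mu$.
\item $\mu$ does not vanish near $Z$, because its pairing with the vacuum section is $\cZ_{V,g}$, which equals $1$ on $Z$.
\item If $\mu=\psi e$ in a local frame $e$ of $\T^*$, then $0=\ell_i\mu$ gives $\ell_i e=-(q_i\partial_{q_i}\log\psi)\,e$. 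So the zeroth-order term in that frame is $-q_i\partial_{q_i}\log\psi$, which is divisible by $q_i$.
\end{itemize}
Equivalently, in the frame $\mu$ the $\ell_i$ act as $q_i\partial_{q_i}$. The lemma then reduces to the fact that $q_i\partial_{q_i}F=0$ for all $i$ forces $F$ to be independent of $q_1,\dots,q_g$. With $e_i|_Z=0$ your recursion reads $n_iu_n=0$ and closes.

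A smaller slip: the terms $c_{ik}q_i\partial_{q_k}$ with $k\neq i$ do not lower the degree. Writing $\epsilon_i$ for the $i$-th unit multi-index, they send $q^m$ to a multiple of $q^{m+\epsilon_i-\epsilon_k}$, which has the same total degree. They are harmless only because a logarithmic vector field along $q_1\cdots q_g=0$ must have $c_{ik}$ divisible by $q_k$. In the paper the symbol is exactly $q_i\partial_{q_i}$, so they do not occur at all.
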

\begin{proof}
Let $F:=f-g$, we have to show that $F=0$. For what we have said above, see e.g. the reference from \cite{Ueno}, the symbol of $\ell_i$ is $q_i \frac{\partial}{\partial q_i}$, i.e. $\ell_i$ is the degree one operator $q_i\frac{\partial}{\partial q_i}+A$, where $A$ is a holomorphic function on $B_0$. 

We fix an index $i$, let $q:=q_i$, and show that $F=0$, when all variables different from $q_i$ are specialized at generic values. We have $F(0)=0$, and $\frac{dF}{dq}=A(q)F(q)$ for some holomorphic function $A$ of $q$. Then $F=0$.

	\end{proof}

Since we have chosen local coordinates around the marked points, we see the conformal blocks as a sub-sheaf of $V^{\vee}$ tensor and the sheaf of holomorphic functions.  The vector fields $\ell_k$ act on the functions as $q\frac{d}{dq}$; and on $V^{\vee}$ replacing  $z^{n+1}\frac{d}{dz}$ with $L_n$.

Let us now construct the conformal block that will be annihilated by $\ell_k$. If we restrict the family at $q=0$ and normalize the corresponding horizontal node, we obtain a family with three marked points. Given a conformal block $s$ for this family, we can consider the conformal block $s'$ for $\pi$ obtained by the sewing procedure described in Section \ref{S:sewing}. We will show that $\ell_ks'=0$ for every $k$.

The analogous construction holds for the family over $B_i$, $i\neq 0$, from Section \ref{S:deltai}. The only difference is that we have $g+1$ differential operators $\ell_k$ because there are $g+1$ nodes: one corresponding to the parameter $q$ and the other $g$ corresponding to the parameters $q_i$'s. In addition, the analog of Lemma \ref{uniqueness} is true.

\begin{theorem}[Differential equation satisfied by conformal blocks obtained by sewing]\label{t:diff_eq}

With the above notation, let $\ell$ be any of the $\ell_k$'s, then
$
\ell s'=0
$

\end{theorem}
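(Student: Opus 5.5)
Following \cite[Section 5.3]{Ueno} and \cite[Theorem 4.3.6]{Gui}, the plan is to compute the action of $\ell$ on $s'$ term by term in the $q$-expansion and reduce it to the fact that $s$ is a conformal block on the normalized family. Fix $\ell=\ell_k$, write $q=q_k$, and let $a,b$ be the preimages of the node, with coordinates $z_a,z_b$ such that the family near the node is $z_az_b=q$. By the construction recalled before Lemma \ref{uniqueness}, $\ell$ acts on sections of $\T^*(\pi)\subset V^\vee\otimes\O_{B_0}$ as
$$q\frac{\partial}{\partial q}+\sum_{x}\rho_x(\ell)+(\text{central term}),$$
where $\rho_x(\ell)$ is the operator obtained by replacing, in the Laurent expansion of $\ell$ at the marked point $x$, each $z^{n+1}\frac{d}{dz}$ with $L_n$ (acting on $V^\vee$ contragrediently), and the central term is the scalar fixed by the Virasoro uniformization of Section \ref{S:unif}, depending only on $c$.

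\emph{Step 1.} Apply $q\frac{\partial}{\partial q}$ to $s'$. Since $s'$ is the contraction of $s\otimes\sum_n \Id_n q^n$ (Equation \eqref{E:sewing}), we get $q\frac{\partial}{\partial q}s'$ equal to the contraction of $s\otimes\sum_n n\,\Id_n q^n$, i.e.\ $s$ with $L_0$ inserted in one of the two sewn slots. Using the self-duality of $( \, , \,)$ and $L_0$-symmetry, this equals $\tfrac12\bigl(L_0^{(a)}+L_0^{(b)}\bigr)$ inserted symmetrically at $a$ and $b$.

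\emph{Step 2.} Since $\ell$ is a meromorphic vector field on $X$ with poles only at the marked points and at $a,b$, it defines (via the conformal vector) a global section of $\mathcal{V}_X\otimes\omega_X(*S\cup\{a,b\})$. Because $s$ is a conformal block on the normalized family, it is annihilated by this section. This yields
$$\sum_x\rho_x(\ell)\,s+\rho_a(\ell)\,s+\rho_b(\ell)\,s=(\text{anomaly})\cdot s.$$
The expansions of $\ell$ at $a$ and $b$ are $\tfrac12 z_a\frac{d}{dz_a}$ and $\tfrac12 z_b\frac{d}{dz_b}$, so $\rho_a(\ell)+\rho_b(\ell)=\tfrac12(L_0^{(a)}+L_0^{(b)})$ (up to sign conventions). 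After contracting with the $\Id_n q^n$, this cancels exactly the term from Step 1, leaving $\sum_x\rho_x(\ell)s'$ plus scalars.

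\emph{Step 3.} Check that the leftover scalar terms (the central extension $\frac{c}{12}$ contributions from the local coordinates, and the normalization of the Atiyah action) cancel. This is where only $c$ enters, consistent with the remark that the equation depends only on the central charge. Convergence is not an issue: everything holds as an identity of formal power series in the $q_i$, and by the Sewing Theorem these converge.

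The main obstacle is Step 3 together with the sign and normalization bookkeeping in Step 2. One must verify that the contragredient action of $L_0$ at $a,b$, after contraction with an orthonormal basis, reproduces $n\,\Id_n$ with the correct factor $\tfrac12+\tfrac12$. One must also check that the central term prescribed by the identification $\A_{c,g}\cong\A_{\T}(-\log\delta)$ matches the anomaly produced by $L_0$ (no $L_{\pm1}$ or Schwarzian corrections arise, since the expansions at $a,b$ are purely $z\frac{d}{dz}$). I would first handle this in the case where $X=\P^1$ with explicit correlation functions, as in Example \ref{ex:covacP1_tantipunti}, to fix the constants, and then argue that the constant is universal. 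The case of the families over $B_i$ is identical, with $\ell$ attached to the separating node.
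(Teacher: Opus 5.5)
Your Steps 1 and 2 are the paper's argument with the order of operations swapped. The paper builds the quadratic differential $s'T(z)$ on the sewn curve and applies the residue theorem on $X$ to $\ell(z)\,s'T(z)$. This gives $D[\ell]s'$ at $x$ and $\tfrac12 L_0 s'=\tfrac12 q\frac{d}{dq}s'$ at each of $a$ and $b$. You instead apply the same residue identity to $s$ (its conformal-block property at $x,a,b$) and only then contract with $\sum_n \Id_n q^n$. That reordering is fine, and slightly cleaner: everything becomes an identity in $\C[[q]]$, so you never need the convergence of $s'T(z)$ on $\cC$.

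The genuine gap is Step 3. You leave it open and propose to fix a constant on $\P^1$ and ``argue it is universal''. That does not prove $\ell s'=0$.

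Your stated reason that no Schwarzian corrections occur (``the expansions at $a,b$ are purely $z\frac{d}{dz}$'') misplaces the source of the anomaly. Because $\nu$ is only quasi-primary, $\nu\otimes\ell$ is a global section of $\mathcal{V}_X\otimes\omega_X(*\{x,a,b\})$, acting at each point by $z^{n+1}\frac{d}{dz}\mapsto L_n$, only if the atlas has projective transition maps and the coordinates chosen at $x,a,b$ are charts of it. Otherwise the transition functions of $\nu$ pick up $\frac{c}{12}S(g)\,\Omega^V$. The Ward identity in your Step 2 then acquires a scalar: residues of $\ell$ against the Schwarzian derivatives of the marked-point coordinates relative to a projective structure. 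Nothing lets you compute this on $\P^1$ and transport it to a general $X$.

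The missing idea, which is exactly how the paper closes this step, is the projective structure coming from Schottky uniformization. $X$ and $\cC$ are built from domains of $\P^1$ glued by M\"obius maps (cf.\ \eqref{plumbing}). So the affine coordinate of $\P^1$ gives a projective atlas in which $z$ at $x$, $z-w_k$ at $a$ and $z-z_k$ at $b$ are all charts. Consequences:
\begin{itemize}
\item $S(g)=0$ in \eqref{e:trasf_T}, and $T(z)\,dz^2$ is an honest $\End(V)$-valued quadratic differential.
\item The identity of your Step 2 holds with right-hand side exactly $0$.
\item On the other side, in the paper's convention $\ell$ acts on $\T^*$ as $q\frac{d}{dq}$ plus the substitution $z^{n+1}\frac{d}{dz}\mapsto L_n$ at $x$, with no central term.
\end{itemize}
So there is nothing to cancel, and Steps 1--2 already give $\ell s'=0$.

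A universal nonzero constant, i.e.\ $\ell s'=\kappa s'$ with $\kappa$ depending only on $c$ and the geometry, would still suffice for the use of this theorem in Theorem \ref{T:main1}, through the argument of Lemma \ref{uniqueness}. It would not give the statement as written.
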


\begin{proof}
We can first restrict ourselves to the family over a disk $\Delta$. We also carry out the proof only in the case $i=0$, and specify the minor twists needed for the case $i\neq 0$.

Let $\nu$ be the conformal vector of $V$. We consider the energy-momentum tensor $T(z)=Y(\nu,z)=\sum L_nz^{-n-2}$.  Let $g(z)$ be a change of coordinate; the Lie algebra of the group of change of coordinates acts via the Virasoro algebra on $V$ (\cite[Section 2.3]{Gui} or \cite[Section 6.3]{VA}); denoting by $R$ this action, we have the following well-known transformation rule obtained by direct computation; see e.g. \cite[Section 8.2.2]{VA}:
\begin{equation}\label{e:trasf_T}
T(z)=R(g)T(g(z))R(g)^{-1}(g'(z))^2+\frac{c}{12}S(g)\Id_V \, ,
\end{equation}
where $S(g)$ is the Schwarzian derivative of $g$. Recall that projective transformations are exactly the functions $g$ such that $S(g)=0$. Up to shrinking $B$, we can assume that we have a projective atlas for $\cC$, i.e. an atlas such that the change of coordinates is projective transformation (this is well-known, a reference is \cite[Section 4 and Theorem 4.1.6]{Gui}, or \cite[Section 8.2, in particular subsection 8.2.12]{VA}). Thus, we can always take $S(g)=0$. This means that, up to the proof of convergence, $T(z)$ is a section of the second power of the canonical bundle valued in $\End(V)$.

We can let $T$ act on conformal blocks, and consider
$$\Omega(z):=s'T(z)\, .$$
For every fixed vector $v$ in $V$, we can then consider $\Omega(z,v)$; this is a formal power series in $z$ and, due to the transformation rule of Equation \eqref{e:trasf_T} with $S(g)=0$, it is valued in the line bundle of holomorphic two forms. Let us show that it converges to a meromorphic two form on $\cC$ with poles just around $x$ (or just around $x_1$ and $x_2$, when $i\neq 0$). 

Since $s'$ is a conformal block, for all meromorphic one form $\omega$ on $\cC$ with poles just around $x$ we have
$$
\Res_{z=0}\left(f(z)\Omega(z,v)dz\right)=0\,.
$$
Where $f(z)dz$ is the local expression of $\omega$ around $x$. We can now deduce the convergence from the Strong Residue Theorem (\cite[Theorem 9.2.9]{VA}, \cite[Lemma 1.1.6]{Ueno} or \cite[Section 1.4]{Gui} ).

Let $\ell(z)$ be a meromorphic vertical vector field on $\cC$, or just the Laurent expansion of a vector field around a marked point $x$ (or around $x_1$ and $x_2$, when $i \neq 0$). Following e.g. \cite[page 79]{Ueno},we introduce the operator
$$D[\ell](z):=\Res_{z=0}\left(\ell(z)T(z)dz^2 \right)$$
This is a formal power series in $z$ whose coefficients are in the Virasoro algebra.  In practice, we are just replacing  $z^{n+1}\frac{d}{dz}$ with $L_n$. In other words, $D[\ell](z)$ is how the vector field $\ell$ acts on the space of conformal blocks.

Let $\gamma$ be a small circle around $x(0)$. We have
\begin{equation}\label{eq:res_at_x=0}
\int_{\gamma}\ell(z)s'T(z) dz^2=D[\ell]s'
\end{equation}

The one form $\ell(z)\Omega(z)$ can be regarded as a form on $U\subset X$. Its integral around $x$ has been computed in Equation \ref{eq:res_at_x=0}. Let us compute its residue at $a$, and hence let $\gamma_a$ be a small circle around $a$.  Using the special form of $\ell(z)$ near $a$, we get
$$\int_{\gamma_a}\ell(w_a)\Omega(w_a)(dw_a)^2= \frac{1}{2}\int_{\gamma_a} w_i\frac{d}{dw_a}s'T(w_a)dw_a^2=\frac{1}{2}L_0s'$$
where the last inequality is because $\frac{d}{dw_a}$ gets contracted with one $dw_a$, and the coefficient of $w_a^{-1}$ in the power series $w_aT(w_a)$ is $L_0$. To compute $L_0s'$ we have to go through the sewing construction. As we compute the residue at $a$, we consider the action on the copy of $V^{\vee}$ corresponding to the point $a$. For an integer $d$, the coefficients of $q^d$ in $s'$ are a basis of the degree $d$ part $V^{\vee}$ , so
$$L_0s'=q\frac{d}{d q}s'$$

The analog computation is around $b$.
\medskip

On $X$, by the classical Residue Theorem,  the sum of the residues of $\ell(z)\Omega(z)$ around $a$ and $b$ is the opposite of the residue of $\ell \Omega$ at $x$ (or at $x_1$ and $x_2$, when $i\neq 0$); we obtain the final results combining the explicit computations of these residues carried out above.

\end{proof}

\section{Relation between partition functions and Teichm\"{u}ller modular forms }\label{S:proof1}

\begin{definition}\label{def:vuoto_uniformizzato}
Let $V$ be a holomorphic vertex algebras of central charge $c$.  Let $u_{V,g}$ be  the unique up to a multiplicative scalar $\A_{g,c}$-equivariant isomorphism between $\T_g(V)$ and $\lambda_g^{c/2}$ over $\oM_g$ (compare to \ref{S:unif}). We will normalize $u_{V,g}$ as explained in Definition \ref{Normalization}.

We will consider the image of the vacuum section under this isomorphism; we should denote it by $u_{V,g}(\oOmega_{V,g})$, to shorten the notations we will denote it just by $u(\oOmega_{V,g})$.

The pull-back of $u_{V,g}$ to the Teichm\"{u}ller space gives a canonical up to a multiplicative scalar isomorphism between $\T_g$ and $L_g^{c/2}$ as $\Gamma_g$ and $\A_{g,c}$ line bundles; by abuse of notations, we still denote it by $u_{V,g}$. Again by abuse of notations, we denote by $u(\oOmega_{V,g})$ the image of the vacuum section, which is a Teichm\"{u}ller modular forms of weight $c/2$. Observe that, on $T_g$, $u_{V,g}$ is a section of the conformal blocks $\T_g^*$ because $L_g$ is trivial. 

Similarly, we also have $u_{V,g}$ and $u(\oOmega_{V,g})$ on the Schottky space $S_g$ and the marked Schottky space $C_g$, and $u(\oOmega_{V,g})$ is a Teichm\"{u}ller modular forms of weight $c/2$ over these spaces.

\end{definition}

\begin{definition}[Normalization of $u_{V,g}$]\label{Normalization}  We will adopt the following normalization. Let $Z\subset \oM_g$ be the locus of curves whose normalization is $\P^1$. The bundle of covacua blocks $\T_g(V)$ is canonically isomorphic to the trivial bundle with fiber $V_0$; under this isomorphism the vacuum section is the dual of the vacuum vector. On the Schottky space, approaching $Z$ is equivalent to letting all the $q_i's$ tend to zero. We can thus normalize $u$ in such a way that if we let $q_i=0$ for every $i$ in $u(\oOmega_{V,g})$ we obtain $1$.

\end{definition}

\begin{theorem}\label{T:main1}
With the notations of Definition \ref{def:parition_functions}, \ref{def:open_in_Schottky} and \ref{def:vuoto_uniformizzato}, for every integer $g\geq 1$ and every $r\in \mathbb{R}_{>0} $, we have
\begin{enumerate}
	\item\label{T:conv} the formal Laurant power series  $\cZ_{V,g}$ converges on $U_{g,r}^+$, and can be analytically continued to a holomorphic function on all $U_{g,r}$;
	\item\label{T:modularity}  there exists a unique non-zero meromorphic function $F_g$ on $U_{g,r}$ such that for every holomorphic vertex algebra $V$ of central charge $c$, the product 
	$F_g^{c/8}\cZ_{V,g}$ 
	is the weight $c/2$ Teichm\"{u}ller modular form $u(\oOmega_{V,g})$ from Definition \ref{def:vuoto_uniformizzato} restricted to $U_{g,r}$.
\end{enumerate}

\end{theorem}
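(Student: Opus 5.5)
The plan is to identify $\cZ_{V,g}$ with the sewn conformal block evaluated on the vacuum, and then compare two sections of the same line bundle via the differential equation of Theorem \ref{t:diff_eq} and the uniqueness statement of Lemma \ref{uniqueness}.

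\textbf{Step 1: identify the formal series with a sewn conformal block.} On $B_0=B\times\Delta$ of Subsection \ref{S:delta0}, start from the trivial family $\P^1\times B\to B$ with the $2g+1$ sections $x,w_1,\dots,w_g,z_1,\dots,z_g$. Take the conformal block $s$ given by the correlation function of Example \ref{ex:covacP1_tantipunti}, with $x=0$. By Propagation of Vacua, the vacuum inserted at $x$ reduces $s$ to correlation functions in the points $w_i,z_i$. The coordinates at $w_i,z_i$ induced by the standard coordinate are compatible with the plumbing relation \eqref{plumbing}.

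Sewing as in \eqref{E:sewing} with orthonormal bases $\sum v^{(i)}\otimes v^{(i)}$ then gives $s'(\Omega^V)$ term by term. Its coefficients are exactly $\bigl(\Omega^V,\gamma_{n_1}(w_1,z_1)\cdots\gamma_{n_g}(w_g,z_g)\Omega^V\bigr)$, read as the rational functions of Theorem \ref{T:commutation}, expanded in $U_{g,r}^+$. One must check that the inversion in the plumbing does not introduce extra factors. Because the gluing $x\sim\gamma_i(y)$ is a M\"obius map, the transformation of non-quasi-primary vectors involves $e^{L_1}(-1)^{L_0}=S$. This $S$ is absorbed by the invariant form $(\,,\,)$, which is defined via $S$.

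With this identification, part \eqref{T:conv} follows from the Sewing Theorem of Section \ref{S:sewing}. The series $s'$ is a power series in the $q_i$ with coefficients holomorphic on $B$, and it converges on $B\times\Delta$. Covering $U_{g,r}$ by such charts and using that the coefficients are rational in $w_i,z_i$ gives the analytic continuation to all of $U_{g,r}$.

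\textbf{Step 2: comparison with $u(\oOmega_{V,g})$.} On $U_{g,r}$, the function $u(\oOmega_{V,g})$ is obtained by pairing $\oOmega_{V,g}$ with the section $u_{V,g}$ of $\T^*_g$, which is nonvanishing because $L_g$ is trivial. We also have the conformal block $s'$, which does not vanish near $Z=\{q=0\}$ since its value there is $(\Omega,\cdot)$. Both are sections of the line bundle $\T^*_g$, so $u_{V,g}=h_V\,s'$ for a meromorphic function $h_V$, and hence $u(\oOmega_{V,g})=h_V\cZ_{V,g}$.

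By Theorem \ref{t:diff_eq}, $\ell_k s'=0$. By $\A_{c,g}$-equivariance, $u_{V,g}$ is the trivialization transported from $\lambda_g^{c/2}$, in which $\ell_k$ acts as some fixed operator $q_k\partial_{q_k}+A_k^{(c)}$. Here $A_k^{(c)}$ depends only on $c$, because the Atiyah algebra $\A_{c,g}$ depends only on $c$, and it is linear in $c$, because the central term is. Hence $h_V$ solves $q_k\partial_{q_k}h=c\,a_k h$ for fixed functions $a_k$ independent of $V$. The normalization of Definition \ref{Normalization} gives $h_V=1$ on $Z$.

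By Lemma \ref{uniqueness}, applied to logarithms or to the ratio $h_V/h_W$, $h_V$ depends only on $c$ and equals $H^{c}$ for a single function $H$. Set $F_g:=H^8$, which is well defined since $c\in 8\Z$. Existence of $F_g$ can be checked with $V=V_{E_8}$, and the same argument gives uniqueness. Nonvanishing and meromorphy of $F_g$ come from $\cZ_{V_{E_8},g}\not\equiv 0$, since its value is $1$ at $q=0$.

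\textbf{Main obstacle.} The delicate step is the second: making precise that the action of $\ell_k$ on $u_{V,g}$ is given by a multiplier depending linearly on $c$ alone. I would argue this through the tensor product compatibility of Section \ref{S:tens}: $\oOmega_{U\otimes V}=\oOmega_U\otimes\oOmega_V$ and $\cZ_{U\otimes V,g}=\cZ_{U,g}\cZ_{V,g}$ (Remark \ref{remarkProductFormal}). This forces $h_{U\otimes V}=h_Uh_V$. Comparing with $V_{E_8}$, using that $h_V$ is determined by $c$ via the differential equation, then yields $h_V=h_{V_{E_8}}^{c/8}$.
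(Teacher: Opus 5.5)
Your proposal is correct and follows essentially the paper's route. You sew the genus-zero correlation function into a conformal block killed by the $\ell_k$, compare it with the $\A_{c,g}$-equivariant section $u_{V,g}$ via Theorem \ref{t:diff_eq}, Lemma \ref{uniqueness} and the normalization on $Z$, and then reach $F_g^{c/8}$ through tensor powers of $V_{E_8}$. The differences are cosmetic: you write the comparison as a scalar ODE for $h_V$ in a fixed trivialization of $\lambda_g^{c/2}$, taking ratios $h_V/h_W$, whereas the paper transports $\mu_{V,g}$ to $\T^*_g(U)$ by the equivariant isomorphism $\phi$ and shows $\phi(\mu_{V,g})=\mu_{U,g}$. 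Your unproven ``linear in $c$'' shortcut is not needed, since your tensor-product fallback is exactly the paper's argument.
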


\begin{proof}

(The following proofs work also when $g=1$; in this case, one has to replace $\M_1$, $\oM_1$, and $T_1$ with $\M_{1,1}$, $\oM_{1,1}$, and $T_{1,1}$, see Remark \ref{rem:g=1}.)

Let $(\, , \,)$ be the unique normalized invariant bilinear form on $V$. First, consider the trivial family $\P^1\times B$ with one constant marked point, i.e. zero. From Example \ref{ex:covacP1}, we can see that the sheaf of conformal block is one dimensional, it is the trivial line bundle on $B$, and it has a basis given by the linear function 
$$F_1(a)=(\Omega^V, a)$$

Now consider the trivial family $\P^1\times B$  with $2g+1$  distinct marked points (i.e. $2g+1$ sections $z_i\colon B \to \mathbb{P}^1$), where the last is constantly zero, and the others are non-constant and avoid $\infty$. Following Example \ref{ex:covacP1_tantipunti} for every point $b$ of $B$, the correlation function
\begin{equation}\label{E:cor2}
F_{2g+1}(a_1 \otimes\cdots \otimes a_{2g}\otimes a)=\left(\Omega^V,Y(a_1,z_1(b))\cdots Y(a_{2g},z_{2g}(b))a)\right) \,,
\end{equation}
is a basis of the fiber of the line bundle of conformal blocks at $b$. Basic properties of these functions are discussed in \cite[Section 3.5]{Axiomatic}, see also \ref{T:commutation} and \cite[Introduction]{Gui24}. In particular, these functions converge on the open subset $B^+$ of $B$ defined by conditions $|z_1(b)|>|z_2(b)|> \cdots > |z_{2g+1}(b)|$ (by a standard abuse of notation, $z_i(b)$ is both a point of $\P^1\setminus \infty$ and the complex number obtained evaluating the standard coordinate on $z_i(b)$), and extends to a holomorphic function on all $B$. 

For later use, it is convenient to relabel the sections as $w_1,\dots ,w_g$, $z_1,\dots , z_g$ and $x$. Composing with the standard coordinate on $\P^1$, we can think of these sections as functions on $B$, we will assume that $w_1,\dots ,w_g$, and $z_1,\dots , z_g$ are coordinates on $B$, even though this is not strictly necessary.

Consider now a family $\pi \colon \cC\to B_0=B\times \Delta$ from Subsection \ref{S:delta0}. Applying  the Sewing Theorem from Section \ref{S:sewing} to the conformal blocks $F_{2g+1}$ described in Equation \eqref{E:cor2}, we obtain a conformal block $\mu_{V,g}$ on $B\times \Delta$.

If we apply $\mu_{V,g}$ to the vacuum section $\oOmega_{V,g}$ we obtain a holomorphic function on $B\times \Delta$.

On the open subset $B^+$ of $B$ defined above, $F_{2g+1}$ converges to a function. Thus, on $B^+\times \Delta$, we can compute the expansion of $\mu_{V,g}\left(u(\oOmega_{V,g})\right)$ using the expansion of $F_{2g+1}$ and the expansion of $\mu_{V,g}$ given in Equation \eqref{E:sewing} in the variables  $w_i$'s, $z_i$'s and $q_i$'s. To this end, we have to start from $F_{2g+1}$, plug $a=\Omega^V$, and for the other $a_i$'s follow the receipt of Section \ref{S:sewing}. The final result is the formal power series $\cZ_{V,g}$ from Definition \ref{def:parition_functions}! This thus is convergent on $B^+\times \Delta$ and can be continued to the holomorphic function  $\mu_{V,g}\left(u(\oOmega_{V,g})\right)$ on $B\times \Delta$.

Let the space $U_{g,r}$ (Definition \ref{def:open_in_Schottky}) be covered by chart of the form $B_0=B\times \Delta^*$ as above. Since on each chart the formal power  $\cZ_{V,g}$ series defines a function, meaning that in each chart there is an open subset where it converges and can be extended to a holomorphic function on all $B\times \Delta^*$, we conclude that  $\cZ_{V,g}$ converges on an open subset of $U_{g,r}^+$ and extends to a holomorphic function on all $U_{g,r}$. Because of what we have said above, the open subset where it converges contains $U_{g,r}^+$ from Definition \ref{def:open_in_Schottky}. This concludes the proof of item \ref{T:conv}.

 The space of global sections of $\T_g^*$ is a free $\O_{U_{g,r}}$-module of rank one, where $\O_{U_{g,r}}$ is the sheaf of holomorphic functions on $U_{g,r}$, so there exists a unique non-zero meromorphic function $F_V$ on $U$ such that $F_V\mu_{V,g}$ is equal to the restriction of $u_{V,g}$ from Definition \ref{def:vuoto_uniformizzato} to $U$. We are going to show that this function $F_V$ does not depend on $V$ but just on its central charge $c$. 

Let $U$ be another holomorphic VOA with the same central charge of $V$; denote by $\mu_{U,g}$ and $u_{U,g}$ the sections of $\T^*_g(U)$ constructed with the above procedures.

Following \ref{S:unif}, the line bundles $\T_g(V)$ and $\T_g(U)$ have the same Atiyah algebra $\A_{g,c}$, and there is a unique up to a multiplicative scalar isomorphism $\phi$ on $\oM_g$ between  $\T^*_g(V)$ and $\T^*_g(U)$ which is compatible with its action. We normalize it asking that its restriction to the compact locus $Z$ from Definition \ref{Normalization}, where  $\T^*_g(V)$ (resp. $\T^*_g(U)$) is canonically isomorphic to $V_0^{\vee}$ (resp. $(U_0)^{\vee}$) maps the dual of the vacuum vector of $V$ to the dual of the vacuum vector of $U_0$.

Restricting $\phi$ to $\M_g$, and then pulling it back to $U$, we obtain an  isomorphism, which we keep on calling $\phi$, between $\T^*_g(V)$ and $\T^*_g(U)$ which commutes with the action of $\A_{g,c}$, and such that $\phi \circ u_{V,g}$ is equal up to a scalar to $u_{U,g}$. We will show that 
$\phi(\mu_{V,g})=\mu_{U,g}$. As they are analytic sections, it is enough to do it on a chart of the form $B_0=B\times \Delta^*$ from Subsection \ref{S:delta0}.

The sections $\mu_{U,g}$ and $\phi(\mu_{V,g})$ satisfy the same $g$ differential equations from Theorem \ref{t:diff_eq}. We want to apply Lemma \ref{uniqueness} so we are going to show that they are equal on the locus defined by $q_1=\cdots = q_g=0$. To this end, extend the family over $B\times \Delta$, the $2g$ dimensional locus will be $B\times \{(0,\dots , 0)\}$. There, we have $\mu_{U,g}=\phi(\mu_{V,g})$ because of the definition of $\mu$ and the normalization we have chosen for $\phi$.

Putting everything together, we get
$$F_{U}\mu_{U,g}=u_{U,g}=\phi(u_{V,g})= \phi(F_V\mu_{V,g})=F_V\phi(\mu_{V,g})=F_V\mu_{U,g} \,,$$
where all equalities are true up to a non-zero constant scalar factor. We conclude that $F_V\mu_{U,g}=F_{U}\mu_g'$, hence  $F_V=F_{U}$. 

\medskip

So far, we have shown that there exists a meromorphic function $F_{g,c}$ such that $F_{g,c}\cZ_{V,g}$ is a weight $c/2$ Teichm\"{u}ller modular form for every vertex algebra $V$ of central charge $c$. This would already be enough for our applications. Let us further show that we can take $F_{g,c}=F_{g,8}^{c/8}$, for a convenient $F_{g,8}$. First, observe the following useful relation: given two vertex algebras $U$ and $V$, we have
\begin{equation}\label{E:somma}
\cZ_{U\otimes V,g}=\cZ_{U,g}\cZ_{V,g} \, ,
\end{equation}
cf. Remark \ref{remarkProductFormal}.

Through the tensor product isomorphism of Section \ref{S:tens}, we have
\begin{equation}\label{E:somma2}
\mu_{U\otimes V,g} =\mu_{U,g}\otimes \mu_{V,g} \quad \textrm{and} \quad u_{U\otimes V,g} =u_{U,g}\otimes u_{V,g}\,.
\end{equation}

Let now a $V_{E_8}$ be the unique central holomorphic vertex algebra of central charge 8, and take $F_{g,8}$ to be a meromorphic function such that $F_{g,8}\cZ_{V_{E_8},g}=u_{V_{E_8},g}$. Let $V=V_{E_8}^{\otimes c/8}$. We then have that $F_{g,8}^{c/8}\mu_{V,g}=u_{V,g}$.  We have shown that if a meromorphic function realizes that equality for a specific vertex algebra, it realizes it for every vertex algebra of the same central charge, hence we can take $F_{g,c}=F_{g,8}^{c/8}$, and let $F_g=F_{g,8}$. 

For the uniqueness  of $F_g$, observe that $F_g=u_{V_{E_8},g}/\mathcal{Z}_{g,V_{E_8}}$

This concludes the proof of Theorem \ref{T:main1}.

\end{proof}

Let us recall from  \cite[Section 7.2]{TWgeq26} the following explicit description of $\cZ_{V_L,g}$, where $V_L$ is the lattice vertex algebra associated to the positive definite even lattice $L$.

\begin{theorem}[Partition function of lattice VOA]\label{thm:partition_of_lattice}
With the notations of Section \ref{S:Sieg}, for a lattice VOA $V_L$, $\cZ_{V_L,g}$, as a formal series, is the pull-back of the degree $g$ theta series $\Theta_{L,g}$ associated to $L$ via the Jacobi map, times the $c$-th power $( \cZ_{M(1),g} )^c$ of the genus $g$ partition function $\cZ_{M(1),g}$ of the rank-one Heisenberg vertex algebra $M(1)$.
\end{theorem}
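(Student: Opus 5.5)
The plan is to compute $\cZ_{V_L,g}$ directly from Definition \ref{def:parition_functions}, using the tensor decomposition $V_L \cong M(1)^{\otimes c}\otimes \C[L]$ as a graded vector space, where $c=\operatorname{rank} L$ and $V_L=\bigoplus_{\alpha\in L} M(1)^{\otimes c}\otimes e^\alpha$. The first step is to choose, for each weight $k$, an orthonormal basis of $(V_L)_k$ adapted to this decomposition, namely vectors $u\otimes e^{\alpha}$ with $u$ running through an orthonormal basis of the Heisenberg Fock module $M_\alpha$ of the appropriate weight. Each Casimir bilocal field $\gamma_{n}(w,z)$ then becomes a sum over $\alpha\in L$ of bilocal fields built from Heisenberg oscillators together with the intertwining operators $Y(e^{\alpha},w)Y(e^{-\alpha},z)$. (The invariant form pairs $e^\alpha$ with a multiple of $e^{-\alpha}$, so the orthonormal basis has to be chosen with care; only the combination $\alpha,-\alpha$ survives the pairing.)

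Next I expand the vacuum expectation of the product $\gamma_{n_1}(w_1,z_1)\cdots\gamma_{n_g}(w_g,z_g)$. Charge conservation forces the total lattice charge to vanish, which it does automatically since each handle carries $\alpha_i$ and $-\alpha_i$. The sum therefore runs over $(\alpha_1,\dots,\alpha_g)\in L^g$, and for fixed charges the correlation function factorizes into a free-boson (Heisenberg) correlator with insertions of vertex operators $e^{\pm\alpha_i}$. Summing over the oscillator states with the weights $q_i^{n_i}$ is exactly the sewing of Section \ref{S:sewing} applied to the rank-$c$ Heisenberg algebra with charged insertions. I expect the outcome to be $\cZ_{M(1),g}^c$ times $\exp\bigl(\pi i\sum_{i,j}\langle\alpha_i,\alpha_j\rangle\Omega_{ij}\bigr)$, where $\Omega$ is the period matrix of the Schottky curve expressed as a formal series in $(w_i,z_i,q_i)$. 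This would follow from the standard expansion of the period matrix as $\log q_i$ plus a power series, combined with the Heisenberg sewing formula that produces the prime form and the holomorphic differentials. Summing over $(\alpha_i)\in L^g$ then yields $\Theta_{L,g}(\Omega)\,\cZ_{M(1),g}^c$. Here I use that $\cZ_{M(1)^{\otimes c},g}=\cZ_{M(1),g}^c$ by Remark \ref{remarkProductFormal}, and that $\Theta_{L,g}(\Omega)$ is a formal series because $e^{2\pi i\Omega_{ii}}\sim q_i\cdot(\ldots)$ after choosing the branch correctly. Note that the normalization of the $q_i$ relative to the multipliers $\mu_i$ via \eqref{e:coo_tw} enters precisely here.

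The main obstacle is the charged Heisenberg sewing computation: showing that inserting $e^{\alpha_i}$ at $w_i$ and $e^{-\alpha_i}$ at $z_i$ and sewing produces exactly the Gaussian factor in the Schottky period matrix, with no extra $\alpha$-dependent corrections. For this step I would follow the generating-function approach of Mason--Tuite and Tuite--Welby. The Heisenberg sewing is a Gaussian computation that reduces to $\det(1-R)^{-1/2}$ for a moment matrix $R$ built from the $q_i$ and the local expansions; the charges contribute a linear source term whose completed square is the Schottky-series expansion of $2\pi i\Omega_{ij}$. Because the statement is attributed to \cite[Section 7.2]{TWgeq26}, in the paper I would cite that computation rather than reproduce it. I would add only the remark identifying their period-matrix series with the pull-back via the Jacobi map $J$ of Section \ref{S:Sieg}, restricted to $U_{g,r}$.
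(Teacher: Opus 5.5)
Your proposal is correct and essentially matches the paper. The paper gives no independent argument for this statement; it simply recalls the result from \cite[Section 7.2]{TWgeq26}. Your outline is an accurate summary of what that reference does: for each charge vector $(\alpha_1,\dots,\alpha_g)\in L^g$, the charged Heisenberg sewing produces $\cZ_{M(1),g}^{c}\exp\bigl(\pi i\sum_{i,j}\langle\alpha_i,\alpha_j\rangle\tau_{ij}\bigr)$, with $\tau$ the Schottky period matrix, and summing over $L^g$ gives the theta series.

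Your $\pi i$ normalization with $\alpha_i\in L$ is the right one. It is what the paper's own genus-one formula $\cZ_{V_L,1}=\bigl(\prod_n(1-e^{2n\pi i\tau})\bigr)^{-c}\Theta_{L,1}$ requires, since $L_0e^{\alpha}=\tfrac12\langle\alpha,\alpha\rangle e^{\alpha}$. So the factor $2\pi i$ and the summation range $\Z^g$ in the paper's displayed definition of $\Theta_{L,g}$ should be read as typos.
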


For $g=1$, Theorem \ref{thm:partition_of_lattice} was well-known and, after pulling back to the upper half-plane, reads
$$
\cZ_{V_L,1}(\tau)=\left(\prod_{n=1}^{\infty}(1-e^{2n \pi i \tau})\right)^{-c}\Theta_{L,1}( \tau),
$$
$\Im \tau > 0$, cf. Remark \ref{RemarkGenus1Partition}.

Theorem \ref{thm:partition_of_lattice} gives an identity of formal power series, so the convergence of $\cZ_{M(1),g}$ on $U_{g,r}^+$ follows from the well-known convergence of theta series, and the convergence of partition functions proven in Theorem \ref{T:main1}; see also \cite{MT10,TZ11,Tui21,TWgeq26}.

\begin{remark}\label{rem:part_latt}
Theorem \ref{thm:partition_of_lattice} together with \cite[Corollary 1.5]{CS-B14} implies that the partition functions of holomorphic lattice VOA are different for $g$ big enough. In particular, the partition functions of the VOA associated to the lattices $D_{16}^+$ and $E_8^{\oplus 2}$ are different if and only if $g\geq 5$, see \cite[page 2]{CS-B14} and \cite{GruSa}.
\end{remark}

The following result combines Theorem \ref{T:main1} and Theorem \ref{thm:partition_of_lattice}.
\begin{proposition}\label{P:ampl}
In  Theorem \ref{T:main1}, we can take $F_g$ holomorphic. Let $H_g$ be another meromorphic function such that $H_g \cZ_{V,g}$ is a Teichm\"{u}ller modular form of weight $k$ for every holomorphic vertex algebra $V$ of central charge $c$. Then $H_g=\phi F_g$, where $\phi$ is a holomorphic Teichm\"{u}ller modular form of weight $k-c/2$. Moreover, $F_g = (\cZ_{M(1),g})^{-8}$; in particular, for $g=1$, we have $F_1(\tau)=\left(\prod_{n=1}^{\infty}
(1-e^{2n\pi i \tau })\right)^8$.
\end{proposition}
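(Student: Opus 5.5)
We first identify $F_g$ explicitly, using Theorem \ref{thm:partition_of_lattice} for $L=E_8$. By the uniqueness statement in Theorem \ref{T:main1}, $F_g=u(\oOmega_{V_{E_8},g})/\cZ_{V_{E_8},g}$. Theorem \ref{thm:partition_of_lattice} gives $\cZ_{V_{E_8},g}=J^*\Theta_{E_8,g}\cdot(\cZ_{M(1),g})^{8}$, an identity of formal series and hence of holomorphic functions on $U_{g,r}^+$. The pull-back $J^*\Theta_{E_8,g}$ is a weight $4$ Teichm\"{u}ller modular form. Its value on the locus $q_1=\dots=q_g=0$ is $1$: as $q_i\to0$ the imaginary parts of the periods diverge, so only the zero vector contributes to the theta series. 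The space $H^0(\oM_g,\lambda_g^{\otimes 4})$ restricted from the $E_8$ vacuum form is governed by the same normalization, so we aim to show $u(\oOmega_{V_{E_8},g})=J^*\Theta_{E_8,g}$.

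To prove this we argue as in the proof of Theorem \ref{T:main1}. Consider $G:=u(\oOmega_{V_{E_8},g})\cdot\cZ_{M(1),g}^{8}/\cZ_{V_{E_8},g}$. Then $u(\oOmega)/J^*\Theta = F_g\,\cZ_{M(1),g}^8$. Here the main point is that the ratio $F_g\cZ_{M(1),g}^8$ does not depend on the lattice. Indeed, for any even unimodular lattice $L$ of rank $c$,
\[
F_g^{c/8}\cZ_{V_L,g}=u(\oOmega_{V_L,g}),\qquad \cZ_{V_L,g}=J^*\Theta_{L,g}\,\cZ_{M(1),g}^c .
\]
Hence $\psi:=F_g\cZ_{M(1),g}^8$ satisfies $\psi^{c/8}J^*\Theta_{L,g}=u(\oOmega_{V_L,g})$ for every such $L$. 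So $\psi^{c/8}$ is a meromorphic modular form of weight $0$, because it is a ratio of weight $c/2$ forms. We now choose $c$ divisible by the ampleness constant $A$ of Section \ref{S:Sieg}, so that for every point some $\Theta_{L,g}$ is nonvanishing. This shows that $\psi^{c/8}$ has no poles on the Jacobian locus. Doing the same with $u$-forms that are nonvanishing, or with two coprime $c$, shows that $\psi$ is a holomorphic nowhere vanishing weight-$0$ modular form, hence a section of $\O$ on $\oM_g$, i.e.\ a constant. The normalization at $q_i=0$ (both $u$ and $\Theta$ equal $1$, and $\cZ_{M(1),g}\to1$) forces $\psi=1$. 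Thus $F_g=\cZ_{M(1),g}^{-8}$.

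Holomorphicity of $F_g$ then follows because $\cZ_{M(1),g}^{8}$ is $\cZ_{V_{E_8},g}/J^*\Theta_{E_8,g}$ and it is nowhere zero. We expect the nonvanishing of $\cZ_{M(1),g}$ on $U_{g,r}$ to be the main obstacle. The cleanest route is to use the known infinite-product (Selberg zeta/Schottky product) formula for the Heisenberg partition function, as in \cite{TWgeq26,MT10}, which displays $\cZ_{M(1),g}^{-1}$ as a convergent holomorphic product. The argument above also shows this directly: $\psi=1$ gives $\cZ_{M(1),g}^{-8}=u(\oOmega_{V_{E_8},g})/\cZ_{V_{E_8},g}$, and varying over lattices with no common theta zero gives holomorphy. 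For $g=1$ the formula reduces to Remark \ref{RemarkGenus1Partition} and the classical $\eta$-product.

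Finally, let $H_g$ be another meromorphic function as in the statement. Then $\phi:=H_g/F_g=H_g\cZ_{V,g}/u(\oOmega_{V,g})$ transforms with weight $k-c/2$, and it is independent of $V$. To see that $\phi$ is holomorphic, we would again use enough holomorphic VOAs of central charge $c$, for instance lattice VOAs when $8\mid c$ with $A\mid c/2$, or tensor powers of $V_{E_8}$ times such. This gives forms $u(\oOmega_{V,g})$ without common zeros, so that $\phi$ has no poles. The subtle point here is arranging common-zero-freeness for the given $c$. If ampleness does not directly apply, we would pass to powers and use that a meromorphic function whose power is holomorphic is holomorphic.
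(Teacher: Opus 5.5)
Your determination of $F_g$ is correct, and it is in fact tighter than the paper's. The paper first proves holomorphy of $F_g$ and the $H_g$-statement by letting $V$ run over lattice VOAs. It then only says one ``can take'' $F_g=\cZ_{M(1),g}^{-8}$ because theta series are Teichm\"{u}ller forms, and leaves the constant implicit. You instead show that $\psi^{c/8}=u(\oOmega_{V_L,g})/J^*\Theta_{L,g}$ is a holomorphic weight-zero form, hence constant, and you pin the constant down by the normalization at $q_1=\dots=q_g=0$. Your claim that $\psi$ is nowhere vanishing is unjustified, but it is also not needed. What you need is that $\psi^{c/8}$ is holomorphic on $\oM_g$. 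This is automatic for $g\geq 3$. For $g\leq 2$ it holds because $u(\oOmega_{V_L,g})$ and $J^*\Theta_{L,g}$ both extend, and the rank-$c$ theta series have no common zero on the boundary either (Siegel $\Phi$-operator).

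The genuine gap is the holomorphy of $F_g$. Your ``direct'' argument is circular. Once $\psi=1$, you have $\cZ_{V,g}=u(\oOmega_{V,g})\,\cZ_{M(1),g}^{c}$ for every holomorphic $V$. So at a zero of $\cZ_{M(1),g}$ every partition function vanishes, whatever $\Theta_{L,g}$ does there. A pole of $F_g=\cZ_{M(1),g}^{-8}$ is exactly such a zero, and no choice of lattices (or of VOAs) can exclude it. Holomorphy of $F_g$ is therefore equivalent to $\cZ_{M(1),g}$ having no zeros on $U_{g,r}$, which is an analytic fact about the Heisenberg partition function itself. The Schottky product gives this only where it converges, which need not be all of $U_{g,r}$. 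You therefore also need a nonvanishing statement for its analytic continuation. You were right to single this out; the paper's one-line argument (``varying $V$ among lattice VOAs, $F_g$ cannot have poles'') silently uses the same fact.

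For the $H_g$-part your argument is the paper's. It works only when the rank-$c$ theta series have no common zero, and ``passing to powers'' cannot remove this, because the claim is false for $c=8$. For $g=1$ take $H_1=F_1E_6/E_4$. Then $H_1\cZ_{V_{E_8},1}=E_6$ is a modular form for the only holomorphic VOA of central charge $8$, while $\phi=E_6/E_4$ has a pole at $e^{2\pi i/3}$. So the statement must be read for $c$ with $A\mid c/2$, as the paper tacitly does. It must also be read with $\phi=H_g/F_g^{c/8}$ rather than $H_g/F_g$, since only this equals $H_g\cZ_{V,g}/u(\oOmega_{V,g})$ and has weight $k-c/2$.
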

\begin{proof}
  
Given $g$, as explained in  Section \ref{S:Sieg}, we can fix a $c$ such that for every point $\tau$ of $\H_g$, there exists a positive definite even unimodular lattice $L$ of rank $c$ such that $\Theta_{L,g}(\tau)\neq 0$. Since both $F_g^{c/2}\cZ_{V,g}$ and $\cZ_{V,g}$ are holomorphic for every $V$, and for lattice VOA by Theorem \ref{thm:partition_of_lattice} $\cZ_{V,g}$ is related to $\Theta_{L,g}$, varying $V$ among all lattice vertex algebras we conclude that $F_g$ cannot have poles, and hence it is holomorphic.

Take $H_g$ as in the statement. Then $H_g\cZ_{V_L,g}=H_gF_g^{-1}\Theta_{L,g}$ is a weight $k$ holomorphic modular form for all positive definite even unimodular lattices $L$. This means that $H_gF_g^{-1}=:\phi$ is a modular form of weight $k-c/2$, possibly with poles. Using again the fact that theta series have no common zero for $c$ large enough, we conclude that $\phi$ is holomorphic.

Degree $g$ theta series are well known to be Teichm\"{u}ller modular forms of weight $c/2$, so, by Theorem \ref{thm:partition_of_lattice}, we can take as $F_g$ the $8$-th power of the genus $g$ partition function of the Heisenberg vertex algebra.

\end{proof}

\section{Expansion around the boundary divisors}
\label{SecBoundary}

 \begin{theorem}\label{thm:altre_espansioni}
 Using the notation of Subsection \ref{S:delta0}, Section \ref{S:deltai}, Definition \ref{def:parition_functions} and \ref{def:vuoto_uniformizzato}. Fix integers $c$ and $g$, then for every $i\in \{0, \dots , \lfloor\frac{g}{2} \rfloor\}$ 
\begin{enumerate}
    \item the formal power series $\cZ_{V,g,i}$ converges on the open subset $B_i^+$ of $B_i$ defined by the inequalities $|w_1|>|z_1|>\cdots >|w_i|>|z_i|>|w|$ and $|z|>|w_{i+1}|>|z_{i+1}|>\cdots >|w_g|>|z_g|$; moreover, it extends to a holomorphic function on all $B_i$;
\item there exists a trivialization $\phi_i$ of the pull-back of the Hodge line bundle $\lambda^{\otimes c/2}$ to $B_i$, such that for every holomorphic vertex algebra $V$ of central charge $c$, on $B_i^+$ we have
$$\phi_i(u(\oOmega_{V,g}))=\cZ_{V,g,i}$$\,.
 \end{enumerate}

 \end{theorem}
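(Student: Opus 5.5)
The plan is to imitate the proof of Theorem \ref{T:main1}, replacing the chart $B_0$ by the chart $B_i$ of Subsection \ref{S:deltai} and using the second form of the Sewing Theorem of Section \ref{S:sewing}. The case $i=0$ is essentially Theorem \ref{T:main1}. There $\phi_0$ is obtained by trivializing $\T_g^*$ on $B_0$ through the sewn conformal block $\mu_{V,g}$ and transporting this to $\lambda_g^{\otimes c/2}$ via $u_{V,g}$. The point to check is that the resulting trivialization does not depend on $V$; this independence is exactly the content of the function $F_g^{c/8}$ in Theorem \ref{T:main1}(2).

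So assume $i\geq 1$. Write $X$ for the genus $i$ side and $Y$ for the genus $g-i$ side. First I build conformal blocks $s_X$ and $s_Y$ on the two one-pointed families over $B_X\times\Delta_X$ and $B_Y\times\Delta_Y$. Each is obtained from the genus zero correlation function of Example \ref{ex:covacP1_tantipunti}, with marked points $w_1,\dots,w_i,z_1,\dots,z_i,w$ (respectively $z,w_{i+1},\dots,z_g$) and the extra point $x$ carrying the vacuum. I then sew with the $q_j$ as in Section \ref{S:sewing}, using propagation of vacua (Section \ref{S:prop}) to absorb the vacuum insertion at $x$.

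Concretely, $s_X(a)$ is the convergent sewing of $(\Omega^V,\gamma_{n_1}(w_1,z_1)\cdots\gamma_{n_i}(w_i,z_i)Y(a,w)\Omega^V)$. By Theorem \ref{T:commutation}, this expansion converges in the region $|w_1|>|z_1|>\cdots>|z_i|>|w|$, which is one half of the definition of $B_i^+$. The same holds for $s_Y$ with $|z|>|w_{i+1}|>\cdots>|z_g|$. Next I apply the second form of the Sewing Theorem with the parameter $q$. This gives a conformal block $\mu_{V,g,i}$ of $\pi$ on all of $B_i$. Pairing it with the vacuum section $\oOmega_{V,g}$ gives a holomorphic function on $B_i$. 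On $B_i^+$, expanding term by term, this function is literally the series $\cZ_{V,g,i}$ of Definition \ref{def:parition_functions}. That proves part (1), in the same way as in Theorem \ref{T:main1}.

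For part (2), I define $\phi_i$ by requiring $\phi_i\circ u_{V,g}=\mu_{V,g,i}$ for one fixed $V$ of central charge $c$, for example $V_{E_8}^{\otimes c/8}$. This is a trivialization on $B_i$ once $\mu_{V,g,i}$ is known to be nowhere vanishing. Nonvanishing holds because at $q=0$, by the Factorization Theorem (Section \ref{S:factorization}), $\mu_{V,g,i}$ restricts to $s_X\otimes s_Y$, and these are themselves nonvanishing sewn blocks.

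The main obstacle is showing that $\phi_i$ does not depend on $V$. For a second VOA $U$ of the same central charge, take the $\A_{c,g}$-equivariant isomorphism $\psi\colon\T^*_g(V)\to\T^*_g(U)$ normalized as in the proof of Theorem \ref{T:main1}; I must show $\psi(\mu_{V,g,i})=\mu_{U,g,i}$. Theorem \ref{t:diff_eq}, in its $B_i$ version, gives $g+1$ operators $\ell_k$, one for each $q_j$ and one for $q$, which annihilate both sides. So the analogue of Lemma \ref{uniqueness} reduces the claim to equality on the locus $q=q_1=\cdots=q_g=0$. There the curve is a chain of two rational curves with self-nodes. By factorization and propagation of vacua, both blocks become the canonical vacuum functional, so they agree by the choice of normalization. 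This normalization check along the deepest stratum of $\delta_i$, and its compatibility with the normalization at $Z\subset\delta_0$ used in Definition \ref{Normalization}, is the delicate point. I expect to handle it by noting that both loci lie in the same connected maximal degeneration stratum, on which $\T_g$ is canonically $V_0$.
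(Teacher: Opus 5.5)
Your proposal is correct and matches the paper's proof: sew the genus-zero correlation functions, first in the $q_j$ and then in $q$, to get $\mu_{V,g,i}$, set $\phi_i=\mu_{V,g,i}\circ u_{V,g}^{-1}$, and prove independence of $V$ via Theorem \ref{t:diff_eq} and the analogue of Lemma \ref{uniqueness} on $\{q=q_1=\cdots=q_g=0\}$. The paper only asserts agreement on that locus, so your normalization worry is fair; it is settled by using the closure of $Z$, i.e.\ the image of $\oM_{0,2g}$ under gluing, which contains that locus but not within a single stratum. Over this closure the vacuum sections frame the covacua, so $\psi(\oOmega_{V,g})/\oOmega_{U,g}$ pulls back to a holomorphic function on the compact connected $\oM_{0,2g}$, equal to $1$ on $Z$ and hence identically $1$.
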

\begin{proof}

Fix $V$. We first construct a $\phi_i$ as in the statement that a priori depends on $V$ (so temporarily we call it $\phi_{i,V}$).  

For $i=0$, it is $u_{V,g}^{-1}$ composed with the conformal block $\mu_{V,g}$ from the proof of Theorem \ref{T:main1}. For $i\neq 0$, the construction is similar. First, we construct a conformal block $\mu_{V,g,i}$ such that $\mu_{V,g,i}(\oOmega_{V,g})=\cZ_{V,g,i}$. On the locus where all $q_i$'s and $q$ are zero, this conformal block is given in Example \ref{ex:covacP1_tantipunti}. We then extend it to $B_i$ using the Sewing of conformal blocks  from the last part of  Section \ref{S:sewing} . If we apply this conformal block to the vacuum section, we have to plug $A=\Omega^V$, and for the other $A_i$'s we follow the receipt of Section \ref{S:sewing}, and we obtain on an open subset the power series of the partition function in Definition \ref{def:parition_functions}. Then $\phi_{i,V}$ is the composition of $u_{V,g}^{-1}$ with $\mu_{V,g,i}$. The convergence properties of $\cZ_{V,g,i}$ are the same of the convergence properties of $\mu_{g,i}(\oOmega_{V,g})$, so we have the first claim.

Now, we have to show that $\phi_{i,V}$ does not depend on $V$. We argue in the same spirit of the proof of Theorem \ref{T:main1}, in particular we will use the differential equations from Theorem \ref{t:diff_eq}, and apply Lemma \ref{uniqueness}. Let $U$ be another VOA. Let $\psi$ be the composition of $u_V$ and the inverse of $u_{U}$, so, up to dualizing, this is an $\A_{g,c}$-equivariant isomorphism between $\T^*(V)$ and $\T^*(U)$. We have to show that $\psi(\mu_{V,g})=\mu_{U,g}$. To this end, observe that both are equal on the locus where $q$ and all the $q_i$'s are zero, and then they satisfy the same differential equations from Theorem \ref{t:diff_eq}.

\end{proof}

\begin{corollary}\label{confronto_espansioni}
	Fix $g$ and $i\in \{0,\dots , \lfloor\frac{g}{2}\rfloor\}$. Let $V$ and $U$ be two holomorphic VOA of the same central charge $c$. If
	$$
	\cZ_{V,g,i}=\cZ_{U,g,i}
	$$
	Then for every $j$ in  $\{0,\dots , \lfloor\frac{g}{2}\rfloor\}$ we have
	$$
	\cZ_{V,g,j}=\cZ_{U,g,j}
	$$
\end{corollary}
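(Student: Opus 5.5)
The plan is to pass through the global section $u(\oOmega_{V,g})$ of $\lambda_g^{\otimes c/2}$ on $\oM_g$, using Theorem \ref{thm:altre_espansioni}. The point of that theorem is that the trivialization $\phi_i$ of the pull-back of $\lambda_g^{\otimes c/2}$ to $B_i$ depends only on $c$, $g$ and $i$, and not on the VOA. So suppose $\cZ_{V,g,i}=\cZ_{U,g,i}$ as formal series. By part (1) of Theorem \ref{thm:altre_espansioni}, both series converge on $B_i^+$ and extend holomorphically to $B_i$. Equal formal series give equal holomorphic functions on $B_i^+$, and since $\phi_i$ is the same trivialization for both VOAs, part (2) gives
$$\phi_i\bigl(u(\oOmega_{V,g})\bigr)=\phi_i\bigl(u(\oOmega_{U,g})\bigr)\quad\text{on } B_i^+.$$
Since $\phi_i$ is an isomorphism, the pull-backs of the two sections $u(\oOmega_{V,g})$ and $u(\oOmega_{U,g})$ agree on the open set $B_i^+$.

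Next I would globalize. Because $B_i$ is connected and the sections are holomorphic, they agree on all of $B_i$, and hence on the image $m(\pi)(B_i)\subset\oM_g$. This image contains an open subset of $\oM_g$, since by Section \ref{S:deltai} suitable restrictions of $m(\pi)$ are local charts. Two sections of the line bundle $\lambda_g^{\otimes c/2}$ on the irreducible, connected stack $\oM_g$ that agree on a nonempty open set agree everywhere by the identity principle, so $u(\oOmega_{V,g})=u(\oOmega_{U,g})$ globally. For $i=0$ the same argument works with $\phi_0$ from Theorem \ref{T:main1}, using the domain $U_{g,r}^+$ or $B_0^+$.

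Finally, for any $j$, pull back to $B_j$ and apply $\phi_j$. Theorem \ref{thm:altre_espansioni} gives
$$\cZ_{V,g,j}=\phi_j\bigl(u(\oOmega_{V,g})\bigr)=\phi_j\bigl(u(\oOmega_{U,g})\bigr)=\cZ_{U,g,j}$$
as holomorphic functions on $B_j^+$. Each side is the sum of its own convergent power/Laurent expansion on $B_j^+$. By uniqueness of such expansions in the region of convergence (the coefficients are rational functions in the $w$'s and $z$'s, expanded in a fixed region, as in Remark \ref{Lurent vs Power series}), the formal series coincide.

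The main subtlety is this last step: recovering the formal series from the function. One has to check that the expansion region $B_j^+$ determines the Laurent coefficients uniquely, and that the normalization $u_{V,g}$ of Definition \ref{Normalization} is the same for $U$ and $V$, which holds because it depends only on $c$. The independence of $\phi_i$ from $V$ is exactly what Theorem \ref{thm:altre_espansioni} provides, so it needs no new argument.
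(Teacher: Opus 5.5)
Your proposal is correct and follows the paper's own argument. The VOA-independent trivializations $\phi_i$ and $\phi_j$ of Theorem \ref{thm:altre_espansioni} carry the equality $\cZ_{V,g,i}=\cZ_{U,g,i}$ to $u(\oOmega_{V,g})=u(\oOmega_{U,g})$ and then back to $\cZ_{V,g,j}=\cZ_{U,g,j}$; you simply spell out the identity-principle globalization and the recovery of the formal series from the convergent functions, which the paper's one-line proof leaves implicit.
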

\begin{proof}
	Using notations and results from Definition \ref{def:vuoto_uniformizzato} and \ref{thm:altre_espansioni}, since $u(\oOmega_{V,g})$ and $u(\oOmega_{U,g})$ are holomorphic sections of the same line bundle, if $\cZ_{V,g,i}=\cZ_{U,g,i}$, then $u(\oOmega_{V,g})=u(\oOmega_{U,g})$, hence $\cZ_{V,g,j}=\cZ_{U,g,j}$.
\end{proof}

\section{The slope of the moduli space and the uniqueness of the moonshine VOA}
\label{SecSlope}

When $g$ is large, the geometry of the moduli space $\oM_g$ is rather complicated and has been extensively studied. An important quantitative invariant is the  ``slope of the effective cone".   It is often called just  ``the slope of $\oM_g$", and denoted by $s_g$, two surveys are \cite{Far,Far2}. Let us briefly describe it.

Given a non-zero Teichm\"{u}ller modular form $f$, we define its slope as the ratio
$$
s(f):=\frac{\textrm{weight of } f}{\textrm{vanishing order of } f \textrm{ along } \delta}
$$
By vanishing order of $f$ along $\delta$, we mean the vanishing order of the section of $\lambda_g^{(\otimes (\textrm{weight of } f))}$ on $\oM_g$ associated to $f$. The slope $s_g$ is the infimum of the slopes of all non-zero Teichm\"{u}ller modular forms.

\medskip

Let us show that the slope is related to the classification of vertex algebra, in particular with the problem of the uniqueness  of the moonshine vertex algebra $V^{\natural}$.

\begin{corollary}\label{cor:classification1}
If, for a given integer $\overline{g}$, $s_g>6$ for every $g \leq \overline{g}$, then $$\cZ_{V,g,i}=\cZ_{V^{\natural},g,i}$$ for every $g\leq \overline{g}$, every $i$, and every holomorphic vertex algebra $V$ of central charge $24$ and trivial $V_1$.
\end{corollary}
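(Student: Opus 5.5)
Fix $g\le\overline{g}$ and a holomorphic VOA $V$ with $c=24$ and $V_1=0$. The plan is to show that the difference of vacuum sections vanishes to order $\ge 2$ along the whole boundary, and then to let the slope bound force it to be zero. Consider
$$f:=u(\oOmega_{V,g})-u(\oOmega_{V^\natural,g}),$$
a global section of $\lambda_g^{\otimes 12}$ on $\oM_g$, i.e.\ a Teichm\"uller modular form of weight $12$ (Theorem \ref{T:main1} and Definition \ref{def:vuoto_uniformizzato}). If $f\neq 0$, its slope satisfies $s(f)=12/\operatorname{ord}_\delta(f)\ge s_g>6$, so $\operatorname{ord}_\delta(f)<2$. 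I will prove $\operatorname{ord}_{\delta}(f)\ge 2$, which forces $f=0$. By Theorem \ref{thm:altre_espansioni}, $f=0$ then gives $\cZ_{V,g,i}=\cZ_{V^\natural,g,i}$ for every $i$, since $\phi_i$ is independent of $V$. Here I read ``vanishing order along $\delta$'' as the minimum of the vanishing orders along the components $\delta_i$; this is the convention under which the slope bound applies.

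Order along $\delta_0$: work in the chart $B_0$ of Subsection \ref{S:delta0}. By Theorem \ref{thm:altre_espansioni} with $i=0$ (equivalently Theorem \ref{T:main1}), in the trivialization $\phi_0$ the form $f$ becomes $\cZ_{V,g}-\cZ_{V^\natural,g}$. Expand in the $q_j$ using Definition \ref{def:parition_functions}. The coefficient of $q_1^{n_1}\cdots q_g^{n_g}$ depends only on the VOA structure restricted to $V_{n_1},\dots,V_{n_g}$ through correlation functions of the Casimir bilocal fields. For the first-order claim, it suffices that every monomial in which some $n_j\in\{0,1\}$ agrees for the two VOAs. Since $V_1=V^\natural_1=0$, monomials with some $n_j=1$ vanish for both. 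It remains to handle $n_j=0$: the Casimir field $\gamma_0=1$, so those factors drop out and the coefficient reduces to a lower-genus-type expression in the remaining variables. Consequently $\cZ_{V,g}-\cZ_{V^\natural,g}$ is divisible by $q_1^2\cdots q_g^2$, provided the genus $g'$ expressions with all $n_j\ge 2$ agree.

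That last proviso points to a different organization. The quantity needed is divisibility by $(q_1\cdots q_g)^2$ restricted near a generic point of $\delta_0$, which is really the order along the single divisor $\{q_1=0\}$, since the $q_j$ are local equations of different branches of $\delta_0$ and all branches give the same order by irreducibility. So it suffices to show that the coefficients of $q_1^0$ and $q_1^1$ agree. The $q_1^1$ coefficient vanishes for both because $V_1=0$. The $q_1^0$ coefficient is exactly $\cZ_{\cdot,g-1}$ in the remaining variables, i.e.\ the pullback under the gluing map $G_0$. I will therefore argue by induction on $g$. The base case $g=1$ is Zhu's theorem together with the weight-12 cusp-form argument from the introduction, which gives equal characters; the case $g=0$ is trivial. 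Assuming $\cZ_{V,g-1}=\cZ_{V^\natural,g-1}$, the order of $f$ along $\delta_0$ is at least $2$.

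Order along $\delta_i$, $i\ge1$: use $B_i$ and $\cZ_{V,g,i}$ from Definition \ref{def:parition_functions}. Expanding in $q$, the $q^0$ term is a product of genus $i$ and genus $g-i$ partition functions, which agree by induction. The $q^1$ term vanishes because $V_1=0$. Hence the order along $\delta_i$ is at least $2$, and the contradiction above applies.

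The main obstacle is making the identification of vanishing order with $q$-divisibility rigorous: the trivializations $\phi_i$ must be nonvanishing holomorphic near the boundary (they come from the normalized $u$ and sewing, so this should be fine), and the ``$q_1^0$ coefficient equals genus $g-1$ partition function'' step must be checked via propagation of vacua and $\gamma_0=1$.
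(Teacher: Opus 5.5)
Your proposal is correct and follows the paper's proof: induct on $g$, pull $u(\oOmega_{V,g})-u(\oOmega_{V^\natural,g})$ back to the charts $B_i$, and read off the expansions from Theorem \ref{thm:altre_espansioni}. The $q^0$ term vanishes by induction and the $q^1$ term vanishes because $V_1=V^\natural_1=\{0\}$, so the order along every $\delta_i$ is at least $2$, and the resulting slope $\le 12/2=6<s_g$ forces the form to be zero. Your observation that for $i\ge 1$ the $q^0$ term is the product of the genus $i$ and genus $g-i$ partition functions, so that strong induction is what is used, is a slightly more precise rendering of the paper's phrase ``equal to $D_{g-1}$''.
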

\begin{proof}
Thanks to Theorem \ref{T:main1}, the difference $D_g:=F_g^{3}\left(\cZ_{V,g,0}-\cZ_{V^{\natural},g,0}\right)$ is a Teichm\"{u}ller modular form of weight 12 (or, equivalently, we can look at $D_g=u(\oOmega_{V,g})-u(\oOmega_{V^{\natural},g})$, which is a section of $\lambda_g^{\otimes 12}$ as described in Definition \ref{def:vuoto_uniformizzato}). We show by induction on $g$ that $D_g$ is constantly equal to zero for $g\leq \overline{g}$. 

For $g=1$ the result is well-known: $D_1$ is a modular form of weight 12 with a zero at $\{q=0\}$ of order at least two (see \ref{RemarkGenus1Partition}), and hence by the classical theory of modular forms it is zero.

To prove that $D_g$ is zero, we will show that its slope $s(D_g)$ is less than or equal to 6. To this end, we compute the vanishing order of $D_g$ along $\delta_i$, for every $i$, and we show it is at least equal to $2$.

To carry out this computation, we pull-back $D_g$ to the basis $B_i$ described in Section \ref{S:delta0} and Section \ref{S:deltai}. The expansion of $D_g$ is given by  \ref{thm:altre_espansioni}. The zero order term is zero because it is equal to $D_{g-1}$ and we can apply induction. The degree one term is trivially zero because $V_1=V^{\natural}_1=\{0\}$.

\end{proof}

The exact value of $s_g$ is known only for low values of $g$. In particular, for 
$g\leq 12$, $g=15$ and $g=16$ \cite{Tseng}, we have $s_g>6$. The recent upper bounds on $s_{22}$ and $s_{23}$ are still greater than six \cite{FP}

For arbitrary values of $g$, the first upper bound was proven by Harris and Morrison \cite{HS90}, they proved that $s_g\leq 6 + 12(g+1)^{-1}$, and conjectured that this upper bound was sharp. This went under the name of slope conjecture and was disproved by Farkas and Popa in \cite{Farkas_Popa_slope}. Still, all known upper bounds known by now are of the form $6+O(1/g)$. On the other hand, known lower bounds are of the form $O(1/g)$.

In 2009, experts in the field made a by now famous dinner bet on whether the liminf of the sequence $\{s_g\}$ is zero or six \cite[page 2]{Far2}. The following conjecture is a variant of the hypotheses suggested by experts supporting the six, which is consistent with all recent results.

\begin{conjecture}[Weak Harris-Morrison slope conjecture]\label{conj:slope}
For every $g$, we have $s_g>6$.
\end{conjecture}

The above conjecture, together with Corollary \ref{cor:classification1}, gives the following uniqueness result.

\begin{corollary}\label{cor:classification2}
Assume that Conjecture \ref{conj:slope} is true, then any holomorphic vertex algebra $V$ of central charge $24$ and trivial $V_1$ has the same partition functions of the moonshine vertex algebra $V^{\natural}$. In symbols, $\cZ_{V,g,i}=\cZ_{V^{\natural},g,i}$ for every $g$ and every $i$.
\end{corollary}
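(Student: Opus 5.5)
This follows almost immediately by combining Corollary \ref{cor:classification1} with Conjecture \ref{conj:slope}. The plan is to fix an arbitrary genus $g_0$ and an index $i\in\{0,\dots,\lfloor g_0/2\rfloor\}$, and to show $\cZ_{V,g_0,i}=\cZ_{V^{\natural},g_0,i}$. Assuming Conjecture \ref{conj:slope}, we have $s_g>6$ for every $g$, in particular for every $g\leq \overline{g}:=g_0$. Corollary \ref{cor:classification1} applied with this $\overline{g}$ then yields $\cZ_{V,g,i}=\cZ_{V^{\natural},g,i}$ for all $g\leq g_0$ and all $i$, hence for $g=g_0$. Since $g_0$ was arbitrary, the equality holds for all $g$ and all $i$. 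The case $g=0$ is trivial because $\cZ_{V,0}=1$, and $g=1$ is covered by the base step of Corollary \ref{cor:classification1}.

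For completeness, I would recall the inductive mechanism hidden in Corollary \ref{cor:classification1}, since that is where the real content lies. Put $D_g=u(\oOmega_{V,g})-u(\oOmega_{V^{\natural},g})$, a section of $\lambda_g^{\otimes 12}$ on $\oM_g$. By Theorem \ref{thm:altre_espansioni}, its expansion along each $\delta_i$ in the node-smoothing parameter has a constant term determined by lower-genus data, which vanishes by induction. Its linear term is a sum over $V_1=V^\natural_1=0$, so it is zero. Thus $D_g$ vanishes to order at least $2$ along every $\delta_i$, and if $D_g\neq 0$ its slope would be at most $12/2=6<s_g$, a contradiction.

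The only point needing care is the bookkeeping of this induction. The constant term of $\cZ_{V,g,i}$ along $\delta_i$ is a product (or, for $i=0$, a specialization) of partition functions of genera $i$ and $g-i$ (or $g-1$), each strictly smaller than $g$. So the inductive hypothesis must be stated for all $i$ simultaneously; Corollary \ref{confronto_espansioni} allows this, since equality of one $\cZ_{\cdot,g,j}$ gives equality of all of them. I expect no genuine obstacle beyond this: the heavy lifting (the modularity of Theorem \ref{T:main1} and the expansions of Theorem \ref{thm:altre_espansioni}) has already been done, and the corollary simply removes the genus bound $\overline{g}$ under the conjectural hypothesis.
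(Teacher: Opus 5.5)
Your proposal is correct and matches the paper's argument: the paper derives this corollary by applying Corollary \ref{cor:classification1} under Conjecture \ref{conj:slope} with $\overline{g}$ arbitrary, which is exactly your first paragraph. Your recap of the induction inside that corollary also agrees with the paper's proof: vanishing to order at least $2$ along every $\delta_i$ via Theorem \ref{thm:altre_espansioni}, which forces $s(D_g)\leq 6$. One small overstatement: the constant terms along the boundary involve only the $0$-th partition functions of lower genus, so you do not need the inductive hypothesis for all $i$ simultaneously.
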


In view of Corollary \ref{cor:classification1}, Corollary \ref{confronto_espansioni}, and of the difficulty in computing the value of $s_g$ for $g$ large, we ask for the following effective version of the reconstruction Conjecture \ref{Conj:reconstruction}.

\begin{question}
Fix a positive integer $c$. Does there exist an effectively computable number $N(c)$ such that for every pair of self-dual VOA's  of CFT type $V$ and $U$ of central charge $c$ if, for some $i$, $\cZ_{V,g,i}=\cZ_{U,g,i}$ for every $g\leq N(c)$ then $V$ is isomorphic to $U$?
\end{question}
The above question is already interesting if one restricts the attention of holomorphic vertex algebras, $c=24$ and one of the VOA's is the Moonshine.

\section{Linear independence of partition functions}
\label{SecLinearIndependence}

This section is loosely inspired by \cite{Segal}. Fix an even positive integer $c$. By  $\RR(\oM_g,\lambda_g^{\otimes c/2})$ we denote the graded algebra $\bigoplus_{n\geq 0}H^0(\oM_g,\lambda_g^{\otimes nc/2})$. Consider the gluing morphism $G_0\colon \oM_{g-1,2}\to \oM_g$, and the forgetful map $F\colon \oM_{g-1,2}\to \oM_{g-1}$ from Section \ref{S:moduli} (here we forget two points rather than one with $F$). Since $G_0^*\lambda_g=F^*\lambda_{g-1}$, the projection formula, which is an algebraic geometry variant of the integration along fibers, gives a morphism of graded algebras
\begin{equation}\label{eq:sigma}
\sigma_g \colon \RR(\oM_g,\lambda_g^{\otimes c/2})\to \RR(\oM_{g-1},\lambda_{g-1}^{\otimes c/2})\,.
\end{equation}

We can thus define the graded algebra of \textbf{weight $c/2$ stable Teichm\"{u}ller modular forms} as inverse limit
$$\RR(\oM_\infty,\lambda_\infty^{\otimes c/2}):=\lim_g  \RR(\oM_g,\lambda_g^{\otimes c/2}) $$
(The space $\oM_\infty$ has been seldom considered in the literature, see however \cite{Cod_hyp,Odaka}. In this paper we use  $\oM_\infty$ just as a suggestive symbol, the only mathematical object we care about here is the algebra of stable Teichm\"{u}ller modular forms)

Similarly, the gluing morphism $G_{g-h,h}\colon \oM_{g-h,1}\times \oM_{h,1}\to \oM_g$ gives rise to a map

$$
\sigma_{g,h}\colon \R(\oM_g,\lambda_g^{\otimes c/2})\to \RR(\oM_h,\lambda_h^{\otimes c/2})\otimes \RR(\oM_{g-h},\lambda_{g-h}^{\otimes c/2})
$$
we observe that $\sigma_g$ is compatible with $\sigma_{g,h}$ for every $g$ and $h$, and $\sigma_{g,h}=\sigma_{g,g-h}$, hence we obtain a co-commutative co-multiplication
$$
\Delta \colon \RR(\oM_\infty,\lambda_\infty^{\otimes c/2}) \to \RR(\oM_\infty,\lambda_\infty^{\otimes c/2}) \otimes \RR(\oM_\infty,\lambda_\infty^{\otimes c/2})
$$

 It is also possible to define a co-unit so that $\RR(\oM_\infty,\lambda_\infty^{\otimes c/2})$ acquires the structure of a co-algebra compatible with the structure of algebra, i.e. a bialgebra structure; we refrain from doing it as we do not need it.

\medskip

Given a holomorphic vertex algebra $V$, using the properties of covacua from Sections \ref{S:prop} and \ref{S:factorization}, we can carry out the analog construction for the ring 
$$\RR(\oM_\infty,\T(V)):=\lim_g \bigoplus_n H^0(\oM_g,\T_g(V)^{\otimes n})\,,$$ 
obtaining a co-commutative co-multiplication. 

We denote by $\oOmega_V$ the collection of vacuum sections $\{\oOmega_{V,g}\}_{g\geq 0}$ (cf.  \ref{SS:vacuumSec} for the definition). This collection defines an element $\oOmega_V$ of $\RR(\oM_\infty,\T(V))$, which is a group-like element, i.e. $$\Delta(\oOmega_V)=\oOmega_V\otimes \oOmega_V\,.$$ 
These facts follow from the properties of the vacuum section, see \ref{SS:vacuumSec}. 

The maps  from Definition \ref{def:vuoto_uniformizzato}, give an isomorphism of co-algebras
$$
u_V\colon \RR(\oM_\infty,\T(V))\to \RR(\oM_\infty,\lambda_\infty^{\otimes c/2})\,,
$$
where $c$ is the central charge of $V$. We thus obtain a group-like element $u_V(\oOmega_V)$ of $\RR(\oM_\infty,\lambda_\infty^{\otimes c/2})$, which, to shorten the notations, write as $u(\oOmega_V)$. Finite sets of group-like elements are known to be linearly independent \cite[Prop. 3.2.1]{SweedlerHopf}. Thus, we have the following result.

\begin{theorem}[Linear independence of partition functions]\label{T:indip}

Let $V_1,\dots , V_k$ be holomorphic vertex algebras of the same central charge $c$ whose partition functions are different. Then, for all $g$ large enough
\begin{enumerate}
\item  their vacuum sections $u(\oOmega_{V_1,g}),\dots , u(\oOmega_{V_k,g})$ are linearly independent elements of $H^0(\oM_g,\lambda_g^{c/2})$;

\item  their partition functions $\cZ_{V_1,g,i},\dots ,\cZ_{V_k,g,i}$ are linearly independent power series for every $i$.

\end{enumerate}
\end{theorem}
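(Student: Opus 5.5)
Since the $V_j$ have pairwise different partition functions, the group-like elements $u(\oOmega_{V_1}),\dots,u(\oOmega_{V_k})$ of the bialgebra $\RR(\oM_\infty,\lambda_\infty^{\otimes c/2})$ are pairwise distinct, and I would use this to get linear independence in the limit and then push it down to a finite genus. First I check distinctness. If $u(\oOmega_{V_a})=u(\oOmega_{V_b})$ as elements of the inverse limit, then $u(\oOmega_{V_a,g})=u(\oOmega_{V_b,g})$ for every $g$. Theorem \ref{thm:altre_espansioni} gives a trivialization $\phi_i$, independent of the VOA, with $\phi_i(u(\oOmega_{V,g}))=\cZ_{V,g,i}$. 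Hence $\cZ_{V_a,g,i}=\cZ_{V_b,g,i}$ for all $g$ and $i$, contradicting the hypothesis. So the $u(\oOmega_{V_j})$ are pairwise distinct group-like elements, and by \cite[Prop. 3.2.1]{SweedlerHopf} they are linearly independent in $\RR(\oM_\infty,\lambda_\infty^{\otimes c/2})$. Here I use the facts recalled above that $\oOmega_V$ is group-like and that $u_V$ is an isomorphism of co-algebras.

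Next I descend to finite genus. The elements $u(\oOmega_{V_j})$ all lie in the degree-one graded piece, which is the inverse limit $\lim_g H^0(\oM_g,\lambda_g^{\otimes c/2})$ under the maps $\sigma_g$. Let $K_g\subset\C^k$ be the space of coefficient vectors $(a_1,\dots,a_k)$ with $\sum_j a_j u(\oOmega_{V_j,g})=0$. Each vacuum section is compatible with $\sigma_g$, since $\sigma_g(u(\oOmega_{V,g}))=u(\oOmega_{V,g-1})$ by the gluing and forgetful properties in Section \ref{SS:vacuumSec}. Therefore a relation in genus $g$ maps to a relation in genus $g-1$, so $K_g\subset K_{g-1}$. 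This gives a decreasing chain of subspaces of the finite-dimensional space $\C^k$, which must stabilize at $\bigcap_g K_g$. A vector in the intersection gives a relation in every genus, hence a relation in the inverse limit, so it is zero by the previous paragraph. Thus $K_g=0$ for all $g$ large enough, which proves item (1).

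For item (2), fix $i$ and suppose $\sum_j a_j\cZ_{V_j,g,i}=0$ as power series. By Theorem \ref{thm:altre_espansioni}, applying the single trivialization $\phi_i$ gives $\phi_i\bigl(\sum_j a_j u(\oOmega_{V_j,g})\bigr)=0$ on $B_i^+$. The section $\sum_j a_j u(\oOmega_{V_j,g})$ is holomorphic on the irreducible space $\oM_g$, and it vanishes on the open set $m(\pi)(B_i^+)$, so it vanishes identically. By item (1), all $a_j$ are then zero for $g$ large.

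The main obstacle is the descent step. Sweedler's lemma only gives independence in the limit coalgebra, so one must check two things. First, the inverse system really consists of the compatible maps $\sigma_g$, so that $K_g\subset K_{g-1}$. Second, the coalgebra structure on $\RR(\oM_\infty,\lambda_\infty^{\otimes c/2})$ is rigorous enough for Sweedler's result to apply; in particular one needs a counit making the $u(\oOmega_V)$ group-like with counit $1$, which holds because of the normalization in Definition \ref{Normalization}.
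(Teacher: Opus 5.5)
Your proposal is correct and follows the paper's own route: Sweedler's lemma gives linear independence of the distinct group-like elements $u(\oOmega_{V_j})$ in the stable algebra, you then descend to finite genus, and for item (2) you use the VOA-independent trivialization $\phi_i$ (equivalently, the conformal block $\mu_{g,i}$ used in the paper). You also make explicit three points the paper only asserts---distinctness of the group-like elements via $\phi_i$, the stabilizing chain $K_g\subset\C^k$ that gives independence for all large $g$, and injectivity of $\phi_i$ on holomorphic sections by analytic continuation---and note that Sweedler's argument really only needs $\Delta x=x\otimes x$ and $x\neq 0$, where nonvanishing comes from the normalization in Definition \ref{Normalization}.
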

\begin{proof}
The stable Teichm\"{u}ller modular forms $u(\oOmega_{V_1}),\dots , u(\oOmega_{V_k})$ are linearly independent because they are group like elements. This means that for all $g$ big enough the vacuum sections $u(\oOmega_{V_1,g}),\dots , u(\oOmega_{V_k,g})$ are linearly independent in $H^0(\oM_g,\lambda_k^{\otimes k})$, where $k$ is half of the central charge. Applying the conformal block $\mu_{g,i}$ introduced in the proof of \ref{thm:altre_espansioni} to these sections, we obtain the second statement.
\end{proof}

The following is an intriguing corollary, as there are few results about the dimension of the space of Teichm\"{u}ller modular forms.

\begin{corollary}\label{cor:71}
For all $g$ large enough, we have $\dim H^0(\oM_g,\lambda_g^{\otimes 12})\geq 71$.
\end{corollary}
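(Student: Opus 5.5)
We plan to apply Theorem \ref{T:indip} to a family of $71$ holomorphic VOAs of central charge $24$, all with pairwise distinct partition functions. For $g$ large, Theorem \ref{T:indip} then makes their vacuum sections $u(\oOmega_{V,g})$ linearly independent in $H^0(\oM_g,\lambda_g^{\otimes 12})$, since $c/2=12$. That yields the bound $\dim H^0(\oM_g,\lambda_g^{\otimes 12})\geq 71$.

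The family we take is the one given by Schellekens' list. There are $71$ holomorphic VOAs of central charge $24$: the $70$ Schellekens VOAs with $V_1\neq\{0\}$, whose weight-one Lie algebras are pairwise non-isomorphic, together with $V^\natural$, which has $V_1=\{0\}$. Their existence has been established in the literature (van Ekeren, Lam, Möller, Scheithauer, Shimakura and others). For the present argument we only need existence, not uniqueness.

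The main obstacle is showing that the $71$ partition functions are pairwise distinct. The genus one partition function only detects $\dim V_1$, so it cannot do this alone, and we must look at higher genus. We would first use Remark \ref{RemarkGenus1Partition} to separate the VOAs with different $\dim V_1$; among these, $V^\natural$ is the unique one with $\dim V_1=0$.

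For VOAs with the same $\dim V_1$, we would extract Lie-theoretic data from $\cZ_{V,g}$. Take the coefficient of $q_1\cdots q_g$ in $\cZ_{V,g}$. It is $\sum(\Omega,Y(a_1,w_1)Y(a_1,z_1)\cdots Y(a_g,w_g)Y(a_g,z_g)\Omega)$, summed over orthonormal bases of $V_1$. Its leading singular parts encode invariants of the pair $(V_1,(\cdot,\cdot))$ of the form $\Tr(\mathrm{ad}\,a_1\cdots\mathrm{ad}\,a_k)$ contracted with the Casimir element. For example, the structure constants $a_{(0)}b$ and the levels enter through the $0$-th products in the operator product expansions. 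These contractions compute the values of Casimir-type invariants of the reductive Lie algebra $V_1$, with each simple ideal at its level. We would check that these values distinguish the $70$ weight-one Lie algebras of Schellekens' list.

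Alternatively, and more simply, we could invoke the later result that the partition function determines the Schellekens VOAs (Theorem \ref{th: Schellekens}, relying on Gaberdiel–Volpato \cite{GV09}), which in particular implies that their partition functions are distinct. With distinctness in hand, Theorem \ref{T:indip}(1) gives the corollary.
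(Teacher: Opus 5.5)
Your proposal is correct and follows the paper's proof: the paper likewise takes the $71$ known holomorphic VOAs of central charge $24$ (the $70$ Schellekens VOAs and $V^\natural$), gets pairwise distinct partition functions from Theorem~\ref{th: Schellekens}, and concludes with Theorem~\ref{T:indip} for $c/2=12$. Your Lie-theoretic route is only sketched, so the invocation of Theorem~\ref{th: Schellekens} is the argument to keep; applying it to a pair of Schellekens VOAs uses that all of them are unitary (\cite{CGGH23,Lam23}), as recorded in Section~\ref{sec: c=24}.
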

\begin{proof}
There are at least 71 holomorphic vertex algebras with central charge 24 and distinct partition functions, see Section \ref{sec: c=24}, in particular \ref{th: Schellekens}, so the result follows from Theorem \ref{T:indip}.
\end{proof}

Recall that, for $g$ big enough, the space of weight 12 Siegel modular forms is 24 dimensional, see Section \ref{S:Sieg}. In particular, Corollary \ref{cor:71} says that not all weight 12 Teichm\"{u}ller modular forms are pull-back of Siegel modular forms. 

To the best of our knowledge, so far there were only two known Teichm\"{u}ller modular forms which are not pull-back of Siegel modular forms: one is in genus 3 \cite[Introduction]{Ich}, and one in genus 4 (the determinant of the differential of the Schottky form, see for instance \cite[Section 3]{Mat_Volp}). In both cases, their square is the pull-back of a Siegel modular form. 
 Proposition \ref{prop:comp} gives a general recipe to produce Teichm\"{u}ller modular forms which are not pull-back of Siegel modular forms. Proposition \ref{power} shows that,  given an integer $d$, for all $g$ big enough there are Teichm\"{u}ller modular forms whose $d$-th power is not the pull-back of Siegel modular forms. We are unable to provide an example of Teichm\"{u}ller modular form such that none of its powers is the pull-back of a Siegel modular form.

\begin{proposition}[Comparison between Siegel and Teichm\"{u}ller modular forms]\label{prop:comp}
Fix a positive integer $k$ divisible by $4$. Let $m$ be the number of isomorphism classes of unimodular, positive definite, even lattices of rank $2k$. Let $V_1,\dots , V_n$ be unitary holomorphic vertex algebras of central charge $2k$ which are not lattice VOA and have distinct partition functions.  

Then, for all $g$ big enough, the dimension of the space of weight $k$ and degree $g$ Siegel modular forms is $m$, and the dimension of the space of weight $k$ and degree $g$ Teichm\"{u}ller modular forms is at least $m+n$. 

In particular, if $k\geq 12$, for all $g$ big enough the space of weight $k$ and degree $g$ Teichm\"{u}ller modular forms has dimension strictly bigger than the space of weight $k$ and degree $g$ Siegel modular forms. 
\end{proposition}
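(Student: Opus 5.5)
The plan is to produce $m+n$ linearly independent weight $k$ Teichm\"{u}ller modular forms out of holomorphic VOAs, and to compare them with the Siegel side using Freitag's theorem.

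First, the Siegel count. By Freitag's result recalled in Section \ref{S:Sieg}, for $k$ divisible by $4$ and $g$ large enough, the space of weight $k$, degree $g$ Siegel modular forms has a basis given by the theta series $\Theta_{L,g}$, where $L$ runs over isomorphism classes of even unimodular positive definite lattices of rank $2k$. This basis has $m$ elements, so the dimension is $m$. Moreover, by \cite[Corollary 1.5]{CS-B14} (cf. Remark \ref{rem:part_latt}) these theta series are pairwise distinct for $g$ large.

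Next, consider the $m+n$ holomorphic VOAs $V_{L_1},\dots,V_{L_m},V_1,\dots,V_n$, all of central charge $2k$. I need their partition functions to be pairwise different.
\begin{itemize}
\item Lattice VOAs among themselves have distinct partition functions by Theorem \ref{thm:partition_of_lattice} and Remark \ref{rem:part_latt}.
\item The $V_j$ have distinct partition functions by hypothesis.
\item A lattice VOA $V_L$ and a $V_j$ cannot share the partition function. I would derive this from the reconstruction result for holomorphic lattice VOAs, Theorem \ref{thmLatticePartition}: the unitary VOA structure of $V_L$ is determined by $\cZ_{V_L}$. If $\cZ_{V_j}=\cZ_{V_L}$ with $V_j$ unitary, then $V_j\cong V_L$, contradicting that $V_j$ is not a lattice VOA.
\end{itemize}
This is why unitarity is assumed. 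This third step is the main point where a later result (stated further in the paper) is invoked, and I expect it to be the delicate one. If I could not use Theorem \ref{thmLatticePartition}, I would try instead to argue through $PV$ and Theorem \ref{T:main2_corpo}.

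Having pairwise distinct partition functions, Theorem \ref{T:indip} gives that the sections $u(\oOmega_{\cdot,g})$ of these $m+n$ VOAs are linearly independent in $H^0(\oM_g,\lambda_g^{\otimes k})$ for $g$ large. Since this space is the space of weight $k$ Teichm\"{u}ller modular forms, its dimension is at least $m+n$.

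For the last claim, suppose $k\geq 12$. It suffices to exhibit one unitary holomorphic non-lattice VOA of central charge $2k$; distinctness of its partition function from the others is then automatic by the argument above. For $k=12$, take $V^\natural$. It is unitary, holomorphic, and not a lattice VOA, since $V^\natural_1=0$ while lattice VOAs of rank $24$ have nonzero $V_1$ (containing the Heisenberg part). For $k>12$ with $4\mid k$, take $V^\natural\otimes V_{L}$ with $L$ even unimodular of rank $2k-24$. Tensor products of unitary VOAs are unitary, and by Proposition \ref{PropTensorLattice} this is not a lattice VOA because $V^\natural$ is not. Hence $n\geq 1$ and the Teichm\"{u}ller dimension is strictly bigger than $m$.
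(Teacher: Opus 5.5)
Your proposal is correct and follows the paper's proof essentially step by step: Freitag's theorem for the Siegel count, Theorem~\ref{thmLatticePartition} (which is where unitarity enters) to separate the $V_j$ from the lattice VOAs, and Theorem~\ref{T:indip} for the lower bound $m+n$. The only difference is cosmetic. For the last claim you take the explicit $V^\natural\otimes V_L$, whereas the paper uses $W^{\otimes a}\otimes V_{E_8}^{\otimes b}$ with $W$ an arbitrary unitary non-lattice holomorphic VOA of central charge $24$; both rely on Proposition~\ref{PropTensorLattice} in the same way.
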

\begin{proof}
The statement about the dimension of the space of Siegel modular forms is well known, see Section \ref{S:Sieg}. 

Let $W_1,\dots , W_m$ be the holomorphic lattice VOA of central charge $2k$; their partition functions are theta series (Theorem \ref{thm:partition_of_lattice}), and they are all distinct for $g$ big enough by \cite[Corollary 1.5]{CS-B14}. The partition functions of $V_1,\dots , V_n$ are not theta series by Theorem \ref{thmLatticePartition} (to apply this result we need the unitary assumption), and are  different by assumption.

We conclude that the dimension of the space of Teichm\"{u}ller modular forms is at least $m+n$ by Theorem \ref{T:indip}.

For the last statement, it is enough to produce a unitary holomorphic VOA $V$ of central charge $2k$ which is not a lattice a VOA. Plausibly, there are plenty of them and many different ways to construct them, let us propose one construction. Let $W$ be unitary holomorphic VOA of central charge 24 which is not a lattice VOA. Let $V_{E_8}$ the holomorphic lattice VOA associated to $E_8$, it has central charge $8$. For a pair of integers $a>0$ and $b\geq 0$ such that $2k=24a+8b$, we take $V:=W^{\otimes a} \otimes V_{E_8}^{\otimes b}$. Then $V$ is a holomorphic VOA of central charge $2k$. The inductive application of Proposition \ref{PropTensorLattice} shows that $V$ is not a lattice VOA.
\end{proof}

\begin{proposition}\label{power}
Let $V$ be a holomorphic VOA which is not a lattice VOA. Fix a positive integer $d$. Then, for all $g$ large enough (depending on $V$ and $d$), the Teichm\"{u}ller modular forms $u(\oOmega_{V,g})^d$ are not a pull-back of a Siegel modular forms.
\end{proposition}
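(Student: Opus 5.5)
The plan is to reduce the statement about the $d$-th power to a statement about a single vacuum section, and then to use the linear independence of group-like elements from Theorem \ref{T:indip}. Let $c$ be the central charge of $V$ and $k=c/2$.

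\textbf{Step 1 (reduction to $d=1$).} By the tensor product compatibility of vacuum sections (Sections \ref{S:tens} and \ref{SS:vacuumSec}) and Equation \eqref{E:somma2}, we have
\[
u(\oOmega_{V,g})^d=u(\oOmega_{V^{\otimes d},g}).
\]
Here $V^{\otimes d}$ is a holomorphic VOA of central charge $dc$. Applying Proposition \ref{PropTensorLattice} inductively, $V^{\otimes d}$ is not a lattice VOA. So it suffices to prove the following claim: if $W$ is a holomorphic non-lattice VOA of central charge $c'$, then for $g$ large $u(\oOmega_{W,g})$ is not the pull-back of a Siegel modular form.

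\textbf{Step 2 (Siegel side).} Since $c'$ is a multiple of $8$, the weight $c'/2$ is divisible by $4$. By Freitag's theorem (Section \ref{S:Sieg}), for $g\ge g_0$ the weight $c'/2$ Siegel forms are spanned by the theta series $\Theta_{L,g}$, where $L$ runs over the finitely many even unimodular lattices $L_1,\dots,L_m$ of rank $c'$. By Theorem \ref{thm:partition_of_lattice} and Proposition \ref{P:ampl}, the pull-back of $\Theta_{L,g}$ is $u(\oOmega_{V_L,g})$. Hence, for $g\ge g_0$, the claim fails at $g$ exactly when the subspace
\[
R_g:=\{(a_0,\dots,a_m)\in\C^{m+1}: a_0u(\oOmega_{W,g})=\textstyle\sum_j a_ju(\oOmega_{V_{L_j},g}),\ a_0\neq0 \text{ allowed}\}
\]
contains a vector with $a_0\neq 0$.

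The maps $\sigma_g$ of \eqref{eq:sigma} send vacuum sections to vacuum sections, so a relation holding in genus $g$ also holds in every lower genus. Therefore $R_g\subseteq R_{g-1}$, and this decreasing chain of finite-dimensional spaces stabilizes to some $R_\infty$. Each element of $R_\infty$ is a linear relation among the stable group-like elements $u(\oOmega_W),u(\oOmega_{V_{L_1}}),\dots,u(\oOmega_{V_{L_m}})$ in $\RR(\oM_\infty,\lambda_\infty^{\otimes c'/2})$. Suppose the claim failed for infinitely many $g$. Then the set $\{a_0\neq0\}$, being open in each $R_g$, would be nonempty in each of these $R_g$, and hence nonempty in $R_\infty$ by stabilization. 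This gives a relation $u(\oOmega_W)=\sum_j a_j u(\oOmega_{V_{L_j}})$.

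\textbf{Step 3 (contradiction).} Distinct group-like elements are linearly independent (\cite[Prop. 3.2.1]{SweedlerHopf}, as in Theorem \ref{T:indip}). So the relation above forces $u(\oOmega_W)=u(\oOmega_{V_{L}})$ for some $L$, i.e.\ $\cZ_W=\cZ_{V_L}$ in every genus.

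\textbf{Main obstacle.} The difficulty is ruling out that a non-lattice holomorphic $W$ has the same partition function as a lattice VOA $V_L$. My first attempt would be to invoke Theorem \ref{thmLatticePartition}: the lattice VOA structure is determined by its partition function, which gives $W\cong V_L$, a contradiction. This is fine in the unitary case. Without unitarity, I would try to extract from the low-genus expansions (Theorem \ref{thm:altre_espansioni}) the invariants used in \cite{DM04b}, namely $\dim V_1$ and the rank of the reductive Lie algebra $W_1$. The aim is to show $\operatorname{rank}W_1=c'$, which forces $W$ to be a lattice VOA by \cite[Theorem 3.1]{DM04b}. Determining the rank of $W_1$ from the partition function is the step I expect to be hardest; if it fails, the statement should be read with the unitarity hypothesis used in Proposition \ref{prop:comp}.
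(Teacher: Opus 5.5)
Your argument is the paper's own. You reduce to $V^{\otimes d}$ via $u(\oOmega_{V,g})^d=u(\oOmega_{V^{\otimes d},g})$ and Proposition \ref{PropTensorLattice}, combine Freitag's theta-series basis with the linear independence of group-like elements (Theorem \ref{T:indip}, whose ``for all $g$ large enough'' your decreasing chain $R_g\subseteq R_{g-1}$ just makes explicit), and exclude $\cZ_W=\cZ_{V_L}$ via Example \ref{ex: lattice} and Theorem \ref{thmLatticePartition}.

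The unitarity caveat you raise is genuine, and the paper's proof has it too, since it also invokes Example \ref{ex: lattice}, which is stated only for simple unitary VOAs. Your fallback through $\operatorname{rank}W_1=c'$ and \cite[Theorem 3.1]{DM04b} is the natural repair; the identification $W_1\cong(V_L)_1$ is presumably available from \cite{GV09} in its original holomorphic setting, where $W_1$ is reductive by \cite{DM04b}.
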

\begin{proof}
Let $c$ be the central charge of $V$. We have $u(\oOmega_{V,g})^d=u(\oOmega_{V^{\otimes d},g})$. The VOA $V^{\otimes d}$ has central charge $cd$, and it is not a lattice VOA by Proposition \ref{PropTensorLattice}. Since $V^{\otimes d}$ is not a lattice VOA, its partition function is not a theta series by Example \ref{ex: lattice}. For $g$ big enough, theta series are a basis of the space of Siegel modular forms. By \ref{T:indip}, for $g$ big enough the set of theta series - aka partition functions of lattice VOA (Theorem \ref{thm:partition_of_lattice}) - and $u(\oOmega_{V,g})^d$ are linearly independent, hence $u(\oOmega_{V,g})^d$ is not the pull-back of a Siegel modular forms.

\end{proof}

Let us now recall the notion of stable Siegel modular forms, see e.g \cite[Section 2]{PhD_Cod} and references therein. We use notations from 
Section \ref{S:Sieg}. There is a Siegel operator
$$
\Phi_g\colon H^0(\A_g,N_g^{\otimes k})\to  H^0(\A_{g-1},N_{g-1}^{\otimes k})
$$
which is surjective for $k$ even and $k>2g$. Here we take $k$ divisible by 4. Taking inductive limits we construct the ring of stable modular form $\R(\A_\infty,N_\infty^{\otimes 4})$. This also has a natural structure of a co-commutative co-algebra. The sequence of theta series $\Theta_L:=\{\Theta_{L,g}\}_{g\in \N}$, where $L$ is a positive definite even unimodular, lattice, gives examples of stable Siegel modular forms.

Each graded piece $H^0(\A_\infty,N_\infty^k)$, with $k$ divisible by 4, is finite dimensional and has a basis given by theta series $\Theta_L$, where the rank of $L$ is $2k$. The ring $\R(\A_\infty,N_\infty^{\otimes 4})$ is polynomial in the variable $\Theta_L$, where $L$ is irreducible, and we have the relation $\Theta_L \Theta_{\tilde{L}}=\Theta_{L\oplus \tilde{L}}$. Theta series are the unique group-like elements of the co-algebra of stable Siegel modular forms.

\cite[Corollary 1.5]{CS-B14} can be rephrased by saying that the restriction from $\A_g$ to $\M_g$ embeds the algebra of stable Siegel modular into the algebra of stable Teichm\"{u}ller modular forms. 

The arguments given in this section, especially the use we have made of Example \ref{ex: lattice}, says that this embedding is not surjective. More explicitly, partition functions of non-lattice holomorphic VOA give group-like elements which do not lie in its image.

We end this section with some questions motivated by the analogy with stable Siegel modular forms. First, some queries about the ring of stable Teichm\"{u}ller modular forms.
\begin{questions}\label{questions_stable}
\begin{enumerate}[$(i)$]
\item Is the map $\sigma_g$ from Equation \eqref{eq:sigma} surjective?
\item\label{question_item_finite} Given a positive integer $k$, is the dimension of $H^0(\overline{\M}_\infty,\lambda_\infty^{\otimes k})$ finite? 
\item Given a positive integer $k$, does the dimension of $H^0(\oM_g,\lambda_g^{\otimes k})$ stabilizes when $g$ grows? More explicitly, does $\dim H^0(\oM_g,\lambda_g^{\otimes k})$ depend only on $k$ (and not of $g$) for every $g$ big enough?
\item Given a positive integer $k$, does $H^0(\overline{\M}_\infty,\lambda_\infty^{\otimes k})$ have a (Hamel) basis formed by group-like elements?
\end{enumerate}
\end{questions}

The following conjecture is motivated by the Friedan-Shenker program on the geometrization of two-dimensional CFTs \cite{FS}.

\begin{conjecture}[Geometrization of holomorphic CFTs]\label{ConjectureGeoemtrization}
Let $k$ be a positive integer divisible by $4$. Every group-like element of $H^0(\overline{\M}_\infty,\lambda_\infty^{\otimes k})$ is the vacuum sections of a holomorphic vertex operator algebras.
\end{conjecture}

The set of equivalence classes of holomorphic VOAs with a fixed central charge is expected to be finite. This would be a consequence of \cite[Conjecture 3.5]{Hoh03}. It would also follow from Theorem \ref{T:indip} together with an affirmative answer to the item \ref{questions_stable} (\ref{question_item_finite}), and from the reconstruction Conjecture  \ref{Conj:reconstruction}.

\section{Relation with the Schottky problem}\label{S:schottky_problem}

We keep the notation of Section \ref{S:Sieg}. The complex dimension of $\M_g$ is $3g-3$, and the complex dimension of $\A_g$ is $\frac{g(g+1)}{2}$. This implies that for $g\geq 4$, the dimension of $\M_g$ is strictly smaller than the dimension of $\A_g$. The \emph{Schottky problem} asks for a characterization of the image of the Jacobi map inside $\A_g$. Equivalently, it asks which matrices in the Siegel upper half space $\H_g$ are the period matrix of a Riemann surfaces; i.e. a characterization of $J_g$ in $\H_g$.

A standard approach is to look at the equations of $J_g$ in $\H_g$ (or of $j(\M_g)$ in $\A_g$); more concretely, one looks at Siegel modular forms which vanish along $J_g$ (observe that $J_g$ is $Sp(2g,\Z)$-invariant, hence it is sensible to use modular forms).  One often tries to use theta series to write such equations.

The first case is the so called Schottky form $\Theta_{D_{16}^+,g}-\Theta_{E_8^{\oplus 2},g}$. It has weight 8, it is identically zero on $\H_g$ for $g \leq 3$, and it is the equation of $J_4$ in $\H_4$. The two lattices $D_{16}^+$ and $E_8^{\oplus 2}$ are related to string theory, and it has been asked by physicists if $F_g$ were zero on $J_g$ for $g\geq 5$. The problem was addressed in \cite{GruSa}, and the answer is that the Schottky form is not zero on $J_g$ for all $g\geq 5$. 

This approach was generalized in \cite{CS-B14} and \cite{PhD_Cod}: a fixed linear combination of theta series $\Phi_g:=\sum a_i \Theta_{L_i,g_i}$, where the $a_i$'s are complex numbers and the $L_i$'s are positive definite even unimodular latticed of the same rank, is not zero on $J_g$ for all $g$ large enough ($g$ depends on the  $L _i$'s and $a_i$'s, and we do not know how to compute it in an effective way). 

Similar problems were also among the motivations for the works \cite{GV09,GKV10}. In particular, in \cite[Section 3.1]{GV09}, the authors compute some coefficients of the power series from Definition \ref{def:parition_functions} when $V$ is the lattice vertex algebra associated to $D_{16}^+$ or $E_8^{\oplus 2}$, and remark that these coefficients are different when $g\geq 5$. This computation, together with Theorem \ref{thm:partition_of_lattice}, gives an alternative proof that the Schottky form is not zero on $J_g$ for $g\geq 5$. In the same spirit, one could use partition functions to understand if $\Phi_g$ is zero on $J_g$. Indeed, thanks to Theorem \ref{thm:partition_of_lattice}, one can expand the restriction of $\Phi_g$ to $J_g$ in terms of the coordinates from Equation \ref{e:coo_tw}, i.e., write $\Phi_g$ as a linear combination of partition functions from Definition \ref{def:parition_functions}. Then one could compute explicitly the coefficients and check if they vanish.

There are variants of the Schottky problem that consider other subvarieties of $\H_g$. An important one is the hyperellyptic locus, i.e. the subvariety $Hyp_g$ of $J_g$ defined as the closure period matrices of hyperelliptic curves; it has complex dimension $2g-1$. Here something different happens: there are linear combinations of theta series that vanish on $Hyp_g$ for every $g$. The first example is the Schottky form discussed above; more examples are given in \cite{Cod_hyp}, see also \cite{PhD_Cod} and references therein. The analog problem for the locus of curves with higher gonality is addressed in \cite{SB}. We do not know how to interpret this phenomenon in terms of vertex algebras.

\section{The partition function subalgebra}
\label{SecPartitionFunctionSubalgebra}

The main goal of this section is to prove Theorem \ref{T:main2_corpo}. This theorem characterizes the pairs of unitary VOAs having the same partition functions in term of certain subalgebras that we call {\bf partition function subalgebras}, see Definition \ref{def: partitionsubalgebra}. 
Although many of the results of this section have more general validity, we will restrict ourselves to simple unitary VOAs, which are those involved in 
Theorem \ref{T:main2_corpo}. This does not appear to be a too serious restriction since the focus of this paper is on holomorphic VOAs and, in many relevant cases, such as the VOAs associated to unimodular positive definite even lattices, the moonshine VOA and the Schellekens VOAs, these are known to be unitary, see \cite{CKLW18,CGGH23,DL14,Lam23}.  Actually, to the best of our knowledge, there are no known examples of non unitary holomorphic VOAs, cf. Section \ref{sec: c=24}.  For the convenience of the readers we first include some preliminaries on unitary VOAs, see e.g., \cite{CKLW18,CGH,DL14} for more details.
\subsection{Unitary vertex operator algebras} 
\label{Subsec:unitaryVOAs}

Let $V$ be a VOA and let $(\cdot | \cdot ) : V \times V \to \mathbb{C}$ be a scalar product on $V$ (i.e.\ a positive definite Hermitian form on $V$). We say that $(\cdot | \cdot )$ is {\bf normalized}  if $(\Omega | \Omega ) =1$.  Moreover, we say that $(\cdot | \cdot )$ is {\bf invariant} if there exists an anti-linear automorphism $\theta$ of $V$ (the {\bf PCT operator}) such that $(\theta \cdot | \cdot)$ is an invariant bilinear form. The PCT-operator $\theta$ is uniquely determined by the invariant scalar product $(\cdot | \cdot )$ and satisfies $\theta^2=1_V$, see \cite[Proposition 5.1]{CKLW18}.  If $V$ is a unitary with normalized invariant scalar product $(\cdot | \cdot )$ and PCT-operator $\theta$ and if $a \in V_d$ is quasi-primary, then we see from 
Equation (\ref{eqInvBilqp}) that if $a \in V_d$ is quasi-primary then 

\begin{equation}
\label{eqInvScalqp}
(b|a_nc ) = (-1)^d((\theta a)_{-n}b | c)
\end{equation}
for all $b,c \in V$ and all $n \in \mathbb{Z}$.

A {\bf unitary VOA} is a vertex operator algebra $V$ equipped with a normalized invariant scalar product. A unitary vertex operator algebra $V$ is automatically self-dual and it is simple iff it is of CFT type, see \cite[Proposition 5.3]{CKLW18}. Moreover, every unitary VOA is a direct sum of simple unitary VOAs, see \cite[Proposition 3.30]{CGH}. The central charge of a unitary VOA is always a non-negative real number. The only simple unitary VOA with $c=0$ is the trivial vertex operator algebra  $V=\mathbb{C}$.

Examples of unitary VOAs include the lattice VOAs, $V_L$ with $L$ a positive-definite even lattice, the simple affine VOAs $V_k(\mathfrak{g})$ with $\mathfrak{g}$ a simple complex Lie algebra and positive integer level $k$, and the moonshine vertex operator algebra $V^\natural$, see \cite{DLM97}. 
As we have already recalled in Section \ref{S:vert}, all these examples are also strongly rational. There are also many non-rational examples of unitary VOAs such as the rank $r$ Heisenberg VOAs $M_r(1)$ of any non-negative integer rank $r$ and the unitary Virasoro VOAs $L(c,0)$ with central charge $c \geq 1$. Further examples are discussed in Section \ref{SectionExamples}.  
\medskip

If $V$ is a unitary VOA then one can define the unitary subgroup of $\operatorname{Aut}(V)$, denoted by $\operatorname{Aut}_{(\cdot|\cdot)}(V)$, that is, the group of automorphisms $g$ of $V$ such that  $(ga|gb) = (a|b)$ for all $a,b \in V$. Then $\operatorname{Aut}_{(\cdot|\cdot)}(V)$ is a compact subgroup of $\operatorname{Aut}(V)$. If $\operatorname{Aut}(V)$ is finite, as in the case of the moonshine vertex operator algebra $V^\natural$, then  $\operatorname{Aut}_{(\cdot|\cdot)}(V) = \operatorname{Aut}(V)$. On the other hand, in many cases 
$\operatorname{Aut}(V)$ is far from being compact, so $\operatorname{Aut}_{(\cdot|\cdot)}(V)$ is a proper subgroup.
\medskip

We now discuss unitary subalgebras, cf.\ \cite[Section 5.4]{CKLW18} and \cite{CGH19}. In order to simplify the discussion, we will only consider unitary subalgebras of simple unitary VOAs. Let $V$ be a simple unitary VOA. A {\bf unitary vertex subalgebra} (or simply a { unitary subalgebra}) $U$ of $V$ is a vertex subalgebra of $V$ such that $L_1U \subset U$ and $\theta U \subset U$. Even if the subalgebra $U$ is not conformal, that is $\nu_V \notin U$, it is always a simple unitary  VOA. This is because, as a vertex algebra module for $U$, $V$ is the direct sum of $U$ and its orthogonal complement $U^\perp$ with respect to the invariant scalar product $(\cdot |\cdot)$.  Then, we have  the unique decomposition $\nu_V = \nu_U + \nu_{U^\perp}$ with $\nu_U \in U$ and $\nu_{U^\perp} \in U^\perp$ and it turns out that  $\nu_U$ is a  $\theta$-invariant conformal vector for $U$. With this conformal vectors and the restrictions of the invariant scalr product and TCP operators of $V$, $U$ becomes a unitary VOAs. Clearly, $V$ is a conformal extension of $U$ iff $\nu_U = \nu_V$. If this is the case, we say that $V$ is a {\bf unitary conformal VOA extension} of the unitary vertex operator algebra $U$.

If $V$ is a simple unitary VOA then the Virasoro vertex subalgebra $V_{\{\nu \}}$ generated by its conformal vector is unitary and hence isomorphic to the unitary Virasoro VOA $L(c,0)$ where $c$ is in the unitary range that is either $c \geq 1$ or $c= 1 -\frac{6}{(m + 2)(m+3)}$, $m \in \mathbb{Z}_{\geq 0}$. 
Sometimes we will write $L(c,0) \subset V$  instead of $V_{\{\nu \}} \subset V$.
Other examples of unitary subalgebras are given by compact orbifold subalgebras $V^G$ with $G$ a compact subgroup of the unitary automorphism group 
$\operatorname{Aut}_{(\cdot|\cdot)}(V)$. Note that by \cite[Proposition A.3]{CGGH23} we always have 
$V^{\operatorname{Aut}(V)} = V^{\operatorname{Aut}_{(\cdot|\cdot)}(V)}$ so that the smallest orbifold subalgebra $V^{\operatorname{Aut}(V)}$ is unitary. 
\medskip

In this section and in Section\ref{SectionExamples} we will need the notion of unitary VOA-module that we briefly recall here, see e.g. \cite{DL14} and \cite{Gui22}.  Let $V$ be a simple unitary VOA. A {\bf unitary $V$-module} is a $V$-module $M$ equipped with a scalar product $(\cdot |\cdot)_M$ such that $(a^M_nb|c)_M = (b|(S\theta a)^M_{-n}c)_M$ for all $a \in V$, all $b,c \in M$ and all $n \in \mathbb{Z}$ (an {\bf invariant scalar product on M}). A $V$-module is said to be {\bf unitarizable} if it can be equipped with an invariant scalar product making it into a unitary $V$-module.

\subsection{General results on the partition function subalgebra}

As we shall see, the partition function subalgebra $PV$ of a  unitary vertex operator algebra $V$ is contained in the smallest orbifold subalgebra $V^{\operatorname{Aut}(V)}$, see Proposition \ref{PropAutInvariantCasimir}, and hence it is typically strictly smaller than $V$. Moreover, $PV$ is a unitary subalgebra of $V$ containing the conformal vector of $V$, see Lemmas \ref{L:unitary} and \ref{L:conf}. On the other hand,  
as we will also discuss in this section and in Section \ref{SectionExamples} $PV$ is typically strictly larger of the Virasoro subalgebra of $V$, see e.g. 
Remark \ref{remarkLargerVirasoro} and Theorem \ref{th:design}. 

\begin{remark}
\label{remarkqp1}
Recall that a homogeneous vector $a$ in $V$ is called quasi-primary of $L_1a=0$. We denote by $V^{qp}$ the space of quasi-primary vectors of the simple unitary vertex operator algebra $V$. Since $V$ is unitary, then, recalling that the PCT operator $\theta$ commutes with $L_0$, and $L_1$ we see $\theta V_k^{qp} = V_k^{qp}$ for all $k \in \mathbb{Z}_{\geq 0}$. It follows that, for any non-negative integer $k$, we can find a basis  $\{v^{(i)}\}$ for $V_k^{qp}$ with $\theta v^{(i)} = v^{(i)}$ for 
$i=1,\dots , \dim V_k^{qp}$ which is orthonormal with respect to the invariant scalar product $(\cdot |\cdot)$ and hence with respect to the invariant bilinear form $(\cdot,\cdot)$. In particular $(\cdot,\cdot)$ restricts to a non-degenerate bilinear form on $V_k^{qp}$.
\end{remark}

We introduce the following variants of the Casimir bilocal fields.
\begin{definition}
Let $V$ be a simple unitary VOA.

\begin{description}

\item[Quasi-primary Casimir bilocal fields] For every integer $k$, the quasi-parimary $k$-th Casimir bilocal fields is
$$
\gamma_k^{qp}(w,z)=\sum_{i=1}^{\dim V_k^{qp}}Y(v^{(i)},w)Y(v^{(i)},z)\,,
$$
where $\{v^{(i)}\}$ is basis of $V_k^{qp}$ orthonormal with respect to the invariant bilinear form $(\cdot,\cdot)$ . 

The definition does not depend on the choice of the orthonormal basis.  If $V_k^{qp} =\{ 0\}$ i we set the quasi-primary $k$-th Casimir bilocal field to be zero. Note that  $\gamma_0^{qp}(w,z) = 1_V$ and that $\gamma_k^{qp}(w,z) = 0$ if $k<0$ because $V$ is unitary and hence CFT type.

\item[Casimir element]For every pair integers $k$ and $j$ we define the Casimir element
$$C_{k,j}=\sum_{i=1}^{\dim V_k} v^{(i)}_j v^{(i)} $$
where $\{v^{(i)}\}$ is an orthonormal basis of $V_k$ with respect to the invariant bilinear form $(\cdot |\cdot)$. Again, this is independent on the choice of the orthonormal basis. If $V_k$ is trivial, we set $C_{k,j}=0$ for every $j$. Note that  $C_{0,j} = \delta_{0,j}\Omega$ and that $C_{k,j}=0$ whenever 
$k < 0$ or $j >k$. 

\item[Quasi-primary Casimir element]For every pair of integers $k$ and $j$ we define the quasi-primary Casimir element
$$C_{k,j}^{qp}=\sum_{i=1}^{\dim V^{qp}_a} v^{(i)}_b v^{(i)} $$
where $\{v^{(i)}\}$ is a orthonormal basis of $V_k^{qp}$. The definition does not depend on the choice of the orthonormal basis. 
 If $V_k^{qp} = \{0\}$, we set $C_{k,j}^{qp}=0$ for every $j$. Similarly to the previous case we have $C_{0,j}^{qp} = \delta_{0,j}\Omega$ and that $C_{k,j}^{qp}=0$ whenever $k < 0$ or $j >k$. 
\end{description}
\end{definition}
Observe that $C_{k,j}$ and $C_{k,j}^{qp}$ are homogeneous of conformal weight $k-j$. In particular, 
$C_{k,j} = C_{k,j}^{qp} = 0$ if $j>k$. Moreover,   $C_{0,j} = C_{0,j}^{qp} = \delta_{i,j} \Omega$.
Note also that, in general, the quasi-primary Casimir elements $C_{k,j}^{qp}$
 are not quasi-primary vectors. Accordingly, the quasi-primary Casimir element should be intended as ``Casimir element defined using a basis or quasi-primary vectors".

We will call the corresponding vertex operators 
$$Y(C_{k,j},z)= \sum_{n \in \mathbb{Z}} (C_{k,j})_{(n)} z^{-n -1} = \sum_{n \in \mathbb{Z}} (C_{k,j})_{n} z^{-n +b-a} $$
and
$$Y(C_{k,j}^{qp},z)= \sum_{n \in \mathbb{Z}} (C_{k,j}^{qp})_{(n)} z^{-n-1}  = \sum_{n \in \mathbb{Z}} (C_{k,j})_{n} z^{-n +k-j} $$
{\bf Casimir vertex operator} and {\bf quasi-primary Casimir vertex operator} , respectively.

\begin{remark} 
\label{rem: realbasis}
If $V$ is unitary we can consider the real vertex operator subalgebra $V_\mathbb{R}$ of $\theta$-invariant elements of $V$. Then the restriction of the bilinear form $(\cdot,\cdot)$ to $V_\mathbb{R}$ coincides with the restriction of the scalar-product   $(\cdot |\cdot)$ and hence it is a real, positive-definite bilinear form. Consequently, we can choose the orthonormal basis with respect to the invariant bilinear form $(\cdot,\cdot)$  for each $V_k$, $k \in \mathbb{Z}_{\geq 0}$  belonging $V_\mathbb{R}$ so that they are also orthonormal with respect to the invariant scalar product $(\cdot |\cdot)$, cf. Remark \ref{remarkqp1}. 
\end{remark}

\begin{definition} 
\label{def: partitionsubalgebra}
Let $V$ be a simple unitary VOA. The \textbf{partition function subalgebra} $PV$ is the vertex subalgebra of $V$ generated by the Casimir elements 
$C_{k,j}$, $k \in \mathbb{Z}_{\geq 0}$, $k \in \mathbb{Z}$.
\end{definition}

We will need the following notations. We define the endomorphisms $C_{k}(m,n) \in \operatorname{End}(V)$, $k,m,n \in \mathbb{Z}$
 by  $$C_{k}(m,n) :=\sum_{i=1}^{\dim V_k} v^{(i)}_m v^{(i)}_n $$
where $\{v^{(i)}\}$ is a orthonormal basis of $V_k$. If $V_k$ is trivial, we set $C_{k}(m,n)=0$ for all pairs $(m,n) \in \mathbb{Z}^2$. We will call them 
{\bf Casimir endomorphisms}.
Accordingly, 
$$\gamma_k(w,z) = \sum_{m,n \in \mathbb{Z} } C_{k}(m,n) w^{-m-k}z^{-n-k} \,.$$
Similarly, we define the endomorphisms $C_{k}^{qp}(m,n) \in \operatorname{End}(V)$, $k,m,n \in \mathbb{Z}$
 by  $$C_{k}^{qp}(m,n) :=\sum_{i=1}^{\dim V_k^{qp}} v^{(i)}_m v^{(i)}_n $$
where $\{v^{(i)}\}$ is an orthonormal basis of $V_k^{qp}$. If $V_k^{qp}$ is trivial, we set $C_{k}^{qp}(m,n)=0$ for all pairs $(m,n)\in \mathbb{Z}^2$. We will call them 
{\bf quasi-primary Casimir endomorphisms}.
Accordingly,
$$\gamma_k^{qp}(w,z) = \sum_{m,n \in \mathbb{Z} } C_{k}^{qp}(m,n) w^{-m-k}z^{-n-k} \,.$$ 

One of the advantages of introducing quasi-primary Casimir endomorphisms is that they have a well-behaved action with respect to the invariant scalar product in $V$, that is,

\begin{equation}
\label{eqQPendScalarProduct}
(a|C_{k}^{qp}(m,n)b) = (C_{k}^{qp}(-n,-m)a | b)
\end{equation}
for all $a, b \in V$ and all $k,m,n \in \mathbb{Z}$. This can be seen by taking an orthonormal basis for $V_k^{qp}$ of $\theta$ invariant vectors, cf. Remark \ref{remarkqp1}, and by using Equation (\ref{eqInvScalqp} twice).

The following proposition will play an important role. 

\begin{proposition}
\label{PropAutInvariantCasimir} 
Let $V$ be a simple unitary VOA, let $g \in \operatorname{Aut}(V)$ and $k,m,n \in \mathbb{Z}$. Then
\begin{itemize}
\item[$(i)$] $g\gamma_k(w,z)g^{-1} = \gamma_k(w,z)$
\smallskip

\item[$(ii)$]  $g\gamma_k^{qp}(w,z)g^{-1} = \gamma_k^{qp}(w,z)$
\smallskip

\item[$(iii)$]  $gC_k(m,n)g^{-1}=C_k(m,n)$
\smallskip 

\item[$(iv)$]  $gC_k^{qp}(m,n)g^{-1}=C_k^{qp}(m,n)$
\smallskip

\item[$(v)$]  $gC_{k,m} = C_{k,m}$
\smallskip
 
\item[$(vi)$] $gC_{k,m}^{qp} = C_{k,m}^{qp}$
\smallskip

\item[$(vii)$] $PV \subset V^{\operatorname{Aut}(V)}$
  
\end{itemize} 
 
\end{proposition}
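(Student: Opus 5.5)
The plan is to derive everything from one basic fact: the "identity tensor" $\sum_i v^{(i)}\otimes v^{(i)} \in V_k\otimes V_k$ built from an orthonormal basis for the normalized invariant bilinear form $(\cdot,\cdot)$ is fixed by every $g\in\operatorname{Aut}(V)$. Since $V$ is simple, the normalized invariant bilinear form is unique. For any automorphism $g$ (which fixes $\Omega$ and $\nu$, hence commutes with all $L_n$ and thus with $S$), the form $(g\cdot,g\cdot)$ is again invariant and normalized. So $(ga,gb)=(a,b)$, as already recalled in Section \ref{sec:rep_theory_VOA}. Since $g$ commutes with $L_0$, it preserves $V_k$. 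Hence $\{gv^{(i)}\}$ is again an orthonormal basis of $V_k$. Similarly, $g$ commutes with $L_1$, so it preserves $V_k^{qp}$, and $\{gv^{(i)}\}$ is also an orthonormal basis of $V_k^{qp}$ when $\{v^{(i)}\}$ is.

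Next I use the automorphism property $gY(a,z)g^{-1}=Y(ga,z)$, i.e. $ga_ng^{-1}=(ga)_n$ (this follows from $ga_{(n)}b=(ga)_{(n)}gb$ and the fact that $g$ preserves weights). This gives
\[
g\gamma_k(w,z)g^{-1}=\sum_i Y(gv^{(i)},w)Y(gv^{(i)},z).
\]
By the independence of the Casimir bilocal field from the choice of orthonormal basis (noted after Definition \ref{def:cas_field}), the right-hand side equals $\gamma_k(w,z)$, which is $(i)$. I would make that independence explicit: if $u^{(j)}=\sum_i A_{ji}v^{(i)}$ with $A$ complex orthogonal ($AA^T=1$), then $\sum_j u^{(j)}\otimes u^{(j)}=\sum_i v^{(i)}\otimes v^{(i)}$. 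The same argument applied to $V_k^{qp}$ gives $(ii)$. Comparing coefficients of $w^{-m-k}z^{-n-k}$ in the expansions of $\gamma_k$ and $\gamma_k^{qp}$ gives $(iii)$ and $(iv)$.

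For $(v)$ and $(vi)$, I compute
\[
gC_{k,m}=\sum_i g\,v^{(i)}_m v^{(i)}=\sum_i (gv^{(i)})_m\, gv^{(i)},
\]
which is again $C_{k,m}$ by basis independence; the quasi-primary version is identical. Finally, for $(vii)$: $V^{\operatorname{Aut}(V)}$ is a vertex subalgebra (it contains $\Omega$ and is closed under all products $a_{(n)}b$, because $g$ respects these products). By $(v)$ it contains all generators $C_{k,j}$ of $PV$. Since $PV$ is the smallest vertex subalgebra containing these generators, $PV\subset V^{\operatorname{Aut}(V)}$.

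The only mildly delicate step is the $\operatorname{Aut}(V)$-invariance of $(\cdot,\cdot)$: one must check that $(ga,gb)$ is still invariant, which uses that $g$ commutes with $S=e^{L_1}(-1)^{L_0}$. Unitarity is not really needed here beyond guaranteeing simplicity and the existence of orthonormal bases of $V_k^{qp}$ (Remark \ref{remarkqp1}).
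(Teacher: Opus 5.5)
Your proposal is correct and is essentially the paper's own proof: the unique normalized invariant bilinear form is $\operatorname{Aut}(V)$-invariant, so $g$ carries orthonormal bases of $V_k$ and $V_k^{qp}$ to orthonormal bases, and every item then follows from the basis-independence of the Casimir objects. You additionally spell out the routine checks the paper leaves implicit: that $g$ commutes with $S$, the complex-orthogonal change-of-basis computation, and that $V^{\operatorname{Aut}(V)}$ is a vertex subalgebra.
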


\begin{proof} The unique normalized invariant bilinear form $(\cdot,\cdot)$ on $V$ is $g$-invariant. Hence, if $\{ v^{(i)}\}$ is a basis for $V_k$ or $V_k^{qp}$ orthonormal with respect to $(\cdot,\cdot)$ so is $\{ g v^{(i)}\}$ and the conclusion follows from the independence of the choice of the basis of the involved objects. 
\end{proof}

We denote by $PV^{qp}$ the vertex subalgebra of $V$  generated by the quasi-primary Casimir elements 
$C_{k,j}^{qp}$, $k,j \in \mathbb{Z}$, we denote by $\widetilde{PV}$ the linear span of the coefficients of the series of the form
\begin{equation}
\label{eq:spanPVTilde1}
\gamma_{k_1}(w_1,z_1) \gamma_{k_2}(w_2,z_2) \dots \gamma_{k_s}(w_s,z_s) \Omega
\end{equation}
and we denote by  $\widetilde{PV^{qp}}$ the linear span of the coefficients of the series of the form
\begin{equation}
\label{eq:spanPVqpTilde1}
\gamma_{k_1}^{qp}(w_1,z_1) \gamma_{k_2}^{qp}(w_2,z_2) \dots \gamma_{k_s}^{qp}(w_s,z_s)\Omega
\end{equation}
so that $\widetilde{PV}$
is spanned by vectors of the form

\begin{equation}
\label{eq:spanPVTilde2}
C_{k_1}(m_1,n_1) C_{k_2}(m_2,n_2) \dots C_{k_s}(m_s,n_s) \Omega
\end{equation}
and $\widetilde{PV^{qp}}$
is spanned by vectors of the form
\begin{equation}
\label{eq:spanPVqpTilde2}
C_{k_1}^{qp}(m_1,n_1) C_{k_2}^{qp}(m_2,n_2) \dots C_{k_s}^{qp}(m_s,n_s) \Omega \, .
\end{equation}

Note also that $PV$
is spanned by vectors of the form
\begin{equation}
\label{eq:spanPV}
(C_{k_1,m_1})_{n_1}  (C_{k_2,m_2})_{n_2} \dots (C_{k_s,m_s})_{n_s}\Omega
\end{equation}
and $PV^{qp}$
is spanned by vectors of the form
\begin{equation}
\label{eq:spanPVqp}
(C_{k_1,m_1}^{qp})_{n_1}  (C_{k_2,m_2}^{qp})_{n_2} \dots (C_{k_s,m_s}^{qp})_{n_s}\Omega \,.
\end{equation}

\begin{proposition}
\label{prop: partition} 
Let $V$ be a simple unitary VOA. Then
 $PV = PV^{qp} = \widetilde{PV} = \widetilde{PV^{qp}} $.

\end{proposition}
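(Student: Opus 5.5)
The proof will establish two families of equalities separately: first $PV=PV^{qp}$ and $\widetilde{PV}=\widetilde{PV^{qp}}$, which are consequences of a change of orthonormal basis, and then $PV=\widetilde{PV}$, which is an associativity statement. For the first pair, unitarity gives an orthogonal decomposition $V_k=\bigoplus_{j\ge 0}L_{-1}^jV^{qp}_{k-j}$, since $L_1$ is the adjoint of $L_{-1}$ with respect to $(\cdot|\cdot)$ (this uses $\theta$-invariance of $\nu$ and Equation (\ref{eqInvScalqp})). Choose, as in Remarks \ref{remarkqp1} and \ref{rem: realbasis}, $\theta$-invariant orthonormal bases $\{v^{(i)}\}$ of each $V^{qp}_d$. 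Then the vectors $L_{-1}^jv^{(i)}/\|L_{-1}^jv^{(i)}\|$ form an orthonormal basis of $V_k$ for both $(\cdot|\cdot)$ and $(\cdot,\cdot)$. The $\mathfrak{sl}_2$ relations give $\|L_{-1}^jv\|^2=j!\,(2d)(2d+1)\cdots(2d+j-1)$ for $v\in V^{qp}_d$, a constant $c_{d,j}$ depending only on $d$ and $j$. Here $j=0$ is forced if $d=0$. Since $Y(L_{-1}^jv,z)=\partial_z^jY(v,z)$, this yields
$$\gamma_k(w,z)=\sum_{j\ge 0}c_{k-j,j}^{-1}\,\partial_w^j\partial_z^j\,\gamma^{qp}_{k-j}(w,z).$$
This relation is unitriangular in $k$, with the $j=0$ term equal to $\gamma^{qp}_k$, so it can be inverted recursively. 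Hence each $\gamma_k$ is a finite combination of derivatives of the $\gamma^{qp}_{k'}$, and conversely. Derivatives do not enlarge spans of coefficients, so the products in (\ref{eq:spanPVTilde1}) and (\ref{eq:spanPVqpTilde1}) have the same coefficient spans, giving $\widetilde{PV}=\widetilde{PV^{qp}}$.

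The same basis change handles $PV=PV^{qp}$. First, $(L_{-1}^jv)_{(n)}$ is a multiple of $v_{(n-j)}$. Second, the relation $[L_{-1},v_{(m)}]=-mv_{(m-1)}$ moves the $L_{-1}$'s in $v_{(n-j)}L_{-1}^jv$ to the left. Together these show that $C_{k,j}$ is a finite combination of vectors $L_{-1}^iC^{qp}_{k',j'}$, again with a unitriangular relation that can be inverted. Every vertex subalgebra is $L_{-1}$-stable, because $L_{-1}a=a_{(-2)}\Omega$. Hence each generating set lies in the subalgebra generated by the other, and the two subalgebras coincide.

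For $PV=\widetilde{PV}$, the first observation is that $v_{-k}\Omega=v$ for $v\in V_k$. Hence $C_k(m,-k)\Omega=\sum_iv^{(i)}_mv^{(i)}$, which is $C_{k,m}$ up to the harmless index shift between the two mode conventions. Thus all generators of $PV$ lie in $\widetilde{PV}$. The key claim is then: for every $c\in V$, the span of all $C_k(m,n)c$ equals the span of all $(C_{k,j})_{(p)}c$. I would prove this from the Borcherds identity (\ref{B-id}), which is linear in $a\otimes b$ and can therefore be summed against the Casimir tensor $\sum_iv^{(i)}\otimes v^{(i)}\in V_k\otimes V_k$.

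In one direction, the identity with $m\ge 0$ chosen large enough that the left side reduces to finitely many terms writes each iterate mode $(\sum_iv^{(i)}_{(n)}v^{(i)})_{(q)}c$ as a finite combination of products $\sum_iv^{(i)}_{(\cdot)}v^{(i)}_{(\cdot)}c$. In the other direction, the identity with the index $n\ge 0$ large enough that the second term on the right vanishes, which is possible since $b_{(\cdot)}a_{(\cdot)}c=0$ for large modes, expresses $\sum_iv^{(i)}_{(p)}v^{(i)}_{(q)}c$ through iterates $(C_{k,\cdot})_{(\cdot)}c$. This is the standard proof of associativity, and I expect making the choice of the auxiliary indices uniform in $(p,q)$, for fixed $c$, to be the main technical step. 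An alternative route to the same claim is the rational-function identity $Y(v,w)Y(v,z)c=Y(Y(v,w-z)v,z)c$ from Theorem \ref{T:commutation}, followed by comparison of expansion coefficients.

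Given the claim, an induction on $s$ in (\ref{eq:spanPVTilde2}) and (\ref{eq:spanPV}) shows that both $\widetilde{PV}$ and $PV$ equal the smallest subspace containing $\Omega$ and stable under all $C_k(m,n)$, equivalently under all $(C_{k,j})_{(p)}$. Hence $PV=\widetilde{PV}$, and together with the first part all four spaces coincide.
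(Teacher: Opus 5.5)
Everything up to your key claim is correct. Your direct proof of $PV=PV^{qp}$ through $L_{-1}a=a_{(-2)}\Omega$ is a shortcut the paper does not take. The inclusion $PV\subset\widetilde{PV}$ via (\ref{B-id}) is the paper's argument; use $m=0$ there to isolate a single iterate.

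The converse step, however, fails as you set it up. Write $\ell$ for the index called $k$ in (\ref{B-id}). If you make $n$ large, the left-hand side dies together with the $b\,a$ term. Indeed, for $a=b=v^{(i)}\in V_k$ one has $a_{(n+j)}b\in V_{2k-n-j-1}=\{0\}$ once $n\ge 2k$. The identity then degenerates into a relation among products that contains no iterates at all.

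The index to make large is $m$. Fix $M$ with $v^{(i)}_{(r)}c=0$ for all $i$ and all $r\ge M$, and take $m\ge M$. The $b\,a$ term then vanishes for every $n$, negative values included. Set $n=p-m$ and $\ell=q$, sum over $i$, and put $P(p,q):=\sum_iv^{(i)}_{(p)}v^{(i)}_{(q)}c$. This gives
\[
\sum_{j=0}^{m}\binom{m}{j}\Big(\sum_i v^{(i)}_{(p-m+j)}v^{(i)}\Big)_{(m+q-j)}c=\sum_{j\ge 0}(-1)^j\binom{p-m}{j}P(p-j,q+j).
\]
The left side lies in the span of the $(C_{k,j'})_{(s)}c$, since $\sum_iv^{(i)}_{(r)}v^{(i)}=C_{k,r-k+1}$. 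The $j=0$ term on the right is $P(p,q)$, and $P(\cdot,q')=0$ for $q'\ge M$. Descending induction on $q$ therefore proves the claim. The uniformity you were worried about holds because $M$ depends only on $c$.

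Your fallback via Theorem \ref{T:commutation} does not work by comparing coefficients. The series $\sum_iY(v^{(i)},w)Y(v^{(i)},z)c$ and $\sum_iY(Y(v^{(i)},w-z)v^{(i)},z)c$ expand the same rational function in different domains, so their coefficients do not match termwise. What does transfer is vanishing. A functional on a fixed $V_e$ kills every coefficient of one expansion iff it kills every coefficient of the other. Since both spans are graded with finite-dimensional pieces, they then coincide.

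That annihilator argument is exactly how the paper obtains $\widetilde{PV}\subset PV$, using orthogonal complements of $PV$ in place of functionals. It avoids all index bookkeeping. The repaired Borcherds argument instead gives equality of the two spans vector by vector and never uses the scalar product.
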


\begin{proof}
We first focus on the quasi-primar Casimir bilocal fields. The first observation is that  unitarity  implies that $V$ is an orthogonal direct sum of irreducible positive-energy representations of the complex Lie algebra generated by $L_1,L_0$ and $L_{-1}$; this Lie algebra is isomorphic to $\mathfrak{sl}_2$. Besides the trivial subrepresentation $V_0 = \mathbb{C}1$ each other irreducible subrepresentation 
has an orthonormal basis of the form $\frac{1}{\| L_{-1}^j v\|}L_{-1}^j v$  $j \in\mathbb{Z}_{\geq 0}$ with $v \in V^{qp}_k$, $\|v\| = 1$, $k >0$. Moreover,  
$$\| L_{-1}^j v\| = \sqrt{\frac{j! (2k - 1 +j)}{(2j-1)!}} \,, $$
for all $v \in V^{qp}_k$, see \cite[Appendix B]{CKLW18}.

Recall the formula
$$
Y(L_{-1}v,z)=\frac{d}{dz}Y(v,z)
$$
we then conclude that 
\begin{equation}\label{E:rel_campi}
\gamma_k(w,z) =\sum_{j=1}^k   \frac{(2j-1)!}{(k-j)! (k + j -1)} \left(\frac{\partial^2}{\partial w \partial z} \right)^{k-j}\gamma_j^{qp}(w,z) \,, 
\end{equation}
for all $k \in \mathbb{Z}_{>0}$. Note also that, $\gamma_0 (w,z) = {\gamma_0}^{qp}(w,z) = 1_V$ and that  $\gamma_1 (w,z) = {\gamma_1}^{qp}(w,z)$ because  $V_1 = V_1^{qp}$. Hence,
 from Eq. (\ref{E:rel_campi}) it follows that, for any $k \in \mathbb{Z}_{ > 0}$, $\gamma_k^{qp} (w,z)$ is a linear combination of the bilocal fields 
 $\left(\frac{\partial^2}{\partial w \partial z}\right)^{k-j}\gamma_j(w,z)$, $j=1, \dots , k$, with coefficients independent of the vertex algebra $V$. As a consequence  $\widetilde{PV} = \widetilde{PV^{qp}}$.
 Now, it follows directly by the Borcherds identity that $\widetilde{PV}$ is invariant  for the action of the endomorphisms $(C_{k,m})_{(n)}$ and hence 
 $PV \subset \widetilde{PV}$. The same argument shows that  $PV^{qp} \subset \widetilde{PV^{qp}}$.  Now, let $a \in V$ be any vector orthogonal 
 to $PV$  with respect to the invariant scalar product $(\cdot |\cdot)$ and let $b$ be any vector in $PV$. Since $Y(C_{k,m},z)$ preserves $PV$, we have 
 $$
 \sum_{m \in \mathbb{Z}}(a|Y(C_{k,m},z)b) (w-z)^{-m-k}=0
 $$

 Then
 
 \begin{eqnarray*}
 0 &=&\sum_{m \in \mathbb{Z}}(a|Y(C_{k,m},z)b) (w-z)^{-m-k}  \\
 &=& (a | \sum_{i=1}^{\dim V_k}(a|Y(Y(v^{(i)},w-z)v^{(i)},z)b)
 \end{eqnarray*} 
 for all $k \in \mathbb{Z}_{\geq 0}$. Hence, by the associativity property of VOAs \cite[Prop. 3.3.2]{Axiomatic} 
 we have  
 \begin{equation*}
 0 =    \sum_{i=1}^{\dim V_k}(a|Y(v^{(i)},w)Y(v^{(i)},z)b) = (a|\gamma_k(w,z)b)  
\end{equation*}
for all $k \in \mathbb{Z}_{\geq 0}$. As a consequence, $PV$ is invariant under the action of all endomorphisms $C_{k}(m,n)$ and therefore
$\widetilde{PV} \subset PV$ because, by definition, $\widetilde{PV}$ is the smallest subspace of $V$ containing $\Omega$ and invariant under the action of all endomorphisms $C_{k}(m,n)$. A similar argument shows that  $\widetilde{PV^{qp}} \subset PV^{qp}$ and completes the proof.  
\end{proof}

\begin{lemma}\label{L:unitary} Let $V$ be a simple unitary VOA. 
Then the partition function algebra $PV$ is a unitary subalgebra of $V$. 
In particular, it is preserved by the duality operator $S$.
\end{lemma}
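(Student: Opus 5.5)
We must show that $PV$ is a vertex subalgebra with $L_1 PV\subset PV$ and $\theta PV\subset PV$; it is already a vertex subalgebra by Definition \ref{def: partitionsubalgebra}. The plan is to use the description $PV=\widetilde{PV^{qp}}$ from Proposition \ref{prop: partition}. By that description, $PV$ is spanned by the vectors $C_{k_1}^{qp}(m_1,n_1)\cdots C_{k_s}^{qp}(m_s,n_s)\Omega$. Equivalently, $PV$ is the smallest subspace containing $\Omega$ and invariant under all quasi-primary Casimir endomorphisms.

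\emph{Invariance under $\theta$.} By Remark \ref{remarkqp1}, for every $k$ we choose a basis $\{v^{(i)}\}$ of $V_k^{qp}$ that is orthonormal for $(\cdot,\cdot)$ and consists of $\theta$-invariant vectors. Since $\theta$ is an anti-linear automorphism, $\theta a_m b=(\theta a)_m\theta b$. This gives $\theta C_k^{qp}(m,n)\theta = C_k^{qp}(m,n)$, and $\theta\Omega=\Omega$. Applying $\theta$ to the spanning vectors therefore returns the same vectors, so $\theta PV=PV$.

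\emph{Invariance under $L_1$.} We compute $[L_1,C_k^{qp}(m,n)]$ using Equation \eqref{EqQuasi-Primary/PrimaryCommutation} with $m'=1$, which is valid for quasi-primary $v\in V_k$: $[L_1,v_m]=((k-1)-m)v_{m+1}$. Hence
\[
[L_1,C_k^{qp}(m,n)]=(k-1-m)C_k^{qp}(m+1,n)+(k-1-n)C_k^{qp}(m,n+1).
\]
Since $L_1\Omega=0$, an induction on $s$ shows that $L_1$ maps each spanning vector to a linear combination of spanning vectors. Therefore $L_1PV\subset PV$, and $PV$ is a unitary subalgebra.

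\emph{Invariance under $S$.} We have $S=e^{L_1}(-1)^{L_0}$, and $PV$ is graded: the spanning vectors are $L_0$-homogeneous, because $C_k^{qp}(m,n)$ shifts the weight by $-m-n$. So $(-1)^{L_0}$ preserves $PV$, and $e^{L_1}$ acts on each homogeneous vector as a finite sum of powers of $L_1$, which also preserves $PV$. Hence $S\,PV\subset PV$.

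The only step needing care is the $L_1$ commutator. It relies on the chosen basis vectors being quasi-primary, which is exactly why we pass to the quasi-primary description via Proposition \ref{prop: partition} instead of working with $C_k(m,n)$ directly.
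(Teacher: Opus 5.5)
Your proof is correct and follows the paper's argument. Both use the same $\mathfrak{sl}_2$ commutator $[L_1,C_k^{qp}(m,n)]$ on the spanning description $PV=\widetilde{PV^{qp}}$, $\theta$-invariance via $\theta$-invariant orthonormal bases, and stability under $S=e^{L_1}(-1)^{L_0}$ from $L_0$- and $L_1$-invariance. The only cosmetic difference is that the paper checks $\theta$-invariance on the generating Casimir elements $C_{k,j}$ and cites \cite[Proposition 5.23]{CKLW18} for the resulting unitary structure, whereas you check it on the quasi-primary Casimir endomorphisms and rely directly on the definition of unitary subalgebra.
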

\begin{proof}
By the $\mathfrak{sl}_2$-covariance commutation relations for quasi-primary vectors we have 
\begin{eqnarray*}
[L_{1},C_{k}^{qp}(m,n)] &=& \sum_{i=1}^{\dim V_k} [L_1, v^{(i)}_m v^{(i)}_n] \\
&=& - \sum_{i=1}^{\dim V_k} \left( (m-k+1)v^{(i)}_{m+1} v^{(i)}_n + (n -k+1)v^{(i)}_m v^{(i)}_{n+1}  \right) \\
&=& - (m-k+1) C_{k}^{qp}(m+1,n) -(n-k+1) C_{k}^{qp}(m,n+1)
\end{eqnarray*}
for all  $k \in \mathbb{Z}_{\geq 0}$ and all $m,n \in \mathbb{Z}$. It follows that $\widetilde{PV^{qp}} = PV$ is invariant for the action of $L_1$. 
By Remark \ref{rem: realbasis} we can choose $\theta$-invariant orthonormal basis in the definition of the Casimir elements and then it is 
clear that $\theta C_{a,b} = C_{a,b}$ for all $a \in \mathbb{Z}_{\geq 0}$ and all $b \in \mathbb{Z}$. Since $PV$ is generated by the Casimir elements 
we also have $\theta PV = PV$ and the unitarity follows from \cite[Proposition 5.23]{CKLW18}.
The duality operator $S$ is expressed in terms of $L_0$ and $L_1$; as the unitary subalgebras are both $L_0$ and $L_1$ invariant, they are stable by the duality operator.
\end{proof}

We now give constraints on the elements orthogonal to the partition function subalgebra with respect to the invariant scalar product $(\cdot |\cdot)$.

\begin{remark}
\label{remarkorthogonal}
If $V$ is a unitary VOA and $U \subset V$ is a unitary subagebra then, recalling that $\theta U = U$,  for any $b \in V$ we have $(a|b) = 0$ for all 
$a \in U$ iff  $(\theta a|b) = 0$ for all $a \in U$ iff  $(a,b) = 0$ for all $a \in U$. Consequently, $b$ is orthogonal to $U$ with respect to the invariant scalar product $(\cdot |\cdot)$ iff it is orthogonal to $U$ with respect to the invariant bilinear form $(\cdot,\cdot)$. Note that the same argument works if $U$ is only a $\theta$-invariant subspace of $V$. Consequently, in these cases, we can simply say that $b$ is orthogonal to $U$ without specifying whether orthogonality is meant with respect to $(\cdot |\cdot)$ or $(\cdot ,\cdot)$, cf. Remark \ref{remarkqp1} and Remark \ref{rem: realbasis}.
\end{remark}

\begin{proposition}\label{P:ort}
Let $a \in V$ be a  vector orthogonal to $PV$ with respect to the invariant scalar product $(\cdot | \cdot)$, then
$$ \Tr_{V_k} a_0=0$$
for every non-negative integer $k$.
\end{proposition}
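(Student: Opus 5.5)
The plan is to write $\Tr_{V_k} a_0$ as a coefficient of a three-point correlation function. Locality (Theorem \ref{T:commutation}) then lets us move $Y(a,u)$ to the outside, so that the function becomes a pairing of a vector built from $a$ with vectors of $\widetilde{PV}=PV$, and orthogonality makes it vanish identically.

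First reductions. $PV$ is graded, since it is spanned by homogeneous vectors as in \eqref{eq:spanPV}. Hence each homogeneous component of $a$ is again orthogonal to $PV$, and we may assume $a\in V_d$. By Remark \ref{remarkorthogonal} and Lemma \ref{L:unitary}, orthogonality may be taken with respect to $(\cdot,\cdot)$. Since $S$ preserves $PV$ and is an involution for the form, $Sa$ is also orthogonal to $PV$.

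Next we compute the trace using an adapted basis. By the $\mathfrak{sl}_2$ decomposition in the proof of Proposition \ref{prop: partition}, $V_k$ has a basis orthonormal for $(\cdot,\cdot)$ of the form $L_{-1}^{k-j}v/\|L_{-1}^{k-j}v\|$, with $v$ running over a $\theta$-invariant orthonormal basis of $V^{qp}_j$, $j\le k$. For $x\in V_k$ we have $x=x_{-k}\Omega$ and $(x,y)=(\Omega,(Sx)_k y)$. Hence
$$\Tr_{V_k}a_0=\sum_i (x^{(i)},a_0x^{(i)})=\sum_i(\Omega,(Sx^{(i)})_k\,a_0\,(x^{(i)})_{-k}\Omega).$$
This is a coefficient of $\sum_i(\Omega,Y(Sx^{(i)},w)Y(a,u)Y(x^{(i)},z)\Omega)$. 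For quasi-primary $v$ we have $Sv=(-1)^jv$. The descendants $L_{-1}^{k-j}v$ give derivatives in $w,z$, and $S$ commutes with the $\mathfrak{sl}_2$ structure up to signs, as in \eqref{E:rel_campi}. So this sum is a finite linear combination, with universal coefficients, of derivatives $\partial_w^p\partial_z^q$ of $G_j(w,u,z):=(\Omega,\sum_{v}Y(v,w)Y(a,u)Y(v,z)\Omega)$, the sum over $v$ an orthonormal basis of $V^{qp}_j$.

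The main step is to show that each rational function $G_j$ is identically zero. By Theorem \ref{T:commutation}, $G_j$ equals, as a rational function, $(\Omega,Y(a,u)\gamma^{qp}_j(w,z)\Omega)$. We expand it in the region $|u|>|w|>|z|$. Its coefficients are $(\Omega,a_{(n)}y)=\pm((Sa)\text{-mode}\,\Omega,y)$ by invariance, with $y$ a coefficient of $\gamma_j^{qp}(w,z)\Omega$, hence $y\in\widetilde{PV^{qp}}=PV$ by Proposition \ref{prop: partition}. Since $a_{(n)}^{\dagger}\Omega$ is a multiple of a derivative component of $Sa$ (only $n$ with $(Sa)_{-n'}\Omega=L_{-1}$-descendant of $Sa$ contribute), each coefficient is a pairing of $L_{-1}^pSa$ with an element of $PV$. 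It is $(Sa,L_1^py)=0$ because $PV$ is $L_1$-stable (Lemma \ref{L:unitary}). So the expansion vanishes, $G_j\equiv0$ as a rational function, and therefore every expansion of it vanishes, in particular the one in $|w|>|u|>|z|$ from which the trace is read off.

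The step I expect to require most care is the bookkeeping in the previous paragraph: converting $(\Omega,a_{(n)}y)$ into $(L_{-1}^pSa,y)$ via invariance and the action of $S$ on non-quasi-primary vectors. I would first try to use \eqref{eqInvBilqp} after decomposing $a$ itself into $L_{-1}$-descendants of quasi-primaries (again using $\mathfrak{sl}_2$ and $L_1$-stability of $PV$), which makes all the signs explicit. I would also check carefully that passing between the formal expansions is legitimate, i.e. that "rational function zero" follows from one expansion vanishing. This holds because the correlation function is determined by any one of its convergent expansions.
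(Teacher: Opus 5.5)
Your argument is correct and is essentially the paper's proof in mirror image. The paper keeps $Y(a,t)$ acting on the right vacuum, using that $e^{tL_{-1}}a\perp PV$, and moves the quasi-primary Casimir endomorphisms to the left via \eqref{eqQPendScalarProduct}; you instead place $Y(a,u)$ next to the left vacuum and use invariance of $(\cdot,\cdot)$, with both routes relying on locality, \eqref{E:rel_campi} and $\widetilde{PV^{qp}}=PV$. One small fix: $S$ is not self-adjoint for $(\cdot,\cdot)$, but $Sa\perp PV$ still holds, because each homogeneous component of $Sa$ is a multiple of some $L_1^m a$ and $(L_1^m a,y)=(a,L_{-1}^m y)=0$ for $y\in PV$.
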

\begin{proof}
Since the orthogonal complement of $PV$ is invariant for $L_0$ we can assume, without loss of generality, that $a \in V_d$ for some 
integer $d>0$.  As $a$ is orthogonal to $PV$,
all coefficients of $Y(a,t)\Omega = e^{tL_{-1}}a$ are orthogonal to $PV$. Using Equation (\ref{eqQPendScalarProduct})
 we have 
\begin{eqnarray*}
(\Omega | \gamma_k^{qp}(w,z) Y(a,t) \Omega)  &=& \sum_{m,n \in \mathbb{Z} } (\Omega | C_{k}^{qp}(m,n) Y(a,t)\Omega) w^{-m-k}z^{-n -k} \\ 
&=& \sum_{m,n \in \mathbb{Z} } ( C_{k}^{qp}(-m,-n)\Omega | Y(a,t)\Omega) w^{-m-k}z^{-n -k}  \\
&=& 0 
\end{eqnarray*}
for every integer $k$.  It follows that 
\begin{eqnarray*}
\sum_{i=1}^{\dim V_k^{qp}}(Y(v^{(i)},w) \Omega,Y(v^{(i)},z)  Y(a,t) \Omega) &=& 
(-w^2)^{-k}  (\Omega , \gamma_k^{qp}(w^{-1},z) Y(a,t) \Omega)\\
&=& (-w^2)^{-k}  (\Omega | \gamma_k^{qp}(w^{-1},z) Y(a,t) \Omega) \\
&=& 0
\end{eqnarray*}
 Hence, by the commutativity property of VOAs \cite[Pro. 3.5.1]{Axiomatic}, 
$$ \sum_{i=1}^{\dim V_k^{qp}}(Y(v^{(i)},w) \Omega, Y(a,t) Y(v^{(i)},z) \Omega) = 0 \,.$$

Taking derivatives of the latter equality with respect to $w$ and $z$ we find

$$\sum_{i=1}^{\dim V_k^{qp}}(Y(L_{-1}^k v^{(i)},w) \Omega, Y(a,t) Y(L_{-1}^kv^{(i)},z) \Omega) = 0 $$
for all $a, k \in \mathbb{Z}_{\geq 0}$.  
It follows that
$$\sum_{i=1}^{\dim V_k}(Y(v^{(i)},w) \Omega, Y(a,t) Y(v^{(i)},z) \Omega) = 0,  $$
cf. Eq. (\ref{E:rel_campi}). Here, the symbol $v^{(i)}$ denotes an element of a basis for $V_k$ orthonormal with respect to the invariant bilinear form $(\cdot, \cdot)$. 
Now,  taking $w=z=0$ and recalling that we are assuming that $a\in V_d$ we find

$$\Tr_{V_k}a_0=   t^d  \Tr_{V_k}Y(a,t) = t^d \sum_{i=1}^{\dim V_k}(v^{(i)}, Y(a,t) v^{(i)}) = 0 \,.$$
\end{proof}

\begin{corollary}
\label{cor: projection partition}
Let $V$ be a simple unitary VOA and let $e_{PV}: V \to PV$ be the orthogonal projection onto $PV$ with respect to the invariant scalar product 
$(\cdot|\cdot)$. If $a \in V$ is such that  
$\Tr_{V} a_0q^{L_0} \neq 0$ then $e_{PV} a \neq 0$. In particular, if $a$ is a primary vector in $V_d$ and $\Tr_{V} a_0 q^{ L_0} \neq 0$ then 
$e_{PV} a$ is a nonzero primary vector in $PV_d$. 

\end{corollary}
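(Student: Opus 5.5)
The first claim is the contrapositive of Proposition \ref{P:ort}, and the second follows by showing that the orthogonal projection $e_{PV}$ preserves primarity and conformal weight.

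\textbf{First claim.} Write $a = e_{PV}a + a'$, where $a' := (1-e_{PV})a$ is orthogonal to $PV$. Since $a \mapsto a_0$ is linear and $\Tr_V a_0 q^{L_0} = \sum_k \Tr_{V_k}(a_0)\, q^k$, we get
\[
\Tr_V a_0 q^{L_0} = \Tr_V (e_{PV}a)_0 q^{L_0} + \Tr_V a'_0 q^{L_0}.
\]
By Proposition \ref{P:ort}, $\Tr_{V_k} a'_0 = 0$ for every $k$, so the second term vanishes. If $e_{PV}a = 0$, the first term also vanishes, contradicting the hypothesis. Hence $e_{PV}a \neq 0$. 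Here $a_0$ preserves each $V_k$ because it commutes with $L_0$, so each $\Tr_{V_k} a_0$ is well defined; for non-homogeneous $a$ we expand $a$ into homogeneous components, with $a_0$ defined by linearity.

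\textbf{Second claim.} The key point is that $e_{PV}$ commutes with $L_0$ and with every $L_n$, $n>0$. By Lemma \ref{L:unitary} together with Lemma \ref{L:conf}, $PV$ is a unitary subalgebra containing the conformal vector $\nu$. So $PV$ is invariant under all $L_n = \nu_{(n+1)}$, $n \in \mathbb{Z}$.

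Unitarity gives $(L_n b | c) = (b | L_{-n} c)$, since $\nu$ is quasi-primary and $\theta$-invariant, by Equation (\ref{eqInvScalqp}). Therefore the orthogonal complement $PV^\perp$ is also invariant under every $L_n$: if $c \in PV^\perp$ and $b \in PV$, then $(b | L_n c) = (L_{-n} b | c) = 0$. With both $PV$ and $PV^\perp$ invariant, and $V = PV \oplus PV^\perp$ (this holds degreewise because each $V_d$ is finite dimensional), the projection $e_{PV}$ commutes with all $L_n$.

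Now let $a \in V_d$ be primary. Then $e_{PV}a \in V_d \cap PV = PV_d$, and for $n>0$ we have $L_n e_{PV}a = e_{PV}L_n a = 0$. Combined with the first claim, $e_{PV}a$ is a nonzero primary vector in $PV_d$.

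The only step that needs care is the decomposition $V = PV \oplus PV^\perp$, which holds degreewise by finite dimensionality, together with the invariance of $PV$ under all $L_n$. The latter is exactly where the assumption that $PV$ contains $\nu$ enters. Note that, for a unitary subalgebra not containing $\nu$, it would be false in general that $e_{PV}$ commutes with the $L_n$ of $V$.
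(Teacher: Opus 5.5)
Your argument is correct and is exactly what the paper leaves implicit. The paper states the corollary without proof, as an immediate consequence of Proposition \ref{P:ort}, and your linearity/contrapositive argument is that consequence. For the primarity part, invoking Lemma \ref{L:conf} is legitimate even though the paper places it after the corollary, since its proof uses only Proposition \ref{P:ort}; and you are right that $\nu\in PV$ is what makes $e_{PV}$ commute with every $L_n$, whereas a general unitary subalgebra only guarantees commutation with $L_{-1},L_0,L_1$, hence only quasi-primarity.
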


We can now show that the partition function subalgebra contains the conformal vector

\begin{lemma}\label{L:conf}
The partition function subalgebra $PV$ contains the conformal vector $\nu$ of $V$.
\end{lemma}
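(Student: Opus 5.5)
Since $PV$ is a vertex subalgebra of $V$ that is invariant under the $L_{-1}$-derivative, the plan is to exhibit $\nu$ directly as a combination of elements known to lie in $PV$. The natural candidates are the Casimir elements $C_{k,j}$ and their descendants under the modes of other Casimir elements. A direct attempt, writing $\nu$ as a combination of $C_{k,j}$ for small $k$, fails in general: for instance $C_{2,1}$ involves all of $V_2$, not just $\nu$. So I will use an orthogonality argument, combining Lemma \ref{L:unitary} with Proposition \ref{P:ort} and Corollary \ref{cor: projection partition}.

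First I decompose $\nu = e_{PV}\nu + \nu'$, with $\nu' = \nu - e_{PV}\nu$ orthogonal to $PV$. By Lemma \ref{L:unitary}, $PV$ is a unitary subalgebra, so by the discussion of unitary subalgebras in Section \ref{Subsec:unitaryVOAs} the vector $e_{PV}\nu = \nu_{PV}$ is the conformal vector of $PV$, and $\nu' = \nu_{PV^\perp}$. Moreover $L'_n := \nu'_{n}$ gives a Virasoro representation commuting with $PV$; in particular $\nu'$ is quasi-primary of weight $2$ (or zero), and $L_0 = L^{PV}_0 + L'_0$.

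Next I apply Proposition \ref{P:ort} to $a=\nu'$. This gives $\Tr_{V_k} L'_0 = \Tr_{V_k}\nu'_0 = 0$ for every $k$. Unitarity of $V$ makes $L'_0$ a positive semidefinite operator on each $V_k$: indeed $L'_0 = \frac12(L'_1 L'_{-1} - L'_{-1}L'_1)$ acting on $\mathfrak{sl}_2$-structures gives $(b|L'_0 b)\ge 0$, or more directly $L'_0$ is the conformal Hamiltonian of the unitary VOA structure on the commutant, which has non-negative spectrum. This positivity is where I expect the main care to be needed; it should follow from the coset/commutant being a unitary VOA with its own $L'_0$ diagonalizable with non-negative eigenvalues. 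A trace-zero positive semidefinite operator vanishes, so $L'_0 = 0$ on $V$. Applying this to $\nu'$ itself, if $\nu'\neq 0$ then $L'_0\nu' = 2\nu'$ (because $\nu'$ is the conformal vector of the unitary VOA generated by the commutant structure), which is a contradiction. Alternatively, one can compute $(\nu'|\nu') = \frac{c'}{2}$, with $c'$ determined by $L'_2\nu' = \frac{c'}{2}\Omega$, and note that $L'_0 = 0$ forces $c' = 0$.

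Either way we get $\nu' = 0$, hence $\nu = e_{PV}\nu \in PV$. If the positivity step proves delicate, my fallback is to take $k=2$ directly: on $V_2$ the equality $\Tr L'_0 = 0$, together with $(\nu'|L'_0\nu') = 2(\nu'|\nu')$ and nonnegativity on the orthogonal complement, already forces $\nu' = 0$.
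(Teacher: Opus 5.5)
Your proposal is correct and follows the paper's proof. The paper writes $\nu=\nu^{PV}+\nu^{PV^c}$, where $\nu^{PV^c}=\nu-e_{PV}\nu$ is exactly your $\nu'$; it invokes \cite[Propositions 5.29 and 5.31]{CKLW18} for the simultaneous diagonalizability of $\nu^{PV}_0$ and $\nu^{PV^c}_0$ on $V$ with non-negative eigenvalues, and then uses Proposition \ref{P:ort} to get $\nu'_0=0$ and hence $\nu'=0$. The one point to tighten is that you need non-negativity of $L'_0$ on all of $V$, not only on the commutant, and the identity $L'_0=\frac12[L'_1,L'_{-1}]$ gives this only after passing to a nonzero $L'_1$-lowest vector $(L'_1)^m b$ (which exists because $L'_1$ lowers the $L_0$-grading); alternatively, simply quote the cited result as the paper does.
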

\begin{proof}
Following \cite[Example 5.27]{CKLW18}, consider the coset subalgebra of $PV$ defined as
$$PV^c=\{a \textrm{ in } V \textrm{ such that } [Y(a,w),Y(b,z)]=0 \textrm{ for all } b \textrm{ in } PV\}\,.$$
As $PV$ is unitary, the same is true for $PV^c$. The orthogonal projections $\nu^{PV}$ and $\nu^{PV^c}$ of $\nu$ on $PV$ and $PV^c$ are conformal vectors for $PV$ and $PV^c$ and $\nu=\nu^{PV}+\nu^{PV^c}$. Moreover, $\nu_0^{PV}$ and 
$\nu_0^{PV^c}$ are simultaneously diagonalizable with non-negative eigenvalues (see \cite[Propositions 5.29 and 5.31]{CKLW18}). As $\nu^{PV^c}$ is orthogonal to $PV$ and it is diagonalizable with non-negative eigenvalues, we can apply Proposition \ref{P:ort} to conclude that its zero mode is the zero operator, hence $\nu^{PV^c}=0$ and $\nu=\nu^{PV}$. (As byproduct, this also shows that $PV^c=V_0$, see \cite[Remark 5.30]{CKLW18}.)
\end{proof}

\begin{remark}
In the case of the moonshine vertex operator algebra $V^{\natural}$, there is a more direct way to show that $PV^{\natural}$ contains the conformal vector. This argument also shows that for any vertex algebra with $\dim V_1=0$, the weight two part of $PV$ is non-trivial. Let us explain it. Assume that $V_1=0$. 
It follows by skew-symmetry property of vertex algebras (see e.g. \cite[Sec. 4.2]{Kac01}), that $V_2$ is a commutative non-associative algebra with the product $a_0b$, $a,b \in V_2$, (the so-called Griess algebra). 
In particular, we have $a_0 \nu =\nu_0a=2a$ for all $a \in V_2$. 
It follows that $(\nu | C_{2,0})= 2\dim V_2$; in particular, $C_{2,0}\neq 0$ and $PV_2$ is non-trivial. In the moonshine vertex algebra, the unique 
$\Aut(V^\natural)$-invariant vector of weight two is the conformal vector, so $C_{2,0}$ is a non-zero multiple of the conformal vector.
\end{remark} 

\begin{remark}
\label{remarkLargerVirasoro} 
By Lemma \ref{L:conf} $PV$ contains the Virasoro subalgebra $V_{\{\nu\}}$ of $V$. On the other hand, if there is a primary vector $a \in V$ with 
weight $d>0$ and such that  $\Tr_{V} a_0 q^{ L_0} \neq 0$ then, by Corollary \ref{cor: projection partition} $PV$ is strictly larger than $V_{\{\nu\}}$. \end{remark}

We will need some general results on the unitary representation theory of unitary VOAs. 

\medskip

\begin{lemma}
\label{lem: irrsubmodule}
Let $V$ be a simple unitary VOA. Then every unitary $V$-module $M$ contains an irreducible submodule. 
\end{lemma}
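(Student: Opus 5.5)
The plan is to produce an irreducible submodule as the $V$-submodule generated by a single lowest-weight vector, and then to use the invariant scalar product to show that it has no proper nonzero submodules. We may assume $M\neq\{0\}$. Since $M$ is a $V$-module, $M=\bigoplus_\alpha M_\alpha$ with finite-dimensional weight spaces, and for each $\alpha$ we have $M_{\alpha+n}=\{0\}$ for $n\ll 0$. Moreover, unitarity forces the $L_0^M$-eigenvalues to be real. Indeed, with $a=\nu$ (which is $\theta$-invariant and quasi-primary, so $S\theta\nu=\nu$), invariance gives $(L_0^M b|c)_M=(b|L^M_0c)_M$. Thus $L_0^M$ is symmetric, its eigenvalues are real, and distinct weight spaces are orthogonal.

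First I choose a coset $\alpha+\mathbb{Z}$ with $M^{[\alpha]}:=\bigoplus_{n}M_{\alpha+n}\neq\{0\}$. This subspace is a submodule, because $a_m$ shifts weights by integers. Let $h$ be the minimal weight occurring in $M^{[\alpha]}$; it exists by property (iii). Let $M_h$ be the corresponding lowest weight space, which is finite-dimensional. I then consider the action of the zero modes $o(a)=a^M_0$, $a\in V$, on $M_h$. These operators preserve $M_h$ and generate a finite-dimensional algebra $A$ (the image of Zhu's algebra). By invariance, the adjoint of $a_0$ on $M_h$ is $(S\theta a)_0$, so $A$ is closed under the adjoint and hence is a finite-dimensional $*$-subalgebra of $\operatorname{End}(M_h)$. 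Therefore $M_h$ decomposes into an orthogonal sum of irreducible $A$-modules. I pick an irreducible $A$-submodule $W\subset M_h$ and let $N$ be the $V$-submodule generated by $W$.

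Next I show that $N\cap M_h=W$. By standard results (for example, the PBW-type spanning or the associativity argument in Dong--Mason/Li), $N$ is spanned by vectors $a^M_{n}w$ with $w\in W$. The weight-$h$ part of $N$ is then spanned by $a_0w$ and by products of modes of total degree zero. Products of the form $a_m b_{-m}$ with $m>0$ kill $W$ by minimality of $h$. Products of the form $a_{-m}b_m$ reduce, via the Borcherds identity, to zero modes of elements of $V$, so they land in $AW=W$.

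Finally, let $N'\subsetneq N$ be a nonzero submodule. I consider its orthogonal complement inside $N$. Since $N$ is graded with finite-dimensional pieces and the adjoint of $a_n$ is $(S\theta a)_{-n}$, this complement is again a submodule, and $N=N'\oplus N'^\perp$. Projecting onto the weight-$h$ space gives $W=(N'\cap W)\oplus(N'^\perp\cap W)$, and both summands are $A$-invariant. By the irreducibility of $W$, one of the two summands is zero. Suppose $N'\cap W=0$. Then $W\subset N'^\perp$, so $N'^\perp=N$, which forces $N'=0$, a contradiction. Otherwise $W\subset N'$, so $N'=N$, which contradicts properness. Hence $N$ is irreducible.

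The main obstacle I expect is the step $N\cap M_h=W$, namely controlling which vectors of weight $h$ are generated from $W$. If this turns out to be delicate, I would first try to avoid it entirely by running the orthogonal-decomposition argument directly on $N$ and projecting onto $N_h$, which is a finite-dimensional $A$-module. I would pick $W$ to be an irreducible $A$-submodule of $N_h$, iterating if necessary. Finite-dimensionality of $M_h$ guarantees that this iteration terminates.
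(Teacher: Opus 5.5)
Your proof is correct, but you choose the irreducible submodule differently from the paper.

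\textbf{What the two proofs share.} Both restrict first to a single class $h+\mathbb{Z}$ of weights; the paper uses eigenspaces of $e^{2\pi i L_0^M}$, which amounts to the same thing. Both use unitarity only through one fact: the orthogonal complement of a submodule inside another submodule is again a submodule. This gives a grading-compatible splitting $N=N'\oplus\bigl((N')^\perp\cap N\bigr)$.

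\textbf{How the paper chooses $N$.} The paper takes $N$ minimal under inclusion among submodules with $N_h\neq\{0\}$. Such an $N$ exists because $\dim M_h<\infty$: minimize $\dim N_h$, then pass to the submodule generated by $N_h$. For a nonzero proper submodule $L\subset N$, one of $L$ and $L^\perp\cap N$ is then a proper submodule with nonzero weight-$h$ part, contradicting minimality. This needs neither the zero-mode algebra $A$ nor the identification $N_h=W$, so it avoids the Dong--Mason/Li spanning lemma altogether. Your fallback in the last paragraph is essentially this argument.

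\textbf{What your route buys.} It costs that extra input but gives more information: the top space of the irreducible submodule is an irreducible $A$-module. This is the Zhu-algebra picture the paper only uses later, for the linear independence of characters. The $*$-closedness of $A$ is never actually used. All you need is a minimal nonzero $A$-invariant subspace of the finite-dimensional space $M_h$.

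\textbf{One slip.} With the paper's convention $a_mM_\beta\subset M_{\beta-m}$, it is $a_{-m}b_m$ ($m>0$) that annihilates $W$. By contrast, $a_mb_{-m}w=[a_m,b_{-m}]w$ is the product that reduces to zero modes via the commutator formula. You have these two cases interchanged. This does not affect the proof. The spanning lemma $N=\operatorname{span}\{a_nw : a\in V,\ n\in\mathbb{Z},\ w\in W\}$ already gives $N_h=\operatorname{span}\{a_0w\}\subseteq AW=W$ directly, so the discussion of products is unnecessary.
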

\begin{proof}
If $N$ is a submodule fo $M$ then its orthogonal complement $N^\perp$ in $M$ with respect to the  scalar product $(\cdot | \cdot)_M$ is also a submodule of $M$. 
$e^{2\pi i  L^M_0}$  is a unitary operator on $M$ commuting with every vertex operator $Y^M(a,z)$, $a \in V$. Hence, every  eigenspace of   
$e^{2\pi i  L^M_0}$
is a $V$-submodule and $M$ is the orthogonal direct sum of these submodules. Accordingly, we can assume without loss of generality that 
$e^{2\pi i  L_0} = e^{2\pi i  h}$ with $h \geq 0$ the smallest eigenvalue of $L_0$ on $M$ (the {\it lowest energy}) and hence that 
$$M =\bigoplus_{n \in \mathbb{Z}_{\geq 0}}M_{h+n} \, .$$
Since $M_h$ is finite dimensional, there is a minimal submodule $N$ such that $N_h \neq \{ 0\}$. Then $N$ is irreducible because if $L$ is a non-zero proper submodule of $N$ then,  either $L_h \neq \{ 0\}$ or $(L^\perp \cap N)_h\neq \{ 0\}$  in contradiction with the minimality of $N$. 
\end{proof}

\begin{proposition}
\label{prop: unitarymodule}
 Let $V$ be a simple unitary VOA and let $M$ be a unitary $V$-module. Then $M$ is an orthogonal direct sum of irreducible unitary submodules. Moreover, 
 if $e^{2\pi i L^M_0}$ acts as a scalar on $M$ then the direct sum is at most countable. 
\end{proposition}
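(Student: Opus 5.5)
The plan is a Zorn's lemma argument built on Lemma \ref{lem: irrsubmodule}, followed by a dimension count on lowest weight spaces to get countability.

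\textbf{Step 1: maximal orthogonal family.} Consider families $\{M^\alpha\}$ of mutually orthogonal irreducible submodules of $M$, ordered by inclusion. A union of a chain is again such a family, so Zorn's lemma yields a maximal family $\{M^\alpha\}_{\alpha\in A}$. Let $N$ be the algebraic direct sum $\bigoplus_\alpha M^\alpha \subset M$; it is a submodule.

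\textbf{Step 2: $N^\perp=\{0\}$.} As in the proof of Lemma \ref{lem: irrsubmodule}, invariance of the scalar product (the adjoint of $a^M_n$ is $(S\theta a)^M_{-n}$) shows that $N^\perp$ is a submodule. Since $L^M_0$ is diagonalizable with finite-dimensional eigenspaces, $N^\perp$ is graded, $N^\perp=\bigoplus_\alpha (N^\perp\cap M_\alpha)$. In particular $N^\perp$ is a $V$-module in the strong sense and is unitary with the restricted scalar product.

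If $N^\perp\neq\{0\}$, Lemma \ref{lem: irrsubmodule} gives an irreducible submodule of $N^\perp$, contradicting maximality. So $N^\perp=\{0\}$.

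To conclude $M=N$ we use the finite dimensionality of weight spaces. Each $M_\beta$ is finite dimensional and $N$ is $L_0$-invariant, so $N_\beta$ is a subspace of $M_\beta$. Hence $M_\beta=N_\beta\oplus(N_\beta^\perp\cap M_\beta)$. The second summand lies in $N^\perp$ because distinct weight spaces are orthogonal, since $L_0$ is self-adjoint. It is therefore zero, so $N_\beta=M_\beta$ for every $\beta$, and $M=N$. This step is the main subtle point: orthogonal complements behave well only because we can reduce to finite-dimensional weight spaces.

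\textbf{Step 3: countability.} Assume $e^{2\pi i L^M_0}$ is a scalar $e^{2\pi i h}$. Then all weights lie in $h+\mathbb{Z}$. Because $M$ is a module, for fixed $h$ we have $M_{h+n}=0$ for $n\ll0$, so the weights lie in $h_0+\mathbb{Z}_{\ge0}$ for some $h_0$. There are therefore only countably many weights, and each weight space is finite dimensional. Each nonzero $M^\alpha$ has at least one nonzero weight space $M^\alpha_\beta\subset M_\beta$, and for fixed $\beta$ these spaces are mutually orthogonal. So at most $\dim M_\beta$ of the $M^\alpha$ can contribute at weight $\beta$. Summing over the countably many $\beta$ shows that $A$ is at most countable.
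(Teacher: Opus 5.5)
Your proof is correct, but it is organized differently from the paper's.

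\textbf{The paper's route.} It first uses the orthogonal decomposition of $M$ into eigenspaces of $e^{2\pi i L^M_0}$ to reduce to the case where this operator is a scalar. It then builds the summands one at a time. At step $j$ it takes the lowest energy $h_j$ of the orthogonal complement of $M^1\oplus\cdots\oplus M^{j-1}$, and a minimal submodule $M^j$ of that complement with $M^j_{h_j}\neq\{0\}$. This gives a sequence $h_1\le h_2\le\cdots$, unbounded if infinite, and the complement of the whole sum then vanishes in every weight.

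\textbf{Your route.} You obtain a maximal orthogonal family from Zorn's lemma. You use Lemma \ref{lem: irrsubmodule} only to exclude a nonzero complement. You prove countability afterwards by counting inside the finite-dimensional weight spaces.

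\textbf{What each buys.} Your route handles the general case without the preliminary reduction. It also makes explicit two points the paper leaves implicit:
\begin{itemize}
\item $N^\perp=\{0\}$ forces $N=M$. This needs $\dim M_\beta<\infty$, since in an infinite-dimensional pre-Hilbert space a dense proper subspace already has trivial orthogonal complement.
\item Only finitely many summands can meet a fixed weight space. This is exactly why the paper's $h_j$ must be unbounded.
\end{itemize}
In exchange, the paper's construction directly produces an enumeration of the summands by nondecreasing lowest energy tending to infinity. That is the form used in the proof of Proposition \ref{propr: charactermodules}. Your counting gives the same conclusion, since each weight meets only finitely many summands, but it takes an extra sentence to extract it.
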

\begin{proof}
As in the proof of Lemma \ref{lem: irrsubmodule} we can assume that $e^{2\pi i  L^M_0} = e^{2\pi i  h_M}$ with $h_M \geq 0$ the smallest eigenvalue of $L^M_0$ on $M$. 
We set $h_1 = h_M$ and let $M^1$ a minimal submodule of $M$ with $M^1_{h_1} \neq \{ 0\}$. Now, if $(M^1)^\perp =  \{0\}$ 
then $M=M^1$. On the other hand,  if $(M^1)^\perp \neq  \{0\}$ we denote by  $h_2 \geq h_1$ the smallest eigenvalue of $L^M_0$ on 
$(M^1)^\perp$ and let $M^2$ be a minimal submodule of $(M^1)^\perp$ with $M^2_{h_2} \neq \{0\}$.  
Again, if $(M^1 \oplus M^2)^\perp =  \{0\}$ we find $M= M^1 \oplus M^2$.  On the other hand,  if $(M^1 \oplus M^2)^\perp \neq  \{0\}$ we denote by  $h_3 \geq h_2$ the smallest eigenvalue of $L_0$ on $(M^1 \oplus M^2)^\perp$ and let $M^3$ be a minimal submodule of $(M^1)^\perp$ with $M^3_{h_3} \neq \{0\}$. Proceeding in this way either there is a positive integer $k$ and irreducible $V$-submodules $M^1,\dots M^k$ such that 
$M = M^1\ \oplus \dots \oplus M^k$ or we find a sequence $M^j$, $j  \in \mathbb{Z}_{> 0}$ of irreducible $V$-submodules of $M$ and a corresponding increasing unbounded sequence of non negative real numbers $h_1 \leq h_2 \leq \dots $.  In the first case we have nothing to prove. In the second case
$$\left(M^1\oplus M^2 \oplus \dots \oplus M^j \right)^\perp_r = \{0\} $$
for $r < h_j$ and hence 
$$\left( \bigoplus_{j \in \mathbb{Z}_{\geq 0}} M^j  \right)^\perp = \{ 0\} $$
so that 
$$\bigoplus_{k \in \mathbb{Z}_{\geq 0}} M^k  =  M \,.$$
\end{proof}

It is well known that the characters of a strongly rational VOA associated to inequivalent modules are linear independent. Here we give a similar result for any not necessarily rational simple unitary VOA.

\begin{proposition}
\label{prop: characters independence}
Let $V$ be a simple unitary VOA and $M^1, \dots,M^k$ be pairwise inequivalent irreducible unitary $V$-modules. Then, the characters 
$a \mapsto \Tr_{M^i}(a_0^{M^i}q^{ L^{M^i}_0})$, $i=1,\dots,k$ are linearly independent.   
\end{proposition}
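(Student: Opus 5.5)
We prove this the way one proves linear independence of characters of inequivalent irreducible modules for an associative algebra, i.e.\ through a density (Jacobson/Burnside) argument. Assume a relation $\sum_{i=1}^k c_i \Tr_{M^i}(a_0^{M^i}q^{L^{M^i}_0})=0$ holds for all $a\in V$, as a formal series in $q$ (with real exponents). We will show that every $c_i$ vanishes.

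\textbf{Reduction to finite-dimensional pieces.} Each $M^i$ is irreducible and unitary, so $e^{2\pi i L_0}$ acts on it as a scalar. Hence $M^i=\bigoplus_{n\geq 0} M^i_{h_i+n}$, where $h_i$ is the lowest energy and each weight space is finite dimensional. Comparing coefficients of $q^{h}$ for each real $h$, we get for every $h$ the relation
\[
\sum_{i:\,h-h_i\in\mathbb{Z}_{\geq 0}} c_i \Tr_{M^i_h}\bigl(o^{M^i}(a)\bigr)=0,
\]
where $o(a)=a_0$ preserves each weight space. Choose an index $j$ with $c_j\neq 0$, and take $h$ to be the lowest weight $h_j$ of $M^j$. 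The plan is to produce an $a$ for which the trace is nonzero on $M^j_{h_j}$ and zero on the relevant weight spaces $M^i_{h_j}$, $i\neq j$.

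\textbf{Key step: density on the top-level-type spaces.} Consider the linear span $A_h$ of the zero modes $a_0$ acting on $\bigoplus_{i} M^i_h$. Zero modes are closed under composition modulo zero modes, by Zhu's $a*b$ formula: $(a*b)_0=a_0b_0$ on lowest weight spaces. In general, however, $a_0b_0$ on $M_h$ is only a combination of zero modes of Borcherds products plus contributions from lower levels. To get around this, we restrict to the minimal $h$ among $\{h_i\}$ with $c_i\neq 0$, and argue by induction on a suitable ordering. At that minimal $h$, the spaces $M^i_h$ with $c_i\ne0$ and $h_i=h$ are lowest weight spaces. They are irreducible, pairwise inequivalent modules for Zhu's algebra $A(V)$, and they are finite dimensional. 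Since $M^i$ is irreducible, irreducibility of the lowest space $M^i_{h_i}$ over $A(V)$ is standard. Inequivalence of the $M^i$ forces inequivalence of the lowest spaces, by the Zhu correspondence between irreducible positive-energy modules and simple $A(V)$-modules, which does not require rationality.

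For the other $i$ with $h-h_i\in\mathbb{Z}_{>0}$ we would have $h_i<h$, and then $c_i=0$ by minimality. So only lowest-weight pieces appear. The Jacobson density theorem applied to the finite-dimensional semisimple image of $A(V)$ in $\bigoplus \End(M^i_h)$ gives an $a$ with $a_0$ equal to the identity on $M^j_h$ and $0$ on the others. Then $c_j\dim M^j_h=0$, a contradiction. So all $c_i$ with minimal lowest weight vanish, and we repeat with the remaining indices.

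\textbf{Main obstacle.} The main difficulty is justifying the Zhu-algebra facts in this non-rational unitary setting. These are: the action on lowest weight spaces factors through $A(V)$, irreducibility of $M^i_{h_i}$, and that non-isomorphic $M^i$ give non-isomorphic $A(V)$-modules. For the last point, the first attempt will be to use unitarity directly. An $A(V)$-isomorphism between lowest spaces extends to a $V$-module map by generating, since the submodule generated by the lowest space is everything. Then the orthogonal decomposition of Proposition \ref{prop: unitarymodule} applied to $M^i\oplus M^j$, combined with irreducibility, rules out extra null vectors. If this fails, the fallback is Zhu's original bijection between irreducible $\mathbb{N}$-gradable modules and simple $A(V)$-modules, which holds for arbitrary VOAs.
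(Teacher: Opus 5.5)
Your proposal is correct and follows essentially the paper's argument. Both use the same ingredients: Zhu's algebra acting on the top spaces, Zhu's correspondence giving irreducibility and pairwise inequivalence of those top spaces without any rationality, semisimplicity of the finite-dimensional image to produce an element whose zero mode projects onto a single summand, and comparison of the lowest-order coefficient. The only difference is bookkeeping. You argue by contradiction at the minimal lowest weight among the nonzero coefficients, so you only separate top spaces sharing that weight. The paper separates all top spaces $M^i_{h_i}$ at once and eliminates $\alpha_1,\alpha_2,\ldots$ in order of increasing $h_i$.
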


\begin{proof}
Let $\alpha_1,\dots ,\alpha_k$ be complex numbers such that 
 $$\alpha_1 \Tr_{M^1}(a_0^{M^1}q^{L^{M^1}_0}) + \alpha_2 \Tr_{M^2}(a_0^{M^2}q^{L^{M^2}_0}) + \dots + 
 \alpha_k \Tr_{M^k}(a_0^{M^k}q^{ L^{M^k}_0}) = 0 $$
  for all $a \in V$. Let $h_i$ be the minimal energy of $M^i$. 
  We can assume that $h_1 \leq h_2 \leq \dots \leq h_k$.  
  Let $M$ be the orthogonal direct sum of the $M^i$, $i=1,\dots,k$. Then $M$ is naturally a unitary $V$-module and we have a unital representation 
 $\pi$ of the Zhu algebra $A(V)$ on the top space
 $$M_{(0)}:= \bigoplus_{i=1}^k M^i_{h_i} \, ,$$ 
 a finite-dimensional complex Hilbert space, see \cite[Sec. 2]{Zhu}. Moreover, $\pi$ is the direct sum of pairwise inequivalent irreducible subrepresentation $\pi_i$ on  $M^i_{h_i}$,  see \cite[Thm. 2.2.2]{Zhu}. Accordingly $\pi(A(V))$ is a finite dimensional algebra over $\mathbb{C}$ 
 with zero Jacobson radical. Hence, $\pi(A(V))$ is semisimple
so that  $$\pi(A(V)) = \bigoplus_{i=1}^k \operatorname{End}(M^i_{h_i} ).$$ 
  Thus we can find elements $b^1,b^2,\dots ,b^k$ of $V$ such that the corresponding equivalence classes
 $[b^1],[b^2],\dots , [b^k] \in A(V) = V/O(V)$  satisfy $\pi_j([b^i]) = \delta_{i,j} 1_{M_{h_j}^j}$, that is $(b^i)^{M_{h_j}^j}_0 = \delta_{i,j} 1_{M_{h_j}^j}$,.
 It follows that 
   $$\Tr_{M^j}((b^i)_0^{M^j}q^{L^{M^j}_0}) = \delta_{i,j} \, \mathrm{dim}(M^i_{h_i}) q^{h_i} + o(q^{h_j}) \; \mathrm{as} \; 
   q  \to 0  \,.$$  
   Accordingly, for any $i \in \{1,\dots,k\}$ we have 
   \begin{eqnarray*}
   0 &=&    \alpha_1 \Tr_{M^1}((b^i)^{M^1}_0 q^{L^{M^1}_0}) + \alpha_2 \Tr_{M^2}((b^i)_0^{M^2}q^{L^{M^2}_0})+ \dots + \alpha_k \Tr_{M^k}  ((b^i)_0^{M^k}q^{L^{M^k}_0})   \\
   &=& \alpha_i \mathrm{dim}(M^i_{h_i}) q^{h_i} + \sum_{j=1}^k o( q^{h_j})  \; \mathrm{as} \; q \to 0\,.
    \end{eqnarray*}
 and recalling that $h_1 \leq h_2 \leq \dots \leq h_k$ we see that  $0= \alpha_1 \mathrm{dim}(M^1_{h_1}) 
 q^{h_1} + o(q^{h_1})$ as $ q \to 0$ and thus 
 $\alpha_1=0$. Similarly, $\alpha_2 = 0$ and so on.
   \end{proof}

\begin{proposition}
\label{propr: charactermodules}
Let $V$ be a simple unitary VOA  and let $M, N$ two unitary $V$-modules. If 
$$\Tr_{M}(a_0^{M}q^{ L^M_0}) = \Tr_{N}(a_0^{N}q^{ L^N_0})$$
for all $a \in V$ then $M$ and $N$ are unitarily equivalent $V$-modules. 
\end{proposition}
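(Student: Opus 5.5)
Decompose both modules into irreducibles and then compare multiplicities using the linear independence of characters. By Proposition \ref{prop: unitarymodule}, first splitting $M$ and $N$ into the eigenspaces of $e^{2\pi i L_0}$, we write
$$M=\bigoplus_{\alpha} M^\alpha,\qquad N=\bigoplus_{\beta} N^\beta$$
as orthogonal direct sums of irreducible unitary submodules. Group them by isomorphism class: $M\cong\bigoplus_{[W]} W^{\oplus m_W}$ and $N\cong \bigoplus_{[W]}W^{\oplus n_W}$, where $W$ runs over irreducible unitary $V$-modules.

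The key point is that multiplicities are finite and the characters are meaningful formal series. The hypothesis implicitly requires $\Tr_M q^{L_0}$ to make sense (take $a=\Omega$), so each $M_\lambda$ is finite dimensional, as in the definition of VOA-module. Since each irreducible summand has a nonzero lowest-energy space, for each $\lambda$ only finitely many summands have lowest energy $\le\lambda$. Hence $m_W,n_W<\infty$, and
$$\Tr_M(a_0q^{L_0})=\sum_W m_W\,\Tr_W(a_0q^{L_0}),$$
and likewise for $N$. These sums are locally finite in each power $q^\lambda$.

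From the hypothesis we get $\sum_W (m_W-n_W)\Tr_W(a_0q^{L_0})=0$ for all $a\in V$. We want to conclude $m_W=n_W$ for every $W$. Proposition \ref{prop: characters independence} handles only finitely many modules, so we truncate. Argue by contradiction: among the $W$ with $m_W\ne n_W$, pick one with minimal lowest energy $h$. Only finitely many irreducibles $W^1,\dots,W^k$ occur with lowest energy $\le h$. Rerun the proof of Proposition \ref{prop: characters independence} on the top spaces of $W^1,\dots,W^k$: using Zhu's algebra, choose $b\in V$ acting as $1$ on the top space of the given $W$ and as $0$ on the top spaces of the others. Then the coefficient of $q^{h}$ in $\sum (m_W-n_W)\Tr_W(b_0q^{L_0})$ equals $(m_W-n_W)\dim W_h\neq0$.

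The main obstacle is checking that modules of lowest energy greater than $h$ contribute nothing at $q^h$, and that modules of lowest energy $<h$ with $m=n$ cancel. Both are immediate, since those modules contribute only at powers $\ge$ their lowest energy, and their terms cancel when $m=n$. This contradiction gives $m_W=n_W$ for all $W$.

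Finally, the isomorphisms $M^\alpha\cong N^\beta$ between matched irreducible summands can be chosen unitary. By Schur's lemma, the unitary structure on an irreducible module is unique up to a scalar, because $\Phi^*\Phi$ commutes with the action and preserves the finite-dimensional weight spaces. Assembling these unitary isomorphisms over the orthogonal decompositions gives a unitary equivalence $M\cong N$.
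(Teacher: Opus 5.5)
Your route is the paper's: decompose via Proposition~\ref{prop: unitarymodule}, then compare multiplicities one energy level at a time, using Zhu-algebra elements that separate top spaces as in Proposition~\ref{prop: characters independence}. Your minimal-counterexample version is a tidier form of the paper's truncation at a fixed $n$. Your Schur-lemma argument that matched irreducible summands are \emph{unitarily} isomorphic fills in a point the paper leaves implicit.

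There is one gap as written. You claim that for each $\lambda$ only finitely many summands of $M$ have lowest energy $\le\lambda$. You then rely on this to pick a minimal lowest energy $h$ among the $W$ with $m_W\neq n_W$. This claim is false for a general unitary module. For example, for $V=M(1)$, take the direct sum over $n\ge 1$ of unitary irreducible modules of lowest energy $1/n$. Its weight spaces are still finite dimensional, since the energies $1/n$ are distinct mod $1$. Yet it has infinitely many summands of lowest energy $\le 1$, accumulating at $0$. So a priori the set of ``bad'' lowest energies need not have a minimum, and your contradiction argument does not start.

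The fix is the paper's first step, which you mention but do not carry through. Each irreducible summand has its $L_0$-spectrum in a single coset $\kappa+\mathbb{Z}$. So the hypothesis splits coefficientwise into $\Tr_{M^\kappa}(a_0q^{L_0})=\Tr_{N^\kappa}(a_0q^{L_0})$, where $M^\kappa$ and $N^\kappa$ are the $e^{2\pi i\kappa}$-eigenspaces of $e^{2\pi i L_0}$. You may therefore assume $M=M^\kappa$ and $N=N^\kappa$. Then all lowest energies lie in the discrete set $(\kappa+\mathbb{Z})\cap[0,\infty)$, so a minimal bad $h$ exists. Only finitely many isomorphism classes occur with lowest energy $\le h$, and the rest of your argument goes through.

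In the Zhu step you only need $b$ to separate the finitely many classes with lowest energy exactly $h$. Those with lower lowest energy cancel by minimality anyway.
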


\begin{proof}
Assume that $\Tr_{M}(a_0^{M}q^{L^M_0}) = \Tr_{N}(a_0^{N}q^{L^N_0})$ for all $a\in V$. For a given $h \geq 0$ let $M^h = \{b \in M: e^{2\pi i  L^M_0}b  = e^{2\pi i  h}b \}$ and $N^h = \{b \in N: e^{2\pi i  L^N_0}b  = e^{2\pi i  h}b \}$. 
Then $M^h$ and $N^h$ are $V$-submodules of $M$ and $N$ respectively. Moreover, 
$$\Tr_{M^h}(a_0^{M^h}q^{L^M_0}) = \Tr_{N^h}(a_0^{N^h}q^{L^N_0})$$
for all $a\in V$ and all $h \geq 0$ and $M$ and $N$ are unitarily equivalent if and only if 
$M^h$ is unitarily equivalent to $N^h$ for any $h \geq 0$.
Accordingly, we can assume that $M= M^h$ and $N=N^h$ for some $h \geq 0$. 
Then, by Proposition \ref{prop: unitarymodule} $M$ there are pairwise inequivalent irreducible $V$-modules $M^1, M^2, \dots$ with lowest energies $h_1 \leq h_2 \leq \dots$ such that $M$ is unitary equivalent to $\bigoplus_i m_i M^i$ and $N$ is unitarily equivalent to $\bigoplus_i n_i M^i$ where $m_i$ and $n_i$ denote the multiplicities of $M^i$ as a $V$-submodule of $M$ and $N$ respectively. If the number of irreducible submodules $M^i$ with $m_i n_i \neq 0$ is finite, then the conclusion follows directly from Proposition \ref{prop: characters independence} because for every $a \in V$, 
$$0 = \Tr_{M}(a_0^{M}q^{L^M_0}) - \Tr_{N}(a_0^{N}q^{L^N_0}) = \sum_i (m_i-n_i) \Tr_{M^i}(a_0^{M^i}q^{L^{M^i}_0}) \, .$$
Hence, we can assume that $M^i$, $i \in \mathbb{Z}_{>0}$ is an infinite sequence. In this case $h_i \to + \infty$ as $i \to +\infty$. 
Now, fix a positive integer $n> h_1$ and let $k$ be a positive integer such that $h_k \leq n$ and $h_{k+1}>n$. 
Arguing as in the proof of Proposition \ref{prop: characters independence} we can find, for every positive integer $i \leq k$ a vector $b^i \in V$ such that 
$$\Tr_{M^j}((b_0^i)^{M^j}q^{L^{M^j}_0} ) = \delta_{i,j} \mathrm{dim}(M_{h_j}^j) q^{h_i} + o(q^{h_j}) \; \text{as} \; q \to 0$$
for $j = 1,\dots ,k$. It follows that for $i \leq k$,   
$$0 = (m_i - n_i)\mathrm{dim}(M_{h_i}^i) q^{h_i} + o(q^{h_1})\; \text{as}\; q\to 0 \, ,$$ 
$i = 1, \dots ,k$. Thus, arguing again as in the proof of Proposition \ref{prop: characters independence}, we find that 
for $h_i \leq n$, we have $n_i = m_i$, and since $n$ was arbitrary, we have $n_i = m_i$ for every positive integer $i$ so that 
$M$ and $N$ are unitarily equivalent.
 \end{proof}

 \subsection{Relation between the partition function subalgebra and the partition function}
 We now come back to the discussion of the partition functions and the partition function algebras of unitary VOAs. Let $V$ and $U$ be simple unitary VOAs. Our aim is to give some characterizations of the equality of the partition functions ${\cZ}_V$ and ${\cZ}_U$. Recall that the partition function is as in Definition \ref{def:parition_functions}, and by $\cZ_V$ we denote the collection of power series $\{\cZ_{V,g,0}\}_{g \in\mathbb{Z}_{\geq 0}}$.

 We denote by  $\gamma_k(w,z)$, $\gamma_k^{qp}(w,z)$, $C_{k}(m,n)$ and  $C^{qp}_{k}(m,n)$ the Casimir bilocal fields, the quasi-primary Casimir bilocal fields, the Casimir endormorphisms, and  the quasi-primary Casimir endormorphisms of $V$ respectively.  Moreover, 
 we denote by  $\eta_k(w,z)$, $\eta_k^{qp}(w,z)$, $D_{k}(m,n)$ and  $D^{qp}_{k}(m,n)$ the Casimir bilocal fields, the quasi-primary Casimir bilocal fields, the Casimir endormorphisms and  the quasi-primary Casimir endormorphisms of $U$ respectively.

 \begin{proposition} 
 \label{prop: equality of partition functions} 
 Let $V$ and $U$ be two simple unitary VOAs. Then the following are equivalent: 
 \begin{itemize}
 \item[$(i)$] $\cZ_V = \cZ_U$
 \smallskip
 
 \item[$(ii)$] $(\Omega^V|\gamma_{k_1}(w_1,z_1) \dots \gamma_{k_s}(w_s,z_s)\Omega^V) = 
 (\Omega^U|\eta_{k_1}(w_1,z_1) \dots \eta_{k_s}(w_s,z_s)\Omega^U)$ 
 \smallskip
 
 \noindent for all $s \in \mathbb{Z}_{\geq 0}$ and all  $(k_1, \dots , k_s) \in \mathbb{Z}_{\geq 0}^s$ 
 \smallskip
 
  \item[$(iii)$] $ \Tr_V(\gamma_{k_1}(w_1,z_1) \dots \gamma_{k_s}(w_s,z_s)q^{L_0}) = \Tr_U(\eta_{k_1}(w_1,z_1) \dots \eta_{k_s}(w_s,z_s)q^{L_0})$ 
 \smallskip
 
 \noindent for all $s \in \mathbb{Z}_{\geq 0}$ and all  $(k_1, \dots , k_s) \in \mathbb{Z}_{\geq 0}^s$ 
 \smallskip 
 
 \item[$(iv)$] $(\Omega^V | C_{k_1}(m_1,n_1) \dots C_{k_s}(m_s,n_s)\Omega^V) = 
 (\Omega ^U| D_{k_1}(m_1,n_1) \dots D_{k_s}(m_s,n_s) \Omega^U)  $ 
 \smallskip
 
 \noindent for all $s \in \mathbb{Z}_{\geq 0}$, all  $(k_1, \dots , k_s) \in \mathbb{Z}_{\geq 0}^s$ and all $(m_1,n_1, \dots m_s, n_s ) \in
 \mathbb{Z}^{2s}$  
 \smallskip

  \item[$(v)$] $\Tr_V(C_{k_1}(m_1,n_1) \dots C_{k_s}(m_s,n_s)q^{L_0}) = \Tr_U(D_{k_1}(m_1,n_1) \dots D_{k_s}(m_s,n_s)q^{L_0})$ 
 \smallskip
 
 \noindent for all $s \in \mathbb{Z}_{\geq 0}$, all  $(k_1, \dots , k_s) \in \mathbb{Z}_{\geq 0}^s$ and all $(m_1,n_1, \dots m_s, n_s ) \in
 \mathbb{Z}^{2s}$   
  \smallskip 
 
 \item[$(vi)$] $(C_{k_1}(m_1,n_1) \dots C_{k_r}(m_r,n_r)\Omega^V | C_{k_{r+1}}(m_{r+1},n_{r+1}) \dots C_{k_s}(m_s,n_s)\Omega^V) = \\
(D_{k_1}(m_1,n_1) \dots D_{k_r}(m_r,n_r)\Omega^U | D_{k_{r+1}}(m_{r+1},n_{r+1}) \dots D_{k_s}(m_s,n_s)\Omega^U)$ 
 \smallskip
 
 \noindent for all $r \leq s \in \mathbb{Z}_{\geq 0}$, all  $(k_1, \dots , k_s) \in \mathbb{Z}_{\geq 0}^s$ and all $(m_1,n_1, \dots m_s, n_s ) \in
 \mathbb{Z}^{2s}$   
 
  \end{itemize}
  
 \end{proposition}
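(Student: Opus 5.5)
The natural route is a cycle of implications $(i)\Leftrightarrow(ii)\Leftrightarrow(iv)\Leftrightarrow(vi)$ together with $(ii)\Leftrightarrow(iii)\Leftrightarrow(v)$, built from three ingredients: the definition of the partition functions as generating series, the adjointness relation (\ref{eqQPendScalarProduct}), and the genus-one trace expression from Remark \ref{RemarkGenus1Partition} applied inside correlation functions.

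First, $(i)\Leftrightarrow(ii)$. By Definition \ref{def:parition_functions}, $\cZ_{V,g}$ is a power series in $q_1,\dots,q_g$. Its coefficient of $q_1^{k_1}\cdots q_g^{k_g}$ is $(\Omega^V,\gamma_{k_1}(w_1,z_1)\cdots\gamma_{k_g}(w_g,z_g)\Omega^V)$. For unitary $V$, Remark \ref{rem: realbasis} lets us take $\theta$-invariant orthonormal bases. Then $\theta\Omega=\Omega$ gives $(\Omega,\cdot)=(\Omega|\cdot)$, so the bilinear and Hermitian pairings with the vacuum agree. Hence $\cZ_{V,g}=\cZ_{U,g}$ for all $g$ says exactly that these coefficients agree for every $s=g$ and every $(k_1,\dots,k_s)$, which is $(ii)$. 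Next, $(ii)\Leftrightarrow(iv)$ is just extraction of coefficients, using the expansion $\gamma_k(w,z)=\sum C_k(m,n)w^{-m-k}z^{-n-k}$. These are formal Laurent series whose coefficients are well defined, so no convergence issue arises.

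Then $(iv)\Leftrightarrow(vi)$. The direction $(vi)\Rightarrow(iv)$ is the case $r=0$. For the converse, Proposition \ref{prop: partition} lets us replace each $C_k$ by the quasi-primary $C^{qp}_k$: by (\ref{E:rel_campi}), $\gamma_k$ and $\gamma^{qp}_j$ are related by universal triangular linear combinations of derivatives, with coefficients independent of the VOA. Condition $(iv)$ is therefore equivalent to the same condition with $C^{qp}$ and $D^{qp}$. Relation (\ref{eqQPendScalarProduct}) then moves the left factors across: $(C^{qp}_{k_1}(m_1,n_1)\cdots a\,|\,b)=(a\,|\,C^{qp}(-n_r,-m_r)\cdots C^{qp}(-n_1,-m_1)b)$. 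This turns the inner products in $(vi)$ into vacuum expectation values of the type in $(iv)$, and translating back from $C^{qp}$ to $C$ gives $(vi)$.

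The trace conditions are the main step. To get $(iii)\Rightarrow(ii)$, extract the $q^0$ coefficient: $V_0=\mathbb{C}\Omega$ with $(\Omega|\Omega)=1$ turns the trace into the vacuum expectation value. For the reverse, $(vi)\Rightarrow(v)$ and hence $(iii)$, I would use $PV=\widetilde{PV}$, which is spanned by the vectors $C_{k_1}(m_1,n_1)\cdots\Omega$. Condition $(vi)$ says the Gram matrices of these spanning families agree in $V$ and $U$. This yields an isometry $PV\to PU$ sending $C\cdots\Omega^V\mapsto D\cdots\Omega^U$, and it intertwines $C_k(m,n)$ with $D_k(m,n)$ on these subspaces. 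The problem is that the traces in $(v)$ run over all of $V$, not just $PV$. Here I would use the gluing idea behind $\cZ_{V,g}$ itself: write the trace over $V_k$ as $\sum_i (v^{(i)}|\cdots v^{(i)})$ and realize it as a vacuum expectation with an extra Casimir field $\gamma_k(w,z)$ inserted, taking a $w,z$-limit, exactly as in the proof of Proposition \ref{P:ort}. The concrete manoeuvre is to write $v^{(i)}=\lim Y(v^{(i)},z)\Omega$, move the bra to the vacuum via the invariant form, $(v|X v)=\lim (-w^2)^{-k}(\Omega,Y(v,w^{-1})XY(v,z)\Omega)$, and then use commutativity (Theorem \ref{T:commutation}) to rearrange $\sum_i Y(v^{(i)},w^{-1})X\,Y(v^{(i)},z)$. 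This expresses $\Tr_{V_k}(X)$, for $X$ a product of Casimir endomorphisms, through coefficients of $(\Omega|\gamma_k(w',z)X\,\Omega)$ with a rescaled $w'$; this is the genus-raising step, and it is consistent with $\cZ_{V,g+1}$ containing the trace data of genus $g$. I expect this step to be the main obstacle: commuting $Y(v^{(i)},w)$ through $X$ gives an equality of rational functions, not of formal series, so one must check that the right expansion region is used. I would control this by working with the rational functions of Theorem \ref{T:commutation} and comparing coefficients only after analytic continuation. Once this is done, $(ii)$ gives $(iii)$, $(iii)\Leftrightarrow(v)$ is again coefficient extraction, and the cycle closes.
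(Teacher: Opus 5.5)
Your proposal is correct and follows essentially the paper's proof. The shared steps are coefficient extraction for $(i)\Leftrightarrow(ii)\Leftrightarrow(iv)$ and $(iii)\Leftrightarrow(v)$, the $q^0$ term for $(iii)\Rightarrow(ii)$, and the adjoint relation \eqref{eqQPendScalarProduct} combined with the universal triangular relation \eqref{E:rel_campi} for $(iv)\Rightarrow(vi)$ (you move all factors at once where the paper inducts on $r$). For $(ii)\Rightarrow(iii)$ both use the genus-raising argument of Proposition \ref{P:ort}, so your isometry $PV\to PU$ detour is unnecessary.

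One small correction: your identity $(v|Xv)=\lim(-w^2)^{-k}(\Omega,Y(v,w^{-1})XY(v,z)\Omega)$ holds only for quasi-primary $v$. Consequently $\Tr_{V_k}(X)$ is expressed through $\gamma^{qp}_j$ for all $j\le k$ (equivalently, via \eqref{E:rel_campi}, through all $\gamma_j$ with $j\le k$), not through $\gamma_k$ alone; this is harmless since $(ii)$ covers every $k$.
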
 
 
 \begin{proof} The equivalence$(i) \Leftrightarrow (ii)$ follows in a straightforward  way from the definition of the partition function. 
 The implication $(iii) \Rightarrow (ii)$ follows from the fact that taking $q=0$ in $(iii)$ we get $(ii)$. To prove that  
 $(ii) \Rightarrow (iii)$  we first observe that by Eq. (\ref{E:rel_campi}) and the subsequent discussion we see that $(ii)$ implies that 
 \begin{eqnarray*}
 (\Omega^V|\gamma_k^{qp}(z,w) \gamma_{k_1}(w_1,z_1) \dots \gamma_{k_s}(w_s,z_s)\Omega^V) = \\
 (\Omega^U|\eta_k^{qp}(z,w)\eta_{k_1}(w_1,z_1) \dots \eta_{k_s}(w_s,z_s)\Omega^U)
 \end{eqnarray*}
 for all $k \in \mathbb{Z}_{\geq 0}$, all $s \in \mathbb{Z}_{\geq 0}$ and all  $(k_1, \dots , k_s) \in \mathbb{Z}_{\geq 0}^s$.  Then, arguing as in the proof of Proposition \ref{P:ort}, we get $(iii)$. 
 Now, $(ii) \Leftrightarrow (iv)$ and $(iii) \Leftrightarrow (v)$ follow directly by the definition of the 
 Casimir endomorphisms. Since $( iv)$ follows from $(vi)$ by taking $r=0$ it only remains to show that $(vi)$ follows from the other five conditions. Going to prove $(iv) \Rightarrow (vi)$ by induction on the non-negative integer $r$. As already noted, for $r=0$ $(vi)$ coincides with $(iv)$. Assume that $(vi)$ holds for a given $r$ 
 and all $s\geq r$. Then, for all $s\geq r +1$ we have that
 $$
 (C_{k_2}(m_2,n_2) \dots C_{k_{r+1}}(m_{r+1},n_{r+1})\Omega^V | C_{k_1}(m_1,n_1) C_{k_{r+2}}(m_{r+2},n_{r+2}) \dots C_{k_s}(m_s,n_s)\Omega^V)
 $$
 is equal to
$$(D_{k_2}(m_2,n_2) \dots D_{k_{r+1}}(m_{r+1},n_{r+1})\Omega^U |D_{k_1}(m_1,n_1)  D_{k_{r+2}}(m_{r+2}, n_{r+1}) \dots D_{k_s}(m_s, n_s)\Omega^U)
$$  
all  $(k_1, \dots , k_s) \in \mathbb{Z}_{\geq 0}^s$ and all $(m_1,n_1, \dots m_s, n_s ) \in
 \mathbb{Z}^{2s}$.  
 Using again  Eq.(\ref{E:rel_campi}) we have
 $$(C_{k_2}(m_2,n_2) \dots C_{k_{r+1}}(m_{r+1},n_{r+1}) \Omega^V | C_{k_1}^{qp}(m_1,n_1) C_{k_{r+2}}(m_{r+2}, n_{r+2}) \dots C_{k_s}(m_s, n_s)\Omega^V)$$
 is equal to 
$$(D_{k_2}(m_2, n_2) \dots D_{k_{r+1}}(m_{r+1}, n_{r+1}) \Omega^U  |D_{k_1}^{qp}(m_1, n_1)  D_{k_{r+2}}(m_{r+2}, n_{r+1}) \dots D_{k_s}(m_s, n_s) \Omega^U)$$
all  $(k_1, \dots , k_s) \in \mathbb{Z}_{\geq 0}^s$ and all $(m_1,n_1, \dots m_s, n_s ) \in 
 \mathbb{Z}^{2s}$. Now, it follows from Equation (\ref{eqQPendScalarProduct}) that for any $k \in \mathbb{Z}_{\geq 0}$ and any $m,n \in \mathbb{Z}$ we have 
 $$(a|C_{k}^{qp}(m,n)b) = (C_{k}^{qp}(-n,-m)a|b)$$
 for all $a, b \in V$ and 
  $$(a|D_{k}^{qp}(m,n)b) = (D_{k}^{qp}(-n,-m)a|b)$$    
   for all $a, b \in U$. Hence,    
    \begin{eqnarray*}
 (C^{qp}_{k_1}(m_1,n_1) \dots C_{k_{r+1}}(m_{r+1},n_{r+1})\Omega^V |C_{k_{r+2}}(m_{r+2},n_{r+2}) \dots C_{n_s}(m_s, n_s) \Omega^V) = \\
(D^{qp}_{k_1}(m_1, n_1) \dots D_{k_{r+1}}(m_{r+1}, n_{r+1}) \Omega^U |D_{n_{r+2}}(m_{r+2}, n_{r+1}) \dots D_{k_s}(m_s,n_s) \Omega^U)
\end{eqnarray*} 
all  $(k_1, \dots , k_s) \in \mathbb{Z}_{\geq 0}^s$ and all $(m_1,n_1, \dots m_s, n_s ) \in 
 \mathbb{Z}^{2s}$ so that, again by  Eq.(\ref{E:rel_campi}),   we can conclude that    
    \begin{eqnarray*}
 (C_{k_1}(m_1,n_1) \dots C_{k_{r+1}}(m_{r+1},n_{r+1})\Omega^V |C_{k_{r+2}}(m_{r+2},n_{r+2}) \dots C_{k_s}(m_s,n_s)\Omega^V) = \\
(D_{k_1}(m_1,n_1) \dots D_{k_{r+1}}(m_{r+1},n_{r+1})\Omega^U |D_{k_{r+2}}(m_{r+2},n_{r+1}) \dots D_{k_s}(m_s,n_s)\Omega^V)
\end{eqnarray*} 
all  $(k_1, \dots , k_s) \in \mathbb{Z}_{\geq 0}^s$ and all $(m_1,n_1, \dots m_s, n_s ) \in
 \mathbb{Z}^{2s}$ that proves $(vi)$.
 \end{proof}

We are now in a position to prove the central result of this section.

\begin{theorem}\label{T:main2_corpo}
Let $V$ and $U$ be two simple unitary VOAs. Then the equality of partition functions $\cZ_V = \cZ_U$ holds 
if and only if there exists 
a unitary operator 
$$\Phi: V \to U$$ 
which restricts to a unitary VOA isomorphism
$$\varphi \colon PV \to PU $$
and satisfying
$$\Phi Y(a,z) = Y(\varphi a,z) \Phi$$ 
for all $a \in PV$. If this is the case, then $\Phi$ also intertwines the action of the Casimir bilocal fields of $V$ and $U$, namely,
$$\Phi \gamma_k(w,z)= \eta_k(w,z)\Phi $$ 
for every non-negative integer $k$.  In particular, if $V$ and $U$ have the same partition function, then they have the same central charge. 
\end{theorem}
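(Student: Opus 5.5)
The plan is to build $\varphi$ first on the partition function subalgebras, using Proposition \ref{prop: equality of partition functions}, and then to extend it to all of $V$ by character theory for unitary modules. For the forward direction, assume $\cZ_V=\cZ_U$.

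\textbf{Construction of $\varphi$.} By Proposition \ref{prop: partition}, $PV=\widetilde{PV}$ is spanned by the vectors $C_{k_1}(m_1,n_1)\cdots C_{k_s}(m_s,n_s)\Omega^V$, and similarly $PU$ is spanned by the corresponding $D$-vectors.
\begin{itemize}
\item Condition $(vi)$ of Proposition \ref{prop: equality of partition functions} says that the Gram matrices of these two spanning families coincide.
\item Hence the assignment $C_{k_1}(m_1,n_1)\cdots C_{k_s}(m_s,n_s)\Omega^V\mapsto D_{k_1}(m_1,n_1)\cdots D_{k_s}(m_s,n_s)\Omega^U$ is well defined: a linear relation among the $C$-vectors gives a null vector, and its image is then also null.
\item It is isometric and surjective, so it defines a unitary $\varphi\colon PV\to PU$.
\item By construction $\varphi$ intertwines $C_k(m,n)$ with $D_k(m,n)$, and $\varphi\Omega^V=\Omega^U$.
\end{itemize}
To see that $\varphi$ is a vertex algebra isomorphism, I use associativity, exactly as in the proof of Proposition \ref{prop: partition}: the Casimir vertex operator $Y(C_{k,m},z)$ is read off from $\gamma_k(w,z)=\sum_m Y(C_{k,m},z)(w-z)^{-m-k}$.
\begin{itemize}
\item It follows that $\varphi(C_{k,m})=D_{k,m}$.
\item Restricted to $PV$, $\varphi$ intertwines $Y(C_{k,m},z)$ with $Y(D_{k,m},z)$.
\item Since the Casimir elements generate $PV$ and $PU$, $\varphi$ intertwines all vertex operators.
\end{itemize}

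\textbf{Conformal vectors.} Unitarity of $\varphi$ as a VOA map needs $\varphi\nu^V=\nu^U$; both vectors lie in the respective subalgebras by Lemma \ref{L:conf}. Since $\varphi$ commutes with $L_0$ (it preserves the grading of the $C$-vectors) and with $L_{-1}$ (it is a vertex algebra map), $\varphi\nu^V$ and $\nu^U$ are both conformal vectors of $PU$ inducing the same $L_0$ and $L_{-1}$. I would try to conclude they coincide from the uniqueness argument of Lemma \ref{L:conf}: their difference is orthogonal to the relevant subalgebra and has vanishing zero-mode traces, so Proposition \ref{P:ort} forces it to vanish. This step is delicate and I flag it as a secondary obstacle. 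Once it holds, equality of central charges follows, since $c/2=(\nu|\nu)$.

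\textbf{Extension to $\Phi\colon V\to U$.} I regard $U$ as a unitary $PV$-module through $\varphi$; this module is unitary because $\varphi$ preserves the PCT structure. The goal is to apply Proposition \ref{propr: charactermodules}, which requires
\[
\Tr_V(a_0q^{L_0})=\Tr_U((\varphi a)_0q^{L_0})\qquad\text{for all } a\in PV .
\]
This is the main obstacle. The plan is as follows.
\begin{itemize}
\item Use the Borcherds identity to write the zero mode of a spanning vector $(C_{k_1,m_1})_{n_1}\cdots\Omega$ of $PV$ as an expression in Casimir endomorphisms $C_k(m,n)$.
\item These expressions are infinite normally ordered sums, but on each finite-dimensional $V_d$ only finitely many terms act nontrivially.
\item The same universal coefficients appear on the $U$ side.
\item The trace equality then follows from condition $(v)$ of Proposition \ref{prop: equality of partition functions}.
\end{itemize}
Proposition \ref{propr: charactermodules} then yields a unitary $PV$-module isomorphism $\Phi\colon V\to U$. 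Composing with a unitary automorphism of the submodule $PV$ if necessary, I can arrange $\Phi|_{PV}=\varphi$.

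\textbf{Casimir bilocal fields and the converse.} Intertwining of $\gamma_k$ and $\eta_k$ follows from the associativity identity above, because $\Phi$ intertwines each $Y(C_{k,m},z)$ with $Y(D_{k,m},z)$. Conversely, suppose such $\Phi$ exists. Then $\Phi$ intertwines the Casimir bilocal fields by the same associativity identity. Since $\Phi$ is unitary and $\Phi\Omega^V=\Omega^U$, condition $(ii)$ of Proposition \ref{prop: equality of partition functions} holds, hence $\cZ_V=\cZ_U$.
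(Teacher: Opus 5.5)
Your forward direction follows the paper's route: $\varphi$ on $PV=\widetilde{PV}$ via condition $(vi)$, the vertex algebra isomorphism via the Casimir elements, equality of zero-mode characters via the Borcherds identity and condition $(v)$, then Proposition~\ref{propr: charactermodules}. However, the converse, and with it the intertwining statement for an arbitrary $\Phi$, has a genuine gap.

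In the converse, $\varphi\colon PV\to PU$ is an \emph{arbitrary} unitary VOA isomorphism, and $\Phi$ is only a $PV$-module map on the rest of $V$. Your claim that $\Phi$ intertwines $\gamma_k$ and $\eta_k$ ``by the same associativity identity'' presupposes $\Phi Y(C_{k,m},z)=Y(D_{k,m},z)\Phi$, i.e.\ $\varphi C_{k,m}=D_{k,m}$. This holds for the $\varphi$ you constructed, but for a given $\varphi$ it is exactly what must be proved. The element $C_{k,m}=\sum_i v^{(i)}_m v^{(i)}$ is built from an orthonormal basis of all of $V_k$, not of $PV_k$. Since $v^{(i)}\notin PV$, you cannot compute $\Phi(v^{(i)}_m v^{(i)})$ as $(\Phi v^{(i)})_m\Phi v^{(i)}$.

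What is missing is the paper's trace argument. Since $\varphi\nu^V=\nu^U$, $\Phi$ intertwines all $L_n$ and hence maps $V_k^{qp}$ unitarily onto $U_k^{qp}$. Therefore, for $a,b\in PV$ and $\{v^{(i)}\}$ orthonormal in $V_k^{qp}$, the quantities $\sum_i\bigl(e^{wL_{-1}}v^{(i)},Y(a,x)Y(b,y)e^{zL_{-1}}v^{(i)}\bigr)$ are traces over $V_k^{qp}$, and they are unchanged under conjugation by $\Phi$. Rewriting them as vacuum correlators and using locality gives $(\Omega^V|Y(a,x)\gamma^{qp}_k(w,z)Y(b,y)\Omega^V)=(\Omega^U|Y(\varphi a,x)\eta^{qp}_k(w,z)Y(\varphi b,y)\Omega^U)$. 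Then \eqref{E:rel_campi} gives the same identity for $\gamma_k$. Since $\gamma_k$ preserves $PV=\widetilde{PV}$, this yields $\Phi\gamma_k=\eta_k\Phi$ on $PV$, hence $\varphi C_{k,m}=D_{k,m}$. Only after this does your associativity argument extend the intertwining to all of $V$ and yield condition $(ii)$.

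The conformal-vector step in the forward direction is also not closed by what you propose. The difference $b:=\varphi\nu^V-\nu^U$ lies in $PU$, so it is not orthogonal to $PU$, and Proposition~\ref{P:ort} gives nothing. Moreover, $b_{(0)}=b_{(1)}=0$ on $PU$ does not force $b=0$: in $M(1)$ all the vectors $\tfrac12 h_{-1}^2\Omega+\lambda h_{-2}\Omega$ induce the same $L_{-1}$ and $L_0$. What is missing is a condition on $L_1$; the paper invokes \cite[Prop.~4.8]{CKLW18}.

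Alternatively, you can argue directly. The commutator formula in the proof of Lemma~\ref{L:unitary}, together with the inversion of \eqref{E:rel_campi}, shows that $L_1$ acts on $PV=\widetilde{PV^{qp}}$ by universal formulas in the Casimir endomorphisms. So your $\varphi$ also intertwines $L_1$, which gives $L_1b=0$. The Borcherds commutator $[\nu^U_{(-1)},b_{(1)}]=0$ applied to $\Omega^U$ then yields $b=\tfrac12L_{-1}L_1b=0$.

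This step cannot be skipped, because Proposition~\ref{propr: charactermodules} compares characters computed with $L_0^M=(\varphi\nu^V)_{(1)}$ acting on $U$. Finally, to arrange $\Phi|_{PV}=\varphi$, use $\Phi\Omega^V\in\C\Omega^U$ (by CFT type). Then $\Phi a=\Phi a_{(-1)}\Omega^V=(\varphi a)_{(-1)}\Phi\Omega^V$ shows $\Phi|_{PV}$ agrees with $\varphi$ up to a phase, which you can remove.
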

 
\begin{proof} 
Assume first that there is such a unitary operator $\Phi$ and, moreover,  $\Phi \gamma_k(w,z) = \eta_k(w,z)\Phi$ for every $k \in \mathbb{Z}_{\geq 0}$. 
Note that
$\Phi \Omega^V = \varphi \Omega^V = \Omega^U$ because $\varphi: PV \to PU$ is a VOA isomorphism. Hence, $\Phi^{-1}\Omega^U = \Omega^V$. 
Then,  for every non negative integer $s$ and every $(k_1, \dots k_s) \in \mathbb{Z}_{\geq 0}^s$, by the unitarity of $\Phi$ we have 
\begin{eqnarray*}
 (\Omega^V|\gamma_{k_1}(w_1,z_1) \dots \gamma_{k_s}(w_s , z_s)\Omega^V) &=&  (\Phi \Omega^V|\Phi \gamma_{k_1}(w_1,z_1) \dots \gamma_{k_s}(w_s, z_s)\Omega^V) \\
 &=& (\Omega^U|\Phi \gamma_{k_1}(w_1,z_1) \dots \gamma_{k_s}(w_s ,z_s) \Phi^{-1}\Omega^U)  \\
 &=& (\Omega^U|\eta_{k_1}(w_1,z_1) \dots \eta_{k_s}(w_s, z_s)\Omega^U)
\end{eqnarray*}
and hence $\cZ_V = \cZ_U$ by $(ii) \Rightarrow (i)$ in Proposition \ref{prop: equality of partition functions}. 

\medskip

We now assume that $\cZ_V = \cZ_U$. 
If $a \in PV$ is given by 
$$a := \alpha_1 C_{k_1^1}(m_1^1,n_1^1) \dots C_{k_{s^1}^1}(m_{s^1}^1,n_{s^1}^1) \Omega^V + \dots 
+  \alpha_j C_{k_1^j}(m_1^j,n_1^j) \dots C_{k_{s^j}^j}(m_{s^j}^j,n_{s^j}^j) \Omega^V$$   
and $b \in PU$ is given by 
$$b := \alpha_1 D_{k_1^1}(m_1^1,n_1^1) \dots D_{k_{s^1}^1}(m_{s^1}^1,n_{s^1}^1) \Omega^U + \dots 
+  \alpha_j D_{k_1^j}(m_1^j,n_1^j) \dots D_{k_{s^j}^j}(m_{s^j}^j,n_{s^j}^j) \Omega^U$$   
with $k \in \mathbb{Z}_{\geq 0}$, then by $(i) \Rightarrow (ii)$ in  Proposition \ref{prop: equality of partition functions}
$(a|a) = (b|b)$. 
Hence, letting $\varphi a =b$, we obtain a well-defined unitary operator $\varphi: PV \to PU$ such that 

\begin{eqnarray*}
\varphi C_{k_1}(m_1,n_1) C_{k_2}(m_2,n_2) \dots C_{k_s}(m_s,n_s) \Omega^V  = \\
D_{k_1}(m_1,n_1) D_{k_2}(m_2,n_2) \dots D_{k_s}(m_s,n_s) \Omega^U
\end{eqnarray*}
for all $s \in \mathbb{Z}_{\geq 0}$, all $(k_1,\dots,k_s) \in \mathbb{Z}_{\geq 0}^s$ and all $(m_1,n_1, \dots, m_s,n_s) \in \mathbb{Z}^{2s}$.

Now, it follows from the Borcherds identity that, if $C_{a,b}$ and $D_{a,b}$ denote the Casimir elements of $V$ and $U$ respectively,
we have 
\begin{eqnarray*}
\varphi (C_{k,m})_n  C_{k_1}(m_1,n_1) \dots C_{k_s}(m_s,n_s) \Omega^V   = \\ 
(D_{k,m})_n  D_{k_1}(m_1,n_1) \dots D_{k_s}(m_s,n_s) \Omega^U
\end{eqnarray*}

for all $s \in \mathbb{Z}_{\geq 0}$, all $(k,k_1,\dots,k_s) \in \mathbb{Z}_{\geq 0}^{s+1}$, all and all $(m,n,m_1,n_1, \dots, m_s,n_s) \in \mathbb{Z}^{2s+2}$.
Hence, $\varphi C_{k,m} = D_{k,m}$ and $\varphi (C_{k,m})_n a  = (D_{k,m})_n \varphi a =  (\varphi C_{k,m})_n \varphi a$  
for all $k \in \mathbb{Z}_{\geq 0}$, all $m,n \in \mathbb{Z}$ and all $a \in PV$.  Hence, recalling that the Casimir elements $C_{k,j}$, $k,j \in \mathbb{Z}$  generate $PV$, the Borcherds identity implies that $\varphi$ is a vertex algebra isomorphism. We now want to show that $\varphi$ is also a VOA isomorphism, i.e. that $\varphi$ preserves the conformal vector. To this end first note that  
$\varphi$ is grading preserving, namely $\varphi L^V_0 a= L^U_0\varphi a$ for all $a \in PV$. Then the result follows from \cite[Prop. 4.8]{CKLW18}. 

Using the Borcherds identity, it is quite straightforward to see that $(i) \Rightarrow (v)$ in Proposition \ref{prop: equality of partition functions} implies that 
$\Tr_V(a_0q^{L_0}) = \Tr_U(\varphi a)_0q^{L_0})$. Using the unitary  isomorphism $\varphi :PV \to PU$ we can consider the vector space $U$ as a 
$PV$-module $M$ with vertex operators $Y^M(a,z) := Y(\varphi a,z)$. In this way, we find 
$$\Tr_M(a_0^M q^{L_0}) = \Tr_U( (\varphi a)_0q^{L_0}) =\Tr_V(a_0q^{L_0}) \, .$$
Hence, by Proposition \ref{propr: charactermodules}  $M$ and $V$ are unitarily equivalent $V$ modules and hence there is a unitary operator 
$\Phi: V \to U$ such that 
$$\Phi Y(a,z) = Y(\varphi a,z) \Phi$$
for all $a \in PV$. Taking $a$ equal to the conformal vector $\nu^V$, we find 
$L^U_0 \Phi \Omega^V = \Phi L^V_0 \Omega^V = 0$. Hence, being $U$ of CFT type,  $\Phi \Omega^V \in \mathbb{C}\Omega^U$. Accordingly, we can assume that 
$\Phi \Omega^V = \Omega^U$.  Hence, for any $a \in PV$ 
$$\Phi a = \Phi Y(a,z)\Omega^V |_{z=0} = Y(\varphi a,z)\Omega^U|_{z=0} = \varphi a \,.$$

It remains to prove that if $\Phi: V \to U$ is a unitary which restricts to a VOA isomorphism $\varphi: PV \to PU$ and such that 
$$\Phi Y(a,z) = Y(\varphi a,z) \Phi$$
for all $a \in PV$ then  we also have 
$$\Phi \gamma_k(w,z) = \eta_k(w, z) \Phi$$
for every non-negative integer $k$. 
If $\nu^V \in PV$ is the conformal vector of $V$ then $\Phi\nu^V =\varphi \nu^V = \nu^U$ is the conformal vector of $U$. It follows that 
$$\Phi L^V_n = L^U_n \Phi $$ 
for all $n\in \mathbb{Z}$ and hence 
$$\Phi (L^V_n)^jY(a,x)Y(b,y)(L^V_m)^k \Phi^{-1}  = (L^U_n)^jY(\varphi a,x)Y(\varphi b,y))(L^U_m)^k $$ 
for all  $j,k  \in \mathbb{Z}_{\geq 0}$, all $m,n \in \mathbb{Z}$ and all $a,b \in PV$. It follows that, for any non-negative integer $k$, we 
have $\Phi V_k^{qp} = U_k^{qp}$ so that 

$$\Tr_{V_k^{qp}} ((L^V_n)^jY(a,x)Y(b,y)(L^V_m)^k)  = \Tr_{U_k^{qp}} ((L^U_n)^jY(\varphi a,x)Y(\varphi v,y)(L^U_m)^k)$$
for all  $j,k  \in \mathbb{Z}_{\geq 0}$, all $m,n \in \mathbb{Z}$ and all $a,b \in PV$.

Thus, taking a basis $\{v^{(i)} \}$ for $V_k^{qp}$ orthonormal with respect to the invariant bilinear form $(\cdot, \cdot)$ in $V$, we find the following. 

\begin{eqnarray*}
\sum_{i =1}^{\mathrm{dim}(V_k^{qp})}(v^{(i)},(L^V_n)^jY(a,x)Y(b,y) (L^V_m)^kv^{(i)})  = \\
 \sum_{i=1}^{\mathrm{dim}(U_k^{qp})} (u^{(i)},(L^U_n)^j Y(\varphi a,x)Y(\varphi b,y) (L^U_m)^k u^{(i)}) \,,
\end{eqnarray*}

where $\{u^{(i)} \}:= \{ \Phi v^{(i)} \}$ is a basis for $U_k^{qp}$ orthonormal with respect to the invariant bilinear form $(\cdot, \cdot)$ on $U$.
It follows that 

\begin{eqnarray*}
\sum_{i =1}^{\mathrm{dim}(V_k^{qp})}(e^{wL^V_{-1}} v^{(i)},Y(a,x)Y(b,y) e^{zL^V_{-1}}v^{(i)})  = \\
\sum_{i=1}^{\mathrm{dim}(U_k^{qp})} (e^{wL^U_{-1}}u^{(i)},Y( \varphi a,x)Y(\varphi b,y) e^{zL^U_{-1}} u^{(i)})
\end{eqnarray*}
i.e. (see e.g. \cite[Proposition 4.1]{Kac01})
\begin{eqnarray*}
\sum_{i =1}^{\mathrm{dim}(V_k^{qp})}(Y(v^{(i)},w)\Omega^V,Y(a,x) Y(b,y) Y(v^{(i)},z)\Omega^V)  = \\
\sum_{i=1}^{\mathrm{dim}(U_k^{qp})} (Y(u^{(i)},w)\Omega^U,Y(\varphi a,x) Y(\varphi b,y) Y(u^{(i)},z)1)
\end{eqnarray*}
and thus 
\begin{eqnarray*}
\sum_{i =1}^{\mathrm{dim}(V_k^{qp})}(\Omega^V, Y(v^{(i)},w)Y(a,x) Y(b,y) Y(v^{(i)},z)\Omega^V)  =  \\
\sum_{i=1}^{\mathrm{dim}(U_k^{qp})} (\Omega^U, Y(u^{(i)},w)Y(\varphi a ,x) Y(\varphi b,y) Y(u^{(i)},z)1) \,.
\end{eqnarray*}

It follows from the commutativity of VOAs \cite[Prop. 3.5.1]{Axiomatic} that
$$(\Omega^V|Y(a,x)\gamma_k^{qp}(w,z)Y(b,y)\Omega^V) = (\Omega^U|Y(\varphi a,x)\eta_k^{qp}(w,z)Y(\varphi b,y) \Omega^U)$$
for every non-negative integer $k$ and every $a, b \in PV$. Hence, it follows from Eq. (\ref{E:rel_campi}) that
\begin{eqnarray*}
(\Omega^V|Y(a,x)\gamma_k(w,z)Y(b,y)\Omega^V) &=& (\Omega^U |Y(\varphi a,x)\eta_k(w,z)Y(\varphi b,y)\Omega^U) \\
&=& (\Phi \Omega^V |Y(\varphi a,x)\eta_k(w,z)Y(\varphi b,y)\Phi \Omega^V) \\
&=& (\Omega^V |Y(a,x)\Phi^{-1}\eta_k(w,z)\Phi Y(b,y) \Omega^V)
\end{eqnarray*}
for every non-negative integer $k$ and every $a, b \in PV$. Hence, 
$$\Phi \gamma_k (w,z) \restriction_{PV} = \eta_k (w, z) \Phi \restriction_{PV}.$$
It follows that $\Phi C_{k, n} = D_{k,n}$ and hence 
$$\Phi Y(C_{k,n},z)= Y(D_{k,n},z)\Phi $$ 
if $k$ and $n$ are integers. By the associativity property of VOAs \cite[Prop. 3.3.2]{Axiomatic} 
we can conclude that
$$\Phi \gamma_a(w,z)  = \eta_a(w, z) \Phi \, , $$
c.f the proof of Proposition \ref{prop: partition}. 

Let us prove the statement about central charges. Since $\varphi: PV \to PU$ is a VOA isomorphism, it maps the conformal vector of $PV$ into the one of $PU$. Hence $PV$ and $PU$ have the same central charge. The conclusion then follows from the lemma \ref{L:conf}.
\end{proof}

\begin{corollary}
\label{cor: unique structure}
Let $V$ be a simple unitary VOA and assume that the $PV$-module $V$  has a unique, up to isomorphisms, unitary VOA structure. Then  every simple unitary vertex operator algebra $U$ with $\cZ_U =\cZ_V$ is isomorphic to $V$.
\end{corollary}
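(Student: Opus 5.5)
The plan is to deduce the statement directly from Theorem \ref{T:main2_corpo}, reading the hypothesis as follows: whenever $W$ is a unitary VOA whose underlying $PV$-module (via some embedding $PV\subset W$ containing the conformal vector) is unitarily equivalent to the $PV$-module $V$, then $W\cong V$ as unitary VOAs.

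Let $U$ be a simple unitary VOA with $\cZ_U=\cZ_V$. First, Theorem \ref{T:main2_corpo} gives a unitary operator $\Phi\colon V\to U$ with the following properties:
\begin{itemize}
\item[(a)] it restricts to a unitary VOA isomorphism $\varphi\colon PV\to PU$;
\item[(b)] it satisfies $\Phi Y(a,z)=Y(\varphi a,z)\Phi$ for all $a\in PV$.
\end{itemize}

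Next, transport the structure of $U$ back to the vector space $V$. Define $\widetilde{Y}(a,z):=\Phi^{-1}Y^U(\Phi a,z)\Phi$ for $a\in V$. Put on $V$ the vacuum $\Phi^{-1}\Omega^U=\Omega^V$, the conformal vector $\Phi^{-1}\nu^U=\varphi^{-1}\nu^U=\nu^V$, the original scalar product, and the PCT operator $\Phi^{-1}\theta_U\Phi$. This makes $V$ into a unitary VOA $\widetilde V$, and $\Phi\colon\widetilde V\to U$ is tautologically a unitary VOA isomorphism.

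The key check is that $\widetilde V$ induces the same $PV$-module structure on $V$ as the original VOA structure. For $a\in PV$ we have $\Phi a=\varphi a$, so (b) gives
\[
\widetilde{Y}(a,z)=\Phi^{-1}Y^U(\varphi a,z)\Phi=Y^V(a,z).
\]
Hence $PV$ sits inside $\widetilde V$ as a subalgebra containing the conformal vector, and the induced $PV$-module structure on the vector space $V$ is literally the original one. So $\widetilde V$ is a unitary VOA structure on the $PV$-module $V$.

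By the uniqueness hypothesis, $\widetilde V\cong V$. Composing with $\Phi$ then gives $V\cong\widetilde V\cong U$, which is the claim.

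The only delicate point is interpretive: the paper must mean that the uniqueness hypothesis applies to such transported structures, namely ones extending the given $PV$-action with the same vacuum, conformal vector and scalar product. I expect no real obstacle beyond verifying that the transported PCT operator and scalar product satisfy the unitarity axioms, which holds because $\Phi$ is unitary.
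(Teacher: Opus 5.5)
Your proposal is correct and follows essentially the same route as the paper: use Theorem \ref{T:main2_corpo} to obtain $\Phi$, transport the structure of $U$ to $V$ via $\widetilde{Y}(a,z)=\Phi^{-1}Y(\Phi a,z)\Phi$, note that $\widetilde{Y}(a,z)=Y(a,z)$ for $a\in PV$, then apply the uniqueness hypothesis and compose with $\Phi^{-1}$. Your reading of the hypothesis, as uniqueness among unitary VOA structures extending the given $PV$-action with the same vacuum and conformal vector, is exactly the one the paper's proof uses.
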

\begin{proof}
Assume that the $PV$-module $V$  has a unique unitary VOA structure, up to isomorphisms, and let $U$ be a simple unitary VOA with
$\cZ_U =\cZ_V$. By Theorem \ref{T:main2_corpo} there is a unitary operator $\Phi: V \to U$ 
restricting to a unitary VOA isomorphism
$\varphi \colon PV \to PU $
 satisfying
$$\Phi Y(a,z) = Y(\varphi a,z) \Phi$$ 
for all $a \in PV$. For any $a \in V$ we define a new vertex operator  $\widetilde{Y}(a,z)$ by 
$$ \widetilde{Y}(a,z) = \Phi^{-1}Y(\Phi a,z)\Phi . $$
Then, the vertex operators $\widetilde{Y}(a,z)$ give a new VOA structure on $V$ and satisfy
$$ \widetilde{Y}(a,z) = Y(a,z)\ $$
for all $a \in PV$ so that they define a unitary VOA structure on the $PV$-module $V$. By the uniqueness assumption, there is a vector space isomorphism 
$\Psi :V \to V$ such that $\Psi \Omega^V= \Omega^V$, $\Psi \nu^V = \nu^V$ and 
$$ \Psi \widetilde{Y}(a,z) = Y(\Psi a,z)\Psi \,, $$
for all $a \in V$. It follows that $\Psi \Phi^{-1} :U \to V$ is a unitary VOA isomorphism. 

\end{proof}

\section{Explicit examples of partition function  subalgebras}
\label{SectionExamples}
In this section, we give various examples of unitary VOA., not necessarily rational of $C_2$-cofinite,  which are determined by their partition function and of unitary VOAs whose partition function subalgebra can be explicitly identified with a known VOA. We also discuss some problems and conjectures that arise from the analysis of these models.

\begin{example} 
\label{ex:Virasoro} {\it Virasoro unitary VOAs.} 
Let $L(c,0)$ be a simple unitary VOA with central charge $c$ and generated by its conformal vector. Then, either 
 $c = c_m := 1-\frac{6}{m(m+1)}, \, m=2,3,4 \ldots$ ({\it unitary discrete series}) or $c \geq 1$. It follows directly from Lemma \ref{L:conf} that the partition function subalgebra $PL(c,0)$ coincides with $L(c,0)$. Accordingly, if $V$ is a unitary VOA with the same partition function of $L(c,0)$ then  $V$ is isomorphic to $L(c,0)$. Note also that $\operatorname{Aut}(L(c,0))$ is the trivial group so that 
 $PL(c,0) = L(c,0)^{\operatorname{Aut}(L(c,0))}$. 
\end{example}

\begin{example} 
\label{ex:c < 1} {\it Unitary VOAs with $c<1$.}
The simple unitary VOAs with central charge $c < 1$ are all simple CFT type extensions of the discrete series unitary Virasoro VOAs 
$L(c_m,0)$. The latter have been classified by Dong and Lin in \cite{DL14} in close connection with the classification of conformal nets with $c<1$ obtained 
by Kawahigashi and Longo in \cite{KL04}. It turns out that alle these extensions are unitary and are in one-to-one correspondence with the conformal nets with $c<1$, see \cite{Gui22} and \cite[Sec. 6]{CGGH23}. In fact the classification of these VOA extensions can be directly obtained by the classification of the corresponding conformal nets and the results in \cite{CGGH23}, cf. \cite[Thm. 6.1]{CGGH23}. These extensions can be divided in three families. The first is given 
by the Virasoro VOAs $L(c_m,0)$ (trivial extensions), $m=2,3,4 \dots$ that have been considered in Example \ref{ex:Virasoro}. The second family is given by $\mathbb{Z}_2$ simple current extensions of the $L(c_m,0)$ for $m=4n + 1$, $n\geq 1$ or $m=4n+2$, $n\geq 1$. If $V$ is a unitary VOA with central charge $c_m$ in this second family, then $\operatorname{Aut}(V)$ is isomorphic to the order two cyclic group $\mathbb{Z}_2$ and the orbifold subVOA 
$V^{\operatorname{Aut}(V)}$ coincides with $L(c_m,0)$. Accordingly, we again have the equality $PV= V^{\operatorname{Aut}(V)}$. The third family consists of four exceptional extensions corresponding to $m= 11,12,29,30$. For any such exceptional extension $V$ one can use the classification in  \cite{KL04} together with the fact that the Jones index for the corresponding subfactor is not an integer, see \cite[Section 4]{KL04}, to show that 
$\operatorname{Aut}(V)$ is trivial so that $V^{\operatorname{Aut}(V)}=V$ and we conjecture that we have again $PV= V^{\operatorname{Aut}(V)}$ for these four exceptional VOAs. In any case if $V$ is any unitary VOA in the above three families having central charge $c_m$ then, as a consequence of the classification results in \cite{DL14,KL04,CGGH23} the isomorphism class of $V$ is completely determined by its $L(c_m,0)$-module structure and hence by its central charge and its genus one partition function. Accordingly, if $U$ is a unitary VOA and $\cZ_U = \cZ_V$ then 
$U$ is isomorphic to $V$.

\end{example}

\begin{example}
\label{ex:W3} {\it Unitary $\mathcal{W}_3$ algebras.} Let $W^c(3)$ be the universal VOA associated to the $\mathcal{W}_3$-algebra with central charge $c$ 
and let $W_c(3)$ be its simple quotient. Then $W_c(3)$ is a unitary VOA if and only if  either  $c = \tilde{c}_m:= 2(1-\frac{12}{m(m+1)}),\, m=3,4,5 \ldots$ ({\it unitary discrete series}) or $c \geq 2$, c.f. \cite{CTW23}. Now, let $c$ be such that $W_c(3)$ is unitary. Then  $W_c(3)$ is generated by the conformal vector $\nu$  and a primary vector  $\xi \in W_c(3)_3$ such that $\theta \xi = - \xi$.  In fact $W_c(3)$ is spanned by vectors of the form 
$$\nu_{-m_1} \dots \nu_{-m_r}\xi_{-n_1} \dots \xi_{-n_s}\Omega$$
with $m_1 >m_2 > \dots >m_r > 1$ and $n_1 >n_2 > \dots >n_s > 2$, see e.g.  \cite[Appendix A]{CTW23}. The automorphism group  
$\operatorname{Aut}(W_c(3))$ is generated by a single order-two element $g$ such that $g\xi=-\xi$. In particular $\operatorname{Aut}(W_c(3))$ is isomorphic to $\mathbb{Z}_2$.  We can fix an orthonormal basis  $v^{(i)}$, $i=1,2$ for $W_c(3)_3$ with 
$v^{(1)} = \frac{1}{\| \nu_{-3}1\|} \nu_{-3}\Omega$ and  $v^{(2)}= \frac{i}{\|\xi\|}\xi$.    
It follows that for any $a\in P W_c(3)$ the coefficients of the series 
$$Y(\xi,w)Y(\xi,z)a = \|\xi\|^2\gamma_3(w,z)a -  \left(\frac{\|\xi\|}{\|\nu_{-3} \Omega\|}\right)^2Y(\nu_{-3}\Omega,w) Y(\nu_{-3}\Omega,z)a$$
belongs to the partition subalgebra $P W_c(3)$. Accordingly, being $W_c(3)^{\mathbb{Z}_2}$ spanned by the coefficients of the series 
$$Y(\nu,t_1) \dots Y(\nu, t_m) Y(\xi,z_1)Y(\xi,w_1) \dots Y(\xi,z_n)Y(\xi,w_m)\Omega \, ,$$ 
we can conclude that $W_c(3)^{\mathbb{Z}_2} \subset P W_c(3)$ and hence that $W_c(3)^{\mathbb{Z}_2} = P W_c(3)$.  Now, let $V$ be a simple unitary VOA with the same partition function of $W_c(3)$ and let us denote by $\tilde{\gamma}_k(w,z)$, $k \in \mathbb{Z}_{\geq 0}$ the Casimir bilocal fields of $V$. By Theorem \ref{T:main2_corpo} there is a unitary map $\Phi: W_c(3) \to V$ which restricts to a unitary VOA  isomorphism $\varphi: P W_c(3) \to PV$. Then, 
$\tilde{\xi}:= \Phi \xi$ is a primary vector in $V_3$. Moreover, 
\begin{eqnarray*}
Y(\tilde{\xi},w)Y({\tilde \xi},z) &=& \|\xi \|^2\tilde {\gamma}_3(z,w) -  \left(\frac{\|\xi\|}{\|\nu_{-3}\Omega\|}\right)^2Y(\nu^V_{-3}\Omega^V,w) 
Y(\nu^V_{-3}\Omega^V,z) \\
&=& \Phi \left(  \|\xi\|^2 \gamma_3(w,z) -  \left( \frac{\|\xi\|}{\|\nu_{-3}\Omega\|}\right )^2 Y(\nu_{-3}\Omega,w) Y(\nu_{-3}\Omega,z) \right) \Phi^{-1} \\
&=& \Phi Y(\xi,w)Y(\xi,z) \Phi^{-1} \,.
\end{eqnarray*}
Hence, for the commutator we have 
$$ \left[Y(\tilde{\xi},w),Y({\tilde \xi},z) \right] =  \Phi \left[Y(\tilde{\xi},w),Y({\tilde \xi},z) \right] \Phi^{-1}\,.$$ 
so that the fields $Y(\tilde{\xi},z)$ and $Y(\nu^V,z)$ generate a unitary subalgebra $U$ of $V$ isomorphic to $W_c(3)$. 
On the other hand $U = \Phi  W_c(3) = V$ so that $V$ and  $W_c(3)$ are isomorphic. 
\end{example}

Before giving further examples, we describe some general results concerning VOAs with non-zero weight one subspace. For a given simple complex Lie algebra 
$\mathfrak{g}$ and complex number $k$ we denote by $V^k(\mathfrak{g})$ the universal vertex algebra associated with $\mathfrak{g}$ at level $k$ and by  
$V_k(\mathfrak{g})$ its simple quotient. If $h^\vee$ denotes the dual Coxeter number of $\mathfrak{g}$ and $k\neq - h^\vee$ then $V_k(\mathfrak{g})$
is a simple VOA. Moreover, $V_k(\mathfrak{g})$ is unitary if and only if it is strongly rational if and only if $k$ is a non-negative integer. The case $k=0$ corresponds to the trivial one-dimensional VOA so that $k>0$  for non trivial VOAs. We also denote by $M_{r}(1)$ the rank $r$ Heisenberg VOA. $M_0(1)$ denotes the trivial VOA. Note that $M_{r}(1)= M(1)^{\otimes r}$, where $M(1)=M_1(1)$ id the rank one Heisenberg VOA. $M_r(1)$ is a simple unitary VOA for any rank $r$.

First, recall that if $V$ is a VOA then the one-way weight subspace $V_1$ is a complex Lie algebra with brackets $[a,b]= a_0b$, $a,b \in V$. 
We denote by $V_{V_1}$ the VOA subalgebra of $V$ generated by $V_1$. It is known that if $V$ is strongly rational, then $\mathfrak{g}:=V_1$ is reductive (a direct sum of a semisimple Lie algebra and an abelian Lie algebra); see \cite[Thm. 1.1]{DM04b}. The same is true for every unitary VOA $V$. This follows from the fact that the real subspace 
$$\mathfrak{g}_{\mathbb{R}}:= \{a \in \mathfrak{g}: \theta a = a \}$$ is real form of $\mathfrak{g}$ and the restriction of the normalized invariant bilinear form   on $V$ to $\mathfrak{g}_{\mathbb{R}}$ is positive-definite. Now, let $V$ be unitary and $\mathfrak{g}=V_1$. Then 
$$\mathfrak{g} = \mathfrak{g}_1 \oplus \dots \oplus \mathfrak{g}_j \oplus \mathfrak{h}$$
with $ \mathfrak{g}_1, \dots \mathfrak{g}_j$ semisimple and $\mathfrak{h}$ abelian. Moreover, $V_{V_1}$ is a unitary subalgebra of $V$ and is isomorphic to the unitary affine VOA  
\begin{equation}
\label{Eq: Affine}
V_{k_1}(\mathfrak{g}_1)\otimes \dots \otimes V_{k_j}(\mathfrak{g}_j) \otimes M_r(1)
\end{equation}
for suitable positive integer level $k_1, \dots, k_j$ and $r=\mathrm{dim}(\mathfrak{h})$.

The next result follows rather directly by the work of H\"{o}hn on conformal design \cite{Hoh08}, also c.f. \cite{Hur02,Hur06}, together with our Proposition 
\ref{P:ort}.
\begin{theorem}
\label{th:design}
Let $V$ be a non-trivial simple unitary with $V_1 \neq \{ 0 \}$ and central charge $c$. If $PV = L(c,0)$ then $V$ is isomorphic to $V_1(\mathfrak{sl}_2)$.  
\end{theorem}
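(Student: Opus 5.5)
The plan is to turn the hypothesis $PV = L(c,0)$ into a trace-vanishing condition that makes each homogeneous space $V_n$ a conformal design. Then H\"{o}hn's classification of VOAs with $V_1\neq\{0\}$ all of whose weight spaces are conformal $t$-designs (for large $t$) singles out $V_1(\mathfrak{sl}_2)$.

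\emph{Step 1.} Suppose $PV=L(c,0)$, the Virasoro subalgebra generated by $\nu$ (Lemma \ref{L:conf}). Take a Virasoro primary vector $a\in V_d$ with $d>0$. Since $V$ is unitary, $V$ decomposes orthogonally into unitary $L(c,0)$-modules, so $a$ is orthogonal to the vacuum module $L(c,0)=PV$. Proposition \ref{P:ort} then gives $\Tr_{V_k} a_0=0$ for every $k\ge 0$. By H\"{o}hn's trace criterion, for each $n$ the space $V_n$ is a conformal $t$-design for every $t$: the orthogonal projection of $V_n$ onto $L(c,0)$ determines all traces $\Tr_{V_n}a_0$. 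Corollary \ref{cor: projection partition} states the same conclusion: no primary of positive weight can have a nonzero graded trace.

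\emph{Step 2.} Apply the design condition to weight one. Since $V_1\neq\{0\}$ is a reductive Lie algebra $\mathfrak{g}$, the subalgebra $V_{V_1}$ has the affine form \eqref{Eq: Affine}. For $a,b\in V_1$ we consider the weight-two vectors $a_{-1}b$. Their projections onto primaries give vanishing traces $\Tr_{V_1}(a_{-1}b)_0=0$ after subtracting the Virasoro component. On $V_1$, $(a_{-1}b)_0$ acts as a combination of $a_0b_0$ and $b_0a_0$, so we obtain an identity of the form $\Tr_{V_1}a_0b_0 = \lambda\,(a,b)$ with $\lambda$ fixed by $c$ and $\dim V_1$. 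This is the Matsuo--H\"{o}hn norm relation, which rules out a nontrivial abelian part $\mathfrak{h}$: a Heisenberg vector has $a_0=0$ on $V_1$ while $(a,a)\neq 0$. Using $\Tr_{V_k}$ for several $k$ (designs at all levels), H\"{o}hn's equations also force $\mathfrak{g}$ simple and pin down the pair (type of $\mathfrak{g}$, level, $c$). The expected outcome is $\mathfrak{g}=\mathfrak{sl}_2$, level $1$, $c=1$, i.e.\ $c$ equals the Sugawara central charge of $V_{V_1}\cong V_1(\mathfrak{sl}_2)$.

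\emph{Step 3.} Conclude. If the central charge of $V_{V_1}$ equals $c$, then the Sugawara vector equals $\nu$ (by unitarity the coset has $c=0$, hence is trivial), so $V$ is a unitary conformal extension of $V_1(\mathfrak{sl}_2)$. The only unitary extensions are $V_1(\mathfrak{sl}_2)$ itself and the lattice VOA $V_{\mathbb{Z}}$ ($\mathbb{Z}_2$ simple current). The latter has weight-one space $\mathfrak{u}(1)$, contradicting $V_1=\mathfrak{sl}_2$. Hence $V\cong V_1(\mathfrak{sl}_2)$.

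The main obstacle is Step 2: importing H\"{o}hn's theorem \cite{Hoh08} correctly. His hypotheses are formulated for the traces $\Tr_{V_n} a_0$ of Virasoro-descendant-free components (conformal designs based on $V_{\{\nu\}}$), and I expect his Theorem to state exactly that a VOA with $V_1\ne 0$ whose weight spaces are all conformal $11$-designs (or infinite designs) is $V_1(\mathfrak{sl}_2)$. I would first check that Proposition \ref{P:ort} yields precisely his design condition for all $n$, then cite his classification, verifying that the unitary setting supplies his standing assumptions (CFT type, self-duality).
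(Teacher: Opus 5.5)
Your Step 1 is exactly the paper's starting point: Proposition \ref{P:ort} makes $V_1$ a conformal $t$-design for every $t$, and one then appeals to H\"{o}hn.

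\textbf{The gap is in what you expect H\"{o}hn's classification to deliver.} The result the paper uses, \cite[Thm.~4.1(b)]{Hoh08}, does not single out $V_1(\mathfrak{sl}_2)$. It leaves two possibilities: $V_1(\mathfrak{sl}_2)$ with $c=1$, and $V_1(\mathfrak{e}_8)\cong V_{E_8}$ with $c=8$. This is the VOA counterpart of the fact that the $240$ roots of $E_8$ form a spherical $7$-design.

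Consequently, the design equations of low strength that you sketch in Step 2 cannot pin down $\mathfrak{sl}_2$. Step 3 does not remove the second case either: $V_{E_8}$ is holomorphic, its Sugawara vector is $\nu$, and it has no proper extension.

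What is missing is a separate argument that $PV_{E_8}\neq L(8,0)$. The paper supplies it in two steps. First, \cite[Cor.~5.2.1]{DMN01} gives a Virasoro primary $a\in V_{E_8}$ of positive weight with $\Tr a_0q^{L_0}\neq 0$. Then Corollary \ref{cor: projection partition} places the nonzero positive-weight primary $e_{PV}a$ inside $PV_{E_8}$. Morally, this reflects that the $E_8$ roots are not an $8$-design, so some weight-eight primary has nonvanishing trace. Without this step your argument stops at ``$V\cong V_1(\mathfrak{sl}_2)$ or $V\cong V_{E_8}$''.

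There are also two smaller inaccuracies.

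First, the weight-two relation $\Tr_{V_1}a_0b_0=\lambda(a,b)$ does not rule out an abelian part. With the normalized form one finds $\lambda=2(1-\dim V_1/c)$. A nonzero center only forces $\lambda=0$. That makes the Killing form vanish, so $V_1$ is abelian with $\dim V_1=c$, which is perfectly consistent with this relation (Heisenberg-type cases). Excluding it requires design conditions of higher strength, which is part of what H\"{o}hn's theorem does.

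Second, in Step 3, $V_{\mathbb{Z}}$ is not an extension of $V_1(\mathfrak{sl}_2)\cong V_{\sqrt{2}\mathbb{Z}}$; it is a vertex superalgebra on a different lattice. In fact $V_1(\mathfrak{sl}_2)$ has no proper conformal VOA extension, because its only nontrivial irreducible module has lowest weight $1/4$. This does not affect your conclusion in the $\mathfrak{sl}_2$ case.
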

\begin{proof}
Assume that $PV = L(c,0)$. Then, from Proposition \ref{P:ort} it follows that the weight one subspace $V_{1}$ is a conformal $t$-design in the sense of \cite[Sec. 2]{Hoh08} for all $t >0$. Consequently, by \cite[Thm. 4.1 (b)]{Hoh08}, $V$ is isomorphic either to $V_1(\mathfrak{sl}_2)$ or to 
$V_1(\mathfrak{e}_8)$. On the other hand  it follows from \cite[Cor. 5.2.1]{DMN01} and by Corollary \ref{cor: projection partition} that 
$P V_1(\mathfrak{e}_8)$ contains non-zero primary fields of positive weight and hence cannot be isomorphic to  $L(c,0)$. 
\end{proof}

The following theorem has been proven in \cite[Sec. 5]{GV09}. Although the authors of \cite{GV09} appear to assume that the VOA $V$ is holomorphic, a careful inspection of their proof shows that this assumption is not essential. What appears to be essential is the fact that $V_1$ is a complex reductive Lie algebra that is always true for a unitary $V$.

\begin{theorem}
\label{th:V_{V_1}}
Let $V$ and $U$ be two simple unitary VOAs with $\cZ_V = \cZ_U$. Then, the unitary affine VOAs $V_{V_1}$ and $U_{U_1}$ are isomorphic. 
\end{theorem}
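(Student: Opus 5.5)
The plan is to extract the Lie algebra $\mathfrak{g}=V_1$, its invariant form, and the levels of its simple ideals from partition-function data. Then we use the structure result (\ref{Eq: Affine}): a unitary affine VOA is determined up to isomorphism by the reductive Lie algebra $V_1$, the decomposition into simple ideals, the levels $k_j$, and the abelian rank $r$.

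The first step uses Theorem \ref{T:main2_corpo}. Since $\cZ_V=\cZ_U$, there is a unitary $\Phi\colon V\to U$, restricting to a VOA isomorphism $\varphi\colon PV\to PU$, that intertwines the Casimir bilocal fields: $\Phi\gamma_k(w,z)=\eta_k(w,z)\Phi$. Taking $k=1$, the operator $\gamma_1(w,z)=\sum_i Y(v^{(i)},w)Y(v^{(i)},z)$ is built from an orthonormal basis of $V_1$. Hence every quantity that can be written as a correlator or trace built from $\gamma_1$, $\gamma_k$ and the Virasoro field coincides for $V$ and $U$. In particular, by Proposition \ref{prop: equality of partition functions}(iii) the traces $\Tr_{V}\gamma_1(w_1,z_1)\cdots\gamma_1(w_s,z_s)q^{L_0}$ agree. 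Setting $q\to0$ and restricting to weight-one states then shows that the ``Casimir invariants'' $\sum_{i_1,\dots,i_s}\Tr_{V_1}\bigl(\operatorname{ad}v^{(i_1)}\cdots\bigr)$ agree. These encode the traces of products of Casimir-type elements of $U(\mathfrak{g})$ acting on $V_1$ and on the higher graded pieces $V_n$ viewed as $\mathfrak{g}$-modules.

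The second step is to reconstruct $\mathfrak{g}$ from these invariants. The correlators $(\Omega,\gamma_1\gamma_1\gamma_1\Omega)$ involve $\sum (v^{(i)},[v^{(j)},v^{(l)}])^2$, and more generally the $\gamma_1$-correlators give all the invariants $\Tr_{V_1}(\mathrm{Cas}^m)$ of the adjoint action, where $\mathrm{Cas}=\sum_i \operatorname{ad}(v^{(i)})^2$. Because the form restricted to the simple ideal $\mathfrak{g}_j$ is $k_j$ times the normalized Killing form, $\mathrm{Cas}$ acts on $\mathfrak{g}_j$ by the scalar $2h^\vee_j/k_j$ (up to normalization) and by $0$ on $\mathfrak{h}$. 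The eigenvalue multiset of $\mathrm{Cas}$ on $V_1$ is therefore determined, and the multiplicity of the eigenvalue $0$ gives $r$. To separate simple ideals that share the same ratio $h^\vee_j/k_j$, I will also use traces on $V_2$ and higher, i.e.\ $\Tr_{V_n}$ of Casimir powers. The Sugawara vector of each factor $V_{k_j}(\mathfrak{g}_j)$ lies in $V_{V_1}$, and its eigenvalues on weight-two states, together with the central charges $k_j\dim\mathfrak{g}_j/(k_j+h^\vee_j)$ read off from $\gamma_1$-correlators (the $\gamma_1$ four-point function contains $\sum_j$ of these data), should pin down each pair $(\mathfrak{g}_j,k_j)$.

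The main obstacle is this last disentangling: showing that the finite list of Casimir-trace invariants determines the multiset $\{(\mathfrak{g}_j,k_j)\}$ uniquely rather than merely aggregate quantities. I would try first the more direct route suggested by Theorem \ref{T:main2_corpo}. The operator $\Phi$ maps $V_1$ isometrically onto $U_1$, and the intertwining of $\gamma_1$ together with associativity gives $\Phi\, a_0 b\,$-type identities for the structure constants summed against a basis. This makes $\Phi|_{V_1}$ preserve the Casimir tensor, and via the $\gamma_1$ three-point function it preserves the cubic invariant $(a,[b,c])$ up to orthogonal change of basis. The conclusion would then follow from the fact that a compact real form is determined by its invariant form and its structure tensor's orbit, and this is where the Gaberdiel--Volpato argument would be reproduced.
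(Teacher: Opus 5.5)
Your proposal does not yet prove the statement. The step you yourself call the main obstacle is left open: recovering the multiset $\{(\mathfrak{g}_j,k_j)\}$ and the abelian rank $r$ from partition-function data. Moreover, the invariants you propose for that step provably cannot do the job, as an example already in the paper shows. Take $V_{E_8\oplus E_8}$ and $V_{D_{16}^+}$. In both, every simple ideal of the weight-one Lie algebra ($\mathfrak{e}_8\oplus\mathfrak{e}_8$, resp.\ $\mathfrak{so}_{32}$) has level $1$ and dual Coxeter number $30$, $\dim V_1=496$, and the affine central charge is $16$. Hence $\mathrm{Cas}=\sum_i\operatorname{ad}(v^{(i)})^2$ is the same scalar on all of $V_1$, and all $\Tr_{V_1}(\mathrm{Cas}^m)$ and all central charges agree, although $\mathfrak{e}_8\oplus\mathfrak{e}_8\not\cong\mathfrak{so}_{32}$. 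Worse, by Remark \ref{rem:part_latt} the two partition functions coincide for every $g\le 4$. So nothing extracted from genus $\le 4$, in particular the $\gamma_1$ three- and four-point functions, can separate them. Nor are the Sugawara vectors of individual factors available: the two $\mathfrak{e}_8$ Sugawara vectors are exchanged by an automorphism, so they do not lie in $V^{\operatorname{Aut}(V)}\supseteq PV$ (Proposition \ref{PropAutInvariantCasimir}).

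Your ``direct route'' rests on a non sequitur. The $\Phi$ of Theorem \ref{T:main2_corpo} is only a unitary isomorphism of $PV$-modules; the $\gamma_k$-intertwining follows from this by associativity. Since $\gamma_1$ is basis-independent, i.e.\ $O(V_1)$-invariant, intertwining it does not force $\Phi|_{V_1}$ to respect brackets, and in general it does not. For example, take $V=U=V_k(\mathfrak{sl}_2)$, where $PV=V^{\operatorname{Aut}(V)}$ by Example \ref{ex: unitary affine}. The decomposition of $V$ under $\mathrm{SO}(3)\times V^{\mathrm{SO}(3)}$ lets any unitary of $V_1$ extend to a unitary of $V$ commuting with $PV$. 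Saying that $(a,[b,c])$ is preserved ``up to orthogonal change of basis'' is exactly what must be proved.

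A workable version runs as follows. In a $\theta$-fixed orthonormal basis (Remark \ref{rem: realbasis}), $f_{ijk}=(v^{(i)},[v^{(j)},v^{(k)}])$ is a real totally antisymmetric tensor in $\Lambda^3\mathbb{R}^n$, where $n=\dim V_1=\dim U_1$ by the genus-one data. The coefficient of $q_1\cdots q_g$ in $\cZ_{V,g}$ is $(\Omega,\gamma_1(w_1,z_1)\cdots\gamma_1(w_g,z_g)\Omega)$. By the current Ward identities this is an $O(n)$-invariant polynomial in $f$ with coefficients rational in the $w_i,z_i$. You must show that, as $g$ ranges over all integers, these coefficients give a family of invariants separating $O(n)$-orbits. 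For instance, it suffices to obtain all complete contractions of copies of $f$, which span the invariants by the first fundamental theorem for $O(n)$. Since polynomial invariants of a compact group separate its orbits, $f^V$ and $f^U$ are then $O(n)$-conjugate. This gives a Lie algebra isomorphism $V_1\to U_1$ preserving $(\cdot,\cdot)$. It therefore matches simple ideals, levels (the form on $\mathfrak{g}_j$ is $k_j$ times the normalized one) and $r$, and $V_{V_1}\cong U_{U_1}$ follows by (\ref{Eq: Affine}).

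That extraction is the real content of the theorem. The paper does not carry it out; it imports the statement from \cite[Sec.~5]{GV09}. It adds only the observation that the holomorphy assumed there is not used, and that only reductivity of $V_1$, automatic for unitary $V$, is. If you also rely on \cite{GV09}, your argument reduces to making that observation explicit.
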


We will also need the following.

\begin{theorem}
\label{M(1)extension}
Let $V$ be a simple unitary conformal (i.e. with the same conformal vector) extension of the rank $r$ Heisenberg VOA $M_r(1)$. Then there is a rank $s$ even positive-definite lattice $L$, with $s \leq r$ such that $V$  is isomorphic to $V_L \otimes M_{r-s}(1)$.  
\end{theorem}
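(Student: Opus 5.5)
We decompose $V$ as a module over the Heisenberg subalgebra and rebuild the lattice from the charges that occur. Let $\mathfrak{h}=M_r(1)_1\subset V_1$ be the weight one subspace of the Heisenberg subalgebra, with basis $h^1,\dots,h^r$ orthonormal for the invariant bilinear form. Choose the basis $\theta$-invariant, as in Remark \ref{rem: realbasis}, so that the modes $h^i_n$ satisfy $(h^i_n)^\dagger = h^i_{-n}$ (up to the sign from Equation (\ref{eqInvScalqp})). Then $V$ is a unitary $M_r(1)$-module.

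\textbf{Decomposition.} Since $L_0$ has finite-dimensional eigenspaces and the zero modes $h^i_0$ commute with $L_0$ and with each other, and are self-adjoint on each $V_n$, they are simultaneously diagonalizable with real eigenvalues. This gives
\[
V=\bigoplus_{\alpha\in\mathfrak{h}_\mathbb{R}} V^{(\alpha)}.
\]
By Proposition \ref{prop: unitarymodule}, each $V^{(\alpha)}$ is an orthogonal direct sum of irreducible unitary $M_r(1)$-modules. Unitary irreducibles are Fock modules $M_r(1,\alpha)$, so $V^{(\alpha)}\cong M_r(1,\alpha)\otimes \Omega_\alpha$, where the multiplicity space $\Omega_\alpha$ is the space of vacuum vectors $\{v: h^i_n v=0,\ n>0\}$ of charge $\alpha$. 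Conformality ($\nu^V=\nu^{M_r(1)}$) forces $L_0=\tfrac12\sum_i (h^i_0)^2$ on $\Omega_\alpha$. So every vacuum vector of charge $\alpha$ has weight $\langle\alpha,\alpha\rangle/2$, which must be an integer. Let $L=\{\alpha:\Omega_\alpha\neq0\}$.

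\textbf{$L$ is a lattice.} First, $\Omega_0=V_0=\mathbb{C}\Omega$ by CFT type, and finite-dimensionality of the $V_n$ makes $L$ discrete. To show $L$ is an additive group, use the standard intertwining argument: for $u\in\Omega_\alpha$, $v\in\Omega_\beta$, the operator $E^-(-\alpha,z)Y(u,z)E^+(-\alpha,z)z^{-\alpha_0}$ commutes with the Heisenberg modes, by the commutator formula $[h_m,Y(u,z)]=\langle h,\alpha\rangle z^mY(u,z)$. Simplicity of $V$ then forces $u_{(n)}v\neq0$ for some $n$. The vector $u_{(n)}v$ lies in $V^{(\alpha+\beta)}$, so $\Omega_{\alpha+\beta}\neq0$; I would get this via Dong--Mason type arguments that $Y(u,z)Y(v,w)\neq0$ in a simple VOA. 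Closure under $\alpha\mapsto-\alpha$ follows from self-duality, via the contragredient. Evenness follows since $\langle\alpha,\alpha\rangle/2=$ weight $\in\mathbb{Z}$, and $\langle\alpha,\beta\rangle\in\mathbb{Z}$ by polarization. Positive definiteness is automatic. Let $s=\operatorname{rank} L\le r$, and split $\mathfrak{h}=(\mathbb{C}\otimes L)\oplus\mathfrak{h}'$ orthogonally, so that $M_r(1)=M_s(1)\otimes M_{r-s}(1)$, where $M_{r-s}(1)$ commutes with everything and acts on each $V^{(\alpha)}$ by its vacuum module.

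\textbf{Identification, the main obstacle.} This step requires $\dim\Omega_\alpha=1$ and that the resulting cocycle is the lattice one. For the dimension, I would argue as follows. The commutant of $M_s(1)$ in $V$, restricted to charge $\alpha$, acts on $\Omega_\alpha$, and $\Omega_0=\mathbb{C}\Omega$. For $u\in\Omega_\alpha$, the vertex operator above yields a map $\Omega_\beta\to\Omega_{\alpha+\beta}$ that is injective, by simplicity and an associativity/locality argument using unitarity: $u_{(n)}^\dagger$ gives a left inverse pairing $\Omega_{\alpha+\beta}\to\Omega_\beta$. Applying this with $\beta=-\alpha$ gives $\dim\Omega_\alpha\le\dim\Omega_0=1$. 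Once every $\Omega_\alpha$ is one-dimensional, $V\cong M_{r-s}(1)\otimes\bigoplus_{\alpha\in L}M_s(1,\alpha)$ as a module. The locality of the generating fields then makes the structure constants a 2-cocycle on $L$ with commutator $(-1)^{\langle\alpha,\beta\rangle}$. By the uniqueness of such cocycle extensions (Frenkel--Lepowsky--Meurman / Kac), this yields $V\cong V_L\otimes M_{r-s}(1)$. I expect the injectivity and one-dimensionality step to be the delicate part, and I would lean on the unitary adjoint relations to handle it.
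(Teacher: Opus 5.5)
Your argument is correct in outline and shares the paper's skeleton: decompose by charges under the zero modes, show $L$ is closed under addition (nonvanishing of $Y(u,z)v$ in a simple VOA) and under negation (duality), then split off $M_{r-s}(1)$. Where you differ is the crucial multiplicity-one step.

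The paper never works with vacuum spaces. It observes that the exponentials $e^{2\pi(t_1h^{(1)}_0+\dots+t_sh^{(s)}_0)}$ form a compact torus of unitary automorphisms whose fixed-point subalgebra is $V^0=M_r(1)$. It then invokes Dong--Mason's quantum Galois theory for compact groups \cite{DM99} to conclude that each isotypic component $V^\lambda$ is an irreducible $M_r(1)$-module. Finally, it transports a VOA structure to $U=\bigoplus_\lambda U^\lambda$ through the coset of $M_{r-s}(1)$ and quotes \cite[Corollary 5.4]{DM04b} for $U\cong V_L$.

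You instead prove $\dim\Omega_\alpha=1$ directly with the reduced vertex operators $e_u$. You then redo the FLM/Kac cocycle-uniqueness argument by hand, which is essentially what the cited Dong--Mason result packages. Your route is more elementary and self-contained, and its multiplicity step uses only simplicity of $V$. The paper's route is shorter and fits its orbifold viewpoint, in which $M_r(1)$ appears as a torus orbifold of $V$.

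A few points to tighten:
\begin{itemize}
\item Injectivity of $e_u$ on $\Omega_\beta$ needs no adjoint construction. For $v\in\Omega_\beta$ one has $Y(u,z)v=z^{\langle\alpha,\beta\rangle}E^-(-\alpha,z)\,e_uv$, so $e_uv=0$ would force $Y(u,z)v=0$. That is impossible in a simple VOA, by the same lemma you already use for closure under addition.
\item Taking $\beta=-\alpha$ bounds $\dim\Omega_{-\alpha}$, not $\dim\Omega_\alpha$, so you then need $L=-L$ to conclude.
\item For $\theta h=h$ the zero mode $h_0$ is skew-adjoint by Equation (\ref{eqInvScalqp}), not self-adjoint; the paper accordingly writes the eigenvalues as $i\lambda_j$.
\item $M_{r-s}(1)$ does not commute with everything. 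It commutes with $M_s(1)$ and with every $Y(u,z)$, $u\in\Omega_\alpha$, and that is all you need for the tensor splitting.
\end{itemize}
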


\begin{proof}
We can choose an orthonormal basis $\{h^{(1)},\dots, h^{(r)}\}$ for  $M_r (1)_1$, with respect to the given invariant scalar product $(\cdot |\cdot)$ on $M_r(1)$, such that $\theta  h^{(j)} = h^{(j)}$, $j=1,\dots r$. The operators 
$\{h^{(1)}_0,\dots, h^{(r)}_0\}$ restrict to skew-adjoint operators on each $V_n$ and hence are jointly diagonalizable with immaginary eigenvalues. 
For any $\lambda = (\lambda_1,\dots, \lambda_r) \in \mathbb{R}^r$ let   
$$V^\lambda := \{a \in V: h^{(j)}_0 a = i \lambda_j  a , \, j=1,\dots,r \} \,.$$
Then each $V^\lambda$ is a (possibly 0)  $M_r(1)$-module. 
$$V = \bigoplus_{\lambda \in L} V^\lambda $$
where 
$$L :=\{\lambda \in  \mathbb{R}^r : V^\lambda \neq \{ 0 \}\} \,. $$  
Note that $\Omega \in V^0$ so that $0\in L$ . 
Moreover, if $\lambda \in L$ then $V^\lambda$ is a direct sum of equivalent irreducible  $M_r(1)$-modules. 
If $a$ is a $M_r(1)$ highest weight vector in $V^\lambda$ then $L_0a = \frac{(\lambda,\lambda)}{2}a$ where
$(\lambda,\lambda) = \sum_{j=1}^r \lambda_j^2$. It follows that $V^0 = M_r(1)$. Moreover,  $(\lambda,\lambda) \in 2\mathbb{Z}_{\geq 0}$ and 
so that $(\lambda,\lambda)\geq 1$ for all $\lambda \in L \setminus \{0\}$.  

Each $h^{(j)}_0$ is a derivation of $V$ and, in fact, $e^{h^{(j)}_0}$ is a unitary automorphism of $V$. 
If $a \in   V^\alpha$ and $b \in V^\beta$ are non zero then it can be shown that  $a_{(m)}b_{(k)}1$ is non-zero for some $m, k \in \mathbb{Z}$. 
Furthermore, $h^{(j)}_0 a_{(m)}b_{(k)}1 = (\alpha_j + \beta_j) a_{(m)}b_{(k)}1$ and $h^{(j)}_0 \theta a = \theta(i \alpha_j)a = -i\alpha_j\theta a$, 
$j=1,\dots,r$. It follows that $L$ is a discrete subgroup of $\mathbb{R}^r$, c.f. \cite[Sec. 3.1]{CKLR19}. 
Let $s$ be the dimension of $\mathrm{span} L $. Then $L$ is a rank $s$ even positive definite lattice.  Without loss of generality, we can assume that  
$\lambda_j =0$ for all $\lambda \in L$ and all  $j > s$. Accordingly, we write $M_r(1)= M_s(1) \otimes M_{r-s}(1)$
The map  $$\mathbb{R}^s \ni (t_1,\dots, t_s) \mapsto e^{2\pi (t_1h^{(1) }_0 + \dots + t_sh^{(s)}_0)}$$
defines an action of the torus $\mathbb{R}^s / L$ into the unitary automorphism group of $V$ and the fiexed point subalgebra 
$V^{\mathbb{R}^s / L}$ coincides with $M_r(1)$. It follows from \cite[Thm. 1]{DM99} that for any $\lambda \in L$, 
$V^\lambda$ is an irreducible $M_r(1)$-module. Moreover, the irreducible  $M_s(1) \otimes M_{r-s}(1)$-module $V_\lambda$ is equivalent to 
$U^\lambda \otimes  M_{r-s}(1)$, where $U^\lambda$ is the irreducible $M_s(1)$-module in which each $h^{(j)}_0$ acts as the scalar
$\lambda_j$, $j=1, \dots s$.

It follows that, as a  $M_s(1) \otimes M_{r-s}(1)$-module $V$ is equivalent to 
$U \otimes M_{r-s}(1)$ with 
$$U:= \bigoplus_{\lambda \in L}U^\lambda $$
and we denote by $\varphi: V \to  U \otimes M_{r-s}(1)$ a corresponding equivalence. It is clear that $\varphi$ maps the coset subalgebra 
$\left(1 \otimes M_{r-s}(1)\right)^c$ onto $U$ so that $U$ admits a VOA structure such that $V$ is isomorphic to $U \otimes M_{r-s}(1)$. 
Now, by  \cite[Corollary 5.4]{DM04b}, $U$ is isomorphic to the lattice VOA $V_L$ and the conclusion follows. 
\end{proof}

\begin{example}
\label{ex: Heisenberg} 
{\it Heisenberg VOAs}. Let $M_r(1)$ be the rannk $r$ Heisenberg VOA. It is a simple unitary VOA, cf. \cite{CKLW18,DL14}.Then we can identify 
$\operatorname{Aut}(M_r(1))$ with the orthogonal group $\mathrm{O}(r)$, se e.g. \cite{Lin12}. By \cite[Lemma 4.2]{Lin12} the compact orbifold subalgebra $M_r(1)^{\mathrm{O}(r)}$ is generated by the Casimir 
element $C_{1,3}$.  It follows that   $M_r(1)^{\mathrm{O}(r)} \subset PM(1)$ and hence $M(1)^{\mathrm{O}(r)} = PM(1)$. Now, let $V$ be a simple unitary
VOA with $\cZ_V = \cZ_{M_r(1)}$. Then, by  Theorem \ref{th:V_{V_1}} $V_{V_1}$ is isomorphic to $M_r(1)_{M_r(1)_1}=M_r(1)$. Moreover, 
$V$ and $M_r(1)$ have the same genus one partition function and hence the same vacuum character. Hence $V_{V_1}=V$ so that $V$ is isomorphic to $M_r(1)$.
\end{example}

\begin{example} 
\label{ex: lattice}
{\it Lattice VOAs.} Let $L$ be an even positive-definite Lattice or rank $r>0$. Then the corresponding VOA $V_L$ is unitary, see e.g. 
\cite{CKLW18,DL14}.  Let $V$ be a simple unitary VOA with $\cZ_V = \cZ_{V_L}$. Then, by Theorem \ref{T:main2_corpo} the central charge of $V$ is equal to that of $V_L$ which is equal to $r$.

By  Theorem \ref{th:V_{V_1}}, the unitary affine algebras generated by $V_1$ in $V$ and by $(V_L)_1$ in $V_L$ are isomorphic, so $V$ is, up to isomorphisms, a local extension of the rank $r$  Heisenberg VOA 
$M_r(1)$. By Theorem \ref{M(1)extension} there is a non-negative integer $s\leq r$ such that  $V$ is isomorphic to $V_{\tilde{L}} \otimes M_{r-s}(1)$ with 
$\tilde{L}$ a positive-definite even lattice of rank $s$. By the equality of the genus one partition functions, we have 
$$\operatorname{Tr}_V e^{2\pi i  \tau L^V_0} = \operatorname{Tr}_{V_L} e^{2\pi i  \tau L^{V_L}_0}$$ or, equivalently 
$$ \Theta_{\tilde{L},1}(e^{2\pi i  \tau})  \operatorname{Tr}_{M_r(1)}e^{2\pi i  \tau L^{M_r(1)}_0} = 
 \Theta_{L,1}(e^{2\pi i  \tau}) \operatorname{Tr}_{M_r(1)}e^{2\pi i  \tau L^{M_r(1)}_0} \,.  $$

 Hence the genus one theta series $\Theta_{L,1}(e^{2\pi i  \tau})$ and 
$\Theta_{\tilde{L},1}(e^{2\pi i  \tau})$ coincide, i.e. $L$ an $\tilde{L}$ are isospectral. In particular $s=r$ (this follows e.g. from Weyl asymptotyc formula for the Laplacian on a torus) so that $V$ is isomorphic to $V_{\tilde{L}}$. If $L$ is unimodular, equivalently $V_L$ is holomorphic,  then $r$ is a positive integer multiple of $8$ and $\Theta_{\tilde{L},1}(e^{2\pi i  \tau}) = \Theta_{L,1}(e^{2\pi i  \tau})$ is a modular form of weight $\frac{r}{2}$. It follows that also $\tilde{L}$ must be unimodular. To see this note that by 
\cite[Proposition 16, Section 6.2]{Serre} we have 
$$ 1 =  \lim_{t\to +\infty} \Theta_{\tilde{L},1}(e^{-2\pi t}) = \frac{1}{ ( \operatorname{det} G_{\tilde{L}} )^{\frac12} }$$
 where $G_{\tilde{L}}$ is the Gram matrix  of $\tilde{L}$. Hence, $\operatorname{det} G_{\tilde{L}}=1$ so that  $\tilde{L}$ is unimodular. This shows that $V$ is a holomorphic lattice VOA.

 Let us also show that $L$ is isomorphic to $\tilde{L}$. The equality $\cZ_V=\cZ_{V_L}$ together with Theorem \ref{thm:partition_of_lattice} shows that, for every $g$, the pull-back via the Jacobi map of the degree $g$ theta series associated to $L$ is equal to the pull-back of the degree $g$ theta series associated to $\tilde{L}$. Now  \cite[Corollary 1.5]{CS-B14} implies that $L$ and $\tilde{L}$ are isomorphic. 
 
 (Note also that two isospectral positive-definite even lattices, not necessarily unimodular, of rank $r \leq 3$ must be isomorphic by \cite{Schi97}. Hence, if $r\leq 3$ we again find that $V$ is isomorphic to $V_L$ without assuming unimodularity, cf. Example \ref{ex: rank-one lattice} below. )
\end{example}

We resume the discussion in Example \ref{ex: lattice} in the following theorem. 

\begin{theorem} 
\label{thmLatticePartition} 
Let $L$ be a unimodular positive definite even lattice and let $V_L$ be the corresponding lattice VOA. Moreover, let  $V$ be a simple unitary VOA with 
$\mathcal{Z}_V = \mathcal{Z}_{V_L}$. Then $V$ is isomorphic to $V_L$. 
\end{theorem}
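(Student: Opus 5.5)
The plan is to reduce everything to the lattice case and then use the uniqueness of unimodular lattices with given higher-genus theta series. Throughout I follow the argument sketched in Example \ref{ex: lattice}.

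First I would pin down the weight-one structure of $V$. Let $r$ be the rank of $L$. By Theorem \ref{T:main2_corpo}, the equality $\mathcal{Z}_V=\mathcal{Z}_{V_L}$ forces $V$ to have central charge $r$. Theorem \ref{th:V_{V_1}} then gives an isomorphism between the affine subalgebras $V_{V_1}$ and $(V_L)_{(V_L)_1}$. The latter contains the Heisenberg subalgebra $M_r(1)$ generated by $\mathfrak{h}=\mathbb{C}\otimes L$, which is conformal because its central charge is $r$. So $V_{V_1}$ contains a copy of $M_r(1)$. Its conformal vector is the image of the conformal vector of $(V_L)_{(V_L)_1}$, and that coincides with $\nu^{V_L}$ since $M_r(1)$ already carries it. The step requiring care is showing this is also $\nu^V$. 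I would handle it by noting that the isomorphism is unitary and that both subalgebras have full central charge $r=c$. Then the coset argument of Lemma \ref{L:conf} (the complementary conformal vector has central charge zero, hence vanishes in a unitary VOA) gives equality of conformal vectors. Hence $V$ is a unitary conformal extension of $M_r(1)$.

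Next I would apply Theorem \ref{M(1)extension}: $V\cong V_{\tilde L}\otimes M_{r-s}(1)$ for some even positive-definite lattice $\tilde L$ of rank $s\le r$. The genus one partition functions agree by Remark \ref{RemarkGenus1Partition}. Dividing by the common Heisenberg character $\mathrm{Tr}_{M_r(1)}q^{L_0}$, this gives
\[
\Theta_{\tilde L,1}(\tau)\,\mathrm{Tr}_{M_s(1)}q^{L_0}\,\mathrm{Tr}_{M_{r-s}(1)}q^{L_0}=\Theta_{L,1}(\tau)\,\mathrm{Tr}_{M_r(1)}q^{L_0},
\]
so $\Theta_{\tilde L,1}=\Theta_{L,1}$. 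A theta series grows like $t^{-s/2}$ as $\tau=it\to 0$, so matching this growth gives $s=r$. Then $V\cong V_{\tilde L}$, and $\Theta_{\tilde L,1}$ is a modular form of weight $r/2$. Evaluating the transformation at $\tau\to i\infty$ against $\tau\to 0$, as in \cite[Proposition 16, Section 6.2]{Serre}, gives $\det G_{\tilde L}=1$. Hence $\tilde L$ is unimodular.

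Finally, with $V_{\tilde L}$ and $V_L$ both holomorphic lattice VOAs, Theorem \ref{thm:partition_of_lattice} turns $\mathcal{Z}_{V_{\tilde L},g}=\mathcal{Z}_{V_L,g}$ into equality of the pullbacks $J^*\Theta_{\tilde L,g}=J^*\Theta_{L,g}$ for every $g$. Cancelling the common factor $\mathcal{Z}_{M(1),g}^r$ is allowed because the power series have constant term $1$. By \cite[Corollary 1.5]{CS-B14}, equality of these restrictions to the Jacobian locus for all $g$ forces $\tilde L\cong L$, and therefore $V\cong V_L$.

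The main obstacle is the first step: ensuring that the Heisenberg subalgebra inside $V_{V_1}$ is conformal in $V$, so that Theorem \ref{M(1)extension} applies. The rest is classical lattice theory plus the cited result of Codogni and Shepherd-Barron.
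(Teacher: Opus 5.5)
Your proposal is correct and follows essentially the same route as the paper's argument in Example \ref{ex: lattice}. That route is: central charge from Theorem \ref{T:main2_corpo}; reduction to a conformal extension of $M_r(1)$ via Theorem \ref{th:V_{V_1}} and Theorem \ref{M(1)extension}; then $s=r$ and unimodularity of $\tilde L$ from the genus-one theta series; and finally $\tilde L\cong L$ from Theorem \ref{thm:partition_of_lattice} together with \cite[Corollary 1.5]{CS-B14}. Your explicit coset/central-charge check that the Heisenberg conformal vector coincides with $\nu^V$ fills in a step the paper passes over when it simply asserts that $V$ is ``up to isomorphisms, a local extension of the rank $r$ Heisenberg VOA''.
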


\begin{example}
\label{ex: rank-one lattice}
{\it Rank-one lattice VOAs}. If $L$ is a rank-one positive definite even lattice then $L$ is isomorphic to $L_{2n}:=\sqrt{2n}\mathbb{Z}$ for some 
$n \in \mathbb{Z}_{>0}$. We see from  Example \ref{ex: lattice} that in the rank-one case the partition function detemines the VOA up to isomorphism. 
We now want to find an explicit description of the partition function subalgebra in these cases. Note that the unitary subalgebras of the Lattice VOAs 
$V_{L_{2n}}$,  $n \in \mathbb{Z}_{>0}$, have been completely classified in \cite{CGH19}. For any non-negative integer n, let $M(1)$ be the rank-one unitary Heisenberg subalgebra   of $V_{L_{2n}}$ and let $M(1)^{+}$ be the $\mathbb{Z}_2$-orbifold $M(1)^{\mathrm{O}(1)}$. Then,  
$ V_{L_{2n}}^{\operatorname{Aut}(V_{L_{2n}})} \subset M(1)^+$ by \cite[Prop. 2.9]{CGH19}. If $n=1$ we have, up to group isomorphisms,  
$\operatorname{Aut}(V_{L_2}) = \mathrm{PSL}(2,\mathbb{C})$ and $\operatorname{Aut}_{(\cdot I \cdot)}(V_{L_2}) = \mathrm{SO}(3)$. Moreover,
$$V_{L_2}^{\mathrm{PSL}(2,\mathbb{C})} = V_{L_2}^{\mathrm{SO}(3)} = L(1,0)\, ,$$ 
see \cite[Theorem 3.2]{CGH19} and \cite[Prop. A.3]{CGGH23}. Hence, 
$$PV_{L_{2}}= V_{L_{2}}^{\operatorname{Aut}(V_{L_{2}})}= L(1,0) \,.$$
Note that  $V_{L_{2}}$ is isomorphic  to $V_1(\mathfrak{sl}_2)$ and compare with Theorem \ref{th:design}.

For $n>1$ the lattice $L_{2n}$ has no roots and therefore $\left(V_{L_{2n}}\right)_1 = M(1)_1 = \mathbb{C}h$ for some $h\in M(1)_1$ 
with $\theta h=h$ and $(h|h)=1$.   Hence, the Casimir element $C_{1,3}$ of $V_{L_{2n}}$ is equal to $h_{-3}h$ and by 
\cite[Lemma 4.2]{Lin12} it generates the compact orbifold subalgebra $M(1)^{\mathrm{O}(1)}= M(1)^{+}$. It follows that 
$$M(1)^+ \subset P V_{L_{2n}} \subset  V_{L_{2n}}^{\operatorname{Aut}(V_{L_{2n}})} \subset M(1)^{+}$$ and hence 
$$ P V_{L_{2n}} =  V_{L_{2n}}^{\operatorname{Aut}(V_{L_{2n}})} = M(1)^{+} \,.$$ 
In conclusion we have $P V_{L} =  V_{L}^{\operatorname{Aut}(V_{L})}$ for every positive-definite rank-one even lattice $L$. 
\end{example}

\begin{example}
\label{ex: unitary affine}
{\it Unitary affine VOAs}. We say that the simple unitary VOA $V$ is affine if it is generated by $V_1$, i.e. $V= V_{V_1}$. Consequently, a simple non-trivial unitary VOA $V$ is affine if and only if it is isomorphic to a VOA of the form in Eq. (\ref{Eq: Affine}). In particular, if $\mathfrak{g} := V_1$ is a non-zero simple complex Lie algebra  then $V$ is isomorphic to $V_k(\mathfrak{g})$ for some positive integer level $k$. It follows directly from Theorem \ref{th:V_{V_1}} that if $V$ is an affine simple unitary VOA and if $U$ is a simple unitary  VOA with $\cZ_U = \cZ_V$ then $V$ is isomorphic to $U_{U_1}$. Moreover, arguing as in Example  \ref{ex: Heisenberg} we see that $U = U_{U_1}$ so that $U$ and $V$ are isomorphic. In other words, the partition function determines the VOA completely. We have already seen in Example \ref{ex: Heisenberg} that, if $V_1$ is abelian, then $PV = V^{\operatorname{Aut}(V)}$. The same holds for 
$V = V_1(\mathfrak{sl}_2)$ as discussed in Example \ref{ex: rank-one lattice}. 
We are not able to give an explicit description of  the partition function algebra of an affine simple unitary affine algebra in general. However, using the classical Weyl invariant theory,  we can give a complete description of $PV$ when $V= V_k(\mathfrak{sl}_2)$ for an arbitrary positive integer $k$. 
As in the special case $k=1$, see Example \ref{ex: rank-one lattice}, we have 
$\operatorname{Aut}(V_k(\mathfrak{sl}_2))=\mathrm{PSL}(2,\mathbb{C})$ and 
$\operatorname{Aut}_{(\cdot | \cdot)}(V_k(\mathfrak{sl}_2)) = \mathrm{SO}(3)$. Moreover, it follows by  \cite[Theorem 3.2]{CGH19} that 
$$ V_k(\mathfrak{sl}_2)^{\mathrm{PSL}(2,\mathbb{C})} = V_k(\mathfrak{sl}_2)^{\mathrm{SO}(3)} \,.$$
Let $\{v^{(1)}, v^{(2)}, v^{(3)} \}$ be an orthonormal basis of $V_k(\mathfrak{sl}_2)_1$ such that $\theta v^{(i)}= v^{(i)}$, $i=1,2,3$.
From classical invariant theory it follows that $V_k(\mathfrak{sl}_2)^{\mathrm{SO}(3)}$ is spanned by the coefficients of the series of the form 
$$\gamma_1(z_1,w_1) \dots \gamma_1(z_j,w_j)\Gamma(x^1_1,x^2_1,x^3_1,x^4_1 ) \dots \Gamma(x^1_m,x^2_m,x^3_m,x^4_m )1$$
where 
$$
\Gamma(x^1,x^2,x^3_1,x^4):= \sum_{i,j= 1}^3 [Y(v^{(i)},x_1), Y(v^{(j)},x_2)] Y(v^{(i)},x_3)Y(v^{(j)},x_4)
$$
takes into account of the determinant invariant, cf. \cite[Sec. 8]{Lin13} and  \cite{dBFH94}. Now, as an operator valued rational function 
 $$\sum_{i,j= 1}^3 Y(v^{(i)},x_1)Y(v^{(j)},x_2) Y(v^{(i)},x_3)Y(v^{(j)},x_4)$$ 
 ``agree" with  
$$\gamma_1(x_1,x_3)\gamma_1(x_2,x_4)$$
and  
$$\sum_{i,j= 1}^3 Y(v^{(j)},x_2)Y(v^{(i)},x_2) Y(v^{(i)},x_3)Y(v^{(j)},x_4)$$ 
 ``agree" with  
$$\gamma_1(x_2,x_4)\gamma_1(x_x,x_3) \, .$$

It follows that, for any $a\in PV_k(\mathfrak{sl}_2)$, the coefficients of $\Gamma(x^1,x^2,x^3_1,x^4)a$ belong to $PV_k(\mathfrak{sl}_2)$. 
As a consequence $V_k(\mathfrak{sl}_2)^{\mathrm{SO}(3)} \subset PV_k(\mathfrak{sl}_2)$ and hence 
$PV_k(\mathfrak{sl}_2) = V_k(\mathfrak{sl}_2)^{\operatorname{Aut}(V_k(\mathfrak{sl}_2))}$.

\end{example}

The examples of holomorphic VOAs with $c=24$ will be discussed in Section \ref{sec: c=24}. We conclude this section with the following problem motivated by the previous examples and the results in the next section.

\begin{problem} 
\label{problem: Aut(V) orbifold}
Find a simple unitary VOA $V$ with $PV \neq V^{\operatorname{Aut}(V)}$. 
\end{problem} 

Note that, by \cite[Prop. A.3]{CGGH23}, the orbifold subalgebra $V^{\operatorname{Aut}(V)}$ coincides with the compact orbifold 
$V^{\operatorname{Aut}_{(\cdot | \cdot)}(V)}$. Problem \ref{problem: Aut(V) orbifold} appears to be deeply related to the question of whether or not a simple unitary 
VOA is determined, up to isomorphism, by its partition function $\cZ_V$. In fact we expect that $PV = V^{\operatorname{Aut}(V)}$ then $V$ is determined by 
$\cZ_V$ up to isomorphisms. This is because of Corollary \ref{cor: unique structure} and of the following conjecture. 

\begin{conjecture}
\label{conjecture: compact orbifold}
Let $V$ be a simple unitary VOA and let $G$ be a closed subgroup of  $\operatorname{Aut}_{(\cdot | \cdot)}(V)$. Then the unitary $V^G$-module $V$ has a unique unitary VOA structure up to isomorrphisms. 
\end{conjecture}

\begin{remark} If $\operatorname{Aut}(V)$ is a Lie group, which is the case if e.g. $V$ is finitely generated, and  if $G$ is a compact subgroup of 
of  $\operatorname{Aut}(V)$, then, modifying if necessary the invariant scalar product, one can always assume that 
$G \subset \operatorname{Aut}_{(\cdot | \cdot)}(V)$, see \cite[Prop. A.5]{CGGH23}
\end{remark}

One motivation for Conjecture \ref{conjecture: compact orbifold} comes from the correspondence between unitary VOAs and conformal nets whose study began in \cite{CKLW18}. For the theory of conformal nets and their representations, we refer the reader to \cite{Car04,CCHW13,CKLW18,KL04} and the references therein. In what follows, we will always assume that the conformal nets and their representations act on separable Hilbert spaces. Moreover, we will 
always assume that the conformal nets are irreducible.

Let $\mathcal{A}$ be an irreducible conformal net. We say that a representation $\pi$ of $\mathcal{A}$ on a Hilbert space 
$\mathcal{H}$ admits a conformal net structure if there is a local extension $\mathcal{B} \supset \mathcal{A}$ on $\mathcal{H}$ such that $\pi$ is the restriction to $\mathcal{A}$ of the of the vacuum representation of $\mathcal{B}$. This definition should be intended as the conformal net analogous to the existence of a VOA structure for a VOA module.  The following theorem is a conformal net version of Conjecture \ref{conjecture: compact orbifold}. It is based on the Dopliche-Roberts duality theory for compact groups \cite{DR89a,DR89b}, see also \cite{DR90}, and can bee seen as a conformal net version of the uniqueness 
of the Doplicher-Roberts field algebra in \cite{DR90}, see also \cite[Sec. 3]{Car04}.

\begin{theorem}
\label{thm: conformal net orbifold}
Let $\mathcal{A}$ be a conformal net, and let $G$ be a compact group of automorphisms of $\mathcal{A}$. 
Let $\mathcal{A}^G$ be the correponding fixed-pont subnet and let $\pi$ be the restriction to  $\mathcal{A}^G$ of the vacuum representation of $\mathcal{A}$. Then, up to isomorphisms, the only conformal net structure of $\pi$ is the one given by $\mathcal{A}$. 
\end{theorem}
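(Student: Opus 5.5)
We will deduce the statement from Doplicher--Roberts duality, in the spirit of the uniqueness of the field algebra in \cite{DR90}. Let $\mathcal{B}\supset\mathcal{A}^G$ be a local extension on the Hilbert space $\mathcal{H}$ of $\pi$ (the vacuum Hilbert space of $\mathcal{A}$), such that $\pi$ is the restriction to $\mathcal{A}^G$ of the vacuum representation of $\mathcal{B}$. We want a unitary $U$ on $\mathcal{H}$ that fixes the vacuum, commutes with $\pi(\mathcal{A}^G)$, and satisfies $U\mathcal{B}(I)U^*=\mathcal{A}(I)$ for every interval $I$.

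The first step is to decompose $\pi$ into sectors of $\mathcal{A}^G$. Since $G$ is compact, the vacuum Hilbert space of $\mathcal{A}$ decomposes as $\bigoplus_{\xi\in\hat G}\mathcal{H}_\xi\otimes \pi_\xi$. Here the $\pi_\xi$ are pairwise inequivalent irreducible representations of $\mathcal{A}^G$, each with finite statistical dimension $d(\pi_\xi)=\dim\xi$, and $\pi_\xi$ appears with multiplicity $\dim \xi$. The vacuum occurs exactly once. This is the standard Doplicher--Haag--Roberts analysis of compact-group fixed-point nets. The sectors $\{\pi_\xi\}$ generate a full, rigid, symmetric (bosonic, since $\mathcal{A}$ is local) tensor subcategory $\mathcal{S}$ of $\operatorname{Rep}(\mathcal{A}^G)$, and Doplicher--Roberts reconstruction identifies $\mathcal{S}$ with $\operatorname{Rep}(G)$.

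Next we look at $\mathcal{B}$. Since $\mathcal{B}$ restricts to $\pi$, the vacuum representation of $\mathcal{B}$ restricted to $\mathcal{A}^G$ contains only sectors in $\mathcal{S}$, with the same multiplicities. Let $\mathcal{S}_\pi$ denote the full subcategory generated by these sectors, and consider the fixed-point subnet of $\mathcal{B}$ under its gauge group $\operatorname{Aut}(\mathcal{B}/\mathcal{A}^G)$. The DR field algebra is a normal (irreducible) extension whose canonical field system carries all of $\mathcal{S}_\pi$, each irreducible appearing with multiplicity equal to its dimension. Because $\pi$ has exactly these multiplicities, $\mathcal{B}$ is such a normal field net: the Longo--Rehren/Q-system description of $\mathcal{B}$ is a commutative Q-system in the symmetric category $\mathcal{S}_\pi\cong\operatorname{Rep}(G)$ whose underlying object is $\bigoplus_\xi d(\xi)\,\xi$, i.e.\ the regular representation. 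In $\operatorname{Rep}(G)$ the commutative Q-systems with this underlying object are the function algebras $C(G)$ with the standard structure, up to isomorphism. The argument for this is Deligne/Doplicher--Roberts: a commutative algebra object corresponds to a $G$-space, and being supported on the regular representation with multiplicity $d$ forces a transitive free action. For infinite $G$ we will work with the directed system of finite subobjects, as in \cite{DR90}. Locality of $\mathcal{B}$ is what gives commutativity of the Q-system, and the fact that the symmetry is bosonic guarantees that the twist is trivial.

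Finally, uniqueness of the Q-system up to isomorphism yields a unitary equivalence $U$ of extensions, so $\mathcal{B}\cong\mathcal{A}$ as nets on $\mathcal{H}$, compatibly with $\pi$. Covariance transfers automatically, because the Möbius and diffeomorphism covariance of a local extension is determined by that of the subnet.

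The main obstacle is the middle step for infinite compact $G$. There $\mathcal{S}$ has infinitely many objects and finite-index Q-system machinery does not apply directly. I would first handle finite $G$ using Longo--Rehren Q-systems, then treat general $G$ by passing to DR's infinite field-algebra uniqueness theorem. To apply that theorem one must verify that $\mathcal{B}$ contains isometries implementing each $\pi_\xi$ with the required multiplicity and satisfying the Cuntz relations. This verification uses Reeh--Schlieder together with the multiplicity count already established.
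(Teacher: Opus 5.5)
Your overall plan, reducing to Doplicher--Roberts uniqueness of the field net, matches the paper's. For finite $G$ your Q-system branch is sound: $\mathcal{A}^G\subset\mathcal{B}$ then has index $|G|$, and its dual canonical endomorphism is the one realizing $\pi$. Locality makes the Q-system commutative in $\mathcal{S}\simeq\operatorname{Rep}(G)$, and a commutative Q-system on the regular object is $C(G)$. This is the net analogue of the Kirillov--Ostrik step in Theorem~\ref{thm VOA compact orbifold}.

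The gap is the infinite compact case, which the statement covers and which is where its content lies. There the sentence ``because $\pi$ has exactly these multiplicities, $\mathcal{B}$ is such a normal field net'' is precisely what must be proved. Equivalently, you would need that $\mathcal{A}^G$ is the fixed-point net of $\operatorname{Aut}(\mathcal{B}/\mathcal{A}^G)$, which you introduce but never use. Reeh--Schlieder plus the multiplicity count does not prove it. At best those tools show that the space of charged intertwiners $H_\sigma=\{\psi\in\mathcal{B}(I):\psi x=\sigma(x)\psi \text{ for } x\in\mathcal{A}^G(I)\}$ is a Hilbert space of isometries whose dimension equals the multiplicity $d(\sigma)$ of $\sigma$ in $\pi$. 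They say nothing about its support $s(H_\sigma)=\sum_i\psi_i\psi_i^*$, which a priori is only some projection in $\sigma(\mathcal{A}^G(I))'\cap\mathcal{B}(I)$. Yet $s(H_\sigma)=1$, your ``Cuntz relations'', is exactly what is needed.

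The missing idea is the Izumi--Longo--Popa argument the paper cites from \cite[Sec.~3]{ILP98}. Take the conditional expectation $E:\mathcal{B}(I)\to\mathcal{A}^G(I)$. Then $E(\psi\psi^*)\in\sigma(\mathcal{A}^G(I))'\cap\mathcal{A}^G(I)=\mathbb{C}$, and the ILP computation identifies this scalar with $d(\sigma)^{-1}$ for a unit vector $\psi\in H_\sigma$. Hence $E(s(H_\sigma))=\dim H_\sigma/d(\sigma)$. So $\dim H_\sigma\le d(\sigma)$ in general, and equality forces $s(H_\sigma)=1$ by faithfulness of $E$.

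With this in hand the paper needs neither Q-systems nor a split into finite and infinite $G$. It writes $\mathcal{A}\cong\mathcal{A}^G\rtimes\Delta$ and uses $\alpha$-induction as in \cite[Thm.~3.5]{Car04} to obtain $\mathcal{A}\subset\mathcal{B}\rtimes\alpha_\Delta$. The support-one spaces $H_\sigma\subset\mathcal{B}(I)$ make every $\alpha_\sigma$ inner in $\mathcal{B}$, so $\mathcal{B}\rtimes\alpha_\Delta=\mathcal{B}$ and therefore $\mathcal{B}\cong\mathcal{A}$.
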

\begin{proof}
We closely follow the proof of \cite[Theorem 3.5]{Car04}. 
Let $\pi_0$ be the vacuum representation of  $\mathcal{A}^G$ and fix an interval $I$ of the circle $S^1$. Then, there is a DHR endomorphism $\rho$ of $\mathcal{A}^G$, localized in $I$ such that $\pi$ is unitarily equivalent to $\pi_0 \circ \rho$. Let $\Delta$ be the semigroup of DHR endomorphisms of 
$\mathcal{A}^G$ localized in $I$, consisting of the DHR endomorphisms localized in $I$ and equivalent to a finite direct sum of irreducible endomorphisms contained in $\rho$. Then, up to isomorphisms,  $\mathcal{A}$ can be recovered from $\mathcal{A}^G$ as a crossed product $\mathcal{A}^G \rtimes \Delta$.  Let $\mathcal{B}$ be a conformal net giving a conformal net structure for $\pi$.  As in the proof of \cite[Theorem 3.5]{Car04} the $\alpha$-induction gives a semigroup $\alpha_\Delta$ of DHR endomorphisms of $\mathcal{B}$ localized in $I$. Moreover, 
$$\mathcal{A} = \mathcal{A}^G \rtimes \Delta \subset \mathcal{B} \rtimes \alpha_\Delta .$$ 
Note that each irreducible endomorphism $\sigma$ contained in $\rho$ appears with multiplicity $n_\rho$ equal to the dimension $d(\sigma)$ of $\sigma$. Hence, it follows from the results in \cite[Sec. 3]{ILP98}, see in particular the paragraph before \cite[Thm. 3.3]{ILP98}, that every endomorphism $\sigma \in \Delta$ can be realized  by a Hilbert space of isometries  
 $\mathscr{H}_\sigma \subset \mathcal{B}(I)$ with support of support $s(\mathscr{H}_\sigma)=1$ and hence $\alpha_\sigma$ is inner in $\mathcal{B}$, i.e. implemented by $\mathscr{H}_\sigma$. It follows that $\mathcal{B} \rtimes \alpha_\Delta = 
\mathcal{B}$ and hence $\mathcal{B}$ is isomorphic to $\mathcal{A}$. 
\end{proof}

As a consequence, we obtain the following variant of Conjecture \ref{conjecture: compact orbifold}. 

\begin{theorem}
\label{thm: strongly local compact orbifold}
Let $V$ be a simple unitary VOA which is also strongly local in the sense of \cite{CKLW18} and let $G$ be a closed subgroup of 
$\operatorname{Aut}_{(\cdot | \cdot)}(V)$. Then the unitary $V^G$-module $V$ has, up to isomorphisms, a unique structure of simple strongly local VOA. 
\end{theorem}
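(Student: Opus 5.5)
The plan is to transport the statement to conformal nets, where Theorem \ref{thm: conformal net orbifold} is available, and then come back to VOAs using the strong locality correspondence of \cite{CKLW18}. Let $U$ be a simple strongly local unitary VOA structure on the unitary $V^G$-module $V$. This means the underlying vector space is $V$, the vertex operators restrict to those of $V^G$ on $V^G$, and the vacuum and conformal vector are the same as for $V$. Associated to $V$ we have the conformal net $\mathcal{A}_V$, and to $U$ the conformal net $\mathcal{A}_U$. Both act on the same Hilbert space completion $\mathcal{H}$ of $V$, since the two invariant scalar products are determined by the $V^G$-module structure (the module is unitary with a fixed scalar product).

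First step: $G\subset\operatorname{Aut}_{(\cdot|\cdot)}(V)$ is compact, so it acts by unitary automorphisms of $\mathcal{A}_V$. By the results of \cite{CKLW18} on unitary subalgebras of strongly local VOAs, $V^G$ is strongly local and $\mathcal{A}_{V^G}=\mathcal{A}_V^G$ (the VOA--net correspondence intertwines compact orbifolds). Second step: $V^G$ is a unitary subalgebra of $U$ containing the conformal vector. Hence $\mathcal{A}_{V^G}$ is a covariant subnet of $\mathcal{A}_U$ on $\mathcal{H}$, again by \cite{CKLW18}. The restriction of the vacuum representation of $\mathcal{A}_U$ to $\mathcal{A}_{V^G}$ is then the representation $\pi$ on $\mathcal{H}$ given by the $V^G$-module $V$. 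By Step one, this is the same as the restriction of the vacuum representation of $\mathcal{A}_V$ to $\mathcal{A}_V^G$. So $\mathcal{A}_U$ gives a conformal net structure on $\pi$.

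Third step: Theorem \ref{thm: conformal net orbifold} gives an isomorphism $\mathcal{A}_U\cong\mathcal{A}_V$, that is, a unitary $W$ on $\mathcal{H}$ fixing the vacuum and mapping local algebras to local algebras. Final step: pass back to VOAs. Isomorphisms of strongly local VOAs correspond to isomorphisms of the associated nets, because the VOA can be recovered from the net as the space of finite energy vectors together with the smeared fields affiliated to the local algebras (\cite[Thm. 6.x]{CKLW18}, uniqueness of the VOA associated to a net). Hence $W$ maps $V$ onto $U$ as a VOA isomorphism.

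I expect this last step to be the main obstacle. One must check that $W$ preserves the conformal vector, i.e. that it commutes with the Möbius/Virasoro action. This should follow because an isomorphism of conformal nets intertwines the unique vacuum-preserving positive-energy representations of $\mathrm{Diff}^+(S^1)$. One also needs the energy bounds to be inherited by $U$. That holds by strong locality of $U$, which is assumed in the statement.
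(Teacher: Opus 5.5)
Your proposal is correct and follows essentially the same route as the paper: use \cite{CKLW18} to get $\mathcal{A}_{V^G}=\mathcal{A}_V^G$, observe that $V^G$ is a unitary subalgebra of $U$ so that $\mathcal{A}_U$ gives a conformal net structure on $\pi$, apply Theorem \ref{thm: conformal net orbifold}, and then pass back to VOAs. The final step you flag as the main obstacle is handled in the paper by citing \cite[Theorem 9.2]{CKLW18}, which turns the net isomorphism $\mathcal{A}_U\cong\mathcal{A}_V$ into a VOA isomorphism $U\cong V$, so you do not need a separate argument about preserving the conformal vector.
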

\begin{proof} Let $U$ be a strongly local VOA containing $V^G$ as a unitary subalgebra and such that the unitary $V^G$ module $U$ coincides with $V$. 
$U$ and $V$ coincide as vector spaces and have the same scalar product because they coincide as unitary  $V^G$-modules. Hence, they have the same Hilbert space completion 
$\mathcal{H}_U = \mathcal{H}_V$. Let $\mathcal{A}_V$ $\mathcal{A}_U$ the conformal nets corresponding to $V$ and $U$ respectively. By 
\cite[Theorem 6.9]{CKLW18} we can identify $G$ with a compact subgoup of $\operatorname{Aut}(\mathcal{A}_V)$. By \cite[Thm. 7.1]{CKLW18} the compact orbifold $V^G$ is also strongly local, and by \cite[Prop. 7.6]{CKLW18} $\mathcal{A}_V^G = \mathcal{A}_{V^G}$. Let $\pi$ denote the restriction to 
$\mathcal{A}_{V^G}$ of the vacuum representation of $\mathcal{A}_V$. Then, since $V^G$ is also a unitary subalgebra of $U$, it follows by \cite[Thm. 7.5]{CKLW18} that $\mathcal{A}_U$ gives a conformal net structure to $\pi$ and hence $\mathcal{A}_U$ and $\mathcal{A}_V$ are isomorphic conformal nets and it follows from \cite[Theorem 9.2]{CKLW18} that $U$ and $V$ are isomorphic VOAs.
\end{proof}

\begin{remark} It is conjectured that all simple unitary VOA are strongly local, see \cite[Conjecture 8.18]{CKLW18}. If this is the case, Conjecture \ref{conjecture: compact orbifold} would follow from Theorem \ref{thm: strongly local compact orbifold}.
\end{remark}

With suitable assumptions on the representation theory of the compact orbifold $V^G$ and if the compact group $G$ is either finite or abelian then
Conjecture \ref{conjecture: compact orbifold} can be proved directly in the VOA setting thanks to the results in \cite{DM04b,HKL15,KO02,McR20}.  Here, we assume unitarity to simplify our statement, although one could assume weaker conditions. We refer the reader to \cite{CKLR19,CKM24,HKL15,McR20} for further details and definitions.  

\begin{theorem}
\label{thm VOA compact orbifold}
Let $V$ be a simple unitary VOA and let $G$  be a closed subgroup of $\operatorname{Aut}_{(\cdot | \cdot)}(V)$. Assume that all irreducible $V^G$-submodules of $V$ belong to some vertex tensor category $\mathcal{C}$ of $V^G$-modules. Then, if $G$ is abelian or finite, the $V^G$-module $V$ has a unique VOA structure of up isomorphisms. In particular if $G$ is finite and $V^G$ is strongly rational then the $V^G$-module $V$ has a unique structure of simple unitary VOA up isomorphisms.
\end{theorem}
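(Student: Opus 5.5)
The plan is to reduce the statement to the known theory of simple current extensions (for $G$ abelian) and of finite orbifolds (for $G$ finite), using the assumed vertex tensor category $\mathcal{C}$ as the ambient category in which $V$ becomes a commutative algebra object.

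\textbf{Step 1: decomposition of $V$.} Decompose the unitary $V^G$-module $V$ via the compact-group Schur--Weyl type duality,
$$V=\bigoplus_{\chi\in\widehat{G}} W_\chi\otimes V_\chi .$$
Here $W_\chi$ runs over the irreducible unitary representations of $G$, and the multiplicity spaces $V_\chi$ are irreducible, pairwise inequivalent $V^G$-modules; this uses \cite{DM04b} for finite groups and its compact version in \cite{CKLR19}. In particular $V^G=V_{\mathbf 1}$, and the isomorphism classes $[V_\chi]$ together with their multiplicities $\dim W_\chi$ are the data fixed by the module structure.

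\textbf{Step 2: $V$ as an algebra object.} By the hypothesis that every $V_\chi$ lies in $\mathcal{C}$, the results of \cite{HKL15} (and \cite{McR20} for the infinite, direct-limit setting) identify VOA structures on the $V^G$-module $V$ extending $V^G$ with haploid commutative algebra objects in $\mathrm{Ind}(\mathcal{C})$ with underlying object $\bigoplus_\chi \dim(W_\chi)\,V_\chi$. The task is therefore to show uniqueness of such an algebra structure on a fixed object.

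\textbf{Step 3: the abelian case.} If $G$ is abelian, every $\chi$ is a character and each $V_\chi$ is a simple current, with $V_\chi\boxtimes V_\psi\cong V_{\chi\psi}$; the fusion rules are recovered from the given structure on $V$. Commutative algebra structures on a direct sum of simple currents indexed by a group $A$ are classified by $H^2_{ab}(A,\mathbb C^\times)$ modulo the constraint from the braiding, which fixes the quadratic form. Here I would cite the classification of simple current extensions (Dong--Lin--Mason and \cite{CKLR19}), which gives uniqueness up to isomorphism once the braiding is fixed.

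\textbf{Step 4: the finite case.} If $G$ is finite, the subcategory of $\mathcal{C}$ generated by the $V_\chi$ is braided equivalent to $\mathrm{Rep}(G)$ via \cite{KO02,McR20}. Under this equivalence $V$ corresponds to the regular algebra $\mathrm{Fun}(G)$, and uniqueness of the commutative haploid algebra whose underlying object is the regular representation (Deligne, Kirillov--Ostrik) gives uniqueness.

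\textbf{Step 5: unitarity in the strongly rational case.} When $G$ is finite and $V^G$ is strongly rational, every module lies in the modular tensor category $\mathrm{Rep}(V^G)$ of \cite{Hua08}, so the hypothesis holds and Step 4 applies. It remains to show that any two unitary structures are unitarily isomorphic; for this, invoke the uniqueness of the invariant scalar product on a simple unitary VOA \cite{CKLW18}.

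The main obstacle is Step 4, specifically proving that the subcategory generated by the $V_\chi$ is symmetric and equivalent to $\mathrm{Rep}(G)$, since this requires fixing the braiding within $\mathcal{C}$. My first approach is to transfer the $G$-action to the algebra object and apply the main theorem of \cite{McR20} directly.
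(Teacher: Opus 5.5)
Your Steps 3 and 4 follow the paper's proof. In the abelian case, the paper uses \cite[Corollary~4.8]{McR20} to see the $V^G$-module $V$ as a simple current extension by $\widehat{G}$, and concludes by uniqueness of simple current extensions \cite[Prop.~5.3]{DM04b}. In the finite case, it uses the braided equivalence $\mathcal{C}_V\simeq\mathrm{Rep}(G)$ of \cite[Corollary~4.8]{McR20}, then \cite{HKL15}, then the Kirillov--Ostrik uniqueness of the algebra on the regular object \cite{KO02}. So the step you call the main obstacle is a citation, and the same corollary supplies the fusion rules $V_\chi\boxtimes V_\psi\cong V_{\chi\psi}$ that you assert without justification in Step 3.

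The genuine gap is that you never use simplicity of the competing VOA structure, and without it the uniqueness you invoke is false.
\begin{itemize}
\item Kirillov--Ostrik uniqueness is for \emph{rigid} (separable) commutative haploid algebras, not merely haploid ones. For instance, in $\mathrm{Rep}(\mathbb{Z}_2)$ the algebra $\mathbb{C}[x]/(x^2)$, with $x$ spanning the sign representation, is commutative, associative and haploid. Its underlying object is the regular representation, yet it is not $\mathrm{Fun}(\mathbb{Z}_2)$.
\item This is realized by VOAs. Take $G=\mathbb{Z}_2$ and $V=V^+\oplus V^-$, declare $Y(a,z)b=0$ for $a,b\in V^-$, and keep all other products. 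This gives a non-simple VOA structure on the same $V^G$-module. It is the $t\to 0$ limit of $\phi_t^{-1}Y(\phi_t\,\cdot\,,z)\phi_t$ with $\phi_t=1_{V^+}\oplus t\,1_{V^-}$; each of these is isomorphic to $V$, and the Borcherds identity scales homogeneously in $t$.
\end{itemize}
So, as the paper does, you must start from a simple (unitary) VOA $U$ with the same $V^G$-module structure. The theorem's ``unique VOA structure'' has to be read in that sense. Simplicity is what makes the algebra in $\mathcal{C}_V$ rigid \cite[Remark~3.7]{HKL15}. In the abelian case it is also what guarantees $Y(V_\chi,z)V_\psi\neq 0$, so that $U$ is a genuine simple current extension to which \cite[Prop.~5.3]{DM04b} applies.

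Step 5 is unnecessary. The claim is uniqueness up to VOA isomorphism, which Step 4 already gives once \cite{Hua08} supplies the category $\mathcal{C}$. Note also that the invariant scalar product of a simple unitary VOA is in general unique only up to automorphisms, not unique.
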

\begin{proof} Let $U$ be a simple unitary VOA that coincides with $V$ as a $V^G$-module. Then $U$ is of CFT type. 
If $G$ is abelian it follows from \cite[Corollary 4.8]{McR20} that $U$ is a simple current extension by the Ponryagin dual $\hat{G}$ of $G$, cf. also  \cite[Example 4.11]{McR20} and \cite[Thm. 3.1]{CKLR19}. Hence, by \cite[Prop. 5.3]{DM04b}, $U$ is isomorphic to $V$. 
Now, let $G$ be finite. By  \cite[Corollary 4.8]{McR20} the linear semisimple full subcategory $\mathcal{C}_V$ of $\mathcal{C}$ generated by the irreducible  $V^G$-submodules of $V$ is a braided tensor category braided tensor equivalent to the symmetric tensor category $\mathrm{Rep}(G)$ of finite-dimensional 
representations of $G$. Moreover, the image of $V$ under this tensor equivalence is equivalent to the left-regular representation of $G$.   By 
\cite[Thm. 3.2 and Remark 3.7]{HKL15} $V$ and $U$ gives two rigid commutative haploid algebras in $\mathcal{C}_V$, see also 
\cite[Thm. 3.42]{CKM24}. It follows from  \cite[Thm. ]{KO02} that the regular representation of $G$ has, up to isomorphisms, a unique structure of rigid commutative haploid algebra in $\mathrm{Rep}(G)$, the one given by the multiplication of functions on $G$. It follows that $U$ and $V$ are equivalent haploid algebras in $\mathcal{C}_V$ and hence $U$ and $V$ are isomorphic VOAs, see e.g. the proof of \cite[Thm. 4.7]{CGGH23}.
\end{proof}

\section{The case of holomorphic VOAs with central charge 24}
\label{sec: c=24}
The classification program of holomorphic VOAs $V$ with $c=24$ and weight one subspace $V_1\neq \{ 0\}$ started with the influential work of Schellekens \cite{Sche93} and has recently been completed thanks to the work of various authors, see e.g.\ \cite{EMS20,ELMS21,LS19,MS23}. There are exactly 70 such VOAs  corresponding to entries 1--70 of the Schellekens list \cite{Sche93}. Each of these $70$ Schellekens VOAs is completely determined by the Lie algebra structure of its weight one subspace.  
The moonshine VOA is a holomorphic VOA with $V_1= \{0 \}$ and corresponds to entry 0 in the list. In \cite{FLM88} it has been conjectured that the moonshine VOA is the \emph{only} holomorphic VOA with $c=24$ and $V_1= \{0 \}$. A proof of this conjecture would complete the classification by showing that there are exactly 71 holomorphic VOAs with $c=24$ and that they are in one-to-one correspondence with entries 0--70 in \cite{Sche93}.

All the $70$ Schellekens VOAs have been shown to be unitary in \cite{CGGH23} and, independently, in \cite{Lam23}, cf. also \cite{Gau25} for the vertex operator superalgebra analogue of these results. Also the moonshine VOA is unitary, see \cite{CKLW18,DL14}. Accordingly, all the known holomorphic VOAs with $c=24$ are unitary. 
The holomorphic VOAs with $c<24$ are classified in \cite{DM04a}. They are the trivial vertex operator algebra $V=\C$ at $c=0$ and the lattice vertex operator algebra 
$V_{E_8}$ at $c=8$ and the lattice vertex operator algebra $V_{E_8 \times E_8}$ and $V_{\Gamma_{16}}$ at $c=16$ so they are all unitary by \cite{DL14}, see also \cite[Example 5.9]{CKLW18}. To sum up, all the holomorphic vertex operator algebras with $c\leq 24$ are unitary. 
Known examples of holomorphic VOAs with $c> 24$ are given by the lattice VOAs associated to positive definite even unimodular lattices of 
rank $c$, that are known to be unitary again by \cite{DL14},  and by taking tensor product of holomorphic VOAs with smaller central charge. Further examples are given, by possibly non-abelian orbifold constructions starting from holomorphic lattice VOAs \cite{GK19,GK21,MS23} and we expect all these VOAs to be unitary, cf. \cite{CGGH23,Gui24b,Lam23}. This motivates the following empirical conjecture. 

\begin{conjecture}
\label{ConjectureHolomorphicUnitary}
Every holomorphic VOA is unitary.
\end{conjecture}

The next theorem follows from the results by  Gaberdiel and Volpato \cite{GV09}, cf. Theorem \ref{th:V_{V_1}}, together with the classification of holomorphic VOAs with $c=24$ and $V_1 \neq \{0\}$.

\begin{theorem}
\label{th: Schellekens}
Let $V$ be a holomorphic VOA with $c=24$ and  $V_1 \neq \{0\}$. If $U$ is a simple unitary VOA with $\cZ_U = \cZ_V$ then 
$U$ is isomorphic to $V$.     
\end{theorem}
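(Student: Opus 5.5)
The plan is to reduce the statement to Theorem \ref{th:V_{V_1}} together with the classification of the Schellekens VOAs, in the same spirit as Example \ref{ex: unitary affine}. Since all Schellekens VOAs are unitary, we may apply the unitary machinery to $V$ as well as to $U$. By Theorem \ref{T:main2_corpo}, $U$ has the same central charge $c=24$ as $V$. By Remark \ref{RemarkGenus1Partition}, $U$ and $V$ also have the same vacuum character, so $\dim U_n=\dim V_n$ for every $n$. In particular $U_1\neq\{0\}$.

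The next step is to identify $U$ as a holomorphic VOA. Theorem \ref{th:V_{V_1}} gives an isomorphism of unitary affine VOAs $U_{U_1}\cong V_{V_1}$, and hence a Lie algebra isomorphism $U_1\cong V_1$ with matching levels. The real work is to show that $U$ is holomorphic. I would aim for this through Theorem \ref{T:main2_corpo}: there is a unitary $\Phi\colon V\to U$ restricting to a VOA isomorphism $PV\cong PU$ and intertwining the $PV$-actions. So $U$, as a $PV$-module, is just $V$.

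I would then try to show that $V_{V_1}$ is contained in $PV$ up to the relevant structure, or at least argue along the following chain. For the Schellekens VOAs the affine subalgebra $V_{V_1}$ has central charge $24$ (this is the Dong--Mason statement for nonzero $V_1$ with $c=24$), so $V$ is a conformal extension of $V_{V_1}$. By the isomorphism $U_{U_1}\cong V_{V_1}$, $U_{U_1}$ also has central charge $24$ and contains the conformal vector of $U$. Hence $U$ is a unitary conformal extension of a strongly rational affine VOA $A:=V_{V_1}$. Its decomposition into $A$-modules is determined by the genus one data and the conformal weights, or more robustly it coincides with that of $V$ via $\Phi$ if $A\subset PV$. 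I would first try to establish the latter inclusion using Proposition \ref{P:ort}. If instead I must rely on characters, I would use the equality of graded traces from Proposition \ref{prop: equality of partition functions}(v) together with Proposition \ref{propr: charactermodules}.

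Once $U$ is a conformal extension of the strongly rational $A$, it is itself strongly rational. Its module category is then a local module category in $\operatorname{Rep}(A)$, so $U$ is holomorphic exactly when the extension is "maximal", and this follows from matching dimensions: the extension has the same $A$-decomposition as the holomorphic $V$, so its global dimension is the same. Thus $U$ is a holomorphic VOA with $c=24$ and $U_1\cong V_1$. The classification of holomorphic $c=24$ VOAs with $V_1\neq\{0\}$, in which each one is determined by the Lie algebra structure of its weight one space, then gives $U\cong V$.

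The main obstacle I expect is the holomorphicity (equivalently, the rationality) of $U$. The safest route is through the $A$-module isomorphism $U\cong V$: I would first try to prove $V_{V_1}\subset PV$ via the Casimir bilocal fields $\gamma_1$, so that $\Phi$ becomes an $A$-module isomorphism. Failing that, I would fall back on the argument of \cite{GV09}, where the $\gamma_1$-data of the genus $2$ partition function fix the $\widehat{\mathfrak g}$-decomposition.
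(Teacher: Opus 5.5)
Your outline matches the paper's: $c=24$ from Theorem \ref{T:main2_corpo}, $U_{U_1}\cong V_{V_1}$ from Theorem \ref{th:V_{V_1}}, then holomorphicity of $U$, then the Schellekens classification. The gap is in the holomorphicity step, which none of your proposed mechanisms establishes.

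\textbf{The inclusion $V_{V_1}\subset PV$ is false.} By Proposition \ref{PropAutInvariantCasimir}, $PV\subset V^{\operatorname{Aut}(V)}$. For semisimple $V_1$, the inner automorphisms $e^{a_0}$, $a\in V_1$, act on $V_1$ by the adjoint representation, which has no nonzero invariants. Hence $PV_1=\{0\}$ and $A=V_{V_1}\not\subset PV$.

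\textbf{The character fallback fails.} Proposition \ref{propr: charactermodules} needs $\Tr_V a_0q^{L_0}=\Tr_U(\psi a)_0q^{L_0}$ for every $a\in A$ and one fixed isomorphism $\psi\colon A\to U_{U_1}$. Proposition \ref{prop: equality of partition functions}(v) only controls traces of Casimir endomorphisms, i.e.\ data living in $PV$. Nothing relates the abstract isomorphism of Theorem \ref{th:V_{V_1}} to $\varphi\colon PV\to PU$. Nor does the $q$-character determine the $A$-decomposition, since a module and its contragredient have the same graded dimension. Your appeal to Gaberdiel--Volpato for the full $\widehat{\mathfrak g}$-decomposition is not supported by anything recorded here.

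\textbf{The abelian case is not covered.} If $V_1$ is abelian, so $V\cong V_\Lambda$, then $A=M_{24}(1)$ is not rational and the extension argument does not even start. The paper treats this case separately via Example \ref{ex: lattice}: $U\cong V_L$ with $L$ even, unimodular of rank $24$ and isospectral to $\Lambda$. Then $L$ has no roots, so $L\cong\Lambda$.

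\textbf{The missing idea is modular invariance of the genus one datum.} In the semisimple case, $U$ is a conformal extension of the strongly rational $U_{U_1}$, hence strongly rational and unitary, as you say. Since $\chi_U=\chi_V$ is invariant under $\tau\mapsto-1/\tau$, Huang's theorem gives $\chi_U(it)=\chi_U(i/t)=\sum_i S_{0i}\chi_{M^i}(it)$. Letting $t\to\infty$ with $h_0=0<h_i$ yields $S_{00}=1$. Unitarity of $S$ then gives $1=S_{00}^2\le\sum_i S_{i0}^2=1$, and together with $S_{i0}>0$ this forces a single simple module, so $U$ is holomorphic.

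Your global-dimension idea can be repaired the same way, without knowing the $A$-decomposition. By modular invariance of $\operatorname{Rep}(A)$, $\dim_{\operatorname{Rep}(A)}U=\sum_\lambda n_\lambda d_\lambda=\lim_{t\to0^+}\chi_U(it)/\chi_A(it)$. This depends only on the $q$-character, so it equals $\dim_{\operatorname{Rep}(A)}V$. Hence $\dim\operatorname{Rep}(U)=\dim\operatorname{Rep}(A)/(\dim_{\operatorname{Rep}(A)}U)^2=\dim\operatorname{Rep}(V)=1$, which in the unitary setting again means $U$ is holomorphic.
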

\begin{proof}
Since $\cZ_U = \cZ_V$ also $U$ has central charge $24$. Moreover, by Theorem \ref{th:V_{V_1}} $U_{U_1}$ is isomorphic to $V_{V_1}$. 

If we assume that $U$ is holomorphic, we can already conclude that $V\cong U$ because a holomorphic VOA of central charge 24 and $U_1\neq \{0\}$ is uniquely determined by the Lie algebra $U_1$. The rest of the proof is devoted to show that $U$ is holomorphic.
\medskip

If $V_1$ is an abelian Lie algebra then $V$ is isomorphic to the lattice VOA $V_\Lambda$ where lambda is the Leech lattice.  It follows that $U$ is isomorphic to a lattice VOA $V_L$ with $L$ a positive-definite even lattice unimodular of rank 24 isospectral to $\Lambda$, see Example \ref{ex: lattice}. Since $L$ is isospectral to $\Lambda$ then $L$ has no roots and hence it must be isomorphic to the Leech lattice which is, up to isomophisms, the unique positive-definite even lattice unimodular of rank 24
with no roots. It follows that $U$ is isomorphic to $V$.  If $V_1$ is semisimple then $U_{U_1}$ is strongly rational. It follows that also $U$ is strongly rational, see e.g. \cite[Sec. 3.5]{Gui22} and \cite[Corollary 4.11]{CGGH23}.  Actually, by   \cite[Corollary 4.11]{CGGH23} $U$ is also completely unitary in the sense of \cite[Definition 1.8]{Gui22}. It follows that every $U$-module is unitarizable and that the modular fusion category $\operatorname{Rep}(V)$ admits a natural unitary structure, see e.g. \cite{Gui22} and the references therein. Let $n$ be the finite number of equivalence classes of inequivalent irreducible $U$-modules and let $M^0,\dots M^{n-1}$ be representatives for each equivalence class with $M^0=U$ the adjoint module. If $h_i$ denotes the lowest energy of $M^i$ then, by \cite[Proposition 4.5]{CGGH23} $h_0 =0$ and $h_i > 0$ if $i>0$. 

For any $i=0, \dots, n-1$ we denote by $\chi_{M^i}(\tau)$ the character of $M^i$ that is 
$$ \chi_{M^i}(\tau) := e^{-2\pi i \tau}\Tr_{M^i} e^{2\pi i \tau (L^{M^i}_0 - 1_{M^i})} $$
which is convergent if $\Im \tau > 0$. Since the genus one partition function of $U$ coincide with the one of $V$ then $\chi_{M^0}(\tau)$ is a modular function of weight $0$ so that  
$$\chi_{M^0}\left(\frac{i}{t}\right) = \chi_{M^0}\left(it\right)$$ 
for all $t>0$. On the other hand, by \cite{Hua08} we have 
$$\chi_{M^k}\left(\frac{i}{t}\right) = \sum_{k=0}^{n-1}S_{k i}\chi_{M^i}\left(it \right) \,,$$
where $S$ is the unitary Verlinde matrix of the modular fusion category $\operatorname{Rep}(V)$ which is unitary and symmetric.
Hence,  
$$\chi_{M^0}\left(it\right) = \sum_{i=0}^{n-1}S_{0 i}\chi_{M^i}\left(it \right) \,.$$
Recalling that $h_0=0$ and $h_i>0$ if $i>0$ and taking $k=0$ we find $S_{0 0} =1$. 
Moreover, a standard argument, see e.g. \cite[Section 6.2.3]{Gan06}, shows that $S_{i0}>0$ for $i=0,\dots,n-1$

We have
$$
1 = S_{00} \leq \sum_{i=0}^{n-1} (S_{i0})^2 = (S^2)_{00}  =1\,.
$$
It follows that $n=1$ so that $U$ is holomorphic.  Since $U_1$ and $V_1$ are isomorphic Lie algebras the conclusion follows from the classification of holomorphic VOAs with $c=24$ and non-zero weight one subspace. 

\end{proof}

We now come to the moonshine VOA $V^\natural$. The latter is a holomorphic VOA with $c=24$ and zero weight one subspace and was constructed by 
Frenkel, Lepowsky and Meurman in \cite{FLM88} who also conjectured that $V^\natural$. is the unique holomorphic VOA with the above properties. 
It corresponds to entry $0$ in the Schellekens list \cite{Sche93} and, differently from the VOAs corresponding to the other 70 entries, its automorphism group is finite and equal to the Monster group $\mathbb{M}$, the largest among the 26 sporadic simple groups. The moonshine VOA is unitary and the Monster group acts by unitary automorphisms, \cite{CKLW18,DL14}. The automorphism group of Schellekens VOAs has recently been determined in \cite{BLS23}.

The monster orbifold ${V^\natural}^{\mathbb{M}}$ was studied by Harada and Lang in \cite{HL98}. They showed, in particular, that the dimension of the linear span of the (Virasoro) primary fields in ${V^\natural}^{\mathbb{M}}_k$ is equal to 0 for  $0<k < 12$ and for $k = 13, 14, 15, 17, 19, 21, 23$ and is equal to 1
for $k=12, 16, 18, 20,  22$. In particular ${V^\natural}^{\mathbb{M}}_k = L(24,0)_k$ for $k \leq 11$ and ${V^\natural}^{\mathbb{M}}_k \neq  L(24,0)_k$ for 
$k \geq 12$. Here $L(24,0)$ denotes the unitary subalgebra of $V^\natural$ generated by the conformal vector, cf. Example \ref{ex:Virasoro}. By the results \cite{DM00} and \cite{Hur03}, for any $k =  12, 16, 18, 20, 22$ there is a primary vector  $a \in   {V^\natural}_k$ 
with $\Tr a_0q^{L_0} \neq 0$. Hence, as a consequence of Corollary \ref{cor: projection partition} we have the following proposition.

\begin{proposition} 
\label{prop: monster orbifold} $PV^\natural_k =  {V^\natural}^{\mathbb{M}}_k$ for every integer $k < 24$. In particular  $PV^\natural \neq L(24,0)$. 
\end{proposition}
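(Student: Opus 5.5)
We prove the two inclusions $PV^\natural_k \subset {V^\natural}^{\mathbb{M}}_k$ and ${V^\natural}^{\mathbb{M}}_k \subset PV^\natural_k$ for $k<24$. The first holds for every $k$: by Proposition \ref{PropAutInvariantCasimir}(vii), $PV^\natural \subset {V^\natural}^{\operatorname{Aut}(V^\natural)} = {V^\natural}^{\mathbb{M}}$. For the reverse inclusion we use the structure of the monster orbifold as a module over the Virasoro subalgebra $L(24,0)$, which by Lemma \ref{L:conf} lies inside $PV^\natural$. Both $PV^\natural$ and ${V^\natural}^{\mathbb{M}}$ are unitary subalgebras containing $\nu$ (Lemmas \ref{L:unitary} and \ref{L:conf}). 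They are therefore unitary $L(24,0)$-modules, hence orthogonal direct sums of irreducible highest weight modules generated by Virasoro primary vectors (Proposition \ref{prop: unitarymodule}). Consequently, for each $k$, ${V^\natural}^{\mathbb{M}}_k$ is spanned by vectors $L_{-n_1}\cdots L_{-n_r}a$ with $a$ primary in ${V^\natural}^{\mathbb{M}}_j$, $j\le k$. It thus suffices to show that every primary vector of ${V^\natural}^{\mathbb{M}}$ of weight $j<24$ lies in $PV^\natural$; the $L_{-n}$-descendants then lie in $PV^\natural$ automatically, since $PV^\natural$ is a vertex subalgebra containing $\nu$.

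For the primary vectors we use the Harada--Lang count. The primary subspace of ${V^\natural}^{\mathbb{M}}_j$ is $0$ for $0<j<24$ except $j\in\{12,16,18,20,22\}$, where it is one-dimensional; at $j=0$ it is $\mathbb{C}\Omega\subset PV^\natural$. Fix $j\in\{12,16,18,20,22\}$. By \cite{DM00,Hur03} there is a primary $a\in V^\natural_j$ with $\Tr a_0q^{L_0}\neq0$. Averaging over the finite group $\mathbb{M}$ replaces $a$ by $\bar a=|\mathbb{M}|^{-1}\sum_g ga$. This vector is still primary, and since $(ga)_0=g a_0 g^{-1}$ it has the same trace function, so we may take $a\in{V^\natural}^{\mathbb{M}}_j$. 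Then $a$ spans the primary part of ${V^\natural}^{\mathbb{M}}_j$. By Corollary \ref{cor: projection partition}, $e_{PV}a$ is a nonzero primary vector in $PV^\natural_j\subset{V^\natural}^{\mathbb{M}}_j$, so it is a nonzero multiple of $a$. Hence $a\in PV^\natural$. The same argument at $j=12$ gives $PV^\natural_{12}\neq L(24,0)_{12}$, and therefore $PV^\natural\neq L(24,0)$.

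The main obstacle is the descendant argument. We need that the primary vectors of weight $\le k$, together with their Virasoro descendants, exhaust ${V^\natural}^{\mathbb{M}}_k$, and that no other constituents appear in the range $k<24$. This is where unitarity is essential: it gives complete reducibility into irreducible highest weight modules, and since $c=24>1$ the Verma modules with $h>0$ are irreducible, with the vacuum module being $L(24,0)$ itself. We would make sure the Harada--Lang data are stated in exactly this decomposition form. The degree bound $k<24$ is exactly the range covered by those data.
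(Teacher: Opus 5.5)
Your proof is correct and is essentially the paper's argument. The paper also decomposes $PV^\natural\subset{V^\natural}^{\mathbb{M}}$ into $L(24,0)$-isotypic components and uses the Harada--Lang primary counts together with Corollary \ref{cor: projection partition} (fed by \cite{DM00,Hur03}) to get equal multiplicities below weight $24$; your averaging over $\mathbb{M}$ is harmless but unnecessary, since $e_{PV}a$ already lies in $PV^\natural\subset{V^\natural}^{\mathbb{M}}$.
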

\begin{proof} By Proposition \ref{prop: unitarymodule} ${V^\natural}^{\mathbb{M}}$ is a direct sum of submodules $M^n$, $n\in \Z_{\geq 0}$
where each $M^n$ is a multiple $m(n)L(24,n)$ of the $L(24,0)$-module $L(24,n)$ with lowest energy $n$. The multiplicity $m(n)$ is equal to the dimension of the subspace of primary fields in ${V^\natural}^{\mathbb{M}}_n$.  
Similarly ${PV^\natural}$ is a direct sum of submodules $W^n$, $n\in \Z_{\geq 0}$ where each $W^n$ is a multiple $k(n)L(24,n)$ of the $L(24,n)$ and the multiplicity $k(n)$ is equal to the dimension of the subspace of primary fields in $PV^\natural_n$.  Clearly $W^n \subset M^n$ and $k(n) \leq m(n)$ for all 
$n\in \Z_{\geq 0}$. As we discussed above, we must have $k(n) = m(n)$ and hence $W^n=M^n$ for $n\leq 23$ Hence, if $k\leq 23$ we have 
\begin{eqnarray*}
PV^\natural_k &=&  V^\natural_k \cap  \bigoplus_{n\leq k}  W^n = V^\natural_k \cap  \bigoplus_{n\leq k}  M^n \\
&=&  {V^\natural}^{\mathbb{M}}_k  \,.
\end{eqnarray*}

\end{proof}

\begin{conjecture}
\label{conj: monster orbifold}
 $PV^\natural=  {V^\natural}^{\mathbb{M}}$. 
\end{conjecture} 

\begin{theorem}
\label{thm: monster orbifold}
Assume Conjectures \ref{conj:slope} and \ref{conj: monster orbifold}. Then, up isomorphisms,  $V^\natural$ is the unique strongly local holomorphic VOA with central charge $24$ and zero weight one subspace. Moreover, if the monster orbifold ${V^\natural}^{\mathbb{M}}$ is strongly rational then, 
up isomorphisms,  $V^\natural$ is the unique unitary holomorphic VOA with central charge $24$ and zero weight one subspace. 
\end{theorem}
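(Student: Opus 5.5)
Let $V$ be a strongly local holomorphic VOA with $c=24$ and $V_1=\{0\}$; in the second part we instead only assume $V$ unitary. The strategy is to reduce uniqueness of $V$ to uniqueness of a VOA structure on a fixed module over the monster orbifold.

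\textbf{Step 1: equal partition functions.} By Corollary \ref{cor:classification2}, Conjecture \ref{conj:slope} gives $\cZ_{V,g}=\cZ_{V^\natural,g}$ for every $g$, so $\cZ_V=\cZ_{V^\natural}$. This uses only holomorphicity, so it holds in both cases.

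\textbf{Step 2: transport via the partition function subalgebra.} Here we need $V$ unitary, which holds in the second case by hypothesis. In the first case I will read ``strongly local'' in the sense of \cite{CKLW18}, which includes unitarity. Theorem \ref{T:main2_corpo} then gives a unitary $\Phi\colon V^\natural\to V$ restricting to a unitary VOA isomorphism $PV^\natural\to PV$ and intertwining the fields of $PV^\natural$. By Conjecture \ref{conj: monster orbifold}, $PV^\natural={V^\natural}^{\mathbb M}$. Transporting the vertex operators of $V$ back by $\Phi$, as in the proof of Corollary \ref{cor: unique structure}, produces a new unitary VOA structure $\widetilde Y$ on the unitary ${V^\natural}^{\mathbb M}$-module $V^\natural$. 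This structure extends the given action of ${V^\natural}^{\mathbb M}$ and is isomorphic to $V$.

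\textbf{Step 3: uniqueness of the structure.} It remains to show that the unitary ${V^\natural}^{\mathbb M}$-module $V^\natural$ carries a unique such structure; this is Conjecture \ref{conjecture: compact orbifold} for $G=\mathbb M=\operatorname{Aut}_{(\cdot|\cdot)}(V^\natural)$.
\begin{itemize}
\item In the strongly local case we apply Theorem \ref{thm: strongly local compact orbifold} to $V^\natural$, which is strongly local by \cite{CKLW18}, with $G=\mathbb M$. The transported structure is strongly local because it is isomorphic to $V$. Hence it is isomorphic to the original one, and $V\cong V^\natural$.
\item In the second case $\mathbb M$ is finite and ${V^\natural}^{\mathbb M}$ is assumed strongly rational. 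The last part of Theorem \ref{thm VOA compact orbifold} then gives a unique simple unitary VOA structure on the ${V^\natural}^{\mathbb M}$-module $V^\natural$, so again $V\cong V^\natural$.
\end{itemize}

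\textbf{Expected obstacle.} The delicate point is the bookkeeping in Step 2. We must check that the transported $\widetilde Y$ really restricts to the original fields on ${V^\natural}^{\mathbb M}$, which is the intertwining property from Theorem \ref{T:main2_corpo}. We must also check that the scalar product and PCT operator are compatible, so that the transported structure is unitary, or strongly local, as required to invoke Theorems \ref{thm: strongly local compact orbifold} and \ref{thm VOA compact orbifold}. I would handle this exactly as in Corollary \ref{cor: unique structure}: define $\widetilde\theta=\Phi^{-1}\theta_V\Phi$ and use the unitarity of $\Phi$.
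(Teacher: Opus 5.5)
Your proposal is correct and follows the paper's argument. The paper compresses it into one line: Conjecture \ref{conj:slope}, via Corollary \ref{cor:classification2}, gives $\cZ_V=\cZ_{V^\natural}$. Theorem \ref{T:main2_corpo} together with Conjecture \ref{conj: monster orbifold} then reduces the claim to uniqueness of the VOA structure on the ${V^\natural}^{\mathbb{M}}$-module $V^\natural$, as in Corollary \ref{cor: unique structure}. That uniqueness is supplied by Theorem \ref{thm: strongly local compact orbifold} or Theorem \ref{thm VOA compact orbifold}. You simply make explicit the steps the paper leaves implicit, including that strong locality in the sense of \cite{CKLW18} presupposes unitarity and that $V^\natural$ itself is strongly local.
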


\begin{proof}
The first part of the statement follows directly from Theorem \ref{thm: strongly local compact orbifold} while the second follows from Theorem 
\ref{thm VOA compact orbifold}, 
\end{proof}

\end{document}